\renewcommand{\rho}{\varrho}
\renewcommand{\epsilon}{\varepsilon}
\renewcommand{\phi}{\varphi}
\renewcommand{\theta}{\vartheta}
\newcommand*{\matr}[1]{\mathbf{#1}}
\newcommand*{\vct}[1]{{\bm{#1}}}
\newcommand{\D}{{\operatorname{d}}}
\newcommand{\rmf}{{\mathrm{f}}}
\newcommand{\rmT}{{\mathrm{T}}}
\newcommand*{\trp}{\mathrm{t}}
\newcommand*{\dimq}[1]{{\tilde{#1}}}
\newcommand*{\refq}[1]{{#1}^\star}
\newcommand{\partialDer}[2]{\frac{\partial #1}{\partial #2}}
\newcommand{\norm}[1]{{\left\Vert #1 \right\Vert}}
\newcommand{\normsize}[2]{{#1\Vert #2 #1\Vert}}
\newcommand{\abs}[1]{{\left\vert #1 \right\vert}}
\newcommand{\abssize}[2]{{#1\vert #2 #1\vert}}
\newcommand*{\jump}[1]{\left\llbracket #1 \right\rrbracket}
\newcommand*{\jumpsize}[2]{{#1 \llbracket #2 #1\rrbracket}}
\newcommand{\normiii}[1]{{\left\vert\kern-0.25ex\left\vert\kern-0.25ex\left\vert #1 
    \right\vert\kern-0.25ex\right\vert\kern-0.25ex\right\vert}}
\newcommand{\normsizeiii}[2]{{#1\vert\kern-0.25ex #1\vert\kern-0.25ex #1\vert #2 
    #1\vert\kern-0.25ex #1\vert\kern-0.25ex #1\vert}}
\newcommand{\hathat}[1]{%
\begingroup%
  \let\macc@kerna\z@%
  \let\macc@kernb\z@%
  \let\macc@nucleus\@empty%
  \hat{\raisebox{.3ex}{\vphantom{\ensuremath{#1}}}\smash{\hat{#1}}}%
\endgroup%
}
\newcommand{\inlineeqnum}{\refstepcounter{equation}~~\mbox{(\theequation)}}
\DeclareMathOperator*{\argmin}{arg\,min}
\newcommand*{\ubar}[1]{\underaccent{\bar}{#1}}
\newcommand{\invpsi}{\ubar{\vct{\psi}}}
\newcommand{\invT}{\ubar{\vct{T}}}
\newcommand{\invt}{\ubar{t}}
\newcommand{\invY}{\ubar{\mathcal{Y}}}
\newcommand{\invR}{\ubar{\mathcal{R}}}
\newcommand*{\nablaGamma}{\nabla_{\!\Gamma}}
\newcommand{\nablaN}{\nabla_{\!\vct{N}}}
\newcommand*{\smallpar}{{\scaleobj{.8}{\parallel}}}
\newcommand*{\smallperp}{{\scaleobj{.8}{\perp}}}
\newcommand*{\smallo}{{\scalebox{.9}{$\scriptstyle\mathcal{O}$}}}
\newcommand\StepSubequations{
  \stepcounter{parentequation}
  \gdef\theparentequation{\thesection.\arabic{parentequation}}
  \setcounter{equation}{0}
}
\numberwithin{equation}{section}
\newtheorem{theorem}{Theorem}[section]
\newtheorem{lemma}[theorem]{Lemma}
\newtheorem{proposition}[theorem]{Proposition}
\theoremstyle{definition}
\newtheorem{definition}[theorem]{Definition}
\newtheorem{remark}[theorem]{Remark}
\title[Derivation of Discrete Fracture Models]
{Rigorous Derivation of Discrete Fracture Models for Darcy Flow in the Limit of Vanishing Aperture} %
\author[Maximilian Hörl and Christian Rohde]{}
\subjclass{Primary: 76S05, 58J90; Secondary: 35Q35, 35B40.}
\keywords{Fractured porous media, discrete fracture model, weak compactness, vanishing aperture, Darcy flow}
\thanks{$^*$Corresponding author: Maximilian Hörl}
\begin{document}
\maketitle

\centerline{\scshape
Maximilian Hörl$^{{\href{mailto:maximilian.hoerl@mathematik.uni-stuttgart.de}{\textrm{\Letter}}}*1}$
and Christian Rohde$^{{\href{mailto:christian.rohde@mathematik.uni-stuttgart.de}{\textrm{\Letter}}}1}$}

\medskip

{\footnotesize
 \centerline{$^1$Institute of Applied Analysis and Numerical  Simulation, University of Stuttgart, Germany}
} 

\bigskip



\begin{abstract}
We consider single-phase flow in a fractured porous medium governed by Darcy's law with spatially varying hydraulic conductivity matrices in both bulk and fractures. 
The width-to-length ratio of a fracture is of the order of a small parameter~$\epsilon$ and the ratio~$\refq{K_\mathrm{f}} / \refq{K_\mathrm{b}}$ of the characteristic hydraulic conductivities in the fracture and bulk domains is assumed to scale with~$\epsilon^\alpha$ for a parameter~$\alpha \in \mathbb{R}$. 
The fracture geometry is parameterized by aperture functions on a submanifold of codimension one.
Given a fracture, we derive the limit models as~$\epsilon \rightarrow 0$.
Depending on the value of~$\alpha$, we obtain five different limit models as $\epsilon \rightarrow 0$, for which we present rigorous convergence results. 
\end{abstract}


\section{Introduction}
Porous media with fractures or other thin heterogeneities, such as membranes, occur in a wide range of applications in nature and industry including carbon sequestration, groundwater flow, geothermal engineering, oil recovery, and biomedicine.
Fractures are characterized by an extreme geometry with a small aperture but a significantly larger longitudinal extent, typically by several orders of magnitude.
Therefore, it is often computationally unfeasible to represent fractures explicitly in full-dimensional numerical methods, especially in the case of fracture networks, as this results in thin equi-dimensional domains that require a high resolution.
However, the presence of fractures can have a crucial impact on the flow profile in a porous medium with the fractures acting either as major conduits or as barriers. 
Moreover, in order to obtain accurate predictions for the flow profile, generally, one also has to take into account the geometry of fractures, i.e., curvature and spatially varying aperture~\cite{burbulla23,wang23}.

In the following paragraph, we provide a brief overview on modeling approaches for flow in fractured porous media with a focus on discrete fracture models. 
For details on modeling and discretization strategies, we refer to the review article~\cite{berre19} and the references therein.
Conceptually, one can distinguish between models with an
explicit representation of fractures and models that represent fractures implicitly by an effective continuum. 
For the latter category, there is a distinction between equivalent porous medium models~\cite{liu16,oda85}, where fractures are modeled by modifying the permeability of the underlying porous medium, and multi-continuum models~\cite{arbogast90,barenblatt60}, where the fractured porous medium is represented by multiple superimposed interacting continua---in the simplest case by a fracture continuum and a matrix continuum.
In contrast, discrete fracture models represent fractures explicitly as interfaces of codimension one within a porous medium.
In comparison with implicit models, there is an increase in geometrical complexity but no upscaled description based on effective quantities.
Besides, there are also hybrid approaches for fracture networks, where only dominant fractures are represented explicitly~\cite{chen22,karvounis16}.
The most popular method for the derivation of discrete fracture models is vertical averaging~\cite{ahmed17,brenner18,bukac17,burbulla22,lesinigo11,martin05,rybak20,starnoni21}, 
where the governing equations inside the fracture are integrated in normal direction to obtain an interfacial description based on averaged quantities. Using this approach, the resulting mixed-dimensional model is typically closed by making formal assumptions on the flow profile inside the fracture. 
Most commonly, averaged discrete fracture models are based on the conception of a planar fracture geometry with constant aperture. 
However, there are also works that consider curved fractures and fractures with spatially varying aperture~\cite{burbulla23,paranamana21}.
Moreover, there are papers that take a mathematically more rigorous approach for the derivation of discrete fracture models by applying weak compactness arguments to prove (weak) convergence towards a mixed-dimensional model in the limit of vanishing aperture~\cite{armiti22,huy74,list20,morales10,morales17,morales12,sanchez74}. 
This is also the approach that we follow here. 
Further, we mention~\cite{kumar20,melnyk24}, where asymptotic expansions are employed to obtain limit models for vanishing aperture.
Besides, error estimates for discrete fracture models are obtained in~\cite{brezina16,gander21}.
In particular, in~\cite{gander21}, an asymptotic expansion based on a Fourier transform is used to obtain the reduced model.
Further, the authors in~\cite{boon21} have developed a mixed-dimensional functional analysis, which is utilized in~\cite{boon23} to obtain a poromechanical discrete fracture model using a \enquote{top-down} approach.
In addition, we also mention phase-field models~\cite{mikelic15}, which are convenient to track the propagation of fractures and can be combined with discrete fracture models~\cite{burbulla23b}.

In this paper, we consider single-phase fluid flow in a porous medium with an isolated fracture. 
Here, the term fracture refers to a thin heterogeneity inside the bulk porous medium which may itself be described as another porous medium with a distinctly different permeability, e.g., a debris- or sediment-filled crack inside a porous rock. 
We assume that the flow is governed by Darcy's law in both bulk and fracture. 
Further, we introduce the characteristic width-to-length ratio~$\epsilon > 0$ of the fracture as a scaling parameter.
Given that the ratio~$\refq{K_\rmf} / \refq{K_\mathrm{b}} $ of characteristic hydraulic conductivities in the fracture and bulk domain scales with~$\epsilon^\alpha$, we obtain five different limit models as $\epsilon \rightarrow 0$ depending on the value of the parameter~$\alpha \in \mathbb{R}$.
As the mathematical structure of the limit models is different in each case and reaches from a simple boundary condition to a PDE on the interfacial limit fracture, the different cases require different analytical approaches.
Aside from delicate weak compactness arguments, the convergence proofs rely on tailored parameterizations and a  novel coordinate transformation with controllable behavior with respect to the scaling parameter~$\epsilon$. 
Besides, we show the wellposedness of the limit models and strong convergence.

For simple geometries and constant hydraulic conductivities, the limit of vanishing aperture~$\epsilon \rightarrow 0$ is discussed for similar systems in \cite{list20} for the case $\alpha < 1$, in \cite{huy74,morales10} for the case $\alpha = -1$, and in 
\cite{sanchez74} for the case $\alpha = 1$.
Our approach is related to the approach in~\cite{list20}, where Richards equation is considered. 
However, while their focus is on dealing with the nonlinearity and time-dependency of unsaturated flow, our focus is on the derivation of limit models for general fracture geometries and spatially varying tensor-valued hydraulic conductivities for the the whole range of parameters~$\alpha \in \mathbb{R}$.
This aspect is not considered in~\cite{list20}.

The structure of this paper is as follows.  
In \Cref{sec:sec2}, we define the full-dimensional model problem of Darcy flow in a porous medium with an isolated fracture and introduce the characteristic width-to-length ratio~$\epsilon$ of the fracture as a scaling parameter.
\Cref{sec:sec3} deals with the derivation of a-priori estimates for the family of full-dimensional solutions parameterized by~$\epsilon > 0$.
Further, in \Cref{sec:sec4}, depending on the choice of parameters, we identify the limit models as $\epsilon \rightarrow 0$ and provide rigorous proofs of convergence.
A short summary of the geometric background is given in Appendix~\ref{sec:secA}.

\section{Full-Dimensional Model and Geometry} \label{sec:sec2}
First, in \Cref{sec:sec21}, we define the geometric setting and introduce the full-dimensional model problem of single-phase Darcy flow in a porous medium with an isolated fracture in dimensional form. 
Then, in \Cref{sec:sec22}, dimensional quantities are rescaled by characteristic reference quantities to obtain a non-dimensional problem. 
\Cref{sec:sec23} discusses the dependence of the domains and parameters on the width-to-length ratio~$\epsilon$ of the fracture, which is introduced as a scaling parameter. 
Further, given an atlas for the surface that represents the fracture in the limit~$\epsilon \rightarrow 0$, \Cref{sec:sec24} introduces suitable local parameterizations for the bulk and fracture domains, which, in \Cref{sec:sec25}, allow us to transform the weak formulation of the non-dimensional problem from \Cref{sec:sec22} into a problem with $\epsilon$-independent domains.  

\subsection{Full-Dimensional Model in Dimensional Form} \label{sec:sec21}
In the following, dimensional quantities are denoted with a tilde to distinguish them from the non-dimensional quantities that are introduced in \Cref{sec:sec22}. 
Constant reference quantities are marked by a star. 

Let $n\in\mathbb{N}$ with $n\ge 2$ and let $\dimq{G} \subset \mathbb{R}^n$ be a bounded domain with  $\partial \dimq{G} \in \mathcal{C}^2$. 
We write $\vct{N} \in \mathcal{C}^1 (\partial \dimq{G} ; \mathbb{R}^n )$ for the outer unit normal field on~$\partial\dimq{G}$.
Besides, let $\emptyset \not= \overline{\dimq{\gamma }} \subset \partial \dimq{G} $ be a compact and connected $\smash{\mathcal{C}^{0,1}}$-submanifold with boundary~$\smash{\partial \overline{\dimq{\gamma}}}$ and dimension~$n-1$.
The interior of~$\smash{\overline{\dimq{\gamma}}}$ is denoted by $\smash{\dimq{\gamma}}$.
Subsequently, we will consider the limit of vanishing width-to-length ratio for an isolated fracture in a porous medium such that $\dimq{\gamma}$ represents the interfacial fracture in the limit model. 
The domain~$\dimq{G}$ plays a purely technical role: It induces an orientation on~$\dimq{\gamma}$; in particular, its boundary~$\partial \dimq{G}$ is endowed with a signed distance function~$\smash{d_\leftrightarrow^{\partial \dimq{G}}}$.
Moreover, $\smash{\partial\dimq{G}}$ has bounded curvature.
Thus, there exists a neighborhood of~$\smash{\partial \dimq{G}}$ where the orthogonal projection~$\smash{\mathcal{P}^{\partial \dimq{G}}}$ and the signed distance function~$\smash{d_\leftrightarrow^{\partial \dimq{G}}}$ are well-defined and differentiable.
We refer to Appendix~\ref{sec:secA1} for the relevant geometric background.

In the following, we define the geometry of the full-dimensional model.
Given aperture functions~$\smash{\dimq{a}_i \in \mathcal{C}^{0,1} \bigl(\overline{\dimq{\gamma } }\bigr)}$ for $i \in \{ + , - \}$ such that the total aperture $\dimq{a} := \dimq{a}_+ + \dimq{a}_- \ge 0 $ is non-negative, we define the fracture domain~$\smash{\dimq{\Omega}_\mathrm{f}}$ and its boundary segments~$\smash{\dimq{\gamma}_\pm}$ by 
\begin{subequations}
\begin{align}
\dimq{\Omega}_\mathrm{f} &:= \bigl\{ \dimq{\vct{p}} + \dimq{s} \vct{N}  (\dimq{\vct{p}} ) \in \mathbb{R}^n \  \big\vert\ \dimq{\vct{p}} \in \dimq{\gamma} , \, - \dimq{a}_-( \dimq{\vct{p} } ) < \dimq{s }   <  \dimq{a}_+(\dimq{\vct{p} } ) \bigr\} ,  \\
\dimq{\gamma}_\pm &:= \bigl\{ \dimq{\vct{p}} \pm \dimq{a}_\pm (\dimq{\vct{p}}) \vct{N}  (\dimq{\vct{p}} )  \in \mathbb{R}^n \  \big\vert\ \dimq{\vct{p}}  \in \dimq{\gamma} \bigr\} .
\end{align}
\end{subequations}
Here and subsequently, we use the index~$\pm$ as an abbreviation to simultaneously refer to two different quantities or domains on the inside~($-$) and outside~($+$) of the domain~$\smash{\dimq{G}}$. 
Further, we distinguish between the parts of the fracture interface~$\smash{\dimq{\gamma}}$ and the boundary segments~$\smash{\dimq{\gamma}_\pm}$ with non-zero and zero aperture~$\smash{\dimq{a}}$, i.e., $\smash{\dimq{\gamma}  = \dimq{\Gamma } \; \dot{\cup} \; \smash{\dimq{\Gamma}}_0^0}$ and $\smash{\dimq{\gamma}_\pm = \dimq{\Gamma}_\pm \; \dot{\cup} \; \dimq{\Gamma}_0}$, where
\begin{alignat*}{2}
\dimq{\Gamma} &:= \bigl\{ \dimq{\vct{p}} \in \dimq{\gamma} \ \big\vert \ \dimq{a} (\dimq{\vct{p}}) > 0 \bigr\}  , \qquad 
&\smash{\dimq{\Gamma}}_0^0 &:= \dimq{\gamma} \setminus \dimq{\Gamma } ,  \\
\dimq{\Gamma}_0 &:= \dimq{\gamma}_+  \cap \dimq{\gamma}_- , 
&\dimq{\Gamma}_\pm &:= \dimq{\gamma}_\pm \setminus \dimq{\Gamma}_0 .
\end{alignat*}
We assume that $\smash{\dimq{\Omega}_\mathrm{f}}$ is connected with $\smash{\lambda_n (\dimq{\Omega}_\mathrm{f}) > 0}$, where $\lambda_n$ denotes the $n$-di\-men\-sio\-nal Lebesgue measure. 
In addition, we assume that the aperture functions~$\dimq{a}_\pm$ are sufficiently small such that $\smash{\dimq{\Omega}_\mathrm{f} \subset \operatorname{unpp}( \partial \dimq{G} )}$ with $\smash{\operatorname{unpp}( \partial \dimq{G} ) \subset \mathbb{R}^n} $ as defined in \Cref{def:unpp}.
Besides, we denote by~$\smash{ \dimq{\Omega}_\pm \subset \mathbb{R}^n}$  two disjoint and bounded Lipschitz domains such that $\smash{\dimq{\Omega}_\pm \cap \dimq{\Omega}_\mathrm{f} = \emptyset}$ and $\smash{\partial \dimq{\Omega}_\pm \cap \partial \dimq{\Omega}_\mathrm{f} = \overline{\dimq{\gamma}}_\pm}$.
$\smash{\dimq{\Omega}_+}$ and $\smash{\dimq{\Omega}_-}$ are bulk domains adjacent to the fracture domain~$\smash{\dimq{\Omega}_\mathrm{f}}$. 
Further, we define the total domain 
\begin{align}
\dimq{\Omega} &:=   \dimq{\Omega}_+ \cup \dimq{\Omega}_- \cup \dimq{\Omega}_\mathrm{f} \cup \dimq{\gamma}_+ \cup \dimq{\gamma}_-  ,
\end{align}
which we assume to be a Lipschitz domain.
Moreover, we write 
\begin{subequations}
\begin{align}
\dimq{\rho}_\pm &:= \partial \dimq{\Omega}_\pm \setminus \dimq{\gamma}_\pm  = \dimq{\rho}_{\pm , \mathrm{D}}\; \dot{\cup} \; \dimq{\rho}_{\pm,\mathrm{N}} , \\
\dimq{\rho}_\mathrm{f} &:= \partial \dimq{\Omega}  \setminus \bigl(  \dimq{\rho}_+ \cup  \dimq{\rho}_-  \bigr) = \dimq{\rho}_{\mathrm{f}, \mathrm{D} } \; \dot{\cup} \; \dimq{\rho}_{\mathrm{f}, \mathrm{N}} \subset \partial \dimq{\Omega}_\rmf
\end{align}
\end{subequations}
for the external boundaries of the bulk domains~$\dimq{\Omega}_i \subset \dimq{\Omega}$, $i \in \{ +,-,\mathrm{f}\}$, which are composed of disjoint Dirichlet and Neumann segments~$\smash{\dimq{\rho}_{i,\mathrm{D}}}$ and $\smash{\dimq{\rho}_{i,\mathrm{N}}}$.
The resulting geometric configuration is sketched in \Cref{fig:dimqOmega}.
\begin{figure}
\centering
\includegraphics[scale=1]{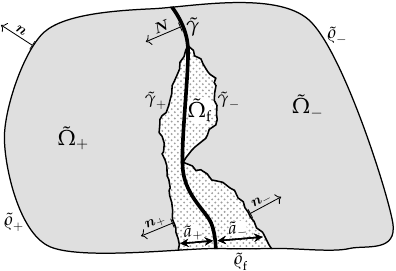}
\caption{Sketch of the geometry in the full-dimensional model~\eqref{eq:darcydim} in dimensional form. }
\label{fig:dimqOmega}
\end{figure}

Now, let~$\smash{\dimq{\matr{K}}_\pm \in L^\infty (\dimq{\Omega}_\pm ; \mathbb{R}^{n\times n} )}$ and~$\smash{\dimq{\matr{K}}_\mathrm{f} \in L^\infty ( \dimq{\Omega}_\mathrm{f} ; \mathbb{R}^{n\times n})}$ be symmetric and uniformly elliptic hydraulic conductivity matrices. 
Further, for $i \in \{ + , -, \mathrm{f} \}$, let $\smash{\dimq{p}_i}$ denote the pressure head in~$\smash{\dimq{\Omega}_i}$.
Then, given the source terms~$\smash{\dimq{q}_\pm \in L^2 (\dimq{\Omega}_\pm )}$ and~$\smash{\dimq{q}_\mathrm{f} \in L^2 (\dimq{\Omega}_\mathrm{f})}$, we consider the following problem of Darcy flow in $\smash{\dimq{\Omega}}$. 

Find $\smash{\dimq{p}_\pm \colon \dimq{\Omega}_\pm \rightarrow \mathbb{R}}$ and $\smash{\dimq{p}_\mathrm{f} \colon \dimq{\Omega}_\mathrm{f} \rightarrow \mathbb{R}}$ such that
\begin{subequations}
\begin{alignat}{3}
-\dimq{\nabla} \cdot \bigl( \dimq{\matr{K}}_i \dimq{\nabla} \dimq{p}_i \bigr) &= \dimq{q}_i \qquad &&\text{in } \dimq{\Omega}_i , \quad &&i \in \{ +, - ,\mathrm{f} \} , \\
\dimq{p}_\pm &= \dimq{p}_\mathrm{f} \qquad &&\text{on } \dimq{\Gamma}_\pm ,  \\
\dimq{\matr{K}}_\pm \dimq{\nabla} \dimq{p}_\pm \cdot \vct{n}_\pm &= \dimq{\matr{K}}_\mathrm{f} \dimq{\nabla} \dimq{p}_\mathrm{f} \cdot \vct{n}_\pm \qquad &&\text{on } \dimq{\Gamma}_\pm  , \\
\dimq{p}_+ &= \dimq{p}_- \qquad &&\text{on } \dimq{\Gamma}_0 ,  \\
\dimq{\matr{K}}_+ \dimq{\nabla} \dimq{p}_+ \cdot \vct{n}_+ &= -\dimq{\matr{K}}_- \dimq{\nabla} \dimq{p}_- \cdot \vct{n}_- \qquad &&\text{on } \dimq{\Gamma}_0 ,  \\
\dimq{p}_i &= 0 \qquad &&\text{on } \dimq{\rho}_{i,\mathrm{D}} , \quad &&i\in \{ +,-, \mathrm{f}\} , \\
\dimq{\matr{K}}_i \dimq{\nabla} \dimq{p}_i \cdot \vct{n} &= 0 \qquad &&\text{on } \dimq{\rho}_{i,\mathrm{N}} ,  \quad &&i \in \{ + , - , \mathrm{f} \}.
\end{alignat}%
\label{eq:darcydim}%
\end{subequations}%
Here, $\vct{n}$ is the outer unit normal on $\partial\dimq{\Omega}$ and $\vct{n}_\pm$ denotes the unit normal on~$\dimq{\gamma}_\pm$ pointing into~$\smash{\dimq{\Omega}_\pm}$. 
We remark that the choice of homogeneous boundary conditions in \cref{eq:darcydim} is only made for the sake of simplicity.
The extension to the inhomogeneous case is straightforward. 

\subsection{Full-Dimensional Model in Non-Dimensional Form} \label{sec:sec22}
We write $\smash{\refq{L}}$~$\unit{[m]}$ and $\smash{\refq{a}}$~$\unit{[m]}$ for the characteristic values of the length and aperture of the fracture given by
\begin{align}
\refq{L} &:= {\lambda_{\partial \dimq{G}} ( \dimq{\Gamma} ) }^\frac{1}{n-1} \quad\enspace\text{and}\quad\enspace \refq{a} := \frac{1}{\lambda_{\partial \dimq{G}} ( \dimq{\Gamma} )} \int_{\dimq{\Gamma }}  \dimq{a} \,\D \lambda_{\partial\dimq{G}} .
\end{align}
Then, we define $\epsilon := {\refq{a}} / {\refq{L}} > 0$ as the characteristic width-to-length ratio of the fracture.  
Subsequently, in \Cref{sec:sec3,sec:sec4}, we will treat $\epsilon$ as scaling parameter and analyze the limit behavior as~$\epsilon \rightarrow 0$.

Next, let $\refq{K}_\mathrm{b}$~$\unit{[m/s]}$ and $\refq{K}_\mathrm{f}$~$\unit{[m/s]}$ be characteristic values of the hydraulic conductivities~$\dimq{\matr{K}}_\pm$ and~$\dimq{\matr{K}}_\mathrm{f}$ in the bulk and fracture. 
In addition, we define the non-dimensional position vector~$\smash{\vct{x} := \dimq{\vct{x}} / \refq{L}}$.
The non-dimensionalization of the position vector~$\vct{x}$ results in a rescaling of spatial derivative operators, e.g., $\nabla = \refq{L} \dimq{\nabla}$.
Besides, it necessitates the definition of non-dimensional domains, which will be denoted without tilde, e.g., $\smash{\Omega := \dimq{\Omega} / \refq{L}}$. 
Moreover, we define
\begin{align}
 \rho_{\mathrm{b}, \mathrm{D}}^\epsilon &:= \rho_{+, \mathrm{D}}^\epsilon \cup \rho_{-, \mathrm{D}}^\epsilon , \quad\enspace
\rho_\mathrm{D}^\epsilon := \rho_{+, \mathrm{D}}^\epsilon \cup \rho_{-, \mathrm{D}}^\epsilon \cup \rho_{\mathrm{f}, \mathrm{D}}^\epsilon .
\end{align}
We require $\smash{\lambda_{\partial\Omega } ( \rho_{\mathrm{b}, \mathrm{D}}^\epsilon ) > 0  }$. 
Besides, we sometimes require the stronger assumption 
\begin{align}
\tag{$\smash{\mathbb{A}}$} \hspace{1.4cm} \lambda_{\partial\Omega } (\rho^\epsilon_{+, \mathrm{D} } ) > 0 \quad\text{and}\quad \lambda_{\partial\Omega } (\rho^\epsilon_{-, \mathrm{D} } ) > 0 , \label{asm:rhoD}
\end{align}
which is subsequently referred to as \enquote{assumption~\eqref{asm:rhoD}}.
Further, we define the non-dimensional quantities
\begin{equation} 
\label{eq:nondim2}
\begin{alignedat}{5}
p_\pm^\epsilon &:= \frac{\dimq{p}_\pm }{ \refq{p}_\mathrm{b} } , 
\quad\enspace &\matr{K}_\pm^\epsilon &:= \frac{ \dimq{\matr{K}}_\pm } {\refq{K}_\mathrm{b}}, 
\quad\enspace &q_\pm^\epsilon &:= \frac{ \dimq{q}_\pm }{\refq{q}_\mathrm{b}},
\quad\enspace &a_\pm &:= \frac{\dimq{a}_\pm}{\refq{a}} , 
\quad\enspace &a &:= \frac{\dimq{a}}{\refq{a}} , \\
p_\mathrm{f}^\epsilon &:= \frac{\dimq{p}_\mathrm{f} }{ \refq{p}_\mathrm{f} } , 
\quad\enspace &\matr{K}_\mathrm{f}^\epsilon &:= \frac{ \dimq{\matr{K}}_\mathrm{f} } {\refq{K}_\mathrm{f}},
\quad\enspace &q_\mathrm{f}^\epsilon &:= \frac{ \dimq{q}_\mathrm{f} }{\refq{q}_\mathrm{f}},
\end{alignedat}
\end{equation}
where $\refq{p}_\mathrm{b} := \refq{L}$, $\refq{p}_\mathrm{f} := \refq{L}$, and $\refq{q}_\mathrm{b} := \refq{K}_\mathrm{b} / \refq{L}$. 
We assume that there exist parameters~$\alpha \in \mathbb{R}$ and $\beta \ge -1$ such that the characteristic fracture quantities~$\refq{K}_\mathrm{f}$ and~$\refq{q}_\mathrm{f}$ scale like
\begin{align}
\label{eq:epsscaling} \refq{K}_\mathrm{f} &= \epsilon^\alpha \refq{K}_\mathrm{b} \quad\enspace\text{and}\quad\enspace \refq{q}_\mathrm{f} = \epsilon^\beta \refq{q}_\mathrm{b} .
\end{align}

The dimensional Darcy system in \cref{eq:darcydim} now corresponds to the following non-dimensional problem. 

Find $\smash{p_\pm^\epsilon \colon \Omega_\pm^\epsilon \rightarrow \mathbb{R}}$ and $\smash{p_\mathrm{f}^\epsilon \colon \Omega_\mathrm{f}^\epsilon \rightarrow \mathbb{R}}$ such that
\begin{subequations}
\begin{alignat}{3}
-\nabla \cdot \bigl( \matr{K}_\pm^\epsilon \nabla p_\pm^\epsilon \bigr) &= q_\pm^\epsilon \qquad &&\text{in } \Omega_\pm^\epsilon , \\
-\nabla \cdot \bigl( \epsilon^\alpha \matr{K}_\mathrm{f}^\epsilon \nabla p_\rmf^\epsilon \bigr) &= \epsilon^\beta q_\mathrm{f}^\epsilon \qquad &&\text{in } \Omega_\mathrm{f}^\epsilon , \\
p_\pm^\epsilon &= p_\mathrm{f}^\epsilon \qquad &&\text{on } \Gamma_\pm^\epsilon , \\
\matr{K}_\pm^\epsilon \nabla p_\pm^\epsilon \cdot \vct{n}_\pm^\epsilon &= \epsilon^\alpha \matr{K}_\mathrm{f}^\epsilon \nabla p_\mathrm{f}^\epsilon \cdot \vct{n}_\pm^\epsilon \qquad &&\text{on } \Gamma_\pm^\epsilon , \\
p_+^\epsilon &= p_-^\epsilon \qquad &&\text{on } \Gamma_0^\epsilon ,  \\
\matr{K}_+^\epsilon \nabla p_+^\epsilon \cdot \vct{n}_+^\epsilon &= -\matr{K}_-^\epsilon \nabla p_-^\epsilon\cdot \vct{n}_-^\epsilon \qquad &&\text{on } \Gamma_0^\epsilon ,  \\
p_i^\epsilon &= 0 \qquad &&\text{on } \rho_{i,\mathrm{D}}^\epsilon , \quad &&i\in \{ +,-, \mathrm{f}\} , \\
\matr{K}_i^\epsilon \nabla p_i^\epsilon \cdot \vct{n} &= 0 \qquad &&\text{on } \rho_{i,\mathrm{N}}^\epsilon ,  \quad &&i \in \{ +, -  , \mathrm{f} \} .
\end{alignat}%
\label{eq:strongdarcyeps}%
\end{subequations}%
In \cref{eq:strongdarcyeps}, $\smash{\vct{n}}$ is the outer unit normal on~$\smash{\partial\Omega}$ and $\smash{\vct{n}_\pm^\epsilon}$ denotes the unit normal on~$\smash{\gamma_\pm^\epsilon}$ pointing into~$\smash{\Omega^\epsilon_\pm}$.
The geometry of the non-dimensional problem~\eqref{eq:strongdarcyeps} with full-dimensional fracture~$\smash{\Omega_\rmf^\epsilon}$, as well as the limit geometry as~$\epsilon \rightarrow 0$, are sketched in \Cref{fig:nondimOmega}. 
\begin{figure}
\centering
\includegraphics[scale=1.049]{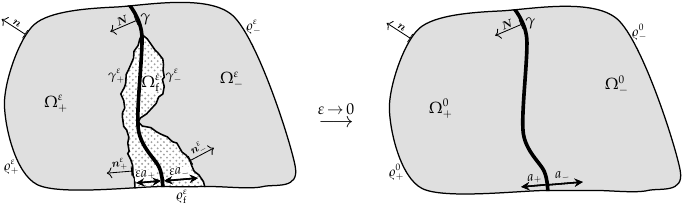}
\caption{Sketch of the geometry in the full-dimensional model~\eqref{eq:strongdarcyeps} in non-dimensional form~(left) and in the limit of vanishing width-to-length ratio~$\epsilon\rightarrow 0$~(right). }
\label{fig:nondimOmega}
\end{figure}

Next, we define the space
\begin{equation}
\begin{multlined}[c][0.875\displaywidth]
\Phi^\epsilon := \Bigl\{ ( \phi_+^\epsilon , \phi_-^\epsilon , \phi_\mathrm{f}^\epsilon ) \in \bigtimes\nolimits_{i\in\{ + , - ,\rmf \}} H_{0,\rho_{i,\mathrm{D}}^\epsilon}^1 (\Omega_i^\epsilon )  \ \Big\vert\\
 \phi_-^\epsilon \bigr\vert_{\Gamma_0^\epsilon} = \phi_+^\epsilon \bigr\vert_{\Gamma_0^\epsilon } ,\enspace \phi_\pm^\epsilon \bigr\vert_{\Gamma_\pm^\epsilon} \!  = \phi_\mathrm{f}^\epsilon \bigr\vert_{\Gamma_\pm^\epsilon}  \Bigr\} \cong H^1_{0, \rho^\epsilon_{ \mathrm{D}}  } (\Omega ) .
\end{multlined}
\end{equation}
Then, a weak formulation of the system in \cref{eq:strongdarcyeps} is given by the following problem.

Find $ \bigl( p_+^\epsilon , p_-^\epsilon , p_\mathrm{f}^\epsilon \bigr) \in \Phi^\epsilon$ such that, for all $\bigl( \phi_+^\epsilon , \phi_-^\epsilon , \phi_\mathrm{f}^\epsilon \bigr) \in \Phi^\epsilon$, 
\begin{equation}
\begin{multlined}[c][0.875\displaywidth]
\sum_{i = \pm } \bigl( \matr{K}_i^\epsilon \nabla p_i^\epsilon , \nabla \phi_i^\epsilon \bigr)_{\vct{L}^2 (\Omega_i^\epsilon  )} + \epsilon^\alpha \bigl(  \matr{K}_\mathrm{f}^\epsilon \nabla p_\mathrm{f}^\epsilon , \nabla \phi_\mathrm{f}^\epsilon \bigr)_{\vct{L}^2 (\Omega_\mathrm{f}^\epsilon  )} \\
= \sum_{i = \pm } \bigl( q_i^\epsilon , \phi_i^\epsilon \bigr)_{L^2 (\Omega_i^\epsilon )} + \epsilon^\beta \bigl(  q_\mathrm{f}^\epsilon , \phi_\mathrm{f}^\epsilon \bigr)_{L^2 (\Omega_\mathrm{f}^\epsilon )} .
\label{eq:weakdarcyeps}%
\end{multlined}
\end{equation}
As a consequence of the Lax-Milgram theorem, the Darcy problem~\eqref{eq:weakdarcyeps} admits a unique solution~$\smash{\bigl( p_+^\epsilon , p_-^\epsilon , p_\mathrm{f}^\epsilon \bigr) \in \Phi^\epsilon }$.

\subsection{Scaling of Domains and Parameters with Respect to~\texorpdfstring{$\epsilon$}{Epsilon}} \label{sec:sec23}
Let $\smash{\kappa_k  \in \mathcal{C}^0 ( \partial G ) } $, $k \in \{ 1, \dots , n-1\}$,
denote the principal curvatures on~$\partial G$ and set 
\begin{align}
\kappa_{\max} := \max_{\vct{p} \in \overline{\gamma} } \max_{ k \in \{ 1, \dots , n-1 \}} \abs{\kappa_k (\vct{p})} .
\end{align}
Then, we have $\smash{\kappa_{\max} < \infty }$ due to the compactness of~$\smash{\overline{\gamma}}$. 
Further, we define
\begin{align}
\hathat{\epsilon } &:= \min\left\{ 1, \frac{1}{3 \kappa_{\max} } , \mathrm{reach}(\partial G ) \right\} > 0 , \quad
\hat{\epsilon} :=  \frac{\hathat{\epsilon}}{2} \Bigl[\, \max_{i=\pm }\bigl\{ \norm{a_i}_{L^\infty (\gamma ) } \bigr\} \Bigr]^{-1} > 0 \label{eq:eps}%
\end{align}%
with $\smash{\operatorname{reach}(\partial G) }$ as defined in \Cref{def:unpp}.
In the following, we require $\epsilon \in (0, \hat{\epsilon}]$, which allows us to use the results from Appendix~\ref{sec:secA1} on the regularity and wellposedness of the orthogonal projection~$\smash{\mathcal{P}^{\partial G}}$ and the signed distance function~$\smash{d_\leftrightarrow^{\partial G}}$. 

The dependence of the non-dimensional domains and quantities on the width-to-length ratio~$\epsilon$ of the fracture is made explicit in the notation. 
For the non-dimensional fracture domain~$\Omega_\mathrm{f}^\epsilon$, the $\epsilon$-dependence is evident.
Specifically, we have 
\begin{align}
\Omega_\mathrm{f}^\epsilon &=  \bigl\{ \vct{p} + s \vct{N} (\vct{p}) \in \mathbb{R}^n \ \big\vert\ \vct{p} \in \gamma , \ -\epsilon a_-(\vct{p}  ) < s < \epsilon a_+ (\vct{p}  ) \bigr\} .
\end{align}
Accordingly, the hydraulic conductivity~$\smash{\matr{K}_\mathrm{f}^\epsilon}$ and the source term~$\smash{q_\mathrm{f}^\epsilon}$ scale like 
\begin{align}
\matr{K}_\mathrm{f}^\epsilon (\vct{x} ) &=  \matr{K}_\mathrm{f}^{\hat{\epsilon}} \bigl( \vct{T}_\mathrm{f}^\epsilon (\vct{x} )  \bigr) , \qquad
q_\mathrm{f}^\epsilon ( \vct{x} ) =  q_\mathrm{f}^{\hat{\epsilon}} \bigl( \vct{T}_\mathrm{f}^\epsilon (\vct{x}) \bigr) ,
\end{align}
where the transformation $\smash{\vct{T}_\mathrm{f}^\epsilon \colon \Omega_\mathrm{f}^\epsilon \rightarrow \Omega_\mathrm{f}^{\hat{\epsilon}} }$ is given by
\begin{align}
\vct{T}_\mathrm{f}^\epsilon (\vct{x} ) = \mathcal{P}^{\partial G} (\vct{x}) -\frac{\hat{\epsilon}}{\epsilon}  d_\leftrightarrow^{\partial G} (\vct{x} ) \vct{N} \bigl( \mathcal{P}^{\partial G} (\vct{x} ) \bigr) .
\end{align}
Further, we define 
\begin{subequations}
\begin{align}
\Omega_{\pm , \mathrm{in}}^\epsilon &:= \Omega^\epsilon_\pm \cap \left\{ \vct{p} \pm s \vct{N}(\vct{p})  \ \middle\vert \ \vct{p}\in \gamma ,\enspace \epsilon a_\pm (\vct{p})  < s < \hathat{\epsilon } \right\}  , \\
\Omega_{\pm , \mathrm{out}} &:= \Omega_\pm^\epsilon \setminus \Omega_{\pm , \mathrm{in}}^\epsilon .
\end{align}
\end{subequations}
Note that only the inner region~$\smash{\Omega_{\pm , \mathrm{in}}^\epsilon}$ of the bulk domain~$\smash{\Omega_\pm^\epsilon}$ depends on the scaling parameter~$\epsilon$, while the outer region~$\smash{\Omega_{\pm ,\mathrm{out}}}$ does not. 
For the inner region~$\smash{\Omega_{\pm , \mathrm{in}}^\epsilon}$, we impose a linear deformation in normal direction with decreasing~$\epsilon$, i.e., the hydraulic conductivity~$\smash{\matr{K}_\pm^\epsilon}$ and the source term~$\smash{q_\pm^\epsilon}$ satisfy
\begin{align}
\matr{K}_\pm^\epsilon (\vct{x} ) &= \matr{K}^0_\pm \bigl( \vct{T}^\epsilon_\pm (\vct{x} ) \bigr) , \qquad 
q_\pm^\epsilon (\vct{x} ) = q^0_\pm \bigl( \vct{T}^\epsilon_\pm (\vct{x} ) \bigr) 
\end{align}
for $\smash{\vct{x} \in \Omega_{\pm , \mathrm{in}}^\epsilon }$, where the transformation $\smash{ \vct{T}^\epsilon_\pm \colon \Omega_{\pm ,\mathrm{in}}^\epsilon \rightarrow \Omega_{\pm, \mathrm{in}}^0 }$ is given by
\begin{subequations}
\begin{align}
\vct{T}^\epsilon_\pm (\vct{x} ) &:= \mathcal{P}^{\partial G} (\vct{x} ) + t^\epsilon_\pm \bigl(  \mathcal{P}^{\partial G} (\vct{x}) , -  d_\leftrightarrow^{\partial G} (\vct{x} )  \bigr) \vct{N} \bigl( \mathcal{P}^{\partial G} (\vct{x} ) \bigr) ,\\
t^\epsilon_\pm (\vct{p}, \lambda ) &:= \frac{\hathat{\epsilon}}{\hathat{\epsilon} - \epsilon a_\pm (\vct{p})} \bigl[ \lambda \mp \epsilon a_\pm (\vct{p}) \bigr] .
\end{align}
\end{subequations}
It is now easy to see that the following lemma holds.
\begin{lemma}
Let $\epsilon \in ( 0 , \hat{\epsilon} ]$. 
Then, $\smash{\vct{T}_\mathrm{f}^\epsilon \colon \Omega_\rmf^\epsilon \rightarrow \Omega_\rmf^{\hat{\epsilon}}}$ is a $\smash{\mathcal{C}^1}$-diffeomorphism. Besides, $\smash{\vct{T}_\pm^\epsilon \colon \Omega_{\pm ,\mathrm{in}}^\epsilon \rightarrow \Omega_{\pm, \mathrm{in}}^0 }$ is bi-Lipschitz.
The inverses
$\smash{\invT_\rmf^\epsilon := ( \vct{T}^\epsilon_\rmf )^{-1}}$ and $\smash{\invT_\pm^\epsilon := ( \vct{T}_\pm^\epsilon )^{-1}}$ 
are given by
\begin{align}
 \invT^\epsilon_\mathrm{f}(\vct{x} ) &=  \mathcal{P}^{\partial G} (\vct{x}) -\frac{\epsilon}{\hat{\epsilon}}  d_\leftrightarrow^{\partial G} (\vct{x} ) \vct{N} \bigl( \mathcal{P}^{\partial G} (\vct{x} ) \bigr) ,
\end{align} 
\vspace*{-1.5em}
\begin{subequations}
\begin{align}
\invT^\epsilon_\pm (\vct{x} ) &=  \mathcal{P}^{\partial G} (\vct{x} ) + \invt^\epsilon_\pm \bigl(  \mathcal{P}^{\partial G} (\vct{x}) ,  -  d_\leftrightarrow^{\partial G} (\vct{x} )  \bigr) \vct{N} \bigl( \mathcal{P}^{\partial G} (\vct{x} ) \bigr) ,\\
\invt^\epsilon_\pm (\vct{p} , \lambda ) &= \frac{\hathat{\epsilon} - \epsilon a_\pm (\vct{p} )}{\hathat{\epsilon} }  \lambda \pm \epsilon a_\pm (\vct{p})  .
\end{align}
\end{subequations}
\end{lemma}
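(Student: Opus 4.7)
The plan is to treat the two maps separately, first establishing regularity from the appendix results on $\mathcal{P}^{\partial G}$, $d_\leftrightarrow^{\partial G}$ and $\vct{N}$, and then verifying the explicit inverse formulas by direct composition. The underlying observation that makes everything work is that all four maps send each point to another point on the \emph{same} normal fibre through $\mathcal{P}^{\partial G}(\vct{x}) \in \gamma$; this means the projection and (scaled) signed distance decouple and the whole calculation reduces to elementary one-dimensional manipulations along normal lines.

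For $\vct{T}_\mathrm{f}^\epsilon$, I would first note that, by the choice $\epsilon \le \hat{\epsilon} \le \hathat{\epsilon}$ together with the definition of $\hathat{\epsilon}$, the closure of $\Omega_\mathrm{f}^\epsilon$ lies in $\operatorname{unpp}(\partial G)$ at distance strictly less than $\mathrm{reach}(\partial G)$ from $\partial G$, so that $\mathcal{P}^{\partial G}$ and $d_\leftrightarrow^{\partial G}$ are $\mathcal{C}^1$ there by the appendix; combined with $\vct{N}\in\mathcal{C}^1(\partial G)$, this shows $\vct{T}_\mathrm{f}^\epsilon\in\mathcal{C}^1(\Omega_\mathrm{f}^\epsilon;\mathbb{R}^n)$. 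Parameterising $\vct{x} = \vct{p} + s\vct{N}(\vct{p})$ with $\vct{p}\in\gamma$ and $s$ in the admissible range, one has $\mathcal{P}^{\partial G}(\vct{x}) = \vct{p}$ and $d_\leftrightarrow^{\partial G}(\vct{x})$ equal (up to sign) to $s$; substituting gives $\vct{T}_\mathrm{f}^\epsilon(\vct{x}) = \vct{p} + \frac{\hat{\epsilon}}{\epsilon} s\vct{N}(\vct{p})$ up to the sign convention, which lands in $\Omega_\mathrm{f}^{\hat{\epsilon}}$ since $s/\epsilon \in (-a_-(\vct{p}),a_+(\vct{p}))$. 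The analogous argument applied to the proposed inverse $\invT_\mathrm{f}^\epsilon$ shows it is $\mathcal{C}^1$ and maps $\Omega_\mathrm{f}^{\hat{\epsilon}}\to\Omega_\mathrm{f}^\epsilon$; composing in either order collapses the scaling factor $\hat{\epsilon}/\epsilon \cdot \epsilon/\hat{\epsilon} = 1$ on the normal component while fixing the projection, yielding the identity.

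For $\vct{T}_\pm^\epsilon$, I would exploit that $a_\pm\in\mathcal{C}^{0,1}(\overline{\gamma})$ and that the denominator in
\[
t^\epsilon_\pm(\vct{p},\lambda) = \frac{\hathat{\epsilon}}{\hathat{\epsilon}-\epsilon a_\pm(\vct{p})}\bigl[\lambda \mp \epsilon a_\pm(\vct{p})\bigr]
\]
is bounded below by $\hathat{\epsilon}/2$, since the choice of $\hat{\epsilon}$ in \eqref{eq:eps} guarantees $\epsilon \norm{a_\pm}_{L^\infty(\gamma)} \le \hathat{\epsilon}/2$. Hence $t^\epsilon_\pm$ is Lipschitz in $(\vct{p},\lambda)$, and since $\mathcal{P}^{\partial G}$ and $d_\leftrightarrow^{\partial G}$ are Lipschitz on $\operatorname{unpp}(\partial G)$ and $\vct{N}$ is Lipschitz on the compact set $\overline{\gamma}$, the composition $\vct{T}_\pm^\epsilon$ is Lipschitz. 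Inverting the affine-in-$\lambda$ map $t^\epsilon_\pm$ yields the stated formula for $\invt^\epsilon_\pm$; the same Lipschitz argument applied to $\invT_\pm^\epsilon$ gives Lipschitz regularity of the inverse, and verifying $t^\epsilon_\pm(\vct{p},\invt^\epsilon_\pm(\vct{p},\cdot)) = \mathrm{id}$ (and vice versa) is a one-line algebraic check. The endpoint conditions $t^\epsilon_\pm(\vct{p},\epsilon a_\pm(\vct{p})) = 0$ and $t^\epsilon_\pm(\vct{p},\hathat{\epsilon}) = \hathat{\epsilon}$ confirm that $\vct{T}_\pm^\epsilon$ bijects $\Omega_{\pm,\mathrm{in}}^\epsilon$ onto $\Omega_{\pm,\mathrm{in}}^0$.

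The only genuinely delicate point—and the one I would flag as the main obstacle—is bookkeeping the sign convention for $d_\leftrightarrow^{\partial G}$ together with checking that each map has the correct codomain ($\Omega_\mathrm{f}^{\hat{\epsilon}}$ and $\Omega_{\pm,\mathrm{in}}^0$ respectively), rather than the analytical content, which is entirely reduced to composition of $\mathcal{C}^1$ (resp.\ Lipschitz) functions once the non-vanishing of $\hathat{\epsilon}-\epsilon a_\pm$ has been secured via the definition of $\hat{\epsilon}$.
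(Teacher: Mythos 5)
Your argument is correct and supplies exactly the proof the paper leaves to the reader (the paper merely asserts the lemma ``is now easy to see'' from the definitions of $\vct{T}^\epsilon_\mathrm{f}$, $\vct{T}^\epsilon_\pm$, $t^\epsilon_\pm$, $\invt^\epsilon_\pm$ and the choice of $\hat\epsilon$ in \cref{eq:eps}). The decomposition into base-point projection and normal coordinate is the intended mechanism, and the regularity statement reduces, as you say, to composition once the denominator $\hathat\epsilon - \epsilon a_\pm$ is bounded below by $\hathat\epsilon / 2$. One small remark on the $\mathcal{C}^1$ regularity of $\vct{T}^\epsilon_\mathrm{f}$: rather than arguing via the composition $\vct{N}\circ\mathcal{P}^{\partial G}$, it is cleaner to use the identity $\vct{N}\bigl(\mathcal{P}^{\partial G}(\vct{x})\bigr) = -\nabla d_\leftrightarrow^{\partial G}(\vct{x})$, so that $\mathcal{C}^1$-regularity of the whole expression drops out of $\partial G\in\mathcal{C}^2$ and the cited results on $\mathcal{P}^{\partial G}$ and $d_\leftrightarrow^{\partial G}$ in Appendix~\ref{sec:secA1}.

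The only concrete slip is in your stated endpoint conditions, and you flagged this yourself as the bookkeeping hazard. Your identities $t^\epsilon_\pm(\vct{p},\epsilon a_\pm(\vct{p})) = 0$ and $t^\epsilon_\pm(\vct{p},\hathat\epsilon) = \hathat\epsilon$ hold as written only for the $+$ sign. For $\vct{x}=\vct{p}-s\vct{N}(\vct{p})\in\Omega^\epsilon_{-,\mathrm{in}}$ one has $\lambda := -d_\leftrightarrow^{\partial G}(\vct{x}) = -s \in (-\hathat\epsilon, -\epsilon a_-(\vct{p}))$, and the relevant identities are $t^\epsilon_-(\vct{p}, -\epsilon a_-(\vct{p})) = 0$ and $t^\epsilon_-(\vct{p}, -\hathat\epsilon) = -\hathat\epsilon$, matching the normal-coordinate range $(-\hathat\epsilon, 0)$ in $\Omega^0_{-,\mathrm{in}}$. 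The bijectivity conclusion is unaffected.
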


\subsection{Local Parameterization} \label{sec:sec24}
Subsequently, we use the definitions and notations from Appendix~\ref{sec:secA3}.
We observe that $\Gamma \subset \partial G $ is open so that $\Gamma \subset \mathbb{R}^n$ is itself a $\mathcal{C}^2$-submanifold of dimension~$n-1$. 
Besides, $\smash{\overline{\Gamma} \subset \mathbb{R}^n}$ is a $\smash{\mathcal{C}^{0,1}}$-submanifold with boundary. 
Now, let $\smash{\{ ( U_j , \vct{\psi}_j , V_j ) \}_{j\in{ J }}}$ be a $\mathcal{C}^2$-atlas for~$\Gamma$ consisting of charts~$\vct{\psi}_j \colon  U_j \rightarrow V_j  $, where $U_j \subset \Gamma $ and $V_j \subset \mathbb{R}^{n-1}$ are open. 
Then, for $j \in  J $ and $\epsilon \in ( 0, \hat{\epsilon} ]$, we write~$\smash{\invpsi_j := \vct{\psi}_j^{-1}}$ for the inverse charts and define
\begin{subequations}
\begin{align}
U_{\mathrm{f},j}^{\epsilon} &:= \bigl\{ \vct{p} + s \vct{N}(\vct{p} ) \ \big\vert\ \vct{p} \in U_j , -\epsilon a_-(\vct{p}) < s < \epsilon a_+ (\vct{p} ) \bigr\} , \\
U_{\pm ,j}^{\epsilon} &:= \bigl\{ \vct{p} \pm s \vct{N}(\vct{p} ) \ \big\vert\ \vct{p} \in U_j , \ \epsilon a_\pm (\vct{p} ) < s < \hathat{\epsilon}  \bigr\} , \\
 \StepSubequations V_{\mathrm{f},j} &:= \bigl\{ ( \vct{\theta}^\prime , \theta_n ) \in \mathbb{R}^n \ \vert \ \vct{\theta}^\prime \in V_j , \ - a_- \bigl( \invpsi_j (\vct{\theta}^\prime )\bigr) < \theta_n < a_+ \bigl( \invpsi_j (\vct{\theta}^\prime )\bigr)  \bigr\} , \\
 V_{\pm , j}&:= \bigl\{ ( \vct{\theta}^\prime , \pm \theta_n ) \in \mathbb{R}^n \ \vert \ \vct{\theta}^\prime \in V_j , \ 0 < \theta_n < \hathat{\epsilon} \bigr\}  .
\end{align}
\end{subequations}
In the following, we will also think of the subdomains~$\smash{\Omega_\mathrm{f}^\epsilon} , \ \smash{\Omega_{\pm , \mathrm{in}}^\epsilon} \subset \mathbb{R}^n$ as $n$-dimensional $\mathcal{C}^{0,1}$-submanifolds. 
With the given atlas for~$\Gamma$, we can construct $\mathcal{C}^{0,1}$-atlases
$\smash{\{ ( U_{\mathrm{f},j}^\epsilon , \vct{\psi}_{\mathrm{f},j}^\epsilon , V_{\mathrm{f} , j } ) \}_{j\in J }}$ for~$\smash{\Omega_\mathrm{f}^\epsilon}$ and $\smash{\{( U_{\pm,j}^\epsilon , \vct{\psi}_{\pm,j}^\epsilon , V_{\pm , j } ) \}_{j\in J }}$ for~$\smash{\Omega_{\pm ,\mathrm{in}}^\epsilon}$.
For $j \in  J $, the charts~$\smash{\vct{\psi}_{\mathrm{f},j}^\epsilon }$ and $\smash{\vct{\psi}_{\pm,j}^\epsilon }$, as well as their inverses~$\smash{\invpsi_{\rmf , j}^\epsilon}$ and~$\smash{\invpsi_{\pm , j}^\epsilon }$, are given by 
\begin{subequations}
\begin{alignat}{2}
\vct{\psi}_{\mathrm{f},j}^\epsilon &\colon U_{\mathrm{f},j}^\epsilon \rightarrow V_{\mathrm{f},j} , \quad &&\vct{x} \mapsto \bigl( \vct{\psi}_j \bigl(\mathcal{P}^{\partial G } (\vct{x})\bigr) , \ - \epsilon^{-1} d_\leftrightarrow^{\partial G} (\vct{x}) \bigr) , \\
\invpsi_{\mathrm{f},j }^\epsilon  &\colon V_{\mathrm{f},j} \rightarrow U_{\mathrm{f},j}^\epsilon , \quad &&(\vct{\theta}^\prime , \theta_n ) \mapsto \invpsi_j (\vct{\theta}^\prime ) + \epsilon \theta_n \vct{N} \bigl( \invpsi_j (\vct{\theta}^\prime) \bigr) , \\
\StepSubequations \vct{\psi}_{\pm,j}^\epsilon &\colon U_{\pm,j}^\epsilon \rightarrow V_{\pm,j} , \quad &&\vct{x} \mapsto \bigl( \vct{\psi}_j \bigl( \mathcal{P}^{\partial G } (\vct{x})\bigr) , \ t_\pm^\epsilon \bigl( \mathcal{P}^{\partial G} (\vct{x} ),  - d_\leftrightarrow^{\partial G} (\vct{x})  \bigr)   \bigr) , \\
\invpsi_{\pm ,j }^\epsilon &\colon V_{\pm,j} \rightarrow U_{\pm,j}^\epsilon , \quad &&(\vct{\theta}^\prime , \theta_n ) \mapsto \invpsi_j (\vct{\theta}^\prime ) +  \invt_\pm^\epsilon \bigl(  \invpsi_j (\vct{\theta}^\prime ) , \theta_n \bigr) \vct{N} \bigl( \invpsi_j (\vct{\theta}^\prime) \bigr) .
\end{alignat}
\end{subequations}
Further, we introduce the product-like $n$-dimensional $\mathcal{C}^2$-submanifold
\begin{align}
\Gamma_a := \bigl\{ (\vct{p} , \theta_n ) \ \vert \ \vct{p} \in \Gamma , \ -a_-(\vct{p} ) < \theta_n < a_+(\vct{p} ) \bigr\} \subset \mathbb{R}^{n} \times \mathbb{R}. 
\end{align}
Then, $\smash{\Gamma_a }$ is the interior of the following $\smash{\mathcal{C}^{0,1}}$-manifolds with boundary.
\begin{subequations}
\begin{align}
\overline{\Gamma}_a^\smallperp &:= \bigl\{ (\vct{p} , \theta_n ) \ \vert \ \vct{p} \in \Gamma , \ -a_-(\vct{p} ) \le \theta_n \le a_+(\vct{p} ) \bigr\} \subset \mathbb{R}^{n} \times \mathbb{R}, \\
\overline{\Gamma}_a^\smallpar &:= \bigl\{ (\vct{p} , \theta_n ) \ \vert \ \vct{p} \in \overline{\Gamma} , \ -a_-(\vct{p} ) < \theta_n < a_+(\vct{p} ) \bigr\} \subset \mathbb{R}^{n} \times \mathbb{R} .
\end{align}
\end{subequations}
Besides, we write
\begin{align}
\rho_{a ,\mathrm{D} } &:= \bigl\{ (\vct{p} , \theta_n ) \in \overline{\Gamma}_a^\smallpar \ \big\vert \ \vct{p} + \hat{\epsilon} \theta_n \vct{N} ( \vct{p} ) \in \rho_{\rmf , \mathrm{D}}^{\hat{\epsilon}} \bigr\} \subset \partial \overline{\Gamma}_a^\smallpar 
\end{align}
for the external boundary segment of $\smash{\overline{\Gamma}_a^\smallpar}$ with Dirichlet conditions.
A $\mathcal{C}^2$-atlas of~$\Gamma_a$ is given by $\smash{\{ ( U_j^a , \vct{\psi}_j^a , V_{\rmf, j } ) \}_{j\in J}}$, where 
\begin{subequations}
\begin{align}
U_j^a &:= \bigl\{ (\vct{p} , \theta_n ) \ \big\vert\ \vct{p} \in U_j , -a_- (\vct{p} ) < \theta_n < a_+(\vct{p} ) \bigr\} , \\
\vct{\psi}_j^a &\colon U_j^a \rightarrow V_{\rmf , j} , \quad (\vct{p} , \theta_n ) \mapsto \bigl( \vct{\psi}_j (\vct{p})  , \theta_n \bigr) .
\end{align}
\end{subequations}
Further, for $f \in H^1 (\Gamma_a )$, we decompose the gradient~$\smash{\nabla_{\! \Gamma_a }} f$ into a tangential and a normal component, i.e., 
\begin{align}
\nabla_{\!\Gamma_a } f &\hphantom{:}= \nablaGamma f + \nablaN f , \quad\enspace
\nablaN f (\vct{p} , \theta_n ) := \partialDer{f (\vct{p}, \theta_n) }{\theta_n } \vct{N} (\vct{p}) .
\end{align}

Next, we write $\smash{\matr{S}^{\vct{\psi}_j} (\vct{\theta}^\prime ) \in \mathbb{R}^{(n-1)\times (n-1)}}$ for the matrix representation of the shape operator~$\smash{\mathcal{S}_{\invpsi_j (\vct{\theta}^\prime )}}$ of~$\Gamma$ at~$\smash{\invpsi_j(\vct{\theta}^\prime ) }$  with respect to the basis
\begin{align}
\biggl\{ \partialDer{\invpsi_j (\vct{\theta}^\prime )}{\theta_1} , \dots , \partialDer{\invpsi_j (\vct{\theta}^\prime )}{\theta_{n-1}}\biggr\} \subset \rmT_{\invpsi_j (\vct{\theta}^\prime )} \Gamma  . \label{eq:tangentbasis}
\end{align}
Details on the shape operator~$\smash{\mathcal{S}_{\invpsi_j (\vct{\theta}^\prime )}}$ can be found in Appendix~\ref{sec:secA2}.
In addition, for $j \in J$ and $\smash{\vct{\theta} = ( \vct{\theta}^\prime , \theta_n ) \in V_{\rmf , j}}$ or~$\smash{V_{\pm , j}}$, we introduce the abbreviations
\begin{subequations}
\begin{align}
\matr{R}_{\rmf , j}^\epsilon (\vct{\theta})  &:= \matr{I}_{n-1} - \epsilon \theta_n \matr{S}^{\vct{\psi}_j} (\vct{\theta}^\prime )  , \\
\matr{R}_{\pm , j }^\epsilon (\vct{\theta} ) &:= \matr{I}_{n-1} - \invt_\pm^\epsilon \bigl(  \invpsi_j (\vct{\theta}^\prime ) , \theta_n \bigr) \matr{S}^{\vct{\psi}_j} (\vct{\theta}^\prime ) , 
\end{align}%
\end{subequations}%
where $\smash{\matr{I}_{n-1} \in \mathbb{R}^{(n-1)\times (n-1)}}$ is the identity matrix. 
Besides, we define the operators%
\begin{subequations}
\begin{align}
\invR^\epsilon_\rmf \bigr\vert_{(\vct{p} , \theta_n ) }  &\colon \rmT_\vct{p}\Gamma \rightarrow \rmT_\vct{p}\Gamma , \\
\invR^\epsilon_\pm \bigr\vert_{\vct{x}} &\colon \rmT_{\mathcal{P}^{\partial G} (\vct{x})} \Gamma \rightarrow  \rmT_{\mathcal{P}^{\partial G} (\vct{x})} \Gamma , \\
\mathcal{R}^0_\pm \bigr\vert_{\vct{x}} &\colon \rmT_{\mathcal{P}^{\partial G} (\vct{x})} \Gamma \rightarrow  \rmT_{\mathcal{P}^{\partial G} (\vct{x})} \Gamma 
\end{align}%
\label{eq:Roperators}%
\end{subequations}%
for all $\smash{(\vct{p} , \theta_n ) \in \Gamma_a }$ and $\smash{\vct{x}\in \Omega_{\pm , \mathrm{in}}^0}$ by
\begin{subequations}
\begin{align}
\invR^\epsilon_\rmf \bigr\vert_{(\vct{p} , \theta_n ) }  &:=  \Bigl( \operatorname{id}_{T_\vct{p}\Gamma }  - \, \epsilon \theta_n \mathcal{S}_\vct{p} \Bigr)^{\! -1} , \\
\invR^\epsilon_\pm \bigr\vert_{\vct{x}} &:= \Bigl( \operatorname{id}_{T_{\mathcal{P}^{\partial G} (\vct{x})}\Gamma }  -\, \invt_\pm^\epsilon \bigl( {\mathcal{P}^{\partial G} (\vct{x})} , -d_\leftrightarrow^{\partial G} (\vct{x}) \bigr) \mathcal{S}_{\mathcal{P}^{\partial G} (\vct{x})} \Bigr)^{\! -1} , \\
\mathcal{R}^0_\pm \bigr\vert_{\vct{x}} &:=  \operatorname{id}_{T_{\mathcal{P}^{\partial G} (\vct{x})}\Gamma }  +\, d_\leftrightarrow^{\partial G} (\vct{x})  \mathcal{S}_{\mathcal{P}^{\partial G} (\vct{x})} .
\end{align}%
\end{subequations}%
The operators in \cref{eq:Roperators} have the following properties.
In particular, we can characterize their behavior as $\epsilon \rightarrow 0$.
\begin{lemma} \label{lem:Roperators}
\begin{enumerate}[topsep=0pt,label=(\roman*),wide]
\item The operators $\smash{\invR_\rmf^\epsilon }$ and $\smash{\invR_\pm^\epsilon }$ exist for all $\epsilon \in (0, \hat{\epsilon} ]$.
\item \label{item:Rselfadj} For all $\smash{(\vct{p}, \theta_n ) \in \Gamma_a}$ and $\smash{\vct{x} \in \Omega^0_{\pm , \mathrm{in}}}$, the operators 
\begin{align*}
\invR_\rmf^\epsilon \bigr\vert_{(\vct{p},\theta_n )} , \enspace \invR_\pm^\epsilon \bigr\vert_\vct{x} , \enspace \text{and} \enspace \mathcal{R}^0_\pm \bigr\vert_\vct{x}
\end{align*}
are self-adjoint for $\smash{\epsilon \in ( 0 , \hat{\epsilon}]}$.
In particular, for $i \in \{ + , - , \rmf \} $, it is 
\begin{align}
\matr{g}\vert^{\vct{\psi}_j } (\vct{\theta}^\prime ) \matr{R}_{i, j }^\epsilon (\vct{\theta} ) &= 
\bigl[ \matr{R}_{i, j }^\epsilon (\vct{\theta} ) \bigr]^\trp  \matr{g}\vert^{\vct{\psi}_j } (\vct{\theta}^\prime ) .
\end{align}
\item For $j\in J$ and $\smash{\epsilon \in ( 0, \hat{\epsilon}]}$, the matrix representations of the operators
\begin{align*}
\invR_\rmf^\epsilon \bigr\vert_{\invpsi_j^a (\vct{\theta } ) }, \enspace \invR_\pm^\epsilon \bigr\vert_{\invpsi_{\pm , j}^0 (\vct{\theta})}, \enspace \text{and}\enspace \mathcal{R}_\pm^0 \bigr\vert_{\invpsi_{\pm , j}^0 (\vct{\theta})}
\end{align*}
with respect to the basis~\eqref{eq:tangentbasis} are given by $\smash{\bigl[\matr{R}_{\rmf , j}^\epsilon (\vct{\theta }) \bigr]^{-1}}$, $\smash{\bigl[\matr{R}_{\pm , j}^\epsilon (\vct{\theta }) \bigr]^{-1}}$, and $\smash{\matr{R}_{\pm , j}^0 (\vct{\theta }) }$. 
\item As $\epsilon \rightarrow 0$, we have
\begin{subequations}
\begin{enumerate}[itemindent=15pt,itemsep=2pt,topsep=4pt]
\item \hphantom{(b)} \hspace{-14pt} $\sup\nolimits_{( \vct{p} ,\theta_n ) \in \Gamma_a } \normsize{\Big}{  \operatorname{id}_{\rmT_{\vct{p}}\Gamma} - \invR^\epsilon_\rmf \bigr\vert_{(\vct{p},\theta_n )} } =  \mathcal{O} (\epsilon ), \hfill \textup{\inlineeqnum}$ 
\item \hphantom{(a)} \hspace{-14pt} $\sup\nolimits_{ \vct{x} \in \Omega^0_{\pm , \mathrm{in}}} \normsize{\Big}{ \operatorname{id}_{\rmT_{\mathcal{P}^{\partial G}(\vct{x})}\Gamma} - \invR^\epsilon_\pm \bigr\vert_{\vct{x}} \circ \mathcal{R}^0_\pm \bigr\vert_{\vct{x}} } =  \mathcal{O} (\epsilon )  \hfill \textup{\inlineeqnum}$
\end{enumerate}
for $\smash{(\vct{p},\theta_n ) \in \Gamma_a }$ and $\smash{\vct{x}\in \Omega^0_{\pm , \mathrm{in}}}$.
\end{subequations}
\end{enumerate}
\end{lemma}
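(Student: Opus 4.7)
I would address the four items in the stated order; (i) is a Neumann-series argument, (ii)--(iii) are bookkeeping, and (iv) is the only place where a genuine cancellation has to be identified.

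For (i), the strategy is to bound $\|\epsilon\theta_n\mathcal{S}_\vct{p}\|$ and $\|\invt_\pm^\epsilon\mathcal{S}_{\mathcal{P}^{\partial G}(\vct{x})}\|$ strictly below one. The bound $|\theta_n|\le\max_{i=\pm}\|a_i\|_{L^\infty(\gamma)}$ on $\Gamma_a$ together with the definition of $\hat{\epsilon}$ in~\eqref{eq:eps} gives $\epsilon|\theta_n|\le\hathat{\epsilon}/2$, whence $\|\epsilon\theta_n\mathcal{S}_\vct{p}\|\le(\hathat{\epsilon}/2)\kappa_{\max}\le 1/6<1$. For $\invt_\pm^\epsilon$ the analogous bound $|\invt_\pm^\epsilon(\vct{p},-d_\leftrightarrow^{\partial G}(\vct{x}))|\le\hathat{\epsilon}$ on $\Omega_{\pm,\mathrm{in}}^0$ follows directly from its explicit formula (the two $\epsilon a_\pm$-contributions are arranged to cancel at the boundary), yielding $\|\invt_\pm^\epsilon\mathcal{S}\|\le\hathat{\epsilon}\kappa_{\max}\le 1/3<1$. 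Invertibility and uniform bounds on the inverses follow by Neumann series. For (ii), I would invoke that $\mathcal{S}_\vct{p}$ is self-adjoint with respect to the first fundamental form on $\rmT_\vct{p}\Gamma$ (Appendix~\ref{sec:secA2}); this property is inherited by any function of $\mathcal{S}_\vct{p}$, in particular by $(\operatorname{id}-c\mathcal{S}_\vct{p})^{-1}$ via $(A^{-1})^*=(A^*)^{-1}$, and by $\operatorname{id}+d\mathcal{S}_\vct{p}$ directly. The coordinate identity $\matr{g}\vert^{\vct{\psi}_j}\matr{R}_{i,j}^\epsilon=[\matr{R}_{i,j}^\epsilon]^\trp\matr{g}\vert^{\vct{\psi}_j}$ is then just its matrix statement relative to the basis~\eqref{eq:tangentbasis}. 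Part (iii) is immediate from definitions: the matrix of $\mathcal{S}_\vct{p}$ in that basis is by construction $\matr{S}^{\vct{\psi}_j}(\vct{\theta}')$, so the matrix of $\operatorname{id}-c\mathcal{S}_\vct{p}$ is $\matr{I}_{n-1}-c\matr{S}^{\vct{\psi}_j}(\vct{\theta}')$ with the appropriate choice of $c$, and inversion finishes the claim.

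The only substantive point is~(iv), which I would handle via the algebraic identity $\operatorname{id}-A^{-1}B=A^{-1}(A-B)$. For (a), taking $A=(\invR_\rmf^\epsilon)^{-1}=\operatorname{id}-\epsilon\theta_n\mathcal{S}_\vct{p}$ and $B=\operatorname{id}$ gives $\operatorname{id}-\invR_\rmf^\epsilon=-\epsilon\theta_n\invR_\rmf^\epsilon\circ\mathcal{S}_\vct{p}$, and the uniform bounds from (i) together with boundedness of $\theta_n$ and $\kappa_{\max}$ on $\Gamma_a$ yield the $\mathcal{O}(\epsilon)$ estimate. For (b), taking $A=(\invR_\pm^\epsilon)^{-1}=\operatorname{id}-\invt_\pm^\epsilon\mathcal{S}$ and $B=\mathcal{R}_\pm^0=\operatorname{id}+d_\leftrightarrow^{\partial G}\mathcal{S}$, I obtain
\begin{equation*}
\operatorname{id}-\invR_\pm^\epsilon\circ\mathcal{R}_\pm^0 = -\invR_\pm^\epsilon\circ\Bigl[\bigl(\invt_\pm^\epsilon(\vct{p},-d)+d\bigr)\mathcal{S}_\vct{p}\Bigr],
\end{equation*}
with $\vct{p}=\mathcal{P}^{\partial G}(\vct{x})$ and $d=d_\leftrightarrow^{\partial G}(\vct{x})$. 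The decisive step, and the main obstacle, lies in the scalar factor: substituting the explicit formula for $\invt_\pm^\epsilon$ gives
\begin{equation*}
\invt_\pm^\epsilon(\vct{p},-d)+d = \epsilon\,a_\pm(\vct{p})\Bigl(\pm 1+\tfrac{d}{\hathat{\epsilon}}\Bigr),
\end{equation*}
which is $\mathcal{O}(\epsilon)$ uniformly on $\Omega_{\pm,\mathrm{in}}^0$ because $|d_\leftrightarrow^{\partial G}|\le\hathat{\epsilon}$ there and $a_\pm$ is bounded. This cancellation reflects that $\mathcal{R}_\pm^0$ has been engineered precisely so as to absorb the linear-in-$d$ contribution of $\invt_\pm^\epsilon(\vct{p},-d)$; the subtle point to recognise is that the factor $\hathat{\epsilon}/(\hathat{\epsilon}-\epsilon a_\pm)$ in $t_\pm^\epsilon$ is exactly what forces the matching $d_\leftrightarrow^{\partial G}\mathcal{S}$-term in the definition of $\mathcal{R}_\pm^0$, after which the $\mathcal{O}(\epsilon)$ bound is immediate from the uniform boundedness of $\invR_\pm^\epsilon$ established in (i).
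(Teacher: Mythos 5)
Your proposal is correct and follows essentially the same approach as the paper: invertibility via a Neumann-series/perturbation bound, self-adjointness inherited from the shape operator, matrix representations by construction, and the same pivotal cancellation $\invt_\pm^\epsilon(\vct{p},-d)+d = \epsilon a_\pm(\vct{p})(\pm 1 + d/\hathat\epsilon) = \mathcal{O}(\epsilon)$ in part (iv-b). The only notable difference is in (i): you bound $|\invt_\pm^\epsilon|\le\hathat\epsilon$ directly from the explicit formula (a slightly sharper observation), whereas the paper first establishes invertibility of $\mathcal{R}_\pm^0$ and then treats $(\invR_\pm^\epsilon)^{-1}$ as a small perturbation of it — this is only a streamlining, not a different route.
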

\begin{proof}
\begin{enumerate}[topsep=0pt,label=(\roman*),wide]
\item Using \cref{eq:eps} and the self-adjointness of~$\smash{\mathcal{S}_\vct{p}}$, we find 
\begin{align*}
\norm{\epsilon \theta_n \mathcal{S}_\vct{p}} \le \epsilon \kappa_\mathrm{max} \max_{i=\pm } \bigl\{ \norm{a_i}_{L^\infty (\gamma ) }  \bigr\} \le \frac{\hathat{\epsilon }}{2} \kappa_{\mathrm{max}} \le \frac{1}{6 } < 1 .
\end{align*}
Thus, the operator $\smash{\invR_\rmf^\epsilon \bigr\vert_{(\vct{p} , \theta_n )}}$ exists for all $\smash{(\vct{p} , \theta_n ) \in \Gamma_a }$ and $\epsilon \in ( 0 , \hat{\epsilon } ]$.

Further, with \cref{eq:eps} and the self-adjointness of~$\smash{\mathcal{S}_\vct{p}}$, we have 
\begin{align*}
\normsize{\big}{d_\leftrightarrow^{\partial G} (\vct{x}) \mathcal{S}_{\mathcal{P}^{\partial G} (\vct{x})}} \le \hathat{\epsilon } \kappa_\mathrm{max} \le \frac{1}{3} < 1
\end{align*}
for all $\smash{\vct{x} \in \Omega_{\pm , \mathrm{in}}^0}$ so that $\smash{\mathcal{R}^0_\pm \bigr\vert_\vct{x}}$ is invertible with 
\begin{align*}
\norm{\bigl[ \mathcal{R}^0_\pm \bigr\vert_\vct{x} \bigr]^{-1} } \le \frac{1}{1 - \normsize{\big}{d_\leftrightarrow^{\partial G} (\vct{x}) \mathcal{S}_{\mathcal{P}^{\partial G} (\vct{x})}}} \le \frac{3}{2} .
\end{align*}
Besides, it is 
\begin{multline*}
\norm{\epsilon a_\pm \bigl( \mathcal{P}^{\partial G} (\vct{x}) \bigr) \bigl[ \hathat{\epsilon}^{-1} d_\leftrightarrow^{\partial G} (\vct{x} ) \pm 1 \bigr] \mathcal{S}_{\mathcal{P}^{\partial G} (\vct{x})}} \\
 \le  \frac{3}{2}\epsilon \norm{a_\pm}_{L^\infty (\gamma ) } \kappa_\mathrm{max}    \le \frac{1}{4} < \norm{\bigl[ \mathcal{R}^0_\pm \bigr\vert_\vct{x} \bigr]^{-1} }^{-1} ,
\end{multline*}
where we have used that $\smash{0 \le \abssize{ }{\hathat{\epsilon}^{-1} d_\leftrightarrow^{\partial G} (\vct{x} ) \pm 1} \le \frac{3}{2}}$.
Consequently, the operator 
\begin{align*}
\invR^\epsilon_\pm \bigr\vert_\vct{x} = \Bigl[ \mathcal{R}^0_\pm \bigr\vert_\vct{x} - \epsilon a_\pm \bigl( \mathcal{P}^{\partial G} (\vct{x}) \bigr) \bigl[ \hathat{\epsilon}^{-1} d_\leftrightarrow^{\partial G} (\vct{x} ) \pm 1 \bigr] \mathcal{S}_{\mathcal{P}^{\partial G} (\vct{x})} \Bigr]^{-1}
\end{align*}
exists for all $\smash{\vct{x} \in \Omega_{\pm , \mathrm{in}}^0}$ and $\smash{\epsilon \in ( 0 , \hat{\epsilon }]}$. 
\item The result follows directly from the self-adjointness of the shape operator.
\item We have 
\begin{align*}
\partialDer{\invpsi_j (\vct{\theta}^\prime )}{\theta_i } &= \matr{D} \invpsi_j (\vct{\theta}^\prime )  \matr{R}_{\rmf, j}^\epsilon (\vct{\theta} ) \bigl[ \matr{R}_{\rmf, j}^\epsilon (\vct{\theta} )  \bigr]^{-1} \vct{e}_i \\
&= \Bigl( \operatorname{id}_{\rmT_{\invpsi_j(\vct{\theta}^\prime ) }\Gamma} - \epsilon \theta_n \mathcal{S}_{\invpsi_j (\vct{\theta}^\prime ) } \Bigr) \Bigl( \matr{D} \invpsi_j (\vct{\theta}^\prime ) \bigl[ \matr{R}_{\rmf, j}^\epsilon (\vct{\theta} )  \bigr]^{-1} \vct{e}_i \Bigr)
\end{align*}
for $i \in \{ 1 , \dots , n-1\}$, where $\smash{\vct{e}_i \in \mathbb{R}^{n-1}}$ denotes the $i$th unit vector, and hence 
\begin{align*}
\matr{D} \invpsi_j (\vct{\theta}^\prime ) \bigl[ \matr{R}_{\rmf, j}^\epsilon (\vct{\theta} )  \bigr]^{-1} \vct{e}_i &= \invR^\epsilon_\rmf \bigr\vert_{\invpsi_j^a (\vct{\theta})} \!\biggl( \partialDer{\invpsi_j (\vct{\theta}^\prime )}{\theta_i }\biggr) .
\end{align*}
The result for $\smash{\invR^\epsilon_\pm }$ follows analogously. The result for $\smash{\mathcal{R}_\pm^0}$ is trivial.
\item[(iv-a)] Using (ii), we find 
\begin{align*}
&\sup_{( \vct{p} ,\theta_n ) \in \Gamma_a } \norm{ \operatorname{id}_{\rmT_\vct{p}\Gamma } -  \invR^\epsilon_\rmf \bigr\vert_{(\vct{p},\theta_n )} } = \sup_{( \vct{p} ,\theta_n ) \in \Gamma_a } \max_{k\in\{ 1, \dots , n-1\} } \abs{1 - \frac{1}{1 - \epsilon \theta_n \kappa_k (\vct{p})}}   =  \mathcal{O} (\epsilon ) . 
\end{align*}
Here, $\smash{\kappa_k \in \mathcal{C}^0 ( \partial G) }$, $k \in \{ 1, \dots , n-1 \}$, denote the principal curvatures on~$\partial G$, which are bounded due to the compactness of~$\partial G$.  
\item[(iv-b)] Using \cref{eq:eps} and the self-adjointness of~$\smash{\mathcal{S}_{\mathcal{P}^{\partial G} (\vct{x})}}$, we find
\begin{align*}
\sup_{ \vct{x} \in \Omega^0_{\pm , \mathrm{in}}} \abssize{\big}{\invt_\pm^\epsilon \bigl( \mathcal{P}^{\partial G} (\vct{x} ) , -d_\leftrightarrow^{\partial G} (\vct{x} ) \bigr) } \normsize{\big}{\mathcal{S}_{\mathcal{P}^{\partial G}(\vct{x})}} \le \Bigl[ \hathat{\epsilon } + \frac{3}{2}\epsilon \norm{a_\pm}_{L^\infty (\gamma ) } \Bigr] \kappa_{\max} \le \frac{7}{12} < 1.
\end{align*}
Thus, we can express $\smash{\invR_\pm^\epsilon \bigr\vert_{\vct{x}}}$ as a Neumann series and obtain
\begin{align*}
&\invR^\epsilon_\pm \bigr\vert_{\vct{x}} \circ \mathcal{R}^0_\pm \bigr\vert_{\vct{x}} =  \biggl[ \sum_{k=0}^\infty \invt_\pm^\epsilon \bigl( \mathcal{P}^{\partial G } (\vct{x} ) , - d_\leftrightarrow^{\partial G} (\vct{x}) \bigr)^k \mathcal{S}^k_{\mathcal{P}^{\partial G} (\vct{x})} \biggr] \circ \mathcal{R}^0_\pm \bigr\vert_{\vct{x}} \\
&\hspace{1cm} = \operatorname{id}_{\rmT_{\mathcal{P}^{\partial G} (\vct{x})}\Gamma } + \bigl[ \invt_\pm^\epsilon \bigl( \mathcal{P}^{\partial G } (\vct{x} ) , - d_\leftrightarrow^{\partial G} (\vct{x}) \bigr) + d_\leftrightarrow^{\partial G} (\vct{x}) \bigr] \invR^\epsilon_\pm \bigr\vert_{\vct{x}} \circ \mathcal{S}_{\mathcal{P}^{\partial G} (\vct{x})} ,
\end{align*}
where $\smash{ \invt_\pm^\epsilon \bigl( \mathcal{P}^{\partial G } (\vct{x} ) , - d_\leftrightarrow^{\partial G} (\vct{x}) \bigr) + d_\leftrightarrow^{\partial G} (\vct{x})   = \mathcal{O}(\epsilon )}$. 
\qedhere
\end{enumerate} 
\end{proof}

Further, for $j \in J$, the Jacobians of the inverse charts~$\smash{\invpsi_{\rmf , j}^\epsilon}$, $\smash{\invpsi_{\pm , j}^\epsilon}$ are given by
\begin{subequations}
\begin{align}
\label{eq:jac_f}\matr{D} \invpsi_{\rmf , j}^\epsilon (\vct{\theta}) &= \bigl[\! \begin{array}{c|c}
\matr{D}\invpsi_j (\vct{\theta}^\prime ) \matr{R}_{\rmf , j}^\epsilon (\vct{\theta} ) & \epsilon \vct{N} \bigl( \invpsi_j (\vct{\theta}^\prime ) \bigr)
\end{array} \! \bigr], \\
\label{eq:jac_pm}\matr{D} \invpsi_{\pm , j}^\epsilon (\vct{\theta} ) &= \matr{A}^\epsilon_{\pm , j} (\vct{\theta }) + \epsilon \vct{N} \bigl(\invpsi_j (\vct{\theta}^\prime ) \bigr) \bigl[\vct{v}_{\pm , j } (\vct{\theta} ) \bigr]^\mathrm{t} ,
\end{align}
\end{subequations}
where 
\begin{subequations}
\begin{align}
\matr{A}^\epsilon_{\pm , j} (\vct{\theta }) &:=\left[ \! \begin{array}{c|c}
\matr{D} \invpsi_j (\vct{\theta}^\prime ) \matr{R}_{\pm , j}^\epsilon (\vct{\theta}) & \vct{N} \bigl( \invpsi_j (\vct{\theta}^\prime ) \bigr) 
\end{array} \! \right] , \\
\vct{v}_{\pm , j } (\vct{\theta} ) &:= \left[\begin{array}{c}
\bigl[ \pm  1 - \hathat{\epsilon}^{-1} \theta_n \bigr] \bigl[ \matr{D} \invpsi_j (\vct{\theta}^\prime ) \bigr]^\trp \nablaGamma a_\pm \bigl( \invpsi_j (\vct{\theta}^\prime ) \bigr)  \\ [3pt]
\hline \\[-10pt]
-\hathat{\epsilon}^{-1} a_\pm \bigl( \invpsi_j (\vct{\theta}^\prime ) \bigr)
\end{array} \right].
\end{align}
\end{subequations}
Consequently, with $\smash{\bigl[ \matr{D} \invpsi_j (\vct{\theta}^\prime ) \bigr]^\trp \vct{N} \bigl(\invpsi_j (\vct{\theta}^\prime ) \bigr) = \vct{0}  }$, we find that the metric tensors of $\smash{\Omega_\rmf^\epsilon }$ and $\smash{\Omega_{\pm , \mathrm{in}}^\epsilon }$ in coordinates of the charts~$\smash{\vct{\psi}_{\rmf ,j}^\epsilon}$ and $\smash{\vct{\psi}_{\pm ,j}^\epsilon}$, $j \in J$, are given by 
\begin{subequations}
\begin{align} \label{eq:metricf}
{\matr{g}}\vert^{\vct{\psi}_{\rmf ,j}^\epsilon } (\vct{\theta }) &= \left[\begin{array}{c|c}
\bigl[ \matr{R}_{\rmf , j}^\epsilon (\vct{\theta}) \bigr]^\mathrm{t} {\matr{g}}\vert^{\vct{\psi}_j } (\vct{\theta}^\prime ) \matr{R}_{\rmf , j}^\epsilon (\vct{\theta}) & \vct{0} \\[3pt]
\hline \\[-9pt]
\vct{0} & \epsilon^2
\end{array}\right] , \\
\begin{split}
{\matr{g}}\vert^{\vct{\psi}_{\pm ,j}^\epsilon } (\vct{\theta }) &= \left[\begin{array}{c|c}
 \bigl[ \matr{R}_{\pm , j}^\epsilon (\vct{\theta }) \bigr]^\mathrm{t} {\matr{g}}\vert^{\vct{\psi}_j } (\vct{\theta}^\prime )\matr{R}_{\pm , j}^\epsilon (\vct{\theta })  & \vct{0}  \\[3pt]
\hline \\[-9pt]
\vct{0} & 1
\end{array}\right] \\
&\hspace{2.25cm} + \bigl( \epsilon \vct{v}_{\pm ,j} (\vct{\theta }) + \vct{e}_n \bigr)\bigl( \epsilon \vct{v}_{\pm ,j} (\vct{\theta }) + \vct{e}_n \bigr)^{\!\trp } - \vct{e}_n \vct{e}_n^\trp  , 
\end{split}  
\end{align}%
\label{eq:metric}%
\end{subequations}%
where $\smash{\vct{e}_n \in \mathbb{R}^n}$ is the $n$th unit vector and $\smash{{\matr{g}}\vert^{\vct{\psi}_{j} }  }$ denotes the metric tensor on~$\Gamma$ in coordinates of the chart~$\smash{\vct{\psi}_j}$. 
Subsequently, for $j \in J$, we will use the notation
\begin{align}
\mu_j &:= \sqrt{\det \matr{g}\vert^{\vct{\psi}_j}} , \quad\enspace 
\mu_{\pm , j}^\epsilon := \sqrt{\det \matr{g}\vert^{\vct{\psi}_{\pm , j}^\epsilon }} , \quad\enspace
\mu_{\rmf , j }^\epsilon := \sqrt{\det \matr{g}\vert^{\vct{\psi}_{\rmf , j}^\epsilon }} .
\end{align}
Moreover, we have the following result.
\begin{lemma} \label{lem:det}
Let $\epsilon \in ( 0, \hat{\epsilon} ]$. Then, for $j\in J$, we have
\begin{subequations}
\begin{align}
\det \matr{R}_{\rmf , j}^\epsilon (\vct{\theta } ) &= 1 + \mathcal{O} (\epsilon ) , \\
\det \matr{R}_{\pm , j}^\epsilon (\vct{\theta } ) &= \bigl[ 1 + \mathcal{O}(\epsilon ) \bigr] \det \matr{R}_{\pm , j}^0 (\vct{\theta } ) \label{eq:det_Rpm}
\end{align}%
\label{eq:det_R}%
\end{subequations}%
as $\epsilon \rightarrow 0$ and, consequently, 
\begin{subequations}
\begin{align}
\label{eq:det_f} \det {\matr{g}}\vert^{\vct{\psi}_{\rmf ,j}^\epsilon } (\vct{\theta }) &= \epsilon^2 \bigl[ 1 + \mathcal{O} (\epsilon ) \bigr] \det {\matr{g}}\vert^{\vct{\psi}_j } (\vct{\theta}^\prime ) , \\
\label{eq:det_pm} \det {\matr{g}}\vert^{\vct{\psi}_{\pm ,j}^\epsilon } (\vct{\theta }) &= \bigl[ 1 + \mathcal{O} (\epsilon ) \bigr] \det {\matr{g}}\vert^{\vct{\psi}_{\pm ,j}^0 } (\vct{\theta }) .
\end{align}%
\label{eq:det_f_pm}%
\end{subequations}%
The prefactors on the right-hand side of \cref{eq:det_f_pm} do not depend on~$j \in J$.
\end{lemma}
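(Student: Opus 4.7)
The plan is to treat the four assertions in two groups: first the determinants of the $\matr{R}$-matrices, then the determinants of the metric tensors, using the former to derive the latter.

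For \cref{eq:det_R}, my strategy is a direct eigenvalue expansion. Since $\matr{S}^{\vct{\psi}_j}(\vct{\theta}')$ is the matrix of the shape operator $\mathcal{S}_{\invpsi_j(\vct{\theta}')}$, its eigenvalues are exactly the principal curvatures $\kappa_1(\vct{p}),\dots,\kappa_{n-1}(\vct{p})$ at $\vct{p}=\invpsi_j(\vct{\theta}')$. I would write
\[
\det \matr{R}_{\rmf,j}^\epsilon(\vct{\theta}) \;=\; \prod_{k=1}^{n-1}\bigl(1 - \epsilon\theta_n \kappa_k(\vct{p})\bigr) \;=\; 1 - \epsilon\theta_n \sum_{k=1}^{n-1}\kappa_k(\vct{p}) + \mathcal{O}(\epsilon^2),
\]
and since $|\theta_n|\le \max_\pm\|a_\pm\|_{L^\infty(\gamma)}$ and $|\kappa_k|\le\kappa_{\max}$, this is $1+\mathcal{O}(\epsilon)$ with an implicit constant depending only on $\kappa_{\max}$ and $\|a_\pm\|_{L^\infty(\gamma)}$, hence independent of~$j$. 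For \cref{eq:det_Rpm}, I write $\matr{R}_{\pm,j}^\epsilon = \matr{R}_{\pm,j}^0 - (\invt_\pm^\epsilon-\invt_\pm^0)\matr{S}^{\vct{\psi}_j}$ and note that
\[
\invt_\pm^\epsilon(\vct{p},\lambda) - \invt_\pm^0(\vct{p},\lambda) \;=\; \epsilon\,a_\pm(\vct{p})\bigl[\pm 1 - \hathat{\epsilon}^{-1}\lambda\bigr] \;=\; \mathcal{O}(\epsilon)
\]
uniformly in $j$. From the proof of \Cref{lem:Roperators}\ref{item:Rselfadj}, $\matr{R}_{\pm,j}^0 = \matr{I}_{n-1}-\theta_n\matr{S}^{\vct{\psi}_j}$ has eigenvalues $1-\theta_n\kappa_k$ with $|\theta_n\kappa_k|\le\hathat{\epsilon}\kappa_{\max}\le 1/3$, so $\matr{R}_{\pm,j}^0$ is invertible with $\|[\matr{R}_{\pm,j}^0]^{-1}\|\le 3/2$. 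Factoring $\matr{R}_{\pm,j}^\epsilon = \matr{R}_{\pm,j}^0\bigl(\matr{I}_{n-1} + \epsilon\matr{B}_j^\epsilon\bigr)$ with $\matr{B}_j^\epsilon$ uniformly bounded in $j$, the determinant multiplies as $\det\matr{R}_{\pm,j}^\epsilon = \det\matr{R}_{\pm,j}^0 \cdot\det(\matr{I}_{n-1}+\epsilon\matr{B}_j^\epsilon)$, and the second factor expands to $1+\mathcal{O}(\epsilon)$ uniformly in $j$.

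For the metric determinants, I exploit the identity $\det \matr{g}|^{\vct{\psi}_\bullet^\epsilon} = \bigl(\det\matr{D}\invpsi_\bullet^\epsilon\bigr)^2$. From \cref{eq:jac_f}, the fracture Jacobian factors as
\[
\matr{D}\invpsi_{\rmf,j}^\epsilon(\vct{\theta}) \;=\; \bigl[\matr{D}\invpsi_j(\vct{\theta}') \mid \vct{N}(\invpsi_j(\vct{\theta}'))\bigr]\begin{pmatrix}\matr{R}_{\rmf,j}^\epsilon(\vct{\theta}) & \vct{0}\\ \vct{0} & \epsilon\end{pmatrix}.
\]
Since $[\matr{D}\invpsi_j]^\trp\vct{N}=\vct{0}$ and $\vct{N}^\trp\vct{N}=1$, the $n\times n$ block matrix in brackets has squared determinant $\det\matr{g}|^{\vct{\psi}_j}$. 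Taking determinants and squaring yields \cref{eq:det_f} directly from \cref{eq:det_R}.

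The $\pm$ case in \cref{eq:det_pm} is the slightly less routine step and will be the main obstacle because of the rank-one perturbation in \cref{eq:jac_pm}. My plan is to write, analogously,
\[
\matr{A}_{\pm,j}^\epsilon(\vct{\theta}) \;=\; \bigl[\matr{D}\invpsi_j(\vct{\theta}') \mid \vct{N}(\invpsi_j(\vct{\theta}'))\bigr]\begin{pmatrix}\matr{R}_{\pm,j}^\epsilon(\vct{\theta}) & \vct{0}\\ \vct{0} & 1\end{pmatrix},
\]
so that $\det\matr{A}_{\pm,j}^\epsilon = \pm\sqrt{\det\matr{g}|^{\vct{\psi}_j}}\,\det\matr{R}_{\pm,j}^\epsilon$, and then apply the matrix determinant lemma to the rank-one update:
\[
\det\matr{D}\invpsi_{\pm,j}^\epsilon \;=\; \det\matr{A}_{\pm,j}^\epsilon\bigl(1 + \epsilon\,\vct{v}_{\pm,j}^\trp\,[\matr{A}_{\pm,j}^\epsilon]^{-1}\vct{N}(\invpsi_j)\bigr).
\]
The invertibility of $\matr{A}_{\pm,j}^\epsilon$ with a uniform bound on its inverse follows from the invertibility of both factors in the displayed product (using $\|[\matr{R}_{\pm,j}^\epsilon]^{-1}\|\le\mathcal{O}(1)$ by \Cref{lem:Roperators}, and the orthogonality structure for the block matrix); moreover $\vct{v}_{\pm,j}$ is uniformly bounded since $\nablaGamma a_\pm\in L^\infty$. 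Squaring gives
\[
\det\matr{g}|^{\vct{\psi}_{\pm,j}^\epsilon} \;=\; \det\matr{g}|^{\vct{\psi}_j}\,(\det\matr{R}_{\pm,j}^\epsilon)^2\,\bigl(1+\mathcal{O}(\epsilon)\bigr).
\]
Evaluating this identity at $\epsilon=0$ yields $\det\matr{g}|^{\vct{\psi}_{\pm,j}^0} = \det\matr{g}|^{\vct{\psi}_j}\,(\det\matr{R}_{\pm,j}^0)^2$; taking the quotient and applying \cref{eq:det_Rpm} establishes \cref{eq:det_pm}. At every step the implicit $\mathcal{O}$-constants depend only on $\kappa_{\max}$, $\hathat{\epsilon}$, $\|a_\pm\|_{W^{1,\infty}(\gamma)}$, and the uniform invertibility bounds from \Cref{lem:Roperators}, so the prefactors are indeed independent of $j\in J$.
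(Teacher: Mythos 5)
Your proof is correct, and it arrives at the same key intermediate identity $\det\matr{g}|^{\vct{\psi}_{\pm,j}^\epsilon}=(1-\epsilon\hathat{\epsilon}^{-1}a_\pm)^2(\det\matr{R}_{\pm,j}^\epsilon)^2\det\matr{g}|^{\vct{\psi}_j}$ as the paper, but by a genuinely different route for the hard case. The paper feeds the metric tensor $\matr{g}|^{\vct{\psi}_{\pm,j}^\epsilon}$ directly into its bespoke determinant lemma (\Cref{lem:detlemma}), which is stated precisely to handle the rank-\emph{two} symmetric perturbation $(\epsilon\vct{v}+\vct{e}_n)(\epsilon\vct{v}+\vct{e}_n)^\trp-\vct{e}_n\vct{e}_n^\trp$ visible in \cref{eq:metric}. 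You instead go one level up the chain: you apply the ordinary rank-one matrix determinant lemma to the Jacobian $\matr{D}\invpsi_{\pm,j}^\epsilon=\matr{A}_{\pm,j}^\epsilon+\epsilon\vct{N}\vct{v}_{\pm,j}^\trp$ from \cref{eq:jac_pm}, compute $[\matr{A}_{\pm,j}^\epsilon]^{-1}\vct{N}=\vct{e}_n$ via the block factorization through $[\matr{D}\invpsi_j\,|\,\vct{N}]$, and then square. This avoids the two-term Sylvester identity entirely and would let the paper dispense with \Cref{lem:detlemma}; the price is that you rely on the identity $\det\matr{g}=(\det\matr{D}\invpsi)^2$, which is available here only because the charts parameterize full-dimensional submanifolds of $\mathbb{R}^n$. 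For \cref{eq:det_R} your eigenvalue product is the same computation the paper performs, and your factorization of $\matr{R}_{\pm,j}^\epsilon$ through $\matr{R}_{\pm,j}^0$ with a uniformly bounded $\matr{B}_j^\epsilon$ is a minor cosmetic variant of the paper's direct substitution $\invt_\pm^\epsilon=\invt_\pm^0+\mathcal{O}(\epsilon)$ into the product. Your tracking of uniformity in $j$ is also adequate.
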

\begin{proof}
Given the principal curvatures~$\smash{\kappa_k \in \mathcal{C}^0 ( \partial G ) }$ on~$\partial G$, $k\in\{ 1, \dots , n-1\}$, which are bounded on the compact submanifold~$\partial G$, we have
\begin{subequations}
\begin{align*}
\det \matr{R}_{\rmf , j}^\epsilon (\vct{\theta} )  &= \prod_{k=1}^{n-1} \bigl[ 1 - \epsilon \theta_n \kappa_k \bigl( \invpsi_j (\vct{\theta}^\prime ) \bigr) \bigr] , \\
\det \matr{R}_{\pm , j}^\epsilon (\vct{\theta}) &= \prod_{k=1}^{n-1} \bigl[ 1- \invt_\pm^\epsilon \bigl(  \invpsi_j (\vct{\theta}^\prime ) , \theta_n \bigr) \kappa_k \bigl( \invpsi_j (\vct{\theta}^\prime ) \bigr)  \bigr] 
\end{align*}
\end{subequations}
for $j\in J$, where $\smash{\invt_\pm^\epsilon \bigl(  \invpsi_j (\vct{\theta}^\prime ) , \theta_n \bigr) = \invt_\pm^0 \bigl(  \invpsi_j (\vct{\theta}^\prime ) , \theta_n \bigr) + \mathcal{O}(\epsilon ) }$.
This yields \cref{eq:det_R} and \cref{eq:det_f}. 
Moreover, with \Cref{lem:detlemma} below and \cref{eq:det_Rpm}, we find 
\begin{multline*}
\det {\matr{g}}\vert^{\vct{\psi}_{\pm ,j}^\epsilon } (\vct{\theta }) = \Bigl( 1 - \epsilon \hathat{\epsilon}^{-1} a_\pm \bigl( \invpsi_j (\vct{\theta}^\prime ) \bigr)  \Bigr)^2 \bigl( \det \matr{R}_{\pm ,j}^\epsilon (\vct{\theta}) \bigr)^2 \det \matr{g}\vert^{\vct{\psi}_j } (\vct{\theta}^\prime )    \\
= \bigl[ 1 + \mathcal{O} (\epsilon ) \bigr] \bigl( \det \matr{R}_{\pm , j}^0 (\vct{\theta})\bigr)^2 \det \matr{g}\vert^{\vct{\psi}_j } (\vct{\theta}^\prime ) = \bigl[ 1 +\mathcal{O}(\epsilon ) \bigr] \det \matr{g}\vert^{\vct{\psi}_{\pm ,j}^0 } (\vct{\theta}) 
. \tag*{\qedhere}
\end{multline*}
\end{proof}
The proof of \Cref{lem:det} makes use of the following determinant lemma, which is a consequence of Sylvester's determinant theorem.
\begin{lemma} \label{lem:detlemma}
Let $\smash{\matr{A} \in \mathbb{R}^{n\times n}}$ be invertible and $\vct{c} , \vct{d}, \vct{e} , \vct{f} \in \mathbb{R}^n$. 
Then, we have
\begin{align}
\det \bigl( \matr{A} + \vct{c} \vct{d}^\trp + \vct{e} \vct{f}^\trp \bigr) &= \det (\matr{A} ) \bigl[ \bigl( \vct{d}^\trp \matr{A}^{-1} \vct{c} + 1 \bigr) \bigl( \vct{f}^\trp \matr{A}^{-1} \vct{e} + 1 \bigr) - \vct{d}^\trp \matr{A}^{-1} \vct{e} \vct{f}^\trp \matr{A}^{-1} \vct{c} \bigr] . 
\end{align}
\end{lemma}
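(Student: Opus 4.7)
The plan is to recognize the expression $\vct{c}\vct{d}^\trp + \vct{e}\vct{f}^\trp$ as a rank-at-most-two update of $\matr{A}$ and apply the generalized matrix determinant lemma, which itself follows from Sylvester's determinant theorem.

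Concretely, I would assemble the rank-two perturbation into a product of two $n \times 2$ matrices. Setting $\matr{U} := [\, \vct{c} \mid \vct{e}\, ] \in \mathbb{R}^{n\times 2}$ and $\matr{V} := [\, \vct{d} \mid \vct{f}\, ] \in \mathbb{R}^{n\times 2}$, one has the identity $\vct{c}\vct{d}^\trp + \vct{e}\vct{f}^\trp = \matr{U}\matr{V}^\trp$. Factoring out $\matr{A}$ from the sum (using that $\matr{A}$ is invertible) gives
\begin{align*}
\det\bigl( \matr{A} + \vct{c}\vct{d}^\trp + \vct{e}\vct{f}^\trp \bigr) = \det(\matr{A}) \det\bigl( \matr{I}_n + \matr{A}^{-1} \matr{U} \matr{V}^\trp \bigr) .
\end{align*}
By Sylvester's determinant theorem, $\det( \matr{I}_n + \matr{A}^{-1} \matr{U} \matr{V}^\trp ) = \det(\matr{I}_2 + \matr{V}^\trp \matr{A}^{-1} \matr{U})$, reducing the computation to the determinant of an explicit $2 \times 2$ matrix.

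I would then compute the entries of $\matr{V}^\trp \matr{A}^{-1} \matr{U}$ directly: its $(1,1)$-entry is $\vct{d}^\trp \matr{A}^{-1}\vct{c}$, the $(1,2)$-entry is $\vct{d}^\trp \matr{A}^{-1}\vct{e}$, the $(2,1)$-entry is $\vct{f}^\trp \matr{A}^{-1}\vct{c}$, and the $(2,2)$-entry is $\vct{f}^\trp \matr{A}^{-1}\vct{e}$. Expanding the $2\times 2$ determinant then yields
\begin{align*}
\det(\matr{I}_2 + \matr{V}^\trp \matr{A}^{-1} \matr{U}) = \bigl( 1 + \vct{d}^\trp \matr{A}^{-1}\vct{c} \bigr)\bigl( 1 + \vct{f}^\trp \matr{A}^{-1}\vct{e}\bigr) - \bigl( \vct{d}^\trp \matr{A}^{-1} \vct{e}\bigr)\bigl( \vct{f}^\trp \matr{A}^{-1}\vct{c}\bigr) ,
\end{align*}
which, after multiplying by $\det(\matr{A})$, gives the claimed formula.

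There is no genuine obstacle here: the only substantive ingredient is Sylvester's determinant theorem $\det(\matr{I}_m + \matr{X}\matr{Y}) = \det(\matr{I}_k + \matr{Y}\matr{X})$ for rectangular $\matr{X},\matr{Y}$ of compatible sizes, which is a standard textbook result (e.g.\ by block row operations on $\left[\begin{smallmatrix} \matr{I} & -\matr{X} \\ \matr{Y} & \matr{I} \end{smallmatrix}\right]$). Everything else is a bookkeeping calculation of a $2\times 2$ determinant, so the proof should be short.
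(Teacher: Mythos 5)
Your proof is correct and follows exactly the route the paper has in mind: the text introduces \Cref{lem:detlemma} as ``a consequence of Sylvester's determinant theorem,'' and your reduction via $\matr{U} = [\,\vct{c} \mid \vct{e}\,]$, $\matr{V} = [\,\vct{d} \mid \vct{f}\,]$ and the identity $\det(\matr{I}_n + \matr{A}^{-1}\matr{U}\matr{V}^\trp) = \det(\matr{I}_2 + \matr{V}^\trp\matr{A}^{-1}\matr{U})$ is precisely that consequence spelled out. The $2\times 2$ determinant expansion is also computed correctly.
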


Next, given a partition of unity~$\smash{\{ \chi_j \}_{j\in J }}$ of~$\Gamma$ that is subordinate to the covering~$\smash{\{ U_j \}_{j\in  J }}$, we define the partitions of unity
\begin{itemize}[wide]
\item $\smash{\{ \chi^\epsilon_{\pm , j }\}_{j\in J }}$ on~$\smash{\Omega_{\pm , \mathrm{in}}^\epsilon}$ subordinate to~$\smash{ \{ U_{\pm ,j}^\epsilon \}_{j\in J }}$ by $\smash{\chi^\epsilon_{\pm , j} := \chi_j \circ \mathcal{P}^{\partial G}\bigr\vert_{\Omega_{\pm,\mathrm{in}}^\epsilon }}$,
\item $\smash{\{ \chi^\epsilon_{\mathrm{f}, j} \}_{j\in J }}$ on~$\smash{\Omega_\mathrm{f}^\epsilon}$ subordinate to~$\smash{ \{ U_{\mathrm{f},j}^\epsilon \}_{j\in J }}$ by $\smash{\chi^\epsilon_{\mathrm{f}, j} := \chi_j \circ\mathcal{P}^{\partial G}\bigr\vert_{\Omega_\rmf^\epsilon }}$,
\item $\smash{\{ \chi^a_j \}_{j\in J }}$ on~$\smash{\Gamma_a}$ subordinate to~$\smash{ \{ U_j^a \}_{j\in J }}$ by $\smash{\chi^a_{ j} (\vct{p} , \theta_n ) := \chi_j (\vct{p}) }$.
\end{itemize}
Further, for $\epsilon \in (0, \hat{\epsilon} ]$, we define the transformations $\smash{\mathcal{Y}^\epsilon_\pm \colon  L^2 ( \Omega_\pm^0 ) \rightarrow  L^2 ( \Omega_\pm^\epsilon )}$ and $\smash{\mathcal{Y}^\epsilon_\mathrm{f} \colon  L^2 ( \Gamma_a ) \rightarrow  L^2 ( \Omega_\mathrm{f}^\epsilon )}$ by
\begin{subequations}
\begin{align}
(\mathcal{Y}^\epsilon_\pm \phi_\pm ) (\vct{x}) &:= \begin{cases}
\phi_\pm \bigl( \vct{T}_\pm^\epsilon (\vct{x} ) \bigr) , &\text{if } \vct{x} \in \Omega_{\pm , \mathrm{in}}^\epsilon , \\
\phi_\pm (\vct{x} ) &\text{if } \vct{x} \in \Omega_{\pm , \mathrm{out}} ,
\end{cases} \\
(\mathcal{Y}^\epsilon_\mathrm{f} \phi_\mathrm{f} )  ( \vct{x} ) &\mapsto \phi_\mathrm{f} \bigl( \mathcal{P}^{\partial G } (\vct{x} ) , -\epsilon^{-1} d_\leftrightarrow^{\partial G } (\vct{x} ) \bigr) .
\end{align}
\end{subequations}
The inverse maps $\smash{\invY_\pm^\epsilon := ( \mathcal{Y}_\pm^\epsilon )^{-1}}$ and $\smash{\invY_\rmf^\epsilon := (\mathcal{Y}^\epsilon_\rmf )^{-1} }$  are given by 
\begin{subequations}
\begin{align}
 \bigl( \invY_\pm^\epsilon   \phi_\pm^\epsilon \bigr) (\vct{x} ) &:= \begin{cases}
\phi_\pm^\epsilon \bigl( \invT_\pm^\epsilon (\vct{x} ) \bigr) , &\text{if } \vct{x} \in \Omega_{\pm , \mathrm{in}}^0 , \\
\phi_\pm ^\epsilon (\vct{x} ) &\text{if } \vct{x} \in \Omega_{\pm , \mathrm{out}} ,
\end{cases} \\
 \bigl( \invY_\rmf^\epsilon \phi_\mathrm{f}^\epsilon \bigr) (\vct{p} , \theta_n ) &:= \phi_\mathrm{f}^\epsilon \bigl( \vct{p} + \epsilon \theta_n \vct{N}(\vct{p} ) \bigr) .
\end{align}
\end{subequations}
Moreover, we define the product map
\begin{align}
\begin{split}
\mathcal{Y}^\epsilon \colon L^2 (\Omega_+^0 ) \times L^2 (\Omega_-^0 ) \times L^2 (\Gamma_a ) &\rightarrow L^2 (\Omega_+^\epsilon ) \times L^2 (\Omega_-^\epsilon ) \times L^2 (\Omega_\rmf^\epsilon ) , \\
(\phi_+ , \phi_- , \phi_\rmf ) &\mapsto \bigl( \mathcal{Y}_+^\epsilon \phi_+ , \mathcal{Y}_-^\epsilon \phi_- , \mathcal{Y}_\rmf^\epsilon \phi_\rmf \bigr) 
\end{split}
\end{align}
and write $\smash{\invY^\epsilon := (\mathcal{Y}^\epsilon)^{-1}}$ for its inverse. 
Then, the following result holds true.  
\begin{lemma} \label{lem:trafo}
There is an $\bar{\epsilon} > 0$ such that the following results hold for all $\epsilon \in ( 0,\bar{\epsilon} ]$.
\begin{enumerate}[label=(\roman*),wide,topsep=0pt]
\item \label{item:trafo1} $\mathcal{Y}_\rmf^\epsilon \colon L^2 (\Gamma_a ) \rightarrow L^2 (\Omega_\rmf^\epsilon )$ defines an isomorphism with
\begin{align}
\norm{\mathcal{Y}_\rmf^\epsilon \phi_\rmf }^2_{L^2 (\Omega_\rmf^\epsilon ) } = \epsilon \bigl[ 1 + \mathcal{O}(\epsilon ) \bigr] \norm{\phi_\rmf}_{L^2 (\Gamma_a ) }^2 
\end{align}
for all $\smash{ \phi_\rmf \in L^2 (\Gamma_a ) }$ as $\epsilon \rightarrow 0$.
\item \label{item:trafo2} $\mathcal{Y}_\pm^\epsilon \colon L^2 (\Omega_\pm^0 ) \rightarrow L^2 (\Omega_\pm^\epsilon )$ defines an isomorphism with
\begin{align}
\norm{\mathcal{Y}_\pm^\epsilon \phi_\pm }_{L^2 (\Omega_\pm^\epsilon ) } =  \bigl[ 1 + \mathcal{O}(\epsilon ) \bigr] \norm{\phi_\pm }_{L^2 (\Omega_\pm^0 ) } 
\end{align}
for all $\smash{ \phi_\pm \in L^2 (\Omega_\pm^0 ) }$ as $\epsilon \rightarrow 0$. 
\item \label{item:trafo3} ${\mathcal{Y}_\rmf^\epsilon \bigr\vert_{H^1 (\Gamma_a )  } \colon H^1 (\Gamma_a ) \rightarrow H^1 (\Omega_\rmf^\epsilon ) }$ is an isomorphism.
In particular, we have  
\begin{align}
\nabla \bigl( \mathcal{Y}_\rmf^\epsilon  \phi_\rmf \bigr) \bigl(\vct{p} + \epsilon \theta_n \vct{N}(\vct{p}) \bigr) &=  \bigl( \invR_\rmf^\epsilon \nablaGamma \phi_\rmf \bigr) (\vct{p} , \theta_n )  + \epsilon^{-1} \nablaN \phi_\rmf (\vct{p} , \theta_n )   
\end{align}
for $\phi_\rmf \in H^1 (\Gamma_a )$ and a.a.~$(\vct{p} ,\theta_n ) \in \Gamma_a$ and hence, as $\epsilon \rightarrow 0$,
\begin{align}
\normsize{\big}{\nabla ( \mathcal{Y}_\rmf^\epsilon \phi_\rmf ) }_{\vct{L}^2 (\Omega_\rmf^\epsilon ) }^2 &= \bigl[ 1 + \mathcal{O}(\epsilon ) \bigr] \bigl( \epsilon \normsize{\big}{\nablaGamma \phi_\rmf }_{\vct{L}^2 (\Gamma_a ) }^2 + \epsilon^{-1} \normsize{\big}{\nablaN \phi_\rmf }_{\vct{L}^2 (\Gamma_a ) }^2 \bigr)  . \label{eq:gradtrafo_f_L2}
\end{align}
\item \label{item:trafo4} ${\mathcal{Y}_\pm^\epsilon \bigr\vert_{H^1 (\Omega_\pm^0 )  } \colon H^1 (\Omega_\pm^0 ) \rightarrow H^1 (\Omega_\pm^\epsilon ) }$ is an isomorphism. In particular, we have 
\begin{align}
\nabla \bigl( \mathcal{Y}_\pm^\epsilon \phi_\pm \bigr) \bigl( \invT_\pm^\epsilon (\vct{x} ) \bigr) &= \matr{M}_\pm^\epsilon (\vct{x} ) \nabla \phi_\pm (\vct{x} )  \label{eq:gradtrafo_pm}
\end{align}
for $\smash{\phi_\pm \in H^1 (\Omega_\pm^0 )}$ and a.a.~$\vct{x} \in \Omega_{\pm , \mathrm{in}}^0$, where 
\begin{align}
\matr{M}_\pm^\epsilon ( \vct{x} ) &:= \bigl[ \matr{D}  \vct{T}_\pm^\epsilon \bigl( \invT^\epsilon_\pm ( \vct{x} ) \bigr)\bigr]^\trp  = \bigl[ \matr{D} \invT_\pm^\epsilon (\vct{x}) \bigr]^{-\trp}.
\end{align} 
Besides, as $\epsilon \rightarrow 0$, it is
\begin{align}
\sup\nolimits_{\vct{x}\in \Omega_{\pm , \mathrm{in}}^0} \normsize{\big}{\matr{M}^\epsilon_\pm (\vct{x}) - \matr{I}_n} = \mathcal{O}(\epsilon ) ,
\end{align}
where $\smash{\matr{I}_n \in \mathbb{R}^{n\times n }}$ denotes the identity matrix. 
Thus, we obtain
\begin{align}
\normsize{\big}{\nabla ( \mathcal{Y}_\pm^\epsilon \phi_\pm ) }_{\vct{L}^2 (\Omega_\pm^\epsilon ) } &= \bigl[ 1 + \mathcal{O}(\epsilon ) \bigr]  \normsize{\big}{\nabla \phi_\pm }_{\vct{L}^2 (\Omega_\pm^0 ) }  . \label{eq:gradtrafo_pm_L2}
\end{align}
\end{enumerate}
\end{lemma}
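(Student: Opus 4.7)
The plan is to reduce each of (i)--(iv) to \Cref{lem:Roperators} and \Cref{lem:det} by working locally via the charts $\vct{\psi}_{\rmf,j}^\epsilon$ and $\vct{\psi}_{\pm,j}^\epsilon$ with the associated partitions of unity. As a preliminary step I would fix $\bar{\epsilon} \in (0,\hat{\epsilon}]$ small enough that the $[1+\mathcal{O}(\epsilon)]$ prefactors appearing in \Cref{lem:det} stay bounded away from zero uniformly in $j\in J$ and that the $\mathcal{O}(\epsilon)$ bound in \Cref{lem:Roperators}(iv-a) is at most $\tfrac{1}{2}$; all error terms are then absorbed into uniform constants and the prefactors $[1+\mathcal{O}(\epsilon)]$ become invertible.

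For (i) and (ii), bijectivity is free from the explicit inverses $\invY_\rmf^\epsilon$ and $\invY_\pm^\epsilon$. For the norm identities, I would decompose $\norm{\mathcal{Y}_\rmf^\epsilon\phi_\rmf}_{L^2(\Omega_\rmf^\epsilon)}^2$ via the partition $\{\chi_{\rmf,j}^\epsilon\}_{j\in J}$, transform each local piece to the parameter domain $V_{\rmf,j}$ via $\invpsi_{\rmf,j}^\epsilon$, pull the volume element $\mu_{\rmf,j}^\epsilon = \epsilon[1+\mathcal{O}(\epsilon)]\mu_j$ out of the integral using \Cref{lem:det}, and reassemble the sum into an integral over $\Gamma_a$. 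The last step uses $\det \matr{g}\vert^{\vct{\psi}_j^a}(\vct{\theta}) = \det \matr{g}\vert^{\vct{\psi}_j}(\vct{\theta}^\prime)$, which is immediate from the product structure of $\Gamma_a \subset \mathbb{R}^n \times \mathbb{R}$ and the form of the chart $\vct{\psi}_j^a$. The analogous argument with $\{\chi_{\pm,j}^\epsilon\}$ and $\mu_{\pm,j}^\epsilon = [1+\mathcal{O}(\epsilon)]\mu_{\pm,j}^0$ gives (ii), noting that the outer region $\Omega_{\pm,\mathrm{out}}$ contributes identically on both sides since $\mathcal{Y}_\pm^\epsilon$ acts as the identity there.

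For (iii), in a chart $\vct{\psi}_{\rmf,j}^\epsilon$ one has $(\mathcal{Y}_\rmf^\epsilon\phi_\rmf) \circ \invpsi_{\rmf,j}^\epsilon = \phi_\rmf \circ \invpsi_j^a$. Differentiating and reading off the columns of \eqref{eq:jac_f}, the last column $\epsilon\vct{N}$ produces the normal contribution $\epsilon^{-1}\nablaN\phi_\rmf$, while the first $n-1$ columns $\matr{D}\invpsi_j\,\matr{R}_{\rmf,j}^\epsilon$ produce the tangential contribution. The self-adjointness in \Cref{lem:Roperators}(ii) is precisely what allows me to present the tangential part as $\invR_\rmf^\epsilon\nablaGamma\phi_\rmf$ (the intrinsic operator acting on the intrinsic gradient) rather than as a chart-dependent twisted expression involving $[\matr{R}_{\rmf,j}^\epsilon]^{-\trp}$. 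Since the tangential and normal parts of the Euclidean gradient are pointwise orthogonal in $\mathbb{R}^n$, their moduli squared add without cross terms; \Cref{lem:Roperators}(iv-a) then gives $\abs{\invR_\rmf^\epsilon\nablaGamma\phi_\rmf}^2 = [1+\mathcal{O}(\epsilon)]\abs{\nablaGamma\phi_\rmf}^2$, and the change of variables from (i) yields \eqref{eq:gradtrafo_f_L2}. The isomorphism claim follows from the equivalence of the resulting $H^1$ norms.

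For (iv), the formula \eqref{eq:gradtrafo_pm} is immediate from the chain rule applied to $\phi_\pm \circ \vct{T}_\pm^\epsilon$ together with $\matr{D}\vct{T}_\pm^\epsilon(\vct{y}) = \bigl[\matr{D}\invT_\pm^\epsilon(\vct{x})\bigr]^{-1}$ at $\vct{y} = \vct{T}_\pm^\epsilon(\vct{x})$. For the uniform estimate $\matr{M}_\pm^\epsilon = \matr{I}_n + \mathcal{O}(\epsilon)$, I would pass to charts: from $\vct{T}_\pm^\epsilon \circ \invpsi_{\pm,j}^\epsilon = \invpsi_{\pm,j}^0$ one gets $\matr{D}\vct{T}_\pm^\epsilon = \matr{D}\invpsi_{\pm,j}^0 \cdot \bigl[\matr{D}\invpsi_{\pm,j}^\epsilon\bigr]^{-1}$, and the Jacobian formula \eqref{eq:jac_pm} together with $\invt_\pm^\epsilon - \invt_\pm^0 = \mathcal{O}(\epsilon)$ (hence $\matr{R}_{\pm,j}^\epsilon - \matr{R}_{\pm,j}^0 = \mathcal{O}(\epsilon)$) and $\epsilon\vct{v}_{\pm,j} = \mathcal{O}(\epsilon)$ shows $\matr{D}\invpsi_{\pm,j}^\epsilon = \matr{D}\invpsi_{\pm,j}^0 + \mathcal{O}(\epsilon)$. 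Uniform invertibility of $\matr{D}\invpsi_{\pm,j}^0$ is ensured by the diffeomorphism property of $\invpsi_{\pm,j}^0$. Combined with the change of variables from (ii), this yields \eqref{eq:gradtrafo_pm_L2}. The main obstacle I anticipate is the chart-independent identification in (iii)---recognising that the tangential part of $\nabla(\mathcal{Y}_\rmf^\epsilon\phi_\rmf)$ is the intrinsic operator $\invR_\rmf^\epsilon$ applied to $\nablaGamma\phi_\rmf$ rather than a chart-dependent twist---while everything else is essentially change-of-variables bookkeeping controlled by \Cref{lem:Roperators} and \Cref{lem:det}.
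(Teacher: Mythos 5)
Your proof plan for parts (i), (ii), and (iii) matches the paper's proof essentially step for step: the same change of variables via the partitions of unity $\{\chi^\epsilon_{\cdot,j}\}$ and the volume factors from \Cref{lem:det}, the same reading of the tangential and normal contributions off the columns of \cref{eq:jac_f}, and the same use of \Cref{lem:Roperators}~\ref{item:Rselfadj} to move $\matr{g}^{-1}|^{\vct{\psi}_j}$ past $[\matr{R}_{\rmf,j}^\epsilon]^{-\trp}$ so that the tangential part becomes the intrinsic operator $\invR_\rmf^\epsilon\nablaGamma\phi_\rmf$; the orthogonal split of the ambient gradient into tangential and normal parts, followed by \Cref{lem:Roperators}(iv-a), is also exactly what the paper does.

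For (iv) the formula \cref{eq:gradtrafo_pm} is the same trivial chain-rule step, but the way you get $\sup_\vct{x}\normsize{\big}{\matr{M}_\pm^\epsilon(\vct{x})-\matr{I}_n} = \mathcal{O}(\epsilon)$ is weaker than what the paper establishes. You want to conclude this from $\matr{D}\invpsi_{\pm,j}^\epsilon = \matr{D}\invpsi_{\pm,j}^0 + \mathcal{O}(\epsilon)$ and ``uniform invertibility of $\matr{D}\invpsi_{\pm,j}^0$''. Two implicit chart-dependent bounds are hiding here: first, $\matr{R}_{\pm,j}^\epsilon - \matr{R}_{\pm,j}^0 = -(\invt_\pm^\epsilon - \invt_\pm^0)\,\matr{S}^{\vct{\psi}_j}$, and while $\invt_\pm^\epsilon - \invt_\pm^0 = \mathcal{O}(\epsilon)$ holds uniformly, the matrix $\matr{S}^{\vct{\psi}_j}$ is the shape operator in a chart-dependent basis and its norm is not automatically bounded (only the intrinsic $\norm{\mathcal{S}_\vct{p}}\le\kappa_{\max}$ is). Second, ``diffeomorphism'' gives pointwise invertibility of $\matr{D}\invpsi_{\pm,j}^0$ but not a uniform bound on $\normsize{}{[\matr{D}\invpsi_{\pm,j}^0]^{-1}}$; the latter can degenerate near chart boundaries, so an additive $\mathcal{O}(\epsilon)$ perturbation of the Jacobian does not without further argument yield $\matr{M}_\pm^\epsilon = \matr{I}_n + \mathcal{O}(\epsilon)$. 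Both gaps can be patched by fixing a finite atlas for the compact $\overline{\Gamma}$ with uniform bi-Lipschitz constants (so that $\matr{D}\invpsi_j$ and its inverse are uniformly bounded on the support of the subordinate partition of unity), but that hypothesis needs to be made explicit. The paper sidesteps the issue entirely by inverting $\matr{D}\invpsi_{\pm,j}^\epsilon$ explicitly via the Sherman--Morrison formula and re-expressing $\matr{M}_\pm^\epsilon - \matr{I}_n$ intrinsically as $[\invR_\pm^\epsilon\circ\mathcal{R}_\pm^0 - \operatorname{id}]\circ\vct{\Pi}$ on the tangential part plus $[\vct{w}_{\pm,j}^\epsilon - \vct{N}][\vct{N}\cdot(\,\cdot\,)]$ on the normal part, each of which is $\mathcal{O}(\epsilon)$ in operator norm directly from \Cref{lem:Roperators}(iv-b) and a closed-form expression for $\vct{w}_{\pm,j}^\epsilon$. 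That computation makes the uniformity genuinely chart-independent; yours is correct in spirit but rests on an unstated atlas regularity assumption.
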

\begin{proof}
\begin{enumerate}[label=(\roman*),wide,topsep=0pt]
\item It is easy to see that $\smash{\mathcal{Y}_\rmf^\epsilon }$ is linear and bijective with inverse~$\smash{\invY_\rmf^\epsilon}$.
Moreover, with \Cref{lem:det}, we have 
\begin{align*}
\norm{\mathcal{Y}_\rmf^\epsilon \phi_\rmf }^2_{L^2 (\Omega_\rmf^\epsilon ) } &= \sum_{j\in J} \int_{V_{\rmf , j}}  \bigl[ \chi_{\rmf , j}^\epsilon ( \mathcal{Y}_\rmf^\epsilon \phi_\rmf  )^2 \bigr] \bigr\vert_{\invpsi_{\rmf, j}^\epsilon (\vct{\theta ) }} \, \mu_{\rmf , j}^\epsilon (\vct{\theta}) \,\D\lambda_n (\vct{\theta }) \\
&= \epsilon \bigl[ 1+ \mathcal{O}(\epsilon ) \bigr] \sum_{j\in J} \int_{V_{\rmf , j}}  \bigl[ \chi_j^a \phi_\rmf^2 \bigr]  \bigr\vert_{ \invpsi_j^a (\vct{\theta} ) }\, \mu_j (\vct{\theta}^\prime )  \,\D \theta_n \D \lambda_{n-1} (\vct{\theta}^\prime ) \\
&= \epsilon \bigl[ 1+ \mathcal{O}(\epsilon ) \bigr] \norm{\phi_\rmf}^2_{L^2 (\Gamma_a ) } .
\end{align*}
\item $\smash{\mathcal{Y}_\pm^\epsilon }$ is clearly linear and bijective with inverse~$\invY_\pm^\epsilon $. 
Further, we have
\begin{align*}
\norm{\mathcal{Y}_\pm^\epsilon \phi_\pm }^2_{L^2 (\Omega_{\pm }^\epsilon )} = \norm{ \phi_\pm }^2_{L^2 (\Omega_{\pm , \mathrm{out}} )} + \norm{\phi_\pm \circ \vct{T}_\pm^\epsilon }^2_{L^2 (\Omega_{\pm , \mathrm{in}}^\epsilon )} .
\end{align*}
Then, by using \Cref{lem:det} and $\smash{\vct{T}_\pm^\epsilon \circ \invpsi_{\pm , j}^\epsilon = \invpsi_{\pm , j}^0 }$, we find
\begin{align*}
\norm{\phi_\pm \circ \vct{T}_\pm^\epsilon }^2_{L^2 (\Omega_{\pm , \mathrm{in}}^\epsilon )} &= 
\sum_{j\in J} \int_{V_{\pm , j}} \bigl[ \chi_{\pm , j}^\epsilon \bigl( \phi_\pm \circ \vct{T}_\pm^\epsilon \bigr)^2 \bigr] \bigr\vert_{\invpsi_{\pm , j}^\epsilon (\vct{\theta }) } \, \mu_{\pm , j}^\epsilon (\vct{\theta}) \,\D\lambda_n (\vct{\theta }) \\
& = \bigl[ 1 + \mathcal{O}(\epsilon ) \bigr] \sum_{j\in J }  \int_{V_{\pm , j}} \bigl[ \chi_{\pm , j}^0  \phi_\pm^2 \bigr] \bigr\vert_{\invpsi_{\pm , j}^0 (\vct{\theta }) }\, \mu_{\pm , j}^0 (\vct{\theta }) \,\D\lambda_n (\vct{\theta }) \\
& = \bigl[ 1+ \mathcal{O}(\epsilon ) \bigr] \norm{\phi_\pm}^2_{L^2 (\Omega_{\pm , \mathrm{in}}^0 ) } .
\end{align*}
\item Let $\smash{\phi_\rmf \in H^1 (\Gamma_a )}$ and $\smash{\phi_\rmf^\epsilon := \mathcal{Y}_\rmf^\epsilon \phi_\rmf}$. 
Then, by using the \cref{eq:jac_f,eq:metricf} and \Cref{lem:Roperators}, we find  
\begin{align*}
&\nabla \phi_\mathrm{f}^\epsilon \bigl( \invpsi_{\mathrm{f},j}^\epsilon (\vct{\theta } ) \bigr) = \matr{D} \invpsi^\epsilon_{\mathrm{f} , j} (\vct{\theta } ) \, \matr{g}^{-1} \vert^{\vct{\psi}_{\rmf , j}^\epsilon } (\vct{\theta } ) \nabla \bigl( \phi_\mathrm{f}^\epsilon \circ \invpsi_{\mathrm{f},j}^\epsilon \bigr) (\vct{\theta } ) \\
&\quad = \matr{D} \invpsi_j (\vct{\theta}^\prime )\, \matr{g}^{-1} \vert^{\vct{\psi}_j} (\vct{\theta}^\prime ) \bigl[\matr{R}_{\rmf , j}^\epsilon (\vct{\theta }) \bigr]^{-\mathrm{t}}\, {\nabla}^\prime \bigl( \phi_\mathrm{f}^\epsilon \circ \invpsi_{\mathrm{f},j}^\epsilon \bigr) (\vct{\theta }) + \epsilon^{-1} \nablaN \bigl( \phi_\mathrm{f}^\epsilon \circ \invpsi_{\mathrm{f},j}^\epsilon \bigr) (\vct{\theta }) \\
&\quad = \matr{D} \invpsi_j (\vct{\theta}^\prime )\,  \bigl[\matr{R}_{\rmf , j}^\epsilon (\vct{\theta }) \bigr]^{-1} \matr{g}^{-1} \vert^{\vct{\psi}_j} (\vct{\theta}^\prime )\, {\nabla}^\prime \bigl( \phi_\mathrm{f}^\epsilon \circ \invpsi_{\mathrm{f},j}^\epsilon \bigr) (\vct{\theta }) + \epsilon^{-1} \nablaN \bigl( \phi_\mathrm{f}^\epsilon \circ \invpsi_{\mathrm{f},j}^\epsilon \bigr) (\vct{\theta }) \\
&\quad = \invR_\rmf^\epsilon \bigr\vert_{\invpsi_j^a (\vct{\theta })} \bigl( \nabla_\Gamma \phi_\rmf  \bigr) \bigl( \invpsi_j^a (\vct{\theta}) \bigr) + \epsilon^{-1} \nablaN \phi_\rmf \bigl( \invpsi_j^a (\vct{\theta}) \bigr) .
\end{align*}
for $j \in J$ and a.a.~$\smash{\vct{\theta} = (\vct{\theta}^\prime , \theta_n ) \in V_{\rmf , j}}$.
Thus, with \Cref{lem:Roperators}, we have 
\begin{align*}
\abssize{\big}{\nabla \phi_\rmf^\epsilon \bigl(\vct{p} + \epsilon \theta_n \vct{N}(\vct{p}) \bigr) }^2 &= \bigl[ 1+ \mathcal{O}(\epsilon ) \bigr]  \abssize{\big}{  \nablaGamma \phi_\rmf  (\vct{p} , \theta_n ) }^2  + \epsilon^{-2} \abssize{\big} {\nablaN \phi_\rmf (\vct{p} , \theta_n )   }^2
\end{align*}
for a.a.~$(\vct{p}, \theta_n ) \in \Gamma_a $ so that \cref{eq:gradtrafo_f_L2} follows with \Cref{lem:det}.
\item 
\Cref{eq:gradtrafo_pm} follows  by applying the chain rule. 
Now, let $\smash{\phi_\pm \in H^1 ( \Omega_\pm^0})$ and $\smash{ \phi_\pm^\epsilon := \mathcal{Y}_\pm^\epsilon \phi_\pm }$.
Then, by using that $\smash{\invpsi_{\pm , j}^\epsilon = \invT_\pm^\epsilon \circ \invpsi_{\pm , j}^0}$, the chain rule yields
\begin{align*}
\matr{M}_\pm^\epsilon \bigl( \invpsi_{\pm , j}^0 (\vct{\theta} ) \bigr) &= \bigl[ \matr{D} \invT_\pm^\epsilon \bigl( \invpsi^0_{\pm , j} ( \vct{\theta} ) \bigr)  \bigr]^{- \trp} = \bigl[ \matr{D} \invpsi_{\pm , j}^\epsilon (\vct{\theta } )  \bigr]^{-\trp }  \bigl[ \matr{D} \invpsi_{\pm , j}^0 (\vct{\theta} ) \bigr]^\trp .
\end{align*}
With \cref{eq:jac_pm} and the Sherman-Morrison formula, we obtain
\begin{align*} 
\bigl[ \matr{D} \invpsi_{\pm , j}^\epsilon (\vct{\theta} ) \bigr]^{-1} &= \biggl( \matr{I}_n - \epsilon \frac{\vct{e}_n \bigl[ \vct{v}_{\pm , j } (\vct{\theta} ) \bigr]^\trp}{1 - \epsilon \hathat{\epsilon }^{-1} a_\pm \bigl(\invpsi_j (\vct{\theta}^\prime ) \bigr) }  \biggr)  \bigl[\matr{A}^\epsilon_{\pm , j} (\vct{\theta })   \bigr]^{-1} ,
\end{align*} 
where $\smash{\matr{I}_n \in \mathbb{R}^{n\times n}}$ is the identity matrix and
\begin{align*}
\bigl[\matr{A}^\epsilon_{\pm , j} (\vct{\theta })   \bigr]^{-1} &=  \left[ 
\begin{array}{c|c}
\!\bigl[ \matr{R}_{\pm , j}^\epsilon (\vct{\theta} ) \bigr]^{-1} \matr{g}^{-1} \vert^{\vct{\psi}_j} (\vct{\theta}^\prime ) & \vct{0} \\[3pt]  
\hline  \\[-10pt]
\vct{0} & 1
\end{array}
 \right] 
 \bigl[ \!
\begin{array}{c|c}
 \matr{D} \invpsi_j (\vct{\theta}^\prime ) & \vct{N} \bigl(\invpsi_j (\vct{\theta}^\prime ) \bigr)
\end{array} \!
 \bigr]^\trp .
\end{align*}
Consequently, with \Cref{lem:Roperators}, we find
\begin{align*}
&\matr{M}_\pm^\epsilon \bigl( \invpsi_{\pm , j}^0 (\vct{\theta} ) \bigr) = \bigl[ \matr{A}_{\pm , j}^\epsilon (\vct{\theta} ) \bigr]^{-\trp } \biggl( \matr{I}_n - \epsilon \frac{  \vct{v}_{\pm , j } (\vct{\theta} ) \,\vct{e}_n^\trp}{1 - \epsilon \hathat{\epsilon }^{-1} a_\pm \bigl(\invpsi_j (\vct{\theta}^\prime ) \bigr) }  \biggr)  \bigl[ \matr{A}_{\pm , j}^0 (\vct{\theta} ) \bigr]^{\trp } \\
&\enspace = \matr{D}\invpsi_j (\vct{\theta}^\prime ) \bigl[  \matr{R}_{\pm , j}^\epsilon (\vct{\theta} ) \bigr]^{-1} \matr{R}^0_{\pm , j} (\vct{\theta } ) \matr{g}^{-1} \vert^{\vct{\psi}_j} (\vct{\theta}^\prime ) \bigl[ \matr{D} \invpsi_j (\vct{\theta}^\prime ) \bigr]^\trp + \vct{w}_{\pm ,j}^\epsilon (\vct{\theta}) \bigl[ \vct{N} \bigl( \invpsi_j ( \vct{\theta}^\prime )\bigr)  \bigr]^\trp   ,
\end{align*}
where we have used the abbreviation
\begin{align*}
\vct{w}_{\pm ,j}^\epsilon (\vct{\theta} ) := \frac{ \hathat{\epsilon } \vct{N} \bigl( \invpsi_j (\vct{\theta}^\prime ) \bigr) - \epsilon \bigl[  \pm \hathat{\epsilon } - \theta_n \bigr] \bigl[ \invR_\pm^\epsilon \vert_{\invpsi_{\pm , j}^0 (\vct{\theta})} \nablaGamma a_\pm \bigl( \invpsi_j (\vct{\theta}^\prime )  \bigr)  \bigr]   }{\hathat{\epsilon } - \epsilon a_\pm \bigl( \invpsi_j (\vct{\theta}^\prime ) \bigr) } .
\end{align*}
Thus, using that 
\begin{align*}
\matr{I}_n = \matr{D}\invpsi_j (\vct{\theta}^\prime ) \matr{g}^{-1} \vert^{\vct{\psi}_j} (\vct{\theta}^\prime ) \bigl[ \matr{D}\invpsi_j(\vct{\theta}^\prime ) \bigr]^\trp + \vct{N}\bigl(\invpsi_j (\vct{\theta}^\prime ) \bigr) \bigl[ \vct{N}\bigl( \invpsi_j (\vct{\theta}^\prime ) \bigr) \bigr]^\trp ,
\end{align*}
we find 
\begin{multline*}
\bigl[ \matr{M}^\epsilon_\pm \bigl( \invpsi_j (\vct{\theta}^\prime )  - \matr{I}_n \bigr)  \bigr] \vct{z} = \biggl[ \invR_\pm^\epsilon \big\vert_{\invpsi_{\pm , j}^0 (\vct{\theta})} \circ \mathcal{R}_\pm^0 \big\vert_{\invpsi_{\pm ,j}^0 (\vct{\theta }) } - \operatorname{id}_{\rmT_{\invpsi_j (\vct{\theta}^\prime )}\Gamma } \biggr] \bigl( \vct{\Pi}\big\vert_{\invpsi_j ( \vct{\theta}^\prime ) } \vct{z}\bigr) \\
+ \Bigl[ \vct{w}^\epsilon_{\pm , j} (\vct{\theta}) - \vct{N}\bigl( \invpsi_j (\vct{\theta}^\prime ) \bigr) \Bigr] \bigl[ \vct{N} \bigl( \invpsi_j (\vct{\theta}^\prime ) \bigr) \cdot \vct{z} \bigr]
\end{multline*}
for all $\smash{\vct{z}\in\mathbb{R}^n }$, where 
\begin{align*}
\vct{\Pi}\vert_{\invpsi_j (\vct{\theta}^\prime ) } \colon \mathbb{R}^n \rightarrow \mathbb{R}^n , \quad \vct{u} \mapsto \matr{D} \invpsi_j (\vct{\theta}^\prime ) \matr{g}^{-1} \vert^{\vct{\psi}_j } (\vct{\theta}^\prime ) \bigl[ \matr{D} \invpsi_j (\vct{\theta}^\prime ) \bigr]^\trp \vct{u}
\end{align*} 
denotes the orthogonal projection onto~$\smash{\rmT_{\invpsi_j (\vct{\theta}^\prime ) } \Gamma }$.
Further, it is easy to see that
\begin{align*}
\sup_{j\in J} \sup_{\vct{\theta} \in V_{\rmf, j}} \abs{\vct{w}^\epsilon_{\pm , j} (\vct{\theta}) - \vct{N}\bigl( \invpsi_j (\vct{\theta}^\prime ) \bigr)} = \mathcal{O}(\epsilon ) .
\end{align*} 
Thus, the result follows with \Cref{lem:Roperators}.
\qedhere 
\end{enumerate}
\end{proof}

\subsection{Full-Dimensional Problem with \texorpdfstring{$\epsilon$}{Epsilon}-Independent Domains} \label{sec:sec25}
Subsequently, we will rewrite the integrals in the weak formulation~\eqref{eq:weakdarcyeps} on~$\smash{\Omega_{\pm }^\epsilon}$ and~$\smash{\Omega_\mathrm{f}^\epsilon}$ as integrals on~$\smash{\Omega_{\pm }^0}$ and $\smash{\Gamma_a}$.
In this way, we avoid working with $\epsilon$-dependent domains and can more easily identify the dominant behavior for vanishing~$\epsilon$. 

Let $\bar{\epsilon} > 0$ be as in \Cref{lem:trafo}. 
Then, for $\epsilon\in (0, \bar{\epsilon}]$, we define the solution and test function space  
\begin{align}
\Phi := \invY^\epsilon \bigl( \Phi^\epsilon \bigr)  \subset  \Bigl[ \bigtimes\nolimits_{i=\pm }H^1 (\Omega^0_i ) \Bigr] \times H^1 (\Gamma_a ). \label{eq:Phi}
\end{align}
As a consequence of \Cref{lem:trafo}, the space $\Phi$ does not depend on~$\epsilon$ (cf.\ \Cref{lem:traceineq}). 
In addition, we define
\begin{subequations}
\begin{align}
\hat{\matr{K}}_\rmf &:= \invY^\epsilon_\rmf \matr{K}_\rmf^\epsilon  = \invY_\rmf^{\hat{\epsilon}} \matr{K}_\rmf^{\hat{\epsilon}} \in L^\infty (\Gamma_a ; \mathbb{R}^{n\times n } ) , \\
\hat{q}_\mathrm{f} &:= \invY^\epsilon_\rmf q_\rmf^\epsilon =\invY_\rmf^{\hat{\epsilon}} q_\rmf^{\hat{\epsilon}} \in L^2 (\Gamma_a ) .
\end{align}
\end{subequations}

Next, for $\epsilon \in (0, \bar{\epsilon} ]$, let $\smash{( \phi_+ ,\phi_- , \phi_\mathrm{f} ) \in \Phi}$ and set 
\begin{align}
\bigl( \phi_+^\epsilon , \phi_-^\epsilon , \phi_\mathrm{f}^\epsilon \bigr) := \mathcal{Y}^\epsilon  (  \phi_+ , \phi_- , \phi_\rmf ) \in \Phi^\epsilon .
\end{align}
Further, given the unique solution $( p_+^\epsilon , p_-^\epsilon , p_\mathrm{f}^\epsilon ) \in \Phi^\epsilon$ of \cref{eq:weakdarcyeps}, we define
\begin{align}
\bigl( \hat{p}_+^\epsilon , \hat{p}_-^\epsilon , \hat{p}_\mathrm{f}^\epsilon \bigr) := \invY^\epsilon \bigl( p^\epsilon_+ ,  p^\epsilon_- , p^\epsilon_\rmf \bigr) \in \Phi .
\end{align}
Then, with \Cref{lem:det}, we have 
\begin{align}
\begin{split}
\int_{\Omega_\mathrm{f}^\epsilon } q_\mathrm{f}^\epsilon \phi_\mathrm{f}^\epsilon \,\D\lambda_n  &= \sum_{j\in J } \int_{V_{\mathrm{f},j}} \bigl[ \chi_{\mathrm{f},j}^\epsilon q_\mathrm{f}^\epsilon \phi_\mathrm{f}^\epsilon \bigr] \bigr\vert_{ \invpsi_{\mathrm{f},j}^\epsilon  (\vct{\theta }) } \mu_{\rmf , j}^\epsilon (\vct{\theta} )  \,\D\lambda_n (\vct{\theta} ) \\
&= \epsilon \bigl[ 1 + \mathcal{O} (\epsilon ) \bigr] \sum_{j\in J } \int_{V_{\rmf, j}}  \bigl[ \chi_j^a \hat{q}_\mathrm{f}  \phi_\mathrm{f} \bigr] \bigr\vert_{  \invpsi_j^a ( \vct{\theta} ) }  \mu_j (\vct{\theta}^\prime )   \,\D\theta_n \,\D\lambda_{n-1} (\vct{\theta}^\prime )  \\
& = \epsilon \bigl[ 1+ \mathcal{O} (\epsilon ) \bigr] \int_{\Gamma_a} \hat{q}_\mathrm{f} \phi_\mathrm{f} \,\D\lambda_{\Gamma_a} .
\end{split}%
\label{eq:rewrite1}%
\end{align}%
In the same way, by additionally using \Cref{lem:trafo}, we obtain
\begin{equation}
\begin{multlined}[c][0.875\displaywidth]
\bigl[ 1+ \mathcal{O}(\epsilon ) \bigr] \int_{\Omega_\mathrm{f}^\epsilon } \matr{K}_\mathrm{f}^\epsilon \nabla p_\mathrm{f}^\epsilon \cdot \nabla \phi_\mathrm{f}^\epsilon \,\D \lambda_n \\
= \epsilon \int_{\Gamma_a} \hat{\matr{K}}_\mathrm{f}  \invR_\rmf^\epsilon \nablaGamma \hat{p}_\mathrm{f}^\epsilon  \cdot  \invR_\rmf^\epsilon  \nablaGamma \phi_\mathrm{f} \,\D\lambda_{\Gamma_a} +  \int_{\Gamma_a} \hat{\matr{K}}_\mathrm{f} \nablaN \hat{p}_\mathrm{f}^\epsilon  \cdot \invR_\rmf^\epsilon \nablaGamma \phi_\mathrm{f} \,\D\lambda_{\Gamma_a} \\
 +  \int_{\Gamma_a} \hat{\matr{K}}_\mathrm{f}  \invR_\rmf^\epsilon \nablaGamma \hat{p}_\mathrm{f}^\epsilon \cdot \nablaN \phi_\mathrm{f}  \,\D\lambda_{\Gamma_a} + \epsilon^{-1} \!\int_{\Gamma_a} \hat{\matr{K}}_\mathrm{f} \nablaN \hat{p}_\mathrm{f}^\epsilon \cdot \nablaN \phi_\mathrm{f}    \,\D\lambda_{\Gamma_a}  .
\label{eq:rewrite2}%
\end{multlined}%
\end{equation}%
Moreover, it is 
\begin{align}
\begin{split}
\int_{\Omega_\pm^\epsilon } q_\pm^\epsilon \phi_\pm^\epsilon \,\D\lambda_n &= \int_{\Omega_{\pm , \mathrm{out}} } q_\pm^0 \phi_\pm \,\D\lambda_n + \int_{\Omega_{\pm , \mathrm{in}}^\epsilon } q_\pm^\epsilon \phi_\pm^\epsilon \,\D\lambda_n \\
& = \bigl[ 1 + \mathcal{O}(\epsilon ) \bigr] \int_{\Omega_\pm^0 } q_\pm^0 \phi_\pm \,\D\lambda_n ,
\end{split}
\end{align}
where we have used that $\smash{ \vct{T}_\pm^\epsilon \circ \invpsi_{\pm , j}^\epsilon = \invpsi_{\pm , j }^0  }$ for $j \in  J $ and hence, with \Cref{lem:det},
\begin{align}
\begin{split}
&\int_{\Omega_{\pm , \mathrm{in}}^\epsilon } q_\mathrm{f}^\epsilon \phi_\mathrm{f}^\epsilon \,\D\lambda_n  = \sum_{j\in J } \int_{V_{\pm ,j}} \bigl[ \chi_{\pm ,j}^\epsilon q_\pm^\epsilon \phi_\pm^\epsilon \bigr]\bigr\vert_{ \invpsi_{\pm ,j}^\epsilon  (\vct{\theta }) }\, \mu_{\pm , j}^\epsilon (\vct{\theta}) \,\D\lambda_n (\vct{\theta} ) \\
&\hspace{1.5cm}= \bigl[ 1 + \mathcal{O}(\epsilon ) \bigr] \sum_{j\in  J  } \int_{V_{\pm , j}} \bigl[ \chi_{\pm ,j}^0 q_\pm^0 \phi_\pm \bigr]\bigr\vert_{ \invpsi_{\pm ,j}^0  (\vct{\theta }) } \, \mu_{\pm , j}^0 (\vct{\theta})  \,\D\lambda_n (\vct{\theta} ) \\
&\hspace{1.5cm}= \bigl[ 1 + \mathcal{O} (\epsilon ) \bigr] \int_{\Omega_{\pm , \mathrm{in}}^0} q_\pm^0 \phi_\pm \, \D\lambda_n .
\end{split}%
\label{eq:rewrite3}%
\end{align}%
Analogously, by additionally using \Cref{lem:trafo}, we obtain
\begin{align}
\begin{split}
&\int_{\Omega^\epsilon_\pm } \matr{K}_\pm^\epsilon \nabla p_\pm^\epsilon \cdot \nabla \phi_\pm^\epsilon \,\D\lambda_n =  \int_{\Omega_{\pm , \mathrm{out}} }\! \matr{K}_\pm^0 \nabla \hat{p}_\pm^\epsilon \cdot \nabla \phi_\pm \,\D\lambda_n \\
&\hspace{3cm} + \bigl[ 1+ \mathcal{O}(\epsilon ) \bigr] \int_{\Omega_{\pm , \mathrm{in}}^0 }\! \matr{K}_\pm^0  \matr{M}_\pm^\epsilon \! \nabla \hat{p}_\pm^\epsilon \cdot  \matr{M}_\pm^\epsilon \!  \nabla \phi_\pm \,\D\lambda_n  .
\end{split}%
\label{eq:rewrite4}%
\end{align}%
Thus, by combining the \cref{eq:rewrite1,eq:rewrite2,eq:rewrite3,eq:rewrite4}, we find that, if $\smash{(p^\epsilon_+ , p^\epsilon_- , p^\epsilon_\rmf ) \in \Phi^\epsilon}$ solves the weak formulation~\eqref{eq:weakdarcyeps}, then $\smash{( \hat{p}^\epsilon_+ , \hat{p}^\epsilon_- , \hat{p}^\epsilon_\rmf ) \in \Phi}$ satisfies
\begin{align}
\begin{split}
&\sum_{i  = \pm ,\rmf } \mathcal{A}_i^\epsilon (\hat{p}_i^\epsilon , \phi_i )  = \bigl[ 1 + \mathcal{O}(\epsilon ) \bigr] \Biggl[ \sum_{i=\pm } \bigl( q_i^0 , \phi_i \bigr)_{L^2 (\Omega_i^0 ) } + \epsilon^{\beta + 1 } \bigl( \hat{q}_\mathrm{f} , \phi_\mathrm{f} \bigr)_{L^2 (\Gamma_a ) } \Biggr] 
\end{split}%
\label{eq:rescaledweakdarcyeps}%
\end{align}%
for all $\smash{\phi = ( \phi_+ , \phi_- , \phi_\mathrm{f} ) \in \Phi}$ as $\epsilon \rightarrow 0$. 
The bilinear forms~$\smash{\mathcal{A}_\pm \colon \Omega_\pm^0 \times \Omega_\pm^0 \rightarrow \mathbb{R}}$ and $\smash{\mathcal{A}_\mathrm{f}^\epsilon \colon \Gamma_a \times \Gamma_a \rightarrow \mathbb{R}}$ are given by
\begin{align}
\begin{split}
&\mathcal{A}_\pm^\epsilon ( \hat{p}_\pm^\epsilon , \phi_\pm ) := \bigl( \matr{K}^0_\pm \nabla \hat{p}^\epsilon_\pm , \nabla \phi_\pm \bigr)_{\vct{L}^2 (\Omega_{\pm , \mathrm{out}} ) } \\
&\hspace{5cm} + \bigl( \matr{K}^0_\pm  \matr{M}^\epsilon_\pm \! \nabla \hat{p}^\epsilon_\pm , \matr{M}^\epsilon_\pm \! \nabla \phi_\pm \bigr)_{\vct{L}^2 (\Omega_{\pm , \mathrm{in}}^0  ) },
\end{split} \\
\begin{split}
&\mathcal{A}_\mathrm{f}^\epsilon (\hat{p}^\epsilon_\rmf , \phi_\mathrm{f} ) := \epsilon^{\alpha + 1} \bigl( \hat{\matr{K}}_\rmf \bigl[ \invR_\rmf^\epsilon \nablaGamma \hat{p}^\epsilon_\rmf + \tfrac{1}{\epsilon }\nablaN \hat{p}_\rmf^\epsilon \bigr] , \bigl[ \invR_\rmf^\epsilon \nablaGamma \phi_\rmf + \tfrac{1}{\epsilon} \nablaN \phi_\rmf \bigr] \bigr)_{\!\vct{L}^2 (\Gamma_a ) } \\
&\hspace{0.85cm} = \epsilon^{\alpha + 1} \bigl( \hat{\matr{K}}_\mathrm{f} \invR^\epsilon_\rmf \nablaGamma \hat{p}^\epsilon_\mathrm{f} ,  \invR^\epsilon_\rmf \nablaGamma \phi_\mathrm{f} \bigr)_{\vct{L}^2 (\Gamma_a  ) }  + \epsilon^{\alpha } \bigl(  \hat{\matr{K}}_\mathrm{f}  \nablaN \hat{p}^\epsilon_\mathrm{f} ,  \invR^\epsilon_\rmf \nablaGamma \phi_\mathrm{f} \bigr)_{\vct{L}^2 (\Gamma_a  ) }  \\
&\hspace{1.45cm} + \epsilon^{\alpha} \bigl(  \hat{\matr{K}}_\mathrm{f}  \invR^\epsilon_\rmf \nablaGamma  \hat{p}^\epsilon_\mathrm{f}  , \nablaN \phi_\mathrm{f}  \bigr)_{\vct{L}^2 (\Gamma_a  ) }  + \epsilon^{\alpha -1} \bigl(\hat{\matr{K}}_\mathrm{f} \nablaN \hat{p}^\epsilon_\mathrm{f} , \nablaN \phi_\mathrm{f}  \bigr)_{\vct{L}^2 (\Gamma_a  ) } .
\end{split}
\end{align}

\section{A-Priori Estimates and Weak Convergence} \label{sec:sec3}
In this section, we obtain a-priori estimates for the solution~$( \hat{p}_+^\epsilon  , \hat{p}_-^\epsilon , \hat{p}_\rmf^\epsilon ) \in \Phi $ of the transformed weak formulation~\eqref{eq:rescaledweakdarcyeps} and, consequently, can identify a weakly convergent subsequence as~$\epsilon \rightarrow 0$. 
The main results are developed in \Cref{sec:sec33}.
They build on trace inequalities from \Cref{sec:sec31} and Poincaré-type inequalities from \Cref{sec:sec32}.

First, we introduce useful functions spaces on~$\Gamma$ and~$\Gamma_a$, as well as averaging operators on~$\smash{\Gamma_a}$.
Given a $\smash{\lambda_\Gamma}$-measurable, non-negative weight function $w \colon \Gamma \rightarrow \mathbb{R}$, we define the weighted Lebesgue space~$L^2_w (\Gamma )$ as the $L^2$-space on~$\Gamma$ with measure~$w\lambda_\Gamma$.
Further, we define the weighted Sobolev space~$H^1_a (\Gamma ) $ as the completion of 
\begin{align}
\bigl\{ f\in \mathcal{C}^{0,1}(\Gamma )   \ \Big\vert\  \norm{f}_{H^1_a (\Gamma ) } < \infty   \bigr\}
\end{align}
with respect to the norm $\norm{f}_{H^1_a (\Gamma ) }^2 :=  \norm{f}_{L^2_a (\Gamma ) }^2 + \norm{\nablaGamma f}^2_{\vct{L}^2_a (\Gamma ) } $.
Besides, we define the space~$\smash{H^1_{\vct{N}} (\Gamma_a ) \subset L^2 (\Gamma_a )}$ as the closure of the space 
\begin{align}
\bigl\{ f \in \mathcal{C}^{0,1} ( \Gamma_a )  \ \Big\vert\  \norm{f}_{H^1_\vct{N} (\Gamma_a) } < \infty \bigr\} 
\end{align}
with respect to the norm $\norm{f}_{H^1_\vct{N} (\Gamma_a) }^2 :=  \norm{f}_{L^2 (\Gamma_a ) }^2 + \norm{\nablaN f}_{\vct{L}^2 (\Gamma_a ) }^2   $.
Moreover, we introduce the averaging operators
\begin{subequations}
\begin{alignat}{2}
\mathfrak{A}_\Gamma &\colon L^2 (\Gamma_a ) \rightarrow L^2_a (\Gamma ), \quad &&( \mathfrak{A}_\Gamma f ) ( \vct{p} ) :=  \frac{1}{a( \vct{p}  )} \int_{-a_-( \vct{p} ) }^{a_+( \vct{p}  )} f (\vct{p} , \theta_n ) \,\D \theta_n , \\
\mathfrak{A}_\mathrm{f} &\colon L^2 (\Gamma_a ) \rightarrow \mathbb{R}, \quad &&\mathfrak{A}_\mathrm{f} f := \frac{1}{\int_\Gamma a  \, \D\lambda_\Gamma } \int_{\Gamma_a }  f \,\D\lambda_{\Gamma_a} .
\end{alignat}
\end{subequations}

\subsection{Trace Inequalities} \label{sec:sec31}
We begin by introducing a trace operator~$\smash{\mathfrak{T}_\pm}$ on~$\smash{H^1_\vct{N} (\Gamma_a ) }$ for the lateral boundaries of~$\smash{\overline{\Gamma}_a^\smallperp}$.
\begin{lemma} \label{lem:Tpm}
There exists a uniquely defined bounded linear operator 
\begin{align}
\mathfrak{T}_\pm \colon H^1_\vct{N} (\Gamma_a ) \rightarrow L^2_a (\Gamma ) 
\end{align}
such that, for all $\smash{f \in \mathcal{C}^{0,1} (\overline{\Gamma}_a^\smallperp )} $, we have
\begin{align}
\bigl( \mathfrak{T}_\pm f \bigr) (\vct{p}  ) &= f \bigl( \vct{p} , \pm a_\pm (\vct{p} ) \bigr) .
\end{align}
\end{lemma}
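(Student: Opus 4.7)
The plan is to define $\mathfrak{T}_\pm$ on the dense Lipschitz subspace used in the definition of $H^1_\vct{N}(\Gamma_a)$, establish a bounded estimate into $L^2_a(\Gamma)$, and extend by continuity. I focus on $\mathfrak{T}_+$; the case $\mathfrak{T}_-$ is analogous. For $f \in \mathcal{C}^{0,1}(\overline{\Gamma}_a^\smallperp)$ with finite $H^1_\vct{N}$-norm, the fiberwise restrictions $\theta_n \mapsto f(\vct{p}, \theta_n)$ are Lipschitz on $(-a_-(\vct{p}), a_+(\vct{p}))$, so the pointwise prescription $(\mathfrak{T}_+ f)(\vct{p}) := f(\vct{p}, a_+(\vct{p}))$ makes sense for every $\vct{p} \in \Gamma$.

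The key pointwise identity comes from the fundamental theorem of calculus applied to the one-dimensional auxiliary function $\theta_n \mapsto (\theta_n + a_-(\vct{p})) f(\vct{p}, \theta_n)^2$ on $[-a_-(\vct{p}), a_+(\vct{p})]$:
\begin{equation*}
a(\vct{p}) f(\vct{p}, a_+(\vct{p}))^2 = \int_{-a_-(\vct{p})}^{a_+(\vct{p})} \Bigl[ f^2 + 2(\theta_n + a_-(\vct{p})) f \, \partial_{\theta_n}\! f \Bigr] \D \theta_n .
\end{equation*}
Young's inequality on the cross term together with the pointwise bound $0 \le \theta_n + a_-(\vct{p}) \le a(\vct{p})$ then yields
\begin{equation*}
a(\vct{p}) f(\vct{p}, a_+(\vct{p}))^2 \le 2 \int_{-a_-(\vct{p})}^{a_+(\vct{p})} f^2 \D \theta_n + a(\vct{p})^2 \int_{-a_-(\vct{p})}^{a_+(\vct{p})} (\partial_{\theta_n}\! f)^2 \D \theta_n .
\end{equation*}

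Integrating over $\vct{p} \in \Gamma$ against $\lambda_\Gamma$ and applying Fubini—which is legitimate because the atlas $\{(U_j^a, \vct{\psi}_j^a, V_{\rmf, j})\}_{j \in J}$ renders $\lambda_{\Gamma_a}$ the product of $\D\theta_n$ and $\lambda_\Gamma$, since the metric tensor in chart $\vct{\psi}_j^a$ is block diagonal with a trivial $\theta_n$-block—gives
\begin{equation*}
\norm{\mathfrak{T}_+ f}_{L^2_a(\Gamma)}^2 \le 2 \norm{f}_{L^2(\Gamma_a)}^2 + \norm{a}_{L^\infty(\Gamma)}^2 \norm{\nablaN f}_{\vct{L}^2(\Gamma_a)}^2 .
\end{equation*}
Since $a \in \mathcal{C}^{0,1}(\overline{\gamma})$ is bounded on the compact set $\overline{\gamma}$, this shows $\mathfrak{T}_+$ is a bounded linear operator on the dense subspace, and the unique bounded linear extension defines $\mathfrak{T}_+$ on all of $H^1_\vct{N}(\Gamma_a)$.

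The main subtlety is the correct weighting: the factor $a(\vct{p})$ naturally appearing on the left of the pointwise identity is absorbed exactly by the weight of the target space $L^2_a(\Gamma)$, which avoids degeneracy at points where $a(\vct{p})$ is small. An unweighted trace estimate would force the factor $1/a(\vct{p})$ to appear pointwise and would blow up near $\partial\Gamma$ where the aperture vanishes; using $L^2_a(\Gamma)$ on the target side is what makes the construction robust.
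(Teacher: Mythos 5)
Your proof is correct and follows essentially the same strategy as the paper: work on the dense Lipschitz subspace, derive an $L^2_a(\Gamma)$ trace estimate via the fundamental theorem of calculus in $\theta_n$, and extend by density. The only tactical differences are that you apply the fundamental theorem to the product $(\theta_n + a_-(\vct{p}))f^2$ (which delivers the aperture weight directly, saving one averaging step) and close with Young's inequality rather than Hölder's, yielding a sum of squares instead of a mixed term — both minor variants of the paper's argument.
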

\begin{proof}
W.l.o.g., we consider $\smash{\mathfrak{T}_+}$. The operator $\smash{\mathfrak{T}_-}$ can be treated analogously. 

Let $\smash{ f \in \mathcal{C}^{0,1} (\overline{\Gamma}_a^\smallperp ) }$.
Then, for all $\smash{( \vct{p} , \theta_n ) \in \Gamma_a }$, we have 
\begin{align*}
f^2 \bigl( \vct{p} , a_+(\vct{p} ) \bigr) &= f^2 (\vct{p} , \theta_n  ) + 2 \int_{\theta_n}^{a_+ (\vct{p} ) }\! f(\vct{p} , \bar{\theta}_n ) \, \partial_{\theta_n } f(\vct{p} , \bar{\theta_n} ) \,\D \bar{\theta}_n  .
\end{align*}
An integration over~$\smash{\Gamma_a }$ yields 
\begin{align*}
\int_\Gamma a f^2 \bigl( \cdot , a_+ (\cdot ) \bigr) \,\D \lambda_\Gamma &\le \int_{\Gamma_a} f^2 \, \D \lambda_{\Gamma_a }   + 2 \int_{\Gamma_a } a \abssize{ }{f} \abssize{ }{ \partial_{\theta_n } f } \,\D\lambda_{\Gamma_a } .
\end{align*}
By applying Hölder's inequality, we obtain
\begin{align*}
\norm { \mathfrak{T}_+ f} ^2_{L^2_a (\Gamma ) }  &\le \norm{f}^2_{L^2 (\Gamma_a ) } + 2 \norm{a}_{L^\infty (\Gamma ) } \norm{f}_{L^2 (\Gamma_a ) } \norm{\nablaN f}_{\vct{L}^2 (\Gamma_a ) } \lesssim \norm{f}^2_{H^1_\vct{N} (\Gamma_a  ) } .
\end{align*}
The result now follows from the fact that $\smash{\mathcal{C}^{0,1} (\overline{ \Gamma}_a^\smallperp ) }$ is dense in $\smash{H^1_\vct{N}(\Gamma_a ) }$.
\end{proof}

Besides, we obtain the following characterization of the space~$\Phi$.
\begin{lemma} \label{lem:traceineq} 
We have 
\begin{equation}
\begin{multlined}[c][0.875\displaywidth]
\Phi = \Bigl\{ ( \phi_+ , \phi_- ,\phi_\rmf ) \in   \Bigl[ \bigtimes\nolimits_{i = \pm } H^1_{0, \rho_{i, \mathrm{D}}^0} (\Omega_i^0 ) \Bigr] \times  H^1_{0, \rho_{a , \mathrm{D}}} (\Gamma_a ) \ \Big\vert\ \\
\phi_+\bigr\vert_{\Gamma_0^0} = \phi_-\bigr\vert_{\Gamma_0^0} , \ \mathfrak{T}_\pm \phi_\rmf = \phi_\pm \big\vert_\Gamma  \Bigr\} . \label{eq:Phi2}
\end{multlined}
\end{equation}
In particular, for $\smash{( \phi_+ , \phi_- ,\phi_\rmf ) \in \Phi}$, it is 
\begin{align}
\normsize{\big}{\mathfrak{T}_\pm \phi_\rmf }_{L^2_a (\Gamma )  }   &\lesssim \normsize{}{ \phi_\pm }_{H^1 (\Omega_\pm^0 )  } . \label{eq:traceineq}
\end{align}
\end{lemma}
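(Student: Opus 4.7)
My plan is to unfold the defining relation $\Phi = \invY^\epsilon(\Phi^\epsilon)$ and verify each constraint of $\Phi^\epsilon$ separately, showing that under the pullback by $\invY^\epsilon$ it translates into the corresponding constraint on the right-hand side of \eqref{eq:Phi2}. \Cref{lem:trafo} already identifies the restrictions of $\mathcal{Y}_\rmf^\epsilon$ and $\mathcal{Y}_\pm^\epsilon$ to the relevant $H^1$-spaces as isomorphisms, so the Sobolev regularity required in the characterization is immediate and the resulting space is manifestly $\epsilon$-independent.

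The heart of the argument is the trace identification. From the explicit formula $\invpsi_{\rmf,j}^\epsilon(\vct{\theta}^\prime,\theta_n) = \invpsi_j(\vct{\theta}^\prime) + \epsilon \theta_n \vct{N}(\invpsi_j(\vct{\theta}^\prime))$ one sees that a point of $\Gamma_\pm^\epsilon$ has the form $\vct{p} \pm \epsilon a_\pm(\vct{p})\vct{N}(\vct{p})$ and corresponds, via $\invY_\rmf^\epsilon$, to $\theta_n = \pm a_\pm(\vct{p})$; thus $(\mathcal{Y}_\rmf^\epsilon\phi_\rmf)\vert_{\Gamma_\pm^\epsilon} = \mathfrak{T}_\pm \phi_\rmf$ in the sense of \Cref{lem:Tpm}. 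On the bulk side, the identity $t_\pm^\epsilon(\vct{p},\pm\epsilon a_\pm(\vct{p})) = 0$ shows that $\vct{T}_\pm^\epsilon$ maps $\Gamma_\pm^\epsilon$ onto $\Gamma$, so $(\mathcal{Y}_\pm^\epsilon \phi_\pm)\vert_{\Gamma_\pm^\epsilon} = \phi_\pm\vert_\Gamma$. The interface condition $\phi_\pm^\epsilon\vert_{\Gamma_\pm^\epsilon} = \phi_\rmf^\epsilon\vert_{\Gamma_\pm^\epsilon}$ therefore becomes exactly $\mathfrak{T}_\pm\phi_\rmf = \phi_\pm\vert_\Gamma$. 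On $\Gamma_0^\epsilon$, where the aperture vanishes, $\vct{T}_\pm^\epsilon$ reduces to the identity, so the matching condition transfers to $\phi_+\vert_{\Gamma_0^0} = \phi_-\vert_{\Gamma_0^0}$, and the homogeneous Dirichlet conditions transfer likewise since $\vct{T}_\rmf^\epsilon$ and $\vct{T}_\pm^\epsilon$ send $\rho_{\rmf,\mathrm{D}}^\epsilon$ and $\rho_{\pm,\mathrm{D}}^\epsilon$ bijectively onto their $\epsilon$-independent counterparts.

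The trace bound \eqref{eq:traceineq} then follows at once. Having identified $\mathfrak{T}_\pm\phi_\rmf = \phi_\pm\vert_\Gamma$, the classical trace theorem on the Lipschitz domain $\Omega_\pm^0$ yields $\norm{\phi_\pm\vert_\Gamma}_{L^2(\Gamma)} \lesssim \norm{\phi_\pm}_{H^1(\Omega_\pm^0)}$, and combining this with $a \in L^\infty(\Gamma)$, inherited from $a_\pm \in \mathcal{C}^{0,1}(\overline{\gamma})$, gives $\norm{\mathfrak{T}_\pm\phi_\rmf}_{L^2_a(\Gamma)} \le \norm{a}_{L^\infty(\Gamma)}^{1/2}\norm{\phi_\pm\vert_\Gamma}_{L^2(\Gamma)} \lesssim \norm{\phi_\pm}_{H^1(\Omega_\pm^0)}$. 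The main technical obstacle is the geometric bookkeeping behind the trace identification — reconciling the sign convention of $d_\leftrightarrow^{\partial G}$ built into $\mathcal{Y}_\rmf^\epsilon$ with the orientation of the $\theta_n$-coordinate on $\Gamma_a$, and verifying that $\vct{T}_\pm^\epsilon$ maps each fragment of $\partial\Omega_\pm^\epsilon$ to its intended $\epsilon=0$ (or $\epsilon=\hat{\epsilon}$) counterpart. Once those identifications are carefully checked, the remainder is a short appeal to \Cref{lem:trafo} and the classical trace theorem.
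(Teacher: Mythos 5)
Your proposal is correct and follows essentially the same approach as the paper: both arguments unfold $\Phi = \invY^\epsilon(\Phi^\epsilon)$, identify the traces of $\mathcal{Y}_\pm^\epsilon\phi_\pm$ and $\mathcal{Y}_\rmf^\epsilon\phi_\rmf$ on $\gamma_\pm^\epsilon$ with $\phi_\pm\vert_\gamma$ and $\mathfrak{T}_\pm\phi_\rmf$ via density of Lipschitz functions, and then deduce \eqref{eq:traceineq} from the standard trace inequality on $\Omega_\pm^0$. The only difference is that you spell out the geometric bookkeeping ($t_\pm^\epsilon(\vct{p},\pm\epsilon a_\pm(\vct{p}))=0$, behavior on $\Gamma_0^\epsilon$ and the Dirichlet segments) more explicitly than the paper, which compresses this into an ``easy to see'' step.
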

\begin{proof}
Using that $\smash{\mathcal{C}^{0,1} (\overline{\Omega_\pm^0 } ) \subset H^1 (\Omega_\pm^0 )}$ and $\smash{\mathcal{C}^{0,1} (\overline{\Gamma}_a^\smallperp ) \subset H^1_\vct{N} (\Gamma_a ) }$ are dense, we find 
\begin{align*}
\phi_\pm \bigr\vert_{\gamma} = \mathcal{P}^{\partial G} \Bigl( \bigl[ \mathcal{Y}^\epsilon_\pm \phi_\pm \bigr] \bigr\vert_{\gamma_\pm^\epsilon } \Bigr), \quad\enspace \mathfrak{T}_\pm \phi_\rmf = \mathcal{P}^{\partial G} \Bigl( \bigl[ \mathcal{Y}^\epsilon_\rmf \phi_\rmf \bigr] \bigr\vert_{\Gamma_\pm^\epsilon } \Bigr)
\end{align*}
almost everywhere for any~$\smash{\epsilon \in ( 0, \bar{\epsilon}]}$ for all $\phi_\pm \in H^1 (\Omega_\pm^0) $ and $\phi_\rmf \in H^1 (\Gamma_a ) $.
Thus, it is easy to see that
\begin{align*}
\Phi = \invY^\epsilon \bigl( \Phi^\epsilon \bigr) \subset \Phi^\prime , \\
\mathcal{Y}^\epsilon \bigl( \Phi^\prime \bigr) \subset \Phi^\epsilon = \mathcal{Y}^\epsilon ( \Phi ) \enspace\Rightarrow\enspace \Phi^\prime \subset \Phi,
\end{align*}
where $\smash{\Phi^\prime}$ denotes the space on the right-hand side of \cref{eq:Phi2}.
Besides, \cref{eq:traceineq} is a consequence of the trace inequality in~$\smash{\Omega_\pm^0}$. 
\end{proof}

Further, it is easy to see that the following lemma holds, which introduces a trace operator on the weighted Sobolev space~$H^1_a (\Gamma ) $. 
\begin{lemma} \label{lem:traceH1a}
Let ${\mathfrak{T}_{\overline{\Gamma}_a^\smallpar } \colon H^1 (\overline{\Gamma}_a^\smallpar ) \rightarrow L^2 (\partial \overline{\Gamma}_a^\smallpar ) }$ denote the trace operator on~$\smash{\overline{\Gamma}_a^\smallpar }$ from \Cref{lem:manifoldtrace}.
Further, we introduce the constant extension operator
\begin{align}
\mathfrak{E}_a  \colon H^1_a ( \Gamma ) \rightarrow H^1 (\Gamma_a ) , \quad \bigl( \mathfrak{E}_a f \bigr) (\vct{p} , \theta_n ) := f ( \vct{p} ) .
\end{align}
Then, the trace operator $\smash{\mathfrak{T}_\smallpar^a \colon H^1_a (\Gamma ) \rightarrow L^2_a (\partial \overline{\Gamma } )}$ defined by
\begin{align}
 \bigl( \mathfrak{T}_\smallpar^a f \bigr) (\vct{p}) := \begin{cases} 
  0 & \text{if } a(\vct{p}) = 0 , \\
\frac{1}{a(\vct{p})} \int_{-a_-(\vct{p})}^{a_+(\vct{p})} \mathfrak{T}_{\overline{\Gamma}^\smallpar_a } \bigl( \mathfrak{E}_a f \bigr) (\vct{p} , \theta_n ) \,\D \theta_n &\text{if } a(\vct{p}) \not= 0
\end{cases}  
\end{align}
is bounded and satisfies 
\begin{align}
\normsize{\big }{ \mathfrak{T}_\smallpar^a f - f \bigr\vert_{\partial \overline{\Gamma}}}_{L^2_a ( \partial \overline{\Gamma} ) } &= 0 \qquad \text{for all } f \in \mathcal{C}^0 (\overline{\Gamma } ) . \label{eq:l2aeq}
\end{align}
\end{lemma}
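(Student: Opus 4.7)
The plan is to factor $\mathfrak{T}_\smallpar^a$ as the composition of the constant extension $\mathfrak{E}_a$, the classical trace $\mathfrak{T}_{\overline{\Gamma}_a^\smallpar}$, and the $1/a$-weighted integration in $\theta_n$, and then to bound each piece. First I would verify that $\mathfrak{E}_a \colon H^1_a(\Gamma) \to H^1(\Gamma_a)$ is an isometry. Since $\mathfrak{E}_a f$ is independent of $\theta_n$, we have $\nablaN(\mathfrak{E}_a f) = \vct{0}$ and $\nablaGamma(\mathfrak{E}_a f)(\vct{p},\theta_n) = \nablaGamma f(\vct{p})$. In the product-type atlas $\{(U_j^a,\vct{\psi}_j^a,V_{\rmf,j})\}$ of $\Gamma_a$ the metric tensor is block-diagonal, $\matr{g}\vert^{\vct{\psi}_j^a}(\vct{\theta}) = \operatorname{diag}\bigl(\matr{g}\vert^{\vct{\psi}_j}(\vct{\theta}^\prime),1\bigr)$, so the volume element reduces to $\mu_j(\vct{\theta}^\prime)\,\D\theta_n\,\D\lambda_{n-1}(\vct{\theta}^\prime)$ and Fubini on each chart yields $\|\mathfrak{E}_a f\|_{H^1(\Gamma_a)} = \|f\|_{H^1_a(\Gamma)}$ first on Lipschitz representatives and then on all of $H^1_a(\Gamma)$ by density.

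For the boundedness claim I would parameterize the lateral boundary $\partial\overline{\Gamma}_a^\smallpar$ by $(\vct{p},\theta_n)$ with $\vct{p}\in\partial\overline{\Gamma}$ and $-a_-(\vct{p})<\theta_n<a_+(\vct{p})$, so that its surface measure factorizes as $\D\theta_n\otimes\D\lambda_{\partial\overline{\Gamma}}$ up to bounded Jacobian factors coming from the same block-diagonal structure applied to the lateral boundary atlas. On $\{a>0\}$, Jensen's inequality (or Cauchy--Schwarz) gives
\begin{align*}
a(\vct{p})\bigl|\mathfrak{T}_\smallpar^a f(\vct{p})\bigr|^2 \le \int_{-a_-(\vct{p})}^{a_+(\vct{p})} \bigl|\mathfrak{T}_{\overline{\Gamma}_a^\smallpar}(\mathfrak{E}_a f)(\vct{p},\theta_n)\bigr|^2 \D\theta_n ,
\end{align*}
while on $\{a=0\}$ the weight $a$ annihilates any contribution to the $L^2_a$-norm. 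Integrating over $\partial\overline{\Gamma}$ and combining the factorization with the classical trace estimate on the Lipschitz manifold $\overline{\Gamma}_a^\smallpar$ and the isometry above yields $\|\mathfrak{T}_\smallpar^a f\|_{L^2_a(\partial\overline{\Gamma})} \lesssim \|f\|_{H^1_a(\Gamma)}$.

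For the identity~\eqref{eq:l2aeq}, I would observe that for $f\in\mathcal{C}^0(\overline{\Gamma})$ the extension $\mathfrak{E}_a f$ is continuous on $\overline{\Gamma}_a^\smallpar$, so the trace appearing in the definition of $\mathfrak{T}_\smallpar^a f$ reduces to the pointwise restriction and equals $f(\vct{p})$ for every $(\vct{p},\theta_n)$ with $\vct{p}\in\partial\overline{\Gamma}$. The averaging integral in the definition of $\mathfrak{T}_\smallpar^a f$ therefore returns $f(\vct{p})$ on $\{a>0\}$, while on $\{a=0\}$ any discrepancy is again killed by the weight. The main obstacle is a purely geometric bookkeeping step: ensuring that the Fubini-type factorization of the surface measure on $\partial\overline{\Gamma}_a^\smallpar$ has Jacobian bounds uniform in the chart index~$j$ (which follows from the Lipschitz regularity of $a_\pm$ and compactness of $\overline{\Gamma}$), so that the multiplicative constants can be absorbed into $\lesssim$.
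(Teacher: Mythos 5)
The paper states this lemma only as a remark (``it is easy to see that the following lemma holds'') and supplies no written proof, so there is no reference argument to compare against; your argument is correct and is essentially the proof one would write out. Two small refinements. First, because the tangent space to $\Gamma_a$ at $(\vct{p},\theta_n)$ is the orthogonal direct sum of $\rmT_\vct{p}\Gamma\times\{0\}$ and the $\theta_n$-axis, the metric tensor is block-diagonal in the charts $\vct{\psi}_j^a$, and the volume and lateral surface measures factorize \emph{exactly} as $\D\lambda_{\Gamma_a}=\D\theta_n\otimes\D\lambda_\Gamma$ and $\D\lambda_{\partial\overline{\Gamma}_a^\smallpar}=\D\theta_n\otimes\D\lambda_{\partial\overline{\Gamma}}$; there are no Jacobian correction factors to track, so both the isometry $\|\mathfrak{E}_a f\|_{H^1(\Gamma_a)}=\|f\|_{H^1_a(\Gamma)}$ and the Cauchy--Schwarz step close cleanly without any ``uniform in $j$'' bookkeeping. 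Second, for the identity~\eqref{eq:l2aeq} to be well-posed one must read $f\in\mathcal{C}^0(\overline{\Gamma})\cap H^1_a(\Gamma)$, since $\mathfrak{T}_\smallpar^a$ is only defined on $H^1_a(\Gamma)$; under that standing convention your observations that $\mathfrak{E}_a f\in\mathcal{C}^0(\overline{\Gamma}_a^\smallpar)$ so the trace is pointwise restriction, that the $\theta_n$-average of a $\theta_n$-independent function is itself, and that the set $\{a=0\}$ contributes nothing to the $L^2_a$-norm, give the claim exactly.
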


\subsection{Poincaré-Type Inequalities} \label{sec:sec32}
We obtain two Poincaré-type inequalities for functions in~$\smash{H^1_\vct{N}(\Gamma_a ) }$.
\begin{lemma} \label{lem:apriori_l2_0} 
Let $i\in\{ +, -\}$ and $f  \in H^1_\vct{N} (\Gamma_a )$. 
Then, we have 
\begin{align}
\normsize{\big}{ \mathfrak{T}_i f - \mathfrak{A}_\Gamma f  }_{L^2_a(\Gamma )} \lesssim \norm{\nablaN f}_{\vct{L}^2 (\Gamma_a )} . \label{eq:apriori_l2_0}
\end{align}
\end{lemma}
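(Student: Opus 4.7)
The plan is to prove the inequality first for Lipschitz functions $f \in \mathcal{C}^{0,1}(\overline{\Gamma}_a^\smallperp)$, which is a dense subspace of $H^1_\vct{N}(\Gamma_a)$ by definition, and then extend by continuity using the boundedness of the trace operator $\mathfrak{T}_i$ from \Cref{lem:Tpm} and of the averaging operator $\mathfrak{A}_\Gamma$ (the latter being bounded from $L^2(\Gamma_a)$ to $L^2_a(\Gamma)$ by Jensen's inequality applied fiberwise, since $a > 0$ on $\Gamma$).

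For the pointwise (in $\vct{p}\in\Gamma$) argument, the idea is the standard one-dimensional Poincaré-type estimate along the normal fibers. Fix $\vct{p}\in\Gamma$ and write $i a_i(\vct{p})$ as shorthand for $+a_+(\vct{p})$ when $i=+$ and $-a_-(\vct{p})$ when $i=-$. By the fundamental theorem of calculus in the variable $\theta_n$, for $f$ Lipschitz we have
\begin{align*}
\bigl(\mathfrak{T}_i f\bigr)(\vct{p}) - f(\vct{p},\theta_n) = \int_{\theta_n}^{i a_i(\vct{p})} \partial_{\theta_n}  f(\vct{p},\bar{\theta}_n) \,\D\bar{\theta}_n,
\end{align*}
and averaging over $\theta_n \in (-a_-(\vct{p}), a_+(\vct{p}))$ yields the identity
\begin{align*}
\bigl(\mathfrak{T}_i f\bigr)(\vct{p}) - \bigl(\mathfrak{A}_\Gamma f\bigr)(\vct{p}) = \frac{1}{a(\vct{p})}\int_{-a_-(\vct{p})}^{a_+(\vct{p})}\!\int_{\theta_n}^{i a_i(\vct{p})} \partial_{\theta_n} f(\vct{p},\bar{\theta}_n)\,\D\bar{\theta}_n\,\D\theta_n .
\end{align*}

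Next I would apply Cauchy--Schwarz twice: first inside in $\bar{\theta}_n$ with the length of the integration interval bounded by $a(\vct{p})$, and then in $\theta_n$ with respect to the normalized measure $a(\vct{p})^{-1}\D\theta_n$. This produces the fiberwise bound
\begin{align*}
\abssize{\big}{\bigl(\mathfrak{T}_i f\bigr)(\vct{p}) - \bigl(\mathfrak{A}_\Gamma f\bigr)(\vct{p})}^2 \le a(\vct{p}) \int_{-a_-(\vct{p})}^{a_+(\vct{p})} \abssize{\big}{\partial_{\theta_n} f(\vct{p},\bar{\theta}_n)}^2 \,\D\bar{\theta}_n .
\end{align*}
Multiplying by $a(\vct{p})$, integrating over $\Gamma$, using $\norm{a}_{L^\infty(\Gamma)} < \infty$ (which is finite since $\dimq{a}_\pm \in \mathcal{C}^{0,1}(\overline{\dimq{\gamma}})$ and $\overline{\gamma}$ is compact), and recognizing the resulting integral as $\norm{\nablaN f}_{\vct{L}^2(\Gamma_a)}^2$ via the Fubini-type formula underlying $\D\lambda_{\Gamma_a} = \D\theta_n \, \D\lambda_\Gamma$ yields~\eqref{eq:apriori_l2_0}.

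Finally, to conclude for a general $f \in H^1_\vct{N}(\Gamma_a)$, I pick a Lipschitz approximating sequence $f_k \to f$ in $H^1_\vct{N}(\Gamma_a)$, apply the inequality to each $f_k$, and pass to the limit: the right-hand side converges by definition of the $H^1_\vct{N}$-norm, while the left-hand side converges because $\mathfrak{T}_i$ is continuous from $H^1_\vct{N}(\Gamma_a)$ to $L^2_a(\Gamma)$ by \Cref{lem:Tpm} and $\mathfrak{A}_\Gamma$ is continuous from $L^2(\Gamma_a)$ to $L^2_a(\Gamma)$. I do not expect a serious obstacle here; the only mildly delicate point is the uniformity of the constant in the $L^\infty$-norm of $a$, which is harmless given the standing regularity and compactness assumptions on $\dimq{a}_\pm$ and $\overline{\dimq{\gamma}}$.
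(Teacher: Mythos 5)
Your proof is correct and follows essentially the same path as the paper's: the identity via the fundamental theorem of calculus along the normal fiber, averaging in $\theta_n$, two applications of Cauchy--Schwarz, and a density argument from $\mathcal{C}^{0,1}(\overline{\Gamma}_a^\smallperp)$. The only difference is that you spell out the fiberwise bound and the limit-passage step that the paper leaves implicit, and you treat both signs $i=\pm$ uniformly rather than fixing $i=+$ and appealing to analogy.
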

\begin{proof} 
We prove the inequality~\eqref{eq:apriori_l2_0} for $i=+$ and $\smash{f \in \mathcal{C}^{0,1} (\overline{\Gamma}_a^\smallperp )}$.
The case $i=-$ is analogous. The general case follows from a density argument. 
We now have 
\begin{multline*}
\normsize{\big}{ \mathfrak{T}_+ f - \mathfrak{A}_\Gamma f  }_{L^2_a(\Gamma )}^2  
= \int_\Gamma a(\vct{p}) \left[ f\bigl( \vct{p}  , a_+(\vct{p}  )\bigr) - \frac{1}{a(\vct{p}  )} \int_{a_-(\vct{p}  ) }^{a_+ (\vct{p} ) } f ( \vct{p} , \theta_n ) \,\D \theta_n \right]^2\! \D\lambda_\Gamma (\vct{p} )  \\
 = \int_\Gamma  \frac{1}{a(\vct{p}  )} \left[ \int_{-a_-(\vct{p} )}^{a_+(\vct{p}  )} \int_{\theta_n}^{a_+(\vct{p}  )} \partial_{\theta_n} f(\vct{p} , \tau_n ) \,\D \tau_n \, \D \theta_n \right]^2 \D\lambda_\Gamma ( \vct{p})   \lesssim \norm{\nablaN f }^2_{L^2 (\Gamma_a ) } .  \tag*{\qedhere}
\end{multline*}
\end{proof}

\begin{lemma} \label{lem:apriori_l2_1}
Let $i\in\{ +, -\}$ and $f \in H^1_\vct{N} (\Gamma_a )$.
Then, we have
\begin{align}
\norm{f}_{L^2 (\Gamma_a )} &\lesssim  \norm{\nablaN f}_{\vct{L}^2 (\Gamma_a )} + \norm{\mathfrak{T}_i f}_{L^2_a (\Gamma  )} . \label{eq:apriori_l2_1}
\end{align} 
\end{lemma}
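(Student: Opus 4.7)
The plan is to reduce to a pointwise identity via the fundamental theorem of calculus anchored at $\theta_n = \pm a_\pm(\vct{p})$, then integrate and apply Cauchy--Schwarz, exactly in the spirit of the proof of \Cref{lem:apriori_l2_0}.

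Without loss of generality I treat the case $i = +$; the case $i = -$ is entirely analogous. Since $\mathcal{C}^{0,1}(\overline{\Gamma}_a^\smallperp)$ is dense in $H^1_\vct{N}(\Gamma_a)$ and both sides of \cref{eq:apriori_l2_1} are continuous with respect to the $H^1_\vct{N}(\Gamma_a)$-norm (the right-hand side by boundedness of $\mathfrak{T}_+$ established in \Cref{lem:Tpm}), it suffices to prove the inequality for $f \in \mathcal{C}^{0,1}(\overline{\Gamma}_a^\smallperp)$.

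For such $f$ and for a.e.~$(\vct{p}, \theta_n) \in \Gamma_a$, the fundamental theorem of calculus in the normal direction gives
\begin{equation*}
f(\vct{p}, \theta_n) \;=\; f\bigl(\vct{p}, a_+(\vct{p})\bigr) \;-\; \int_{\theta_n}^{a_+(\vct{p})} \partial_{\theta_n} f(\vct{p}, \tau_n) \,\D \tau_n .
\end{equation*}
Squaring, using $(x+y)^2 \le 2x^2 + 2y^2$, and bounding the inner integral by Cauchy--Schwarz over the interval $\bigl(-a_-(\vct{p}), a_+(\vct{p})\bigr)$ of length $a(\vct{p})$, I obtain
\begin{equation*}
f^2(\vct{p}, \theta_n) \;\le\; 2\, f^2\bigl(\vct{p}, a_+(\vct{p})\bigr) \,+\, 2\, a(\vct{p}) \int_{-a_-(\vct{p})}^{a_+(\vct{p})} \bigl| \partial_{\theta_n} f(\vct{p}, \tau_n) \bigr|^2 \D \tau_n .
\end{equation*}

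Integrating this pointwise estimate over $\Gamma_a$ then yields the result: the first term contributes $2 \int_\Gamma a(\vct{p}) f^2(\vct{p}, a_+(\vct{p})) \,\D\lambda_\Gamma(\vct{p}) = 2\norm{\mathfrak{T}_+ f}^2_{L^2_a(\Gamma)}$ after the trivial $\theta_n$-integration, while the second term gives $2 \int_\Gamma a(\vct{p})^2 \int_{-a_-(\vct{p})}^{a_+(\vct{p})} |\partial_{\theta_n} f|^2 \,\D\tau_n \,\D\lambda_\Gamma \le 2\norm{a}_{L^\infty(\Gamma)}^2 \norm{\nablaN f}^2_{\vct{L}^2(\Gamma_a)}$, where finiteness of $\norm{a}_{L^\infty(\Gamma)}$ follows from $a_\pm \in \mathcal{C}^{0,1}(\overline{\dimq{\gamma}})$ and compactness of $\overline{\dimq{\gamma}}$. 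Taking square roots gives \cref{eq:apriori_l2_1}. There is no real obstacle here; the only subtle point is confirming that the trace $\mathfrak{T}_+$ from \Cref{lem:Tpm} agrees with the pointwise value $f(\vct{p}, a_+(\vct{p}))$ on the dense subset $\mathcal{C}^{0,1}(\overline{\Gamma}_a^\smallperp)$, which is precisely the content of \Cref{lem:Tpm}.
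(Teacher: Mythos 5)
Your proof is correct and follows essentially the same route as the paper: you anchor the fundamental theorem of calculus at $\theta_n = a_+(\vct{p})$, apply Cauchy--Schwarz, and integrate over $\Gamma_a$, with the only cosmetic difference being that you expand the square via $(x+y)^2 \le 2x^2 + 2y^2$ pointwise, while the paper keeps the difference $f(\cdot,a_+(\cdot)) - f$ and invokes the (reverse) triangle inequality at the norm level afterwards. Both arguments are identical in substance.
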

\begin{proof} 
Subsequently, we prove the inequality~\eqref{eq:apriori_l2_1} for $i=+$ and $\smash{f \in \mathcal{C}^{0,1} (\overline{\Gamma}_a^\smallperp )}$.
Then, the desired inequality is obtained from a density argument.
The case $i=-$ follows by analogy.
Now, let $\smash{ (\vct{p}  , \theta_n ) \in \Gamma_a }$.
Then, we have
\begin{align*}
f\bigl( \vct{p} ,   a_+(\vct{p} )\bigr) - f( \vct{p} , \theta_n )  &= \int_{\theta_n}^{ a_+(\vct{p} )} \partial_{\theta_n}  f (\vct{p}  , \tau_n ) \,\D\tau_n 
\end{align*}
and hence, by using Hölder's inequality,
\begin{align*}
\begin{split}
&\abssize{\big}{ f\bigl( \vct{p} , a_+(\vct{p} )\bigr) - f(\vct{p} , \theta_n ) }^2 \le   a(\vct{p} ) \int_{- a_-(\vct{p} )}^{ a_+(\vct{p})} \abssize{\big}{\partial_{\theta_n} f(\vct{p} , \tau_n ) }^2 \D\tau_n .
\end{split}
\end{align*}
Consequently, we obtain
\begin{align*}
\int_{- a_-(\vct{p} )}^{ a_+(\vct{p} )} \abssize{\big}{ f\bigl( \vct{p},  a_+(\vct{p} )\bigr) - f(\vct{p} , \theta_n ) }^2 \,\D \theta_n  \le   a^2(\vct{p} ) \int_{- a_-(\vct{p} )}^{ a_+(\vct{p} )} \abssize{\big}{\partial_{\theta_n} f (\vct{p} , \theta_n )  }^2 \,\D \theta_n .
\end{align*}
An additional integration on~$\Gamma$ yields 
\begin{align}
\normsize{\big}{ f \bigl( \cdot ,  a_+(\cdot )\bigr) -  f }_{L^2 (\Gamma_a )}  \lesssim  \normsize{\big}{\nablaN f }_{\vct{L}^2 ( \Gamma_a   ) } . \label{eq:l2_apriori_proof1}
\end{align}
Further, we have $\smash{\norm{f ( \cdot ,  a_+(\cdot )) }_{L^2 (\Gamma_a )} = \norm{\mathfrak{T}_+ f}_{L^2_a (\Gamma ) }}$ so that the result follows by applying the reverse triangle inequality in \cref{eq:l2_apriori_proof1}.
\end{proof}

We can now combine Poincaré's inequality and the \Cref{lem:traceineq,lem:apriori_l2_1} to obtain the following estimate for function triples~$\smash{(\phi_+ , \phi_- , \phi_\rmf ) \in \Phi}$, which fits the setting of the coupled Darcy problem in \cref{eq:rescaledweakdarcyeps}.
\begin{lemma} \label{lem:apriori_l2_2} 
Let $\smash{ (\phi_+ , \phi_- , \phi_\rmf ) \in \Phi }$. 
\begin{enumerate}[label=(\roman*),wide,topsep=0pt]
\item There exists an $\smash{\epsilon^\ast > 0}$ such that, for all $\smash{\epsilon \in ( 0, \epsilon^\ast ]}$ and $\nu \ge 0$, we have 
\begin{align}
\begin{split}
&\sum_{i=\pm } \norm{\phi_i}_{L^2 (\Omega_i^0 ) } + \epsilon^\nu \norm{\phi_\rmf}_{L^2 (\Gamma_a ) } \\
&\hspace{1.5cm} \lesssim  \sum_{i=\pm} \norm{\nabla \phi_i}_{\vct{L}^2 (\Omega_i^0 )} + \epsilon^{\frac{1}{2}} \norm{\nablaGamma \phi_\rmf}_{\vct{L}^2 ( \Gamma_a  ) } + \epsilon^{-\frac{1}{2}}\norm{\nablaN \phi_\rmf}_{\vct{L}^2 (\Gamma_a ) } . 
\end{split}%
\label{eq:apriori_l2_2_A}%
\end{align}%
\item Let $\nu \ge 0$ and $\epsilon \in [ 0, 1 ]$. Given additionally the assumption~\eqref{asm:rhoD}, we have
\begin{align}
\sum_{i=\pm}\norm{\phi_i}_{L^2(\Omega_i^0 )} + \epsilon^\nu \norm{\phi_\rmf}_{L^2(\Gamma_a)} &\lesssim  \sum_{i=\pm} \norm{\nabla \phi_i}_{\vct{L}^2 (\Omega_i^0 )} +  \epsilon^\nu\norm{\nablaN \phi_\rmf}_{\vct{L}^2(\Gamma_a  ) } .
\end{align}
\end{enumerate}
\end{lemma}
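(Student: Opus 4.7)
The approach rests on a common reduction: by Lemma~\ref{lem:apriori_l2_1} applied with any $i\in\{+,-\}$ for which $\lambda_{\partial\Omega}(\rho_{i,\mathrm{D}}^0)>0$, combined with the identity $\mathfrak{T}_i\phi_\rmf=\phi_i|_\Gamma$ from Lemma~\ref{lem:traceineq} and the standard trace inequality on $\Omega_i^0$, one obtains $\|\phi_\rmf\|_{L^2(\Gamma_a)}\lesssim \|\nablaN\phi_\rmf\|_{\vct{L}^2(\Gamma_a)}+\|\phi_i\|_{H^1(\Omega_i^0)}$, and Poincaré on $\Omega_i^0$ then yields $\|\phi_\rmf\|_{L^2(\Gamma_a)}\lesssim \|\nablaN\phi_\rmf\|_{\vct{L}^2(\Gamma_a)}+\|\nabla\phi_i\|_{\vct{L}^2(\Omega_i^0)}$. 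This reduces the fracture estimate to a bulk gradient estimate on the side where the external Dirichlet boundary has positive measure.

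For part~(ii), assumption~\eqref{asm:rhoD} makes the reduction available for both $i=\pm$, and direct Poincaré yields $\|\phi_\pm\|_{L^2(\Omega_\pm^0)}\lesssim \|\nabla\phi_\pm\|_{\vct{L}^2(\Omega_\pm^0)}$. Multiplying the fracture bound by $\epsilon^\nu$ and using $\epsilon^\nu\le 1$ on the bulk term (valid for $\epsilon\in[0,1]$ and $\nu\ge 0$) immediately produces the claim.

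For part~(i), only $\lambda_{\partial\Omega}(\rho_{\mathrm{b},\mathrm{D}}^\epsilon)>0$ is assumed, so WLOG $\lambda_{\partial\Omega}(\rho_{+,\mathrm{D}}^0)>0$; Poincaré is then available for $\phi_+$ but may fail for $\phi_-$. To bound $\|\phi_-\|_{L^2(\Omega_-^0)}$ the plan is to exploit the isomorphism $\Phi^\epsilon\cong H^1_{0,\rho_{\mathrm{D}}^\epsilon}(\Omega)$: setting $\phi^\epsilon:=\mathcal{Y}^\epsilon(\phi_+,\phi_-,\phi_\rmf)$ and applying Poincaré on the fixed domain~$\Omega$ would give $\|\phi^\epsilon\|_{L^2(\Omega)}\lesssim \|\nabla\phi^\epsilon\|_{\vct{L}^2(\Omega)}$ with constant uniform in~$\epsilon$, since $\rho_{+,\mathrm{D}}^\epsilon\subset\rho_{\mathrm{D}}^\epsilon$ is $\epsilon$-independent and of positive measure. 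Decomposing this over $\Omega_\pm^\epsilon$ and $\Omega_\rmf^\epsilon$ and invoking the rescaling identities of Lemma~\ref{lem:trafo}, for $\epsilon\le\epsilon^\ast$ small the $[1+\mathcal{O}(\epsilon)]$ factors absorb into the implicit constant to yield
\begin{multline*}
\sum\nolimits_{i=\pm}\|\phi_i\|_{L^2(\Omega_i^0)}^2+\epsilon\|\phi_\rmf\|_{L^2(\Gamma_a)}^2\\
\lesssim \sum\nolimits_{i=\pm}\|\nabla\phi_i\|_{\vct{L}^2(\Omega_i^0)}^2+\epsilon\|\nablaGamma\phi_\rmf\|_{\vct{L}^2(\Gamma_a)}^2+\epsilon^{-1}\|\nablaN\phi_\rmf\|_{\vct{L}^2(\Gamma_a)}^2.
\end{multline*}
Taking square roots delivers the claim for $\nu\ge 1/2$; for $0\le\nu<1/2$ I would supplement with the common reduction applied to $i=+$ and use the elementary bound $\epsilon^\nu\le 1\le\epsilon^{-1/2}$ (valid for $\epsilon\le 1$) to embed the resulting fracture bound into the right-hand side of the statement.

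The main obstacle is precisely the bound on $\|\phi_-\|_{L^2(\Omega_-^0)}$ in part~(i) when $\rho_{-,\mathrm{D}}^0$ has zero measure: ordinary Poincaré is unavailable on $\Omega_-^0$, and since $\phi_-|_\Gamma=\mathfrak{T}_-\phi_\rmf$ only lies in the weighted space $L^2_a(\Gamma)$ (degenerating wherever $a=0$), no weighted trace-based Poincaré on $\Omega_-^0$ alone closes the gap. The global Poincaré on the fixed domain~$\Omega$ via $\mathcal{Y}^\epsilon$ circumvents this by treating $\phi_-$ as part of a single $H^1(\Omega)$ function that does vanish on the set~$\rho_{+,\mathrm{D}}^\epsilon$ of positive measure, at the price of careful bookkeeping of the $\epsilon$-scalings of the rescaling lemma.
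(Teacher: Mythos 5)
Your proposal is correct and takes essentially the same route as the paper: the key move in part~(i) --- passing to $\phi^\epsilon=\mathcal{Y}^\epsilon(\phi_+,\phi_-,\phi_\rmf)\in H^1_{0,\rho_\mathrm{D}^\epsilon}(\Omega)$ and applying Poincaré on the fixed domain~$\Omega$ (whose Dirichlet constraint has $\epsilon$-independent positive measure), then decoding via \Cref{lem:trafo} --- is exactly what the paper does to control $\|\phi_-\|_{L^2(\Omega_-^0)}$ when $\rho_{-,\mathrm{D}}^0$ may have measure zero, and the fracture $L^2$ term is handled in both cases through \Cref{lem:apriori_l2_1}, \Cref{lem:traceineq}, and a bulk Poincaré on the side carrying the Dirichlet data. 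The one stylistic difference is your case split $\nu\ge\tfrac12$ versus $\nu<\tfrac12$: this is unnecessary, since the trace-reduction bound $\|\phi_\rmf\|_{L^2(\Gamma_a)}\lesssim\|\nablaN\phi_\rmf\|_{\vct{L}^2(\Gamma_a)}+\|\nabla\phi_+\|_{\vct{L}^2(\Omega_+^0)}$ already handles all $\nu\ge0$ at once after multiplying by $\epsilon^\nu\le 1\le\epsilon^{-1/2}$ (for $\epsilon\le1$), so the global-Poincaré bound is needed only for the bulk $L^2$ terms, which is how the paper organizes the argument.
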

\begin{proof}
\begin{enumerate}[label=(\roman*),wide,topsep=0pt]
\item 
Let $( \phi_+ , \phi_- , \phi_\rmf ) \in \Phi$ and, for $\epsilon \in ( 0, \bar{\epsilon} ]$, define $\smash{\phi^\epsilon \in H^1_{0 , \rho_\mathrm{D}^\epsilon } (\Omega )}$ by 
\begin{align*}
\phi^\epsilon ( \vct{x} ) := \begin{cases}
\bigl[ \mathcal{Y}_i^\epsilon \phi_i \bigr] ( \vct{x } ) & \text{if } \vct{x} \in \Omega^\epsilon_i , \ i \in \{ + , - , \rmf \} .
\end{cases}
\end{align*}
Then, with \Cref{lem:trafo} and Poincaré's inequality, we have 
\begin{align*}
\sum_{i=\pm} \norm{\phi_i}^2_{L^2 (\Omega_i^0 ) } = \sum_{i=\pm } \bigl[ 1 + \mathcal{O}(\epsilon ) \bigr]    \norm{\mathcal{Y}_i^\epsilon \phi_i}^2_{L^2 (\Omega_i^\epsilon ) }  \lesssim \norm{\phi^\epsilon}_{L^2 (\Omega ) }^2 \lesssim \norm{\nabla \phi^\epsilon}_{\vct{L}^2 (\Omega ) }^2
\end{align*}
if $\epsilon > 0$ is sufficiently small.
Moreover, \Cref{lem:trafo} yields
\begin{multline*}
\norm{\nabla \phi^\epsilon}_{\vct{L}^2 (\Omega ) }^2 = \sum_{i\in \{ + , - ,\rmf \} } \norm{\nabla \bigl[ \mathcal{Y}_i^\epsilon \phi_i \bigr] }^2_{\vct{L}^2 (\Omega_i^\epsilon ) } \\ = \bigl[ 1 + \mathcal{O} ( \epsilon ) \bigr] \biggl[ \sum_{i=\pm } \norm{\nabla \phi_i}_{\vct{L}^2 ( \Omega_i^0 ) }^2 + \epsilon \norm{\nablaGamma \phi_\rmf }^2_{\vct{L}^2 (\Gamma_a ) } + \epsilon^{-1 } \norm{\nablaN \phi_\rmf}_{\vct{L}^2 (\Gamma_a )}^2 \biggr] .
\end{multline*}
By using Poincaré's inequality and the \Cref{lem:traceineq,lem:apriori_l2_1}, we obtain
\begin{align*}
\norm{\phi_\rmf}_{L^2 (\Gamma_a ) } \lesssim \norm{\nablaN \phi_\rmf}_{\vct{L}^2 (\Gamma_a ) } + \norm{\mathfrak{T}_+ \phi_\rmf}_{L^2_a (\Gamma ) } \lesssim \norm{\nablaN \phi_\rmf}_{\vct{L}^2 (\Gamma_a ) } + \norm{\nabla \phi_+}_{\vct{L}^2 (\Omega^0_+ ) } .
\end{align*}
\item Follows directly from Poincaré's inequality and the \Cref{lem:traceineq,lem:apriori_l2_1}. \qedhere
\end{enumerate}
\end{proof}

\subsection{Results} \label{sec:sec33}
Using \Cref{lem:apriori_l2_2}, we can obtain the following a-priori estimates for the solution~$\smash{(\hat{p}_+^\epsilon , \hat{p}_-^\epsilon , \hat{p}_\rmf^\epsilon ) \in \Phi}$ of the transformed Darcy problem~\eqref{eq:weakdarcyeps}.
\begin{proposition} \label{prop:apriori_l2_3} 
Let $\beta \ge -1$. Besides, let either $\alpha \le 0$ or, given the assumption~\eqref{asm:rhoD}, let $\smash{ 2\beta \ge\alpha - 3}$. 
Further, let $2\nu \ge \max \{ 0 , \alpha - 1 \}$. 
Then, there exists~$\epsilon^\ast > 0$ such that, for all  $\epsilon \in (0,\epsilon^\ast]$, the solution $(\hat{p}^\epsilon_+ , \hat{p}^\epsilon_- , \hat{p}^\epsilon_\mathrm{f}) \in \Phi $ of the transformed Darcy problem~\eqref{eq:rescaledweakdarcyeps} satisfies 
\begin{subequations}
\begin{align}
\sum\limits_{i=\pm}  \norm{\nabla \hat{p}_i^\epsilon}^2_{\vct{L}^2(\Omega_i^0 )}  + \epsilon^{\alpha + 1 }\norm{\nablaGamma \hat{p}_\mathrm{f}^\epsilon}^2_{\vct{L}^2(\Gamma_a )} + \epsilon^{\alpha - 1} \norm{\nablaN \hat{p}_\mathrm{f}^\epsilon}^2_{\vct{L}^2(\Gamma_a )} &\lesssim 1 , \label{eq:apriori_l2_3a} 
 \\
\sum_{i=\pm}  \norm{\hat{p}_i^\epsilon}_{L^2(\Omega^0_i )} + \epsilon^\nu \norm{\hat{p}^\epsilon_\mathrm{f}}_{L^2(\Gamma_a )} &\lesssim 1 .  \label{eq:apriori_l2_3b}
\end{align}
\end{subequations}
\end{proposition}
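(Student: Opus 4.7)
The plan is to test the transformed weak formulation~\eqref{eq:rescaledweakdarcyeps} with $\phi = ( \hat{p}_+^\epsilon , \hat{p}_-^\epsilon , \hat{p}_\rmf^\epsilon )$ itself, extract a coercive lower bound on the left-hand side proportional to the target quantity
\begin{align*}
A^\epsilon := \sum_{i=\pm } \norm{\nabla \hat{p}_i^\epsilon }^2_{\vct{L}^2 (\Omega_i^0 )} + \epsilon^{\alpha + 1}\norm{\nablaGamma \hat{p}_\rmf^\epsilon }^2_{\vct{L}^2 (\Gamma_a )} + \epsilon^{\alpha - 1}\norm{\nablaN \hat{p}_\rmf^\epsilon }^2_{\vct{L}^2 (\Gamma_a )} ,
\end{align*}
bound the right-hand side of the weak formulation by (a constant plus) $\sqrt{A^\epsilon}$ using the Poincar\'e-type estimates from \Cref{lem:apriori_l2_2}, and close by Young's inequality.

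For the lower bound, the bulk contributions~$\mathcal{A}_\pm^\epsilon (\hat{p}_\pm^\epsilon , \hat{p}_\pm^\epsilon )$ are controlled from below by the uniform ellipticity of~$\matr{K}_\pm^0$ together with $\matr{M}_\pm^\epsilon = \matr{I}_n + \mathcal{O}(\epsilon)$ from \Cref{lem:trafo}\,\ref{item:trafo4}, yielding $\gtrsim \norm{\nabla \hat{p}_\pm^\epsilon }^2_{\vct{L}^2 (\Omega_\pm^0 )}$ for~$\epsilon$ small. In~$\mathcal{A}_\rmf^\epsilon (\hat{p}_\rmf^\epsilon , \hat{p}_\rmf^\epsilon )$, the two diagonal terms with prefactors~$\epsilon^{\alpha + 1}$ and~$\epsilon^{\alpha - 1}$ are coercive by ellipticity of~$\hat{\matr{K}}_\rmf$ combined with \Cref{lem:Roperators}, while the two mixed terms carry the intermediate prefactor~$\epsilon^\alpha$ and have moduli bounded by $C \epsilon^\alpha \norm{\nablaGamma \hat{p}_\rmf^\epsilon }\,\norm{\nablaN \hat{p}_\rmf^\epsilon }$, i.e.\ precisely the geometric mean of the diagonal terms; Young's inequality with a small parameter therefore absorbs them. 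Together with the boundedness of the $1+\mathcal{O}(\epsilon)$ factor and Cauchy--Schwarz on the right-hand side, this yields $A^\epsilon \lesssim \sum_{i=\pm } \norm{\hat{p}_i^\epsilon }_{L^2 (\Omega_i^0 )} + \epsilon^{\beta + 1} \norm{\hat{p}_\rmf^\epsilon }_{L^2 (\Gamma_a )}$.

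The two cases in the hypothesis are handled via the two parts of \Cref{lem:apriori_l2_2} with the choice $\nu_0 = \beta + 1 \ge 0$. If $\alpha \le 0$, part~(i) bounds the preceding expression by $\sum_{i=\pm } \norm{\nabla \hat{p}_i^\epsilon }_{\vct{L}^2(\Omega_i^0)} + \epsilon^{1/2}\norm{\nablaGamma \hat{p}_\rmf^\epsilon }_{\vct{L}^2(\Gamma_a)} + \epsilon^{-1/2}\norm{\nablaN \hat{p}_\rmf^\epsilon }_{\vct{L}^2(\Gamma_a)}$; rewriting $\epsilon^{\pm 1/2} = \epsilon^{-\alpha /2}\epsilon^{(\alpha \pm 1)/2}$ and using $\epsilon^{-\alpha / 2} \le 1$ gives a bound $\lesssim \sqrt{A^\epsilon}$. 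Under assumption~\eqref{asm:rhoD} with $2\beta \ge \alpha - 3$, part~(ii) produces the critical term $\epsilon^{\beta + 1}\norm{\nablaN \hat{p}_\rmf^\epsilon } = \epsilon^{(2\beta - \alpha + 3)/2}\, \epsilon^{(\alpha - 1)/2}\norm{\nablaN \hat{p}_\rmf^\epsilon }$, and the non-negativity of~$(2\beta - \alpha + 3)/2$ again gives $\lesssim \sqrt{A^\epsilon}$. In either case, Young's inequality applied to $A^\epsilon \lesssim \sqrt{A^\epsilon}$ (with the constant source contribution absorbed) proves~\eqref{eq:apriori_l2_3a}.

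Finally, \eqref{eq:apriori_l2_3b} follows from \eqref{eq:apriori_l2_3a} by a second application of \Cref{lem:apriori_l2_2}, now with the given~$\nu$. The condition $2\nu \ge \max\{ 0 , \alpha - 1 \}$ is precisely what is needed: in case~(i) for $\alpha \le 0$ the factors $\epsilon^{\pm 1/2}$ again produce the benign $\epsilon^{-\alpha / 2} \le 1$, while in case~(ii) the exponent $(2\nu - \alpha + 1)/2$ in $\epsilon^\nu \norm{\nablaN \hat{p}_\rmf^\epsilon } = \epsilon^{(2\nu - \alpha + 1)/2}\, \epsilon^{(\alpha - 1)/2}\norm{\nablaN \hat{p}_\rmf^\epsilon }$ is non-negative exactly because $2\nu \ge \alpha - 1$. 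The main care point throughout is this bookkeeping of $\epsilon$-exponents, and in particular the observation that the hypothesis $2\beta \ge \alpha - 3$ is the sharp threshold at which the weaker Poincar\'e-type bound~(ii) of \Cref{lem:apriori_l2_2} still suffices to close the argument when $\alpha > 0$.
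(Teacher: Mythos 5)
The plan is sound in outline and the $\epsilon$-exponent bookkeeping at the end is exactly the paper's, but there is a genuine gap in the coercivity estimate for $\mathcal{A}_\rmf^\epsilon(\hat{p}_\rmf^\epsilon , \hat{p}_\rmf^\epsilon)$. You expand the form into the two ``diagonal'' terms with prefactors $\epsilon^{\alpha+1}$ and $\epsilon^{\alpha-1}$ and two ``mixed'' terms at the intermediate scale $\epsilon^\alpha$, bound the mixed terms by $C\epsilon^\alpha \norm{\nablaGamma \hat{p}_\rmf^\epsilon}\norm{\nablaN \hat{p}_\rmf^\epsilon}$, and propose to absorb them by Young's inequality with a small parameter. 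This fails. The $\epsilon$-scaling is \emph{exactly} at the geometric mean of the diagonals, so Young's inequality
\begin{align*}
2C\epsilon^\alpha \norm{\nablaGamma \hat{p}_\rmf^\epsilon}\norm{\nablaN \hat{p}_\rmf^\epsilon} \le C\delta\,\epsilon^{\alpha+1}\norm{\nablaGamma \hat{p}_\rmf^\epsilon}^2 + C\delta^{-1}\epsilon^{\alpha-1}\norm{\nablaN \hat{p}_\rmf^\epsilon}^2
\end{align*}
simply moves the constant between the two slots with no slack in $\epsilon$. Absorbing into the diagonals, which are $\gtrsim \underline{K}\,\epsilon^{\alpha+1}\norm{\nablaGamma \hat{p}_\rmf^\epsilon}^2 + \underline{K}\,\epsilon^{\alpha-1}\norm{\nablaN \hat{p}_\rmf^\epsilon}^2$ with $\underline{K}$ the lower ellipticity constant, requires both $C\delta < \underline{K}$ and $C\delta^{-1} < \underline{K}$, which is only possible if $C < \underline{K}$. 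But the best you can do for $C$ is the upper ellipticity bound $\overline{K}$ of $\hat{\matr{K}}_\rmf$ (Cauchy--Schwarz), and $\overline{K} \ge \underline{K}$ with equality only for scalar multiples of the identity; the resulting quadratic form $\underline{K}A^2 - 2\overline{K}AB + \underline{K}B^2$ is indefinite whenever $\overline{K} > \underline{K}$. So for a generic anisotropic $\hat{\matr{K}}_\rmf$ the argument collapses to (at best) a nonnegative, not coercive, lower bound.

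The paper's proof at this step is short but uses an observation you are missing: one should not split the form at all. Instead, apply the ellipticity of $\hat{\matr{K}}_\rmf$ to the \emph{whole} vector $w = \invR_\rmf^\epsilon \nablaGamma \hat{p}_\rmf^\epsilon + \epsilon^{-1}\nablaN \hat{p}_\rmf^\epsilon$, giving $\mathcal{A}_\rmf^\epsilon(\hat{p}_\rmf^\epsilon,\hat{p}_\rmf^\epsilon) = \epsilon^{\alpha+1}(\hat{\matr{K}}_\rmf w, w)_{\vct{L}^2(\Gamma_a)} \gtrsim \epsilon^{\alpha+1}\norm{w}^2_{\vct{L}^2(\Gamma_a)}$, and then exploit the fact that $\invR_\rmf^\epsilon\nablaGamma \hat{p}_\rmf^\epsilon(\vct{p},\theta_n) \in \rmT_\vct{p}\Gamma$ is tangential while $\nablaN \hat{p}_\rmf^\epsilon(\vct{p},\theta_n)$ points in the normal direction $\vct{N}(\vct{p})$, so they are \emph{pointwise orthogonal} in $\mathbb{R}^n$. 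This gives the Pythagorean identity $\norm{w}^2 = \norm{\invR_\rmf^\epsilon\nablaGamma \hat{p}_\rmf^\epsilon}^2 + \epsilon^{-2}\norm{\nablaN \hat{p}_\rmf^\epsilon}^2$, whence $\mathcal{A}_\rmf^\epsilon(\hat{p}_\rmf^\epsilon,\hat{p}_\rmf^\epsilon) \gtrsim \epsilon^{\alpha+1}[1+\mathcal{O}(\epsilon)]\norm{\nablaGamma \hat{p}_\rmf^\epsilon}^2 + \epsilon^{\alpha-1}\norm{\nablaN \hat{p}_\rmf^\epsilon}^2$ after \Cref{lem:Roperators}. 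With this replaced, the remainder of your argument --- H\"older on the right-hand side of \eqref{eq:rescaledweakdarcyeps}, then \Cref{lem:apriori_l2_2}~(i) for $\alpha \le 0$ or (ii) under \eqref{asm:rhoD} with $2\beta \ge \alpha - 3$, closing via Young, and a second application of \Cref{lem:apriori_l2_2} with the given $\nu$ satisfying $2\nu \ge \max\{0,\alpha-1\}$ --- matches the paper and is correct.
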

\begin{proof}
We use the solution~$\smash{ ( \hat{p}^\epsilon_+ , \hat{p}^\epsilon_- , \hat{p}^\epsilon_\mathrm{f} ) \in \Phi }$ as a test function in the transformed weak formulation~\cref{eq:rescaledweakdarcyeps}.
The uniform ellipticity of the hydraulic conductivity~$\smash{\matr{K}_\pm^0}$ yields 
\begin{multline*}
\mathcal{A}_\pm^\epsilon \bigl( \hat{p}^\epsilon_\pm , \hat{p}^\epsilon_\pm \bigr) \gtrsim \norm{\nabla \hat{p}_\pm^\epsilon }^2_{\vct{L}^2 (\Omega_{\pm , \mathrm{out}}^0) } + \norm{\matr{M}^\epsilon_\pm \nabla \hat{p}_\pm^\epsilon }^2_{\vct{L}^2 (\Omega^0_{\pm , \mathrm{in}})} 
= \bigl[ 1  + \mathcal{O}(\epsilon ) \bigr] \normsize{\big }{\nabla \hat{p}^\epsilon_\pm }^2_{\vct{L}^2 (\Omega_\pm^0  ) } .
\end{multline*}
Here, we have used that, as a consequence of \Cref{lem:det} and \Cref{lem:trafo}~\ref{item:trafo4}, it is 
\begin{align*}
&\norm{\matr{M}^\epsilon_\pm \nabla \hat{p}_\pm^\epsilon }^2_{\vct{L}^2 (\Omega^0_{\pm , \mathrm{in}})} = \sum_{j\in J} \int_{V_{\pm , j}} \bigl[ \chi_{\pm , j}^0 \abssize{\big}{ \matr{M}^\epsilon_\pm \nabla \hat{p}_\pm^\epsilon }^2 \bigr] \Big\vert_{\invpsi_{\pm , j}^0 (\vct{\theta })} \mu^0_{\pm , j} (\vct{\theta}) \,\D\lambda_n (\vct{\theta} ) \\
&\hspace{1.5cm} = \bigl[ 1+ \mathcal{O}(\epsilon ) \bigr] \sum_{j\in J} \int_{V_{\pm ,j}} \bigl[ \chi_{\pm , j}^\epsilon \abssize{\big}{ \nabla ( \mathcal{Y}_\pm^\epsilon \hat{p}_\pm^\epsilon )}^2 \bigr] \Big\vert_{\invpsi_{\pm , j}^\epsilon (\vct{\theta})} \mu_{\pm ,j}^\epsilon (\vct{\theta})\, \D \lambda_n (\vct{\theta}) \\
&\hspace{1.5cm} = \bigl[ 1+ \mathcal{O}(\epsilon )\bigr] \normsize{\big}{\nabla ( \mathcal{Y}_\pm^\epsilon \hat{p}_\pm^\epsilon )}^2_{\vct{L}^2 (\Omega_{\pm , \mathrm{in}}^\epsilon )} = \bigl[ 1+ \mathcal{O}(\epsilon ) \bigr] \normsize{\big}{\nabla \hat{p}_\pm^\epsilon }^2_{\vct{L}^2 (\Omega_{\pm ,\mathrm{in}}^0 )}.
\end{align*}
Besides, by using \Cref{lem:Roperators} and the uniform ellipticity of~$\smash{\hat{\matr{K}}_\rmf}$, we obtain
\begin{align*}
\mathcal{A}_\rmf^\epsilon \bigl( \hat{p}_\rmf^\epsilon , \hat{p}_\rmf^\epsilon \bigr) &\gtrsim \epsilon^{\alpha +1 } \normsize{\big}{ \invR_\rmf^\epsilon \nablaGamma \hat{p}_\rmf^\epsilon + \epsilon^{-1} \nablaN \hat{p}_\rmf^\epsilon }^2_{\vct{L}^2 (\Gamma_a ) } \\
&= \epsilon^{\alpha + 1 } \bigl[ 1+ \mathcal{O}(\epsilon ) \bigr] \normsize{\big }{\nablaGamma \hat{p}_\rmf^\epsilon }^2_{\vct{L}^2 (\Gamma_a  ) } + \epsilon^{\alpha - 1 } \normsize{\big }{\nablaN \hat{p}_\rmf^\epsilon }^2_{\vct{L}^2 (\Gamma_a ) }  .
\end{align*}
By applying Hölder's inequality on the right-hand side of \cref{eq:rescaledweakdarcyeps}, we find
\begin{align}
\begin{split}
&\sum_{i=\pm }  \normsize{\big }{\nabla \hat{p}^\epsilon_i }^2_{\vct{L}^2 (\Omega_i^0 ) }  + \epsilon^{\alpha + 1 }  \normsize{\big }{\nablaGamma \hat{p}_\rmf^\epsilon }^2_{\vct{L}^2 (\Gamma_a  ) } + \epsilon^{\alpha - 1 } \normsize{\big }{\nablaN \hat{p}_\rmf^\epsilon }^2_{\vct{L}^2 (\Gamma_a  ) } \\ 
&\hspace{5.5cm}\lesssim \sum_{i=\pm}\normsize{\big}{\hat{p}_i^\epsilon}_{L^2 (\Omega_\pm^0 )} + \epsilon^{\beta + 1 } \normsize{\big }{\hat{p}_\rmf^\epsilon}_{L^2 (\Gamma_a ) } 
\end{split}%
\label{eq:apriori_l2_3_proof1}
\end{align}
if $\epsilon > 0$ is sufficiently small.
Thus, the inequality~\eqref{eq:apriori_l2_3a} follows after applying \Cref{lem:apriori_l2_2} on the right-hand side of \cref{eq:apriori_l2_3_proof1}. 
Then, the inequality in \cref{eq:apriori_l2_3b} follows from \cref{eq:apriori_l2_3a} and \Cref{lem:apriori_l2_2}.
\end{proof}

As a consequence of \Cref{prop:apriori_l2_3}, the solution families~$\smash{\{ \hat{p}_i^\epsilon \}_{\epsilon \in ( 0 , \hat{\epsilon}]}}$, $i \in \{ +, - ,\rmf \}$, have weakly convergent subsequences in the following sense as $\smash{\epsilon \rightarrow 0}$.
\begin{proposition}  \label{prop:convergence}
Let $\beta \ge -1$. Besides, let either $\alpha \le 0$ or, given the assumption~\eqref{asm:rhoD}, let $\smash{ 2\beta \ge\alpha - 3}$. 
Then, there exists a sequence~$\{ \epsilon_k\}_{k\in\mathbb{N}} \subset (0, \hat{\epsilon}]$ with $\epsilon_k \searrow 0$ as $k \rightarrow \infty$ such that 
\begin{subequations}
\begin{alignat}{3}
\hat{p}_\pm^{\epsilon_k} &\rightharpoonup \hat{p}^\ast_\pm \quad\enspace &&\text{in } H^1(\Omega_\pm^0 )  , \label{eq:conv_A} \\
\hat{p}_\pm^{\epsilon_k} &\rightarrow \hat{p}^\ast_\pm \quad\enspace &&\text{in } L^2(\Omega_\pm^0 ) ,  \label{eq:conv_Astrong} \\
\hat{p}_\mathrm{f}^{\epsilon_k} &\rightharpoonup \hat{p}^\ast_\mathrm{f} \quad\enspace &&\text{in } H^1(\Gamma_a ) \quad\enspace &&\text{if } \alpha \le -1, \label{eq:conv_B} \\
\hat{p}_\mathrm{f}^{\epsilon_k} &\rightharpoonup \hat{p}^\ast_\mathrm{f} \quad\enspace &&\text{in } H^1_{\vct{N}}(\Gamma_a ) \quad\enspace &&\text{if } \alpha \le 1, \label{eq:conv_C}  \\
\mathfrak{A}_\Gamma \hat{p}_\mathrm{f}^{\epsilon_k} &\rightharpoonup \mathfrak{A}_\Gamma \hat{p}^\ast_\mathrm{f} \quad\enspace &&\text{in } L^2_a(\Gamma ) \quad\enspace &&\text{if } \alpha \le 1.  \label{eq:conv_D}
\end{alignat}
\end{subequations}
In particular, we have $\smash{( \hat{p}_+^\ast , \hat{p}_-^\ast , \hat{p}^\ast_\rmf ) \in \Phi}$ if $\alpha \le -1 $ and $\smash{( \hat{p}_+^\ast , \hat{p}_-^\ast , \hat{p}^\ast_\rmf ) \in \Phi^\ast }$ if $\alpha \le 1$, where $\smash{\Phi^\ast}$ denotes the closure of~$\Phi$ in $\smash{H^1 (\Omega_+^0 ) \times H^1 (\Omega_-^0 ) \times H^1_\vct{N} (\Gamma_a )}$.
\end{proposition}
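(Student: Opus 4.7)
My proof plan rests on the a-priori estimates of \cref{prop:apriori_l2_3} combined with standard weak-compactness in Hilbert spaces and the Rellich-Kondrachov theorem, together with the structural description of $\Phi$ given in \cref{lem:traceineq}.

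For the bulk pressures I apply \cref{prop:apriori_l2_3} with $\nu=0$ (which is admissible whenever $\alpha\le 1$, and an admissible $\nu>0$ otherwise). This yields a uniform bound of $\hat p_\pm^{\epsilon}$ in $H^1(\Omega_\pm^0)$. By the Banach-Alaoglu theorem a subsequence converges weakly in $H^1$, giving \eqref{eq:conv_A}; the compact embedding $H^1(\Omega_\pm^0)\hookrightarrow L^2(\Omega_\pm^0)$ then upgrades this to strong $L^2$-convergence, which is \eqref{eq:conv_Astrong}. A diagonal extraction realizes both limits on a common sequence $\{\epsilon_k\}$.

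For the fracture pressure the analysis splits according to $\alpha$. If $\alpha\le -1$, then the prefactors $\epsilon^{\alpha+1}$ and $\epsilon^{\alpha-1}$ in \eqref{eq:apriori_l2_3a} are bounded below on $(0,\epsilon^\ast]$, so both $\nablaGamma \hat p_\rmf^{\epsilon}$ and $\nablaN \hat p_\rmf^{\epsilon}$ are uniformly bounded in $\vct{L}^2(\Gamma_a)$; together with the $L^2$-bound \eqref{eq:apriori_l2_3b} (with $\nu=0$) this supplies an $H^1(\Gamma_a)$-bound, yielding \eqref{eq:conv_B} after a further subsequence extraction. If only $\alpha\le 1$, then only the normal derivative is controlled, but together with the $L^2$-bound this is precisely a uniform $H^1_{\vct N}(\Gamma_a)$-bound, producing the weak limit in \eqref{eq:conv_C}. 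The convergence \eqref{eq:conv_D} of the averages follows automatically, since the averaging operator $\mathfrak{A}_\Gamma\colon L^2(\Gamma_a)\to L^2_a(\Gamma)$ is bounded and linear, hence weakly continuous.

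It remains to check that the limit triple lies in $\Phi$ (for $\alpha\le -1$) or in $\Phi^\ast$ (for $\alpha\le 1$). Each $(\hat p_+^{\epsilon},\hat p_-^{\epsilon},\hat p_\rmf^{\epsilon})$ lies in $\Phi$, hence by \cref{lem:traceineq} satisfies the homogeneous Dirichlet conditions on $\rho_{\pm,\mathrm{D}}^0$ and $\rho_{a,\mathrm{D}}$, the bulk matching $\hat p_+^{\epsilon}|_{\Gamma_0^0}=\hat p_-^{\epsilon}|_{\Gamma_0^0}$, and the interfacial coupling $\mathfrak{T}_\pm \hat p_\rmf^{\epsilon}=\hat p_\pm^{\epsilon}|_\Gamma$ in $L^2_a(\Gamma)$. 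All of these constraints are defined by continuous linear maps---the Dirichlet traces on $\Omega_\pm^0$ and $\Gamma_a$, and the operator $\mathfrak{T}_\pm\colon H^1_{\vct N}(\Gamma_a)\to L^2_a(\Gamma)$ from \cref{lem:Tpm}---so they define weakly closed affine subspaces and thus are preserved by the weak limits established above. In the case $\alpha\le -1$ this puts the limit directly in $\Phi$; in the case $\alpha\le 1$ the same reasoning places the limit in the closure $\Phi^\ast$ of $\Phi$ taken in the weaker topology of $H^1(\Omega_+^0)\times H^1(\Omega_-^0)\times H^1_{\vct N}(\Gamma_a)$, since $(\hat p_+^{\epsilon_k},\hat p_-^{\epsilon_k},\hat p_\rmf^{\epsilon_k})\in\Phi\subset\Phi^\ast$ converges weakly in that very space and $\Phi^\ast$ is a closed, hence weakly closed, linear subspace.

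The subtlest point I anticipate is this last identification when $\alpha\le 1$: the bulk trace of $\hat p_\pm^{\epsilon_k}$ converges weakly only in $L^2(\Gamma)$, while the fracture trace $\mathfrak{T}_\pm \hat p_\rmf^{\epsilon_k}$ converges weakly only in $L^2_a(\Gamma)$. I would reconcile the two by testing the pointwise identity $\mathfrak{T}_\pm\hat p_\rmf^{\epsilon_k}-\hat p_\pm^{\epsilon_k}|_\Gamma=0$ against arbitrary $\varphi\in L^2_a(\Gamma)$ (equivalently, against $a\varphi\in L^2(\Gamma)$), using that $a\in L^\infty(\Gamma)$ is a pointwise multiplier between the two weighted spaces, and passing to the limit by the weak continuity of both trace operators.
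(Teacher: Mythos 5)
Your proof is correct and follows essentially the same route as the paper: uniform bounds from \cref{prop:apriori_l2_3}, weak compactness and Rellich--Kondrachov for the bulk and fracture limits, boundedness of $\mathfrak{A}_\Gamma$ from $L^2(\Gamma_a)$ to $L^2_a(\Gamma)$ for the averaged limit, and weak closedness of the closed convex sets $\Phi$ and $\Phi^\ast$ for the membership claim. One remark: the last paragraph about reconciling the two trace spaces is superfluous; since $\Phi^\ast$ is \emph{defined} as the closure of $\Phi$ in $H^1(\Omega_+^0)\times H^1(\Omega_-^0)\times H^1_{\vct N}(\Gamma_a)$, and it is a linear (hence convex) closed subspace, Mazur's lemma already places the weak limit in $\Phi^\ast$ without any need to characterize which trace constraints survive the passage to the limit.
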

\begin{proof}
The weak convergence statements~\eqref{eq:conv_A}, \eqref{eq:conv_Astrong}, \eqref{eq:conv_B}, and~\eqref{eq:conv_C} are a direct consequence of the estimates in~\Cref{prop:apriori_l2_3} and the Rellich-Kondrachov theorem.
Further, the weak convergence~\eqref{eq:conv_D} follows from \Cref{prop:apriori_l2_3} and 
\begin{align*}
\normsize{\big}{\mathfrak{A}_\Gamma \hat{p}^\epsilon_\mathrm{f}}_{L^2_a(\Gamma)}^2 &= \int_\Gamma \frac{1}{a(\vct{p}  )} \left[ \int_{-a_-(\vct{p} ) }^{a_+(\vct{p}  )} \hat{p}^\epsilon_\rmf ( \vct{p} , \theta_n ) \,\D \theta_n \right]^2 \D\lambda_\Gamma (\vct{p})  
\le \normsize{\big}{\hat{p}_\mathrm{f}^\epsilon}_{L^2 (\Gamma_a)}^2 .
\end{align*}
Besides, we have $\smash{( \hat{p}_+^\ast , \hat{p}_-^\ast , \hat{p}^\ast_\rmf ) \in \Phi}$  if $\alpha \le -1 $ since $\Phi$ is convex and closed in $\smash{H^1 (\Omega_+^0 )} \times \smash{H^1 (\Omega_-^0 )} \times \smash{H^1 (\Gamma_a) }$.
Further, $\smash{\Phi^\ast}$ is convex and closed in $\smash{H^1 (\Omega_+^0 )} \times \smash{H^1 (\Omega_-^0 )} \times \smash{H^1_\vct{N} (\Gamma_a) }$ and hence $\smash{( \hat{p}_+^\ast , \hat{p}_-^\ast , \hat{p}^\ast_\rmf ) \in \Phi^\ast}$ if $\alpha \le 1 $.
\end{proof}

Using \Cref{prop:apriori_l2_3}, we can conclude that the limit solution~$\smash{\hat{p}_\rmf^\ast}$ in~$\smash{\Gamma_a}$ is constant in $\theta_n$-direction if $\alpha < 1$ and completely constant if $\alpha < -1$.
\begin{proposition} \label{prop:pconst} 
Let $\beta \ge -1$. 
Besides, let either $\alpha \le 0$ or, given the assumption~\eqref{asm:rhoD}, let $\smash{ 2\beta \ge\alpha - 3}$. 
\begin{enumerate}[wide,label=(\roman*),topsep=0pt]
\item \label{item:pconst1} Let $\alpha < -1$. Then, for a.a.~$\smash{(\vct{p} , \theta_n ) \in \Gamma_a }$, the limit function~$\smash{\hat{p}_\mathrm{f}^\ast \in H^1 (\Gamma_a )}$ from \Cref{prop:convergence} satisfies
\begin{align}
\nablaGamma \hat{p}_\mathrm{f}^\ast (\vct{p}, \theta_n ) = \nablaN \hat{p}_\rmf^\ast (\vct{p}, \theta_n ) = \vct{0} \quad\enspace \Rightarrow\enspace \hat{p}_\mathrm{f}^\ast (\vct{p}, \theta_n)  = \mathfrak{A}_\mathrm{f} \hat{p}_\mathrm{f}^\ast = \mathrm{const}.
\end{align}
\item \label{item:pconst2} Let $\alpha < 1$. Then, for a.a.~$\smash{(\vct{p}, \theta_n ) \in \Gamma_a }$, the limit function~$\smash{\hat{p}_\mathrm{f}^\ast \in H^1_{\vct{N}} (\Gamma_a )}$ from \Cref{prop:convergence} satisfies
\begin{align}
\nablaN \hat{p}_\mathrm{f}^\ast (\vct{p} , \theta_n ) = \vct{0} \quad\enspace \Rightarrow\enspace \hat{p}_\mathrm{f}^\ast (\vct{p} , \theta_n) = (\mathfrak{A}_\Gamma \hat{p}_\mathrm{f}^\ast ) (\vct{p} ) .
\end{align}
\end{enumerate}
\end{proposition}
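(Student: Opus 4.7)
The plan is to deduce both statements directly from the $\epsilon$-weighted a-priori estimate~\eqref{eq:apriori_l2_3a} combined with the weak convergences established in \Cref{prop:convergence}, by exploiting uniqueness of weak limits.

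\textbf{Part (ii):} Assume $\alpha < 1$, so $\alpha - 1 < 0$. Rearranging \cref{eq:apriori_l2_3a} gives
\begin{align*}
\normsize{\big}{\nablaN \hat{p}_\rmf^\epsilon}^2_{\vct{L}^2(\Gamma_a)} \lesssim \epsilon^{1-\alpha} \xrightarrow{\epsilon \to 0} 0.
\end{align*}
Hence $\nablaN \hat{p}_\rmf^{\epsilon_k} \to \vct{0}$ strongly (in particular weakly) in $\vct{L}^2(\Gamma_a)$ along the subsequence $\{\epsilon_k\}$ from \Cref{prop:convergence}. On the other hand, the weak convergence~\eqref{eq:conv_C} in $H^1_\vct{N}(\Gamma_a)$ implies $\nablaN \hat{p}_\rmf^{\epsilon_k} \rightharpoonup \nablaN \hat{p}_\rmf^\ast$ in $\vct{L}^2(\Gamma_a)$. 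By uniqueness of weak limits, $\nablaN \hat{p}_\rmf^\ast = \vct{0}$ a.e.\ on~$\Gamma_a$. Since $\nablaN \hat{p}_\rmf^\ast(\vct{p},\theta_n) = \partial_{\theta_n} \hat{p}_\rmf^\ast(\vct{p},\theta_n)\,\vct{N}(\vct{p})$ with $\vct{N}(\vct{p}) \neq \vct{0}$, this forces $\partial_{\theta_n} \hat{p}_\rmf^\ast = 0$ a.e., so $\hat{p}_\rmf^\ast$ is independent of $\theta_n$ on each fibre. Consequently, $\hat{p}_\rmf^\ast(\vct{p},\theta_n)$ coincides with its fibrewise mean $(\mathfrak{A}_\Gamma \hat{p}_\rmf^\ast)(\vct{p})$.

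\textbf{Part (i):} Assume $\alpha < -1$, so additionally $\alpha + 1 < 0$ and \cref{eq:apriori_l2_3a} yields
\begin{align*}
\normsize{\big}{\nablaGamma \hat{p}_\rmf^\epsilon}^2_{\vct{L}^2(\Gamma_a)} \lesssim \epsilon^{-(\alpha+1)} \xrightarrow{\epsilon \to 0} 0.
\end{align*}
Together with the $H^1(\Gamma_a)$-weak convergence~\eqref{eq:conv_B}, the same uniqueness-of-weak-limits argument as in (ii) gives $\nablaGamma \hat{p}_\rmf^\ast = \vct{0}$ a.e. Combining with $\nablaN \hat{p}_\rmf^\ast = \vct{0}$ from part~(ii) (which applies since $\alpha < -1 < 1$), the total intrinsic gradient $\nabla_{\!\Gamma_a} \hat{p}_\rmf^\ast = \nablaGamma \hat{p}_\rmf^\ast + \nablaN \hat{p}_\rmf^\ast$ vanishes a.e.\ on $\Gamma_a$. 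Since $\Gamma_a$ is connected (it is a Lipschitz graph over the connected base obtained by quotienting out the assumption that $\dimq{\Omega}_\rmf$ is connected), an $H^1$-function with vanishing gradient is constant, and this constant must equal $\mathfrak{A}_\rmf \hat{p}_\rmf^\ast$ by definition of the averaging operator.

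\textbf{Anticipated obstacle:} The only non-routine point is connectedness of $\Gamma_a$, which is needed to pass from ``gradient vanishes a.e.'' to ``function is constant''. This should follow from the assumption in \Cref{sec:sec21} that $\dimq{\Omega}_\rmf$ is connected, via the diffeomorphism correspondence between $\Gamma_a$ and $\Omega_\rmf^{\hat{\epsilon}}$ implemented by the charts $\invpsi_{\rmf,j}^{\hat{\epsilon}}$; otherwise one has to argue componentwise and absorb the constant into the definition of the average on each component. All other steps reduce to reading off powers of $\epsilon$ from \cref{eq:apriori_l2_3a} and invoking uniqueness of weak limits, so no additional machinery is required.
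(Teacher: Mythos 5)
Your proposal is correct and follows essentially the same route as the paper, which simply cites \Cref{prop:apriori_l2_3,prop:convergence} without elaborating; you have filled in the mechanics (strong $L^2$-convergence of the $\epsilon$-weighted gradients to zero combined with weak convergence and uniqueness of limits). The connectedness of~$\Gamma_a$ that you flag is indeed inherited from the standing assumption that $\dimq{\Omega}_\rmf$ is connected, via the bi-Lipschitz map $(\vct{p},\theta_n)\mapsto \vct{p}+\hat{\epsilon}\theta_n\vct{N}(\vct{p})$ onto~$\Omega_\rmf^{\hat{\epsilon}}$.
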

\begin{proof}
The results follow from the \Cref{prop:apriori_l2_3,prop:convergence}.
\end{proof}

If $\alpha < 1$, we obtain continuity of the limit solution across the interface~$\Gamma$.
\begin{proposition} \label{prop:pressurecontinuity} 
Let $\beta \ge -1$.
Besides, let either $\alpha \le 0$ or, given the assumption~\eqref{asm:rhoD}, let $\smash{ 2\beta \ge\alpha - 3}$. 
Then, if $\alpha < 1$, the limit functions~$\smash{\hat{p}^\ast_\pm \in H^1 (\Omega^0_\pm)}$ and $\smash{\hat{p}^\ast_\mathrm{f} \in H^1_{\vct{N}}(\Gamma_a )}$ from \Cref{prop:convergence} satisfy 
\begin{align}
\hat{p}^\ast_\pm\bigr\vert_{\Gamma}  &= \mathfrak{A}_\Gamma \hat{p}^\ast_\mathrm{f}   \quad\enspace \text{a.e.\ on } \Gamma.
\end{align}
\end{proposition}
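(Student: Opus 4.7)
The plan is to pass to the limit in the interface identity satisfied by the prelimit functions and then to use the $\theta_n$-constancy of $\hat p^\ast_\rmf$ provided by Proposition~\ref{prop:pconst}\ref{item:pconst2}. Concretely, since $(\hat p_+^{\epsilon_k}, \hat p_-^{\epsilon_k}, \hat p_\rmf^{\epsilon_k}) \in \Phi$ for every $k$, the characterization in Lemma~\ref{lem:traceineq} yields
\begin{equation*}
\mathfrak{T}_\pm \hat p_\rmf^{\epsilon_k} = \hat p_\pm^{\epsilon_k}\bigr|_\Gamma \quad \text{in } L^2_a(\Gamma).
\end{equation*}
The strategy is to take $k \to \infty$ on both sides in the weak topology of $L^2_a(\Gamma)$.

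For the left-hand side, Lemma~\ref{lem:Tpm} provides $\mathfrak{T}_\pm \colon H^1_\vct{N}(\Gamma_a) \to L^2_a(\Gamma)$ as a bounded linear operator, so the weak convergence $\hat p_\rmf^{\epsilon_k} \rightharpoonup \hat p^\ast_\rmf$ in $H^1_\vct{N}(\Gamma_a)$ from Proposition~\ref{prop:convergence} (which applies because $\alpha < 1$ and the other hypotheses of Proposition~\ref{prop:apriori_l2_3} are assumed) gives $\mathfrak{T}_\pm \hat p_\rmf^{\epsilon_k} \rightharpoonup \mathfrak{T}_\pm \hat p^\ast_\rmf$ in $L^2_a(\Gamma)$. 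For the right-hand side, the standard trace operator $H^1(\Omega_\pm^0) \to L^2(\Gamma)$ is bounded, so from $\hat p_\pm^{\epsilon_k} \rightharpoonup \hat p^\ast_\pm$ in $H^1(\Omega_\pm^0)$ we obtain $\hat p_\pm^{\epsilon_k}|_\Gamma \rightharpoonup \hat p^\ast_\pm|_\Gamma$ in $L^2(\Gamma)$; since $a \in L^\infty(\Gamma)$, the inclusion $L^2(\Gamma) \hookrightarrow L^2_a(\Gamma)$ is continuous and the weak convergence persists there. Passing to the limit we conclude $\mathfrak{T}_\pm \hat p^\ast_\rmf = \hat p^\ast_\pm|_\Gamma$ in $L^2_a(\Gamma)$.

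It remains to replace the trace $\mathfrak{T}_\pm \hat p^\ast_\rmf$ by the average $\mathfrak{A}_\Gamma \hat p^\ast_\rmf$. By Proposition~\ref{prop:pconst}\ref{item:pconst2}, the hypothesis $\alpha < 1$ forces $\hat p^\ast_\rmf(\vct p, \theta_n) = (\mathfrak{A}_\Gamma \hat p^\ast_\rmf)(\vct p)$ for a.a.\ $(\vct p, \theta_n) \in \Gamma_a$, so $\hat p^\ast_\rmf$ is the constant-in-$\theta_n$ extension of $g := \mathfrak{A}_\Gamma \hat p^\ast_\rmf \in L^2_a(\Gamma)$. For a Lipschitz representative $g$, the extension lies in $\mathcal{C}^{0,1}(\overline{\Gamma}_a^\smallperp)$ and the defining formula in Lemma~\ref{lem:Tpm} gives $\mathfrak{T}_\pm(\mathfrak{E}g) = g$ pointwise. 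Approximating a general $g \in L^2_a(\Gamma)$ by Lipschitz functions and exploiting the isometry $\|\mathfrak{E} g\|_{L^2(\Gamma_a)} = \|g\|_{L^2_a(\Gamma)}$ together with $\nablaN(\mathfrak{E}g) = 0$ (so that $\mathfrak{E}\colon L^2_a(\Gamma) \to H^1_\vct{N}(\Gamma_a)$ is an isometric embedding), the continuity of $\mathfrak{T}_\pm$ transfers the identity $\mathfrak{T}_\pm(\mathfrak{E}g) = g$ to all of $L^2_a(\Gamma)$. Combining with the previous step gives $\mathfrak{A}_\Gamma \hat p^\ast_\rmf = \hat p^\ast_\pm|_\Gamma$ in $L^2_a(\Gamma)$, and since $a > 0$ pointwise on $\Gamma$ by definition, this is equivalent to equality a.e.\ on $\Gamma$.

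The main technical step is the last one, namely the identification $\mathfrak{T}_\pm(\mathfrak{E}g) = g$; all other ingredients are soft weak-limit arguments built on the a-priori estimates already established. The density step, however, is quite clean once one notices that constant-in-$\theta_n$ extension is an isometric embedding $L^2_a(\Gamma) \hookrightarrow H^1_\vct{N}(\Gamma_a)$, so I do not anticipate further difficulties.
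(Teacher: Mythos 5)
Your proof is correct, and it takes a genuinely different route from the paper's. The paper estimates $\bigl|\bigl(\hat p^\ast_\pm - \mathfrak{A}_\Gamma \hat p^\ast_\rmf, \zeta\bigr)_{L^2_a(\Gamma)}\bigr|$ for arbitrary $\zeta \in L^2_a(\Gamma)$ by a three-term triangle inequality: the first piece is controlled by a Sobolev trace interpolation inequality combined with the \emph{strong} $L^2$-convergence~\eqref{eq:conv_Astrong}; the second piece uses membership in~$\Phi$ together with the Poincar\'e-type Lemma~\ref{lem:apriori_l2_0} and the rate $\|\nablaN \hat p_\rmf^{\epsilon_k}\|_{\vct L^2(\Gamma_a)} \lesssim \epsilon_k^{(1-\alpha)/2}$ from Proposition~\ref{prop:apriori_l2_3}; the third piece follows from the weak convergence~\eqref{eq:conv_D}. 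You instead pass directly to the weak limit in the trace compatibility $\mathfrak{T}_\pm \hat p_\rmf^{\epsilon_k} = \hat p_\pm^{\epsilon_k}|_\Gamma$ encoded in~$\Phi$, using only the boundedness of $\mathfrak{T}_\pm$ on $H^1_\vct N(\Gamma_a)$ and of the standard trace on $H^1(\Omega_\pm^0)$, and then replace $\mathfrak{T}_\pm$ by $\mathfrak{A}_\Gamma$ via Proposition~\ref{prop:pconst}\ref{item:pconst2} and the identity $\mathfrak{T}_\pm \circ \mathfrak{E}_a = \operatorname{id}$ on $L^2_a(\Gamma)$, which you establish cleanly from the isometric embedding $\mathfrak{E}_a\colon L^2_a(\Gamma) \hookrightarrow H^1_\vct N(\Gamma_a)$ and density of Lipschitz functions. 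Both routes ultimately rest on the same $\mathcal O(\epsilon^{(1-\alpha)/2})$ bound on $\nablaN \hat p^\epsilon_\rmf$ — the paper uses the rate quantitatively in its second term, whereas you only need the qualitative consequence $\nablaN \hat p^\ast_\rmf = 0$ packaged in Proposition~\ref{prop:pconst}. Your approach is more modular (weak-limit step cleanly separated from the trace-of-constant-extension identification), avoids the Sobolev trace interpolation inequality and, notably, does not need the strong convergence~\eqref{eq:conv_Astrong} at all; the paper's route, conversely, is more self-contained in that it never needs the $\mathfrak{T}_\pm \circ \mathfrak{E}_a = \operatorname{id}$ identification.
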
 
\begin{proof}
Let $\zeta \in L^2_a (\Gamma )$.
Then, we have
\begin{equation}
\begin{multlined}[c][0.875\displaywidth]
\abssize{\Big}{\bigl( \hat{p}_\pm^\ast - \mathfrak{A}_\Gamma \hat{p}_\mathrm{f}^\ast , \zeta \bigr)_{L^2_a(\Gamma )}} \le \abssize{\Big}{\bigl( \hat{p}_\pm^\ast - \hat{p}_\pm^{\epsilon_k} , \zeta \bigr)_{L^2_a(\Gamma )}}  \\
+ \abssize{\Big}{\bigl( \hat{p}_\pm^{\epsilon_k} - \mathfrak{A}_\Gamma \hat{p}_\mathrm{f}^{\epsilon_k} , \zeta \bigr)_{L^2_a(\Gamma )}}  + \abssize{\Big}{\bigl( \mathfrak{A}_\Gamma \hat{p}_\mathrm{f}^{\epsilon_k} - \mathfrak{A}_\Gamma\hat{p}_\mathrm{f}^\ast , \zeta \bigr)_{L^2_a(\Gamma )}}.
\end{multlined}%
\label{eq:T123}
\end{equation}
Using a version of the Sobolev trace inequality~\cite[Thm.\ II.4.1]{galdi11}, we obtain
\begin{align*}
\norm{ \hat{p}_\pm^\ast - \hat{p}_\pm^{\epsilon_k} }_{L^2 ( \Gamma )}^2 &\lesssim \norm{ \hat{p}_\pm^\ast - \hat{p}_\pm^{\epsilon_k} }_{L^2 ( \Omega_\pm^0 )} \norm{ \hat{p}_\pm^\ast - \hat{p}_\pm^{\epsilon_k} }_{H^1 (\Omega_\pm^0 )} ,
\end{align*}
where, with \Cref{prop:convergence}, the first term vanishes as $k \rightarrow \infty$ and the second term is bounded. 
Besides, by using the \Cref{lem:traceineq,lem:apriori_l2_0} and \Cref{prop:apriori_l2_3}, we find
\begin{align*}
\normsize{\big}{\hat{p}^{\epsilon_k}_\pm - \mathfrak{A}_\Gamma \hat{p}_\mathrm{f}^{\epsilon_k} }_{L^2_a ( \Gamma ) } = \normsize{\big}{\mathfrak{T}_\pm^{\epsilon_k} \hat{p}_\rmf^{\epsilon_k} - \mathfrak{A}_\Gamma \hat{p}_\mathrm{f}^{\epsilon_k} }_{L^2_a ( \Gamma ) } \lesssim \norm{\nablaN \hat{p}^{\epsilon_k}_\mathrm{f}}_{\vct{L}^2 (\Gamma_a )} \lesssim \epsilon_k^{\frac{1-\alpha}{2}} \rightarrow 0 .
\end{align*}
Further, the last term on the right-hand side of \cref{eq:T123} vanishes due to the weak convergence~\eqref{eq:conv_D} in \Cref{prop:convergence} as $k\rightarrow\infty$.
\end{proof}

\section{Limit Models}  \label{sec:sec4}
In the following, we present the convergence proofs and resulting limit models for vanishing~$\epsilon$. 
Depending on the value of the parameter~$\alpha \in \mathbb{R}$, we obtain five different limit models. 
We distinguish between the following cases that are discussed in separate subsections.
\begin{itemize}[wide]
\item \Cref{sec:sec41} discusses the case $\alpha < - 1$ of a highly conductive fracture, where the limit pressure head inside the fracture becomes completely constant. 
\item In \Cref{sec:sec42}, we discuss the case $\alpha = - 1$ of a conductive fracture, where the fracture pressure head in the limit model solves a PDE of effective Darcy flow on the interface~$\Gamma$.
\item In \Cref{sec:sec43}, we examine the case $\alpha \in (- 1, 1)$, where the fracture disappears in the limit model, i.e., we have both the continuity of pressure and normal velocity across the interface~$\Gamma$ without any effect of the fracture conductivity.
\item \Cref{sec:sec44} is concerned with the case $\alpha = 1$, where the fracture turns into a permeable barrier with a pressure jump across the interface~$\Gamma$ that scales with an effective conductivity. 
\item \Cref{sec:sec45} discusses the case $\alpha > 1$, where the fracture acts like a solid wall in the limit model.
\end{itemize}
Each subsection is structured as follows. 
First, we state the strong formulation of the respective limit model and introduce a corresponding weak formulation. 
Then, we prove weak convergence towards the limit model for the subsequence~$\smash{\{ \epsilon_k\}_{k\in\mathbb{N}}}$ as $k\rightarrow \infty$ and express the limit solution in terms of the limit functions from \Cref{prop:convergence}. 
In a second step, we show strong convergence for the whole sequence~$\smash{\{ \epsilon\}_{\epsilon \in ( 0, \hat{\epsilon}]}}$ as $\epsilon \rightarrow 0$ and  discuss the wellposedness of the limit model.

Further, regarding the convergence of the bulk solution, we obtain the following result that will be useful for all cases.
\begin{lemma} \label{lem:limApm}
Let $\beta \ge -1$. Besides, let either $\alpha \le 0$ or, given the assumption~\eqref{asm:rhoD}, let $\smash{ 2\beta \ge\alpha - 3}$. 
Then, for all $\smash{\phi_\pm \in H^1 (\Omega^0_\pm)}$, we have 
\begin{align}
\mathcal{A}_\pm^{\epsilon_k} ( \hat{p}^{\epsilon_k}_\pm , \phi_\pm ) \rightarrow \bigl( \matr{K}_\pm^0 \nabla \hat{p}_\pm^\ast , \nabla \phi_\pm \bigr)_{\vct{L}^2 (\Omega_\pm^0 )}
\end{align}
as $k\rightarrow \infty$, where $\smash{\hat{p}_\pm^\ast \in H^1 (\Omega_\pm^0 )}$ denote the limit functions from \Cref{prop:convergence}.
\end{lemma}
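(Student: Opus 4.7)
The plan is to split $\mathcal{A}_\pm^{\epsilon_k}(\hat{p}_\pm^{\epsilon_k}, \phi_\pm)$ into its integral over the fixed outer region~$\Omega_{\pm,\mathrm{out}}$ and its integral over the inner region~$\Omega_{\pm,\mathrm{in}}^0$, and to treat the two parts separately. The outer part is immediate: since $\matr{K}_\pm^0\nabla\phi_\pm \in \vct{L}^2(\Omega_{\pm,\mathrm{out}})$ is a fixed test element and $\hat{p}_\pm^{\epsilon_k}\rightharpoonup \hat{p}_\pm^\ast$ in $H^1(\Omega_\pm^0)$ by \Cref{prop:convergence}, the defining weak convergence of gradients yields
\[
\bigl(\matr{K}_\pm^0 \nabla\hat{p}_\pm^{\epsilon_k}, \nabla\phi_\pm\bigr)_{\vct{L}^2(\Omega_{\pm,\mathrm{out}})} \;\longrightarrow\; \bigl(\matr{K}_\pm^0 \nabla\hat{p}_\pm^{\ast}, \nabla\phi_\pm\bigr)_{\vct{L}^2(\Omega_{\pm,\mathrm{out}})}.
\]

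For the inner part, the strategy is to peel off the deformation $\matr{M}_\pm^\epsilon$ by expanding
\[
\matr{M}_\pm^{\epsilon_k} = \matr{I}_n + \bigl(\matr{M}_\pm^{\epsilon_k} - \matr{I}_n\bigr),
\]
and to exploit that, by \Cref{lem:trafo}\ref{item:trafo4}, the perturbation satisfies $\|\matr{M}_\pm^{\epsilon_k}-\matr{I}_n\|_{L^\infty(\Omega_{\pm,\mathrm{in}}^0)} = \mathcal{O}(\epsilon_k)$. Multiplying out the two factors of $\matr{M}_\pm^{\epsilon_k}$ in the inner part produces a principal contribution $(\matr{K}_\pm^0\nabla\hat{p}_\pm^{\epsilon_k},\nabla\phi_\pm)_{\vct{L}^2(\Omega_{\pm,\mathrm{in}}^0)}$ plus three remainder terms, each of the form $(\matr{K}_\pm^0\, \matr{N}_1\nabla\hat{p}_\pm^{\epsilon_k},\matr{N}_2\nabla\phi_\pm)_{\vct{L}^2(\Omega_{\pm,\mathrm{in}}^0)}$ where at least one of $\matr{N}_1,\matr{N}_2$ is $\matr{M}_\pm^{\epsilon_k}-\matr{I}_n$. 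Using Cauchy--Schwarz together with the uniform boundedness of $\|\matr{K}_\pm^0\|_{L^\infty}$ and $\|\matr{M}_\pm^{\epsilon_k}\|_{L^\infty}$, and the uniform bound $\|\nabla\hat{p}_\pm^{\epsilon_k}\|_{\vct{L}^2(\Omega_\pm^0)} \lesssim 1$ from \Cref{prop:apriori_l2_3}, each remainder is bounded by $\mathcal{O}(\epsilon_k)\|\nabla\phi_\pm\|_{\vct{L}^2(\Omega_\pm^0)}$ and therefore vanishes as $k\to\infty$. The principal contribution converges to $(\matr{K}_\pm^0\nabla\hat{p}_\pm^{\ast},\nabla\phi_\pm)_{\vct{L}^2(\Omega_{\pm,\mathrm{in}}^0)}$ by weak convergence.

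Adding the outer and inner limits and using $\Omega_\pm^0 = \Omega_{\pm,\mathrm{out}}\;\dot{\cup}\;\Omega_{\pm,\mathrm{in}}^0$ gives the claim. There is no real obstacle here: the only point that requires care is the product structure of the inner bilinear form, which forces one to separate the $\matr{M}_\pm^{\epsilon_k}$ factor on the solution side (handled by weak convergence, since $\matr{M}_\pm^{\epsilon_k}\to\matr{I}_n$ strongly in $L^\infty$) from the $\matr{M}_\pm^{\epsilon_k}$ factor on the test side (handled by strong $L^\infty$ convergence against a fixed $\nabla\phi_\pm\in \vct{L}^2$). Both mechanisms are available thanks to the quantitative estimate in \Cref{lem:trafo}\ref{item:trafo4} and the a-priori bound of \Cref{prop:apriori_l2_3}.
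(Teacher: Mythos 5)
Your argument is correct and follows essentially the same route as the paper: split off the outer region (where weak convergence of $\nabla\hat{p}_\pm^{\epsilon_k}$ gives the claim directly), and on the inner region peel off the deformation $\matr{M}_\pm^{\epsilon_k}$ using the quantitative estimate $\|\matr{M}_\pm^{\epsilon_k}-\matr{I}_n\|_{L^\infty}=\mathcal{O}(\epsilon_k)$ from \Cref{lem:trafo}~\ref{item:trafo4} together with the a-priori bound from \Cref{prop:apriori_l2_3}, leaving a principal term that converges by \Cref{prop:convergence}. The paper organizes the inner-region difference into three terms (two $\mathcal{O}(\epsilon_k)$ remainders and one handled by weak convergence) rather than your one principal plus three remainders, but this is only a superficially different arrangement of the same identity, relying on the same three ingredients.
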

\begin{proof}
For all $\smash{\phi_\pm \in H^1 (\Omega^0_\pm ) }$, we find 
\begin{align*}
&\bigl( \matr{K}^0_\pm \matr{M}^{\epsilon_k}_\pm \nabla \hat{p}^{\epsilon_k}_\pm , \matr{M}^{\epsilon_k}_\pm \nabla \phi_\pm \bigr)_{\vct{L}^2 (\Omega^0_{\pm , \mathrm{in}})} - \bigl( \matr{K}^0_\pm \nabla \hat{p}^\ast_\pm , \nabla \phi_\pm \bigr)_{\vct{L}^2 (\Omega^0_{\pm , \mathrm{in}})} \\
&\enspace = \bigl( \matr{K}^0_\pm \bigl[ \matr{M}_\pm^{\epsilon_k} - \matr{I}_n \bigr] \nabla \hat{p}^{\epsilon_k}_\pm , \matr{M}_\pm^{\epsilon_k} \nabla \phi_\pm \bigr)_{\vct{L}^2 (\Omega^0_{\pm , \mathrm{in}})} \\
&\hspace{0.6cm} + \bigl( \matr{K}_\pm^0 \nabla \hat{p}^{\epsilon_k}_\pm , \bigl[ \matr{M}^{\epsilon_k}_\pm - \matr{I}_n \bigr] \nabla \phi_\pm  \bigr)_{\vct{L}^2 (\Omega^0_{\pm , \mathrm{in}})}
+ \bigl( \matr{K}^0_\pm \nabla \bigl[  \hat{p}_\pm^{\epsilon_k} - \hat{p}_\pm^\ast \bigr] , \nabla \phi_\pm \bigr)_{\vct{L}^2 (\Omega^0_{\pm , \mathrm{in}})} .
\end{align*}
As $k \rightarrow \infty$, the first two terms on the right-hand side vanish due to \Cref{lem:trafo}~\ref{item:trafo4}, the third term due to \Cref{prop:convergence}. 
Thus, the result follows with \Cref{prop:convergence}.
\end{proof}

Besides, for $\smash{f_\pm \colon \Omega_\pm^0 \rightarrow \mathbb{R}}$ and $\smash{\vct{F}_\pm \colon \Omega_\pm^0  \rightarrow \mathbb{R}^n}$ with well-defined (normal) trace on~$\Gamma$, we define the jump operators 
\begin{align}
\jump{f}_\Gamma := {f_+}\bigr\vert_{\Gamma } - {f_-}\bigr\vert_{\Gamma } , \qquad 
\jump{\vct{F}}_\Gamma := \bigl[ \vct{F}_+ \cdot \vct{N} \bigr] \! \bigr\vert_\Gamma -\bigl[ \vct{F}_- \cdot \vct{N} \bigr]\! \bigr\vert_\Gamma\,  .
\end{align}

\subsection{Case I: \texorpdfstring{$\alpha < - 1$}{alpha < -1}} \label{sec:sec41}
If $\alpha < -1$, the fracture conductivity is much larger than the bulk conductivity.
As a result, the pressure head~$\smash{\hat{p}_\rmf^\epsilon}$ inside the fracture becomes constant as~$\epsilon \rightarrow 0$, i.e., pressure fluctuations in the fracture are instantaneously equilibrated.
The range of achievable constants for the fracture pressure head in the limit model may be constrained by the choice of Dirichlet conditions at the external fracture boundary.
For this reason, we define the set 
\begin{align}
W := \bigl\{ \phi^\ast \in \mathbb{R} \ \big\vert\ \exists\, (\phi_+ , \phi_- , \phi_\rmf ) \in \Phi \text{ with } \phi_\rmf \equiv \phi^\ast \bigr\}
\end{align}
of admissible constants for the limit pressure head in the fracture.
Then, the set~$W$ can be characterized as follows.
\begin{remark} \label{rem:M}
\begin{enumerate}[wide,label=(\roman*)]
\item It is either $W = \mathbb{R}$ or $W  = \{ 0 \}$.
\item If $\smash{\lambda_{\partial\Omega } (\rho^{\hat{\epsilon}}_{\rmf , \mathrm{D}}) > 0}$, then we have $W = \{ 0 \} $.
\item If $\smash{\lambda_{\partial \Omega }( \rho_{\rmf ,\mathrm{D}}^{\hat{\epsilon}} ) = 0 }$ and $\smash{ \lambda_{\partial \Omega } ( \rho_{\mathrm{b} ,\mathrm{D}}^0 \cap U_\delta ( \Gamma ) ) = 0}$ for a constant~$\smash{\delta > 0}$, then we have $W = \mathbb{R}$.
\end{enumerate}
\end{remark}

The strong formulation of the limit problem for $\alpha < -1$ and $\beta \ge -1$ as $\epsilon \rightarrow 0$ now reads as follows.

Find $p_\pm \colon \Omega^0_\pm \rightarrow \mathbb{R}$ and $p_\Gamma \in W$ such that
\begin{subequations}
\begin{alignat}{3}
-\nabla \cdot \bigl( \matr{K}_\pm^0 \nabla p_\pm \bigr) &= q_\pm^0 \qquad &&\text{in } \Omega_\pm^0 ,  \\
p_+ &= p_- \qquad &&\text{on } \Gamma_0^0 , \\
\matr{K}_+^0 \nabla p_+ \cdot \vct{N} &=  \matr{K}_-^0 \nabla p_- \cdot \vct{N} \qquad &&\text{on } \Gamma_0^0 , \\
 p_\pm &\equiv p_\Gamma  \qquad &&\text{on } \Gamma , \\
p_\pm &= 0 \qquad &&\text{on } \rho_{\pm,\mathrm{D}}^0 , \ \\
\matr{K}_\pm^0 \nabla p_\pm \cdot \vct{n} &= 0 \qquad &&\text{on } \rho_{\pm,\mathrm{N}}^0 .
\intertext{Moreover, if $W=\mathbb{R}$, the model is closed by the condition}
\label{eq:stronglimit_alm1_g} \int_\Gamma \jumpsize{\big}{\matr{K}^0\nabla p}_{\Gamma} \,\D \lambda_\Gamma  + A \,\overline{q_\Gamma} &= 0. 
\end{alignat}%
\label{eq:stronglimit_alm1}%
\end{subequations}%
Here, $A\in\mathbb{R}$ and $\smash{\overline{q_\Gamma}} \in \mathbb{R}$  are defined by 
\begin{align}
A := \int_\Gamma a \,\D \lambda_\Gamma , \qquad
\overline{q_\Gamma} &:= \begin{cases} 
 \mathfrak{A}_\mathrm{f} \hat{q}_\mathrm{f}  , & \text{if } \beta = -1, \\
0 , &\text{if } \beta > - 1 .
\end{cases}
\end{align}
A weak formulation of the system in \cref{eq:stronglimit_alm1} is given by the following problem.

Find $ ( p_+ , p_-, p_\Gamma ) \in \Phi^0_\mathrm{I}$ such that, for all $ ( \phi_+ , \phi_- , \phi_\Gamma) \in \Phi^0_\mathrm{I}$,
\begin{align}
&\sum_{i = \pm} \bigl( \matr{K}_i^0 \nabla p_i , \nabla \phi_i )_{\vct{L}^2 (\Omega_i^0  ) } = \sum_{i = \pm} \bigl( q_i^0 , \phi_i \bigr)_{L^2 (\Omega_i^0 )} + A\, \overline{q_\Gamma} \phi_\Gamma .
\label{eq:weaklimit_alm1}
\end{align}
Here, the space~$\Phi^0_\mathrm{I}$ is given by
\begin{align}
\begin{split}
\Phi^0_\mathrm{I} &:= \Bigl\{ ( \phi_+ , \phi_- , \phi_\Gamma) \in \bigl[ \bigtimes\nolimits_{i=\pm} H_{0,\rho_{i,\mathrm{D}}^0}^1 (\Omega_i^0 ) \bigr] \times W \ \Big\vert\  \\
&\hspace{5.5cm} \phi_+\bigr\vert_{\Gamma_0^0} = \phi_-\bigr\vert_{\Gamma_0^0 } , \enspace   \phi_\pm\bigr\vert_{\Gamma } \equiv \phi_\Gamma  \Bigr\}  \\
&\hphantom{:}\cong \bigl\{ \phi \in H^1_{0,\rho^0_{\mathrm{b}, \mathrm{D}}} (\Omega ) \ \big\vert\ \phi\vert_{\Gamma} \equiv \mathrm{const.} \in W \bigr\} . 
\end{split} 
\end{align}
Further, we obtain the following weak convergence result.
\begin{theorem} \label{thm:alm1}
Let $\alpha < - 1$ and $\beta \ge -1$.
Then, $\smash{( \hat{p}^\ast_+ , \hat{p}^\ast_- , \mathfrak{A}_\mathrm{f}  \hat{p}_\mathrm{f}^\ast ) \in \Phi_\mathrm{I}^0}$ is a weak solution of problem~\eqref{eq:weaklimit_alm1}, where $\smash{\hat{p}^\ast_\pm\in H^1(\Omega^0_\pm )}$, $\smash{\hat{p}^\ast_\mathrm{f} \in H^1 (\Gamma_a )}$ denote the limit functions from \Cref{prop:convergence}.
Moreover, we have $\smash{\hat{p}_\mathrm{f}^\ast (\vct{p}, \theta_n ) = \mathfrak{A}_\mathrm{f} \hat{p}^\ast_\mathrm{f} \in W}$ for a.a.~$\smash{ (\vct{p}, \theta_n )  \in \Gamma_a }$.
\end{theorem}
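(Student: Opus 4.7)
The plan is to verify, in order, that the candidate triple lies in $\Phi_\mathrm{I}^0$, that its fracture component equals $\mathfrak{A}_\rmf \hat{p}_\rmf^\ast$, and that it satisfies the integral identity~\eqref{eq:weaklimit_alm1}.

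For the first two points I would invoke the results already established. Since $\alpha < -1$, \Cref{prop:convergence} gives $\hat{p}_\rmf^\ast \in H^1(\Gamma_a)$ with $(\hat{p}_+^\ast, \hat{p}_-^\ast, \hat{p}_\rmf^\ast) \in \Phi$, and \Cref{prop:pconst}~\ref{item:pconst1} asserts that $\hat{p}_\rmf^\ast$ equals the constant $\mathfrak{A}_\rmf \hat{p}_\rmf^\ast$ almost everywhere on $\Gamma_a$. The definition of $W$ then yields $\mathfrak{A}_\rmf \hat{p}_\rmf^\ast \in W$ immediately, because the triple $(\hat{p}_+^\ast, \hat{p}_-^\ast, \mathfrak{A}_\rmf \hat{p}_\rmf^\ast) \in \Phi$ by constancy of the third component. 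The trace identity $\hat{p}_\pm^\ast \bigr\vert_\Gamma \equiv \mathfrak{A}_\rmf \hat{p}_\rmf^\ast$ then follows from \Cref{prop:pressurecontinuity} (applicable since $\alpha < -1 < 1$), using that $\mathfrak{A}_\Gamma$ applied to a constant returns that constant. Together with the homogeneous Dirichlet conditions and the matching conditions on $\Gamma_0^0$ inherited from $\Phi$, this places the candidate triple in $\Phi_\mathrm{I}^0$.

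To obtain the variational identity~\eqref{eq:weaklimit_alm1} I would take an arbitrary $(\phi_+, \phi_-, \phi_\Gamma) \in \Phi_\mathrm{I}^0$, extend $\phi_\Gamma$ to the constant function $\tilde\phi_\rmf \equiv \phi_\Gamma$ on $\Gamma_a$, and observe that $(\phi_+, \phi_-, \tilde\phi_\rmf) \in \Phi$ by the very definition of $\Phi_\mathrm{I}^0$ and $W$. Inserting this into~\eqref{eq:rescaledweakdarcyeps} at $\epsilon = \epsilon_k$, the entire fracture bilinear form $\mathcal{A}_\rmf^{\epsilon_k}(\hat{p}_\rmf^{\epsilon_k}, \tilde\phi_\rmf)$ vanishes identically because $\nablaGamma \tilde\phi_\rmf = \vct{0}$ and $\nablaN \tilde\phi_\rmf = \vct{0}$, so no issue arises from the possibly singular factors $\epsilon^{\alpha-1}$ and $\epsilon^{\alpha+1}$. \Cref{lem:limApm} converts the bulk bilinear contributions into $\sum_{i=\pm}(\matr{K}_i^0 \nabla \hat{p}_i^\ast, \nabla \phi_i)_{\vct{L}^2(\Omega_i^0)}$. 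On the right-hand side, the prefactor $1+\mathcal{O}(\epsilon_k)$ passes harmlessly to $1$, the strong convergence~\eqref{eq:conv_Astrong} handles the bulk source terms, and the fracture source reduces to $\epsilon_k^{\beta+1}(\hat{q}_\rmf,\tilde\phi_\rmf)_{L^2(\Gamma_a)} = \epsilon_k^{\beta+1} A \phi_\Gamma \mathfrak{A}_\rmf \hat{q}_\rmf$; this yields the desired term $A \phi_\Gamma \overline{q_\Gamma}$ when $\beta = -1$ and vanishes when $\beta > -1$, matching the definition of $\overline{q_\Gamma}$. Passing to the limit $k\to\infty$ produces~\eqref{eq:weaklimit_alm1}.

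The main point that requires a moment of care, in my view, is the constancy-preserving lift from $\Phi_\mathrm{I}^0$ to $\Phi$: one must verify that taking $\tilde\phi_\rmf \equiv \phi_\Gamma$ yields a valid element of $\Phi$. If $W = \{0\}$ the lift is forced and the boundary compatibility on $\rho_{a,\mathrm{D}}$ is automatic, while if $W = \mathbb{R}$ the second part of \Cref{rem:M} guarantees that the external fracture boundary carries no Dirichlet data that would obstruct a nonzero constant extension. Apart from this verification, the argument is essentially bookkeeping on top of \Cref{prop:apriori_l2_3,prop:convergence,prop:pconst,prop:pressurecontinuity} and \Cref{lem:limApm}, since all the delicate analysis has already been absorbed into the a-priori estimates and the two preceding convergence propositions.
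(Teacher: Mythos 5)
Your proof is correct and follows essentially the same route as the paper: test the transformed weak formulation~\eqref{eq:rescaledweakdarcyeps} with triples from $\Phi$ whose fracture component is a constant in $W$, observe that $\mathcal{A}_\rmf^{\epsilon_k}$ then drops out, apply \Cref{lem:limApm} to the bulk terms, pass to the limit, and certify membership in $\Phi_\mathrm{I}^0$ via \Cref{prop:convergence,prop:pconst}. Two small imprecisions, neither affecting correctness: the bulk source terms $(q_i^0,\phi_i)_{L^2(\Omega_i^0)}$ are $\epsilon_k$-independent once the test function is fixed, so the strong convergence~\eqref{eq:conv_Astrong} is not actually needed on the right-hand side; and the trace identity $\hat{p}_\pm^\ast\vert_\Gamma\equiv\mathfrak{A}_\rmf\hat{p}_\rmf^\ast$ already follows directly from $(\hat{p}_+^\ast,\hat{p}_-^\ast,\hat{p}_\rmf^\ast)\in\Phi$ together with constancy of $\hat{p}_\rmf^\ast$ (cf.\ \Cref{lem:traceineq}), so the detour through \Cref{prop:pressurecontinuity} is a slightly longer route to the same conclusion.
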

\begin{proof}
Take a test function triple $\smash{ (\phi_+ , \phi_- , \phi_\rmf) \in \Phi}$  with $\smash{\phi_\rmf \equiv \phi_\Gamma \in W}$.
By inserting $\smash{(\phi_+ , \phi_- , \phi_\rmf) }$ into the transformed weak formulation~\eqref{eq:rescaledweakdarcyeps}, we obtain
\begin{align*}
&\sum_{i = \pm} \mathcal{A}_i^{\epsilon_k} (\hat{p}_i^{\epsilon_k} , \phi_i ) = \bigl[ 1+ \mathcal{O}(\epsilon_k ) \bigr] \biggl[ \sum_{i = \pm } \bigl( q_i^0 , \phi_i \bigr)_{L^2 (\Omega_i^0 )} + \epsilon_k^{\beta + 1 } A ( \mathfrak{A}_\rmf \hat{q}_\rmf  ) \phi_\Gamma \biggr]  .
\end{align*}
Thus, by letting~$k  \rightarrow  \infty$ and using  \Cref{lem:limApm}, we find that the limit solution~$\smash{  ( \hat{p}^\ast_+ , \hat{p}^\ast_- , \mathfrak{A}_\mathrm{f} \hat{p}_\mathrm{f}^\ast )} $ satisfies~\cref{eq:weaklimit_alm1}. 
Besides, with the \Cref{prop:convergence,prop:pconst}, it is $\smash{(\hat{p}^\ast_+ , \hat{p}_-^\ast , \hat{p}_\rmf^\ast ) \in \Phi}$ with $\smash{ \hat{p}_\rmf^\ast \equiv \mathfrak{A}_\rmf \hat{p}_\rmf^\ast }$ and hence $\smash{(\hat{p}_+^\ast , \hat{p}_-^\ast , \mathfrak{A}_\rmf \hat{p}_\rmf^\ast ) \in \Phi_\mathrm{I}^0}$.
\end{proof}
Moreover, we obtain strong convergence in the following sense.
\begin{theorem} 
Let $\alpha < -1$ and $\beta \ge -1$. 
Then, for the whole sequence~$\smash{ \{ \hat{p}_i^\epsilon \}_{\epsilon\in (0,\hat{\epsilon}]}}$, $\smash{i\in\{ + , - , \mathrm{f}\}}$, we have strong convergence 
\begin{subequations}
\begin{alignat}{2}
\hat{p}_\pm^\epsilon &\rightarrow \hat{p}^\ast_\pm \quad\enspace &&\text{in } H^1 (\Omega_\pm^0 ) , \\
\hat{p}_\mathrm{f}^\epsilon &\rightarrow \hat{p}^\ast_\mathrm{f} \quad\enspace &&\text{in } H^1 (\Gamma_a) 
\end{alignat}%
\label{eq:strongconv_alm1}%
\end{subequations}%
as $\epsilon \rightarrow 0$.
Further, $\smash{ ( \hat{p}^\ast_+ , \hat{p}^\ast_- , \mathfrak{A}_\mathrm{f}  \hat{p}_\mathrm{f}^\ast ) \in \Phi_\mathrm{I}^0}$ is the unique weak solution of problem ~\eqref{eq:weaklimit_alm1}.
\end{theorem}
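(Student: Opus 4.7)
The plan is: first establish uniqueness of the weak solution to~\eqref{eq:weaklimit_alm1}; then use a subsequence argument to promote the weak convergence of \Cref{prop:convergence} to the whole parameter~$\epsilon\rightarrow 0$; finally upgrade the weak convergence to strong via an energy identity. Uniqueness is essentially standard: the bilinear form on the left-hand side of~\eqref{eq:weaklimit_alm1} is symmetric, continuous, and—by uniform ellipticity of~$\matr{K}^0_\pm$ combined with Poincaré's inequality on~$\Omega_\pm^0$ and a trace estimate that controls any free constant $\phi_\Gamma\in W$ through a Dirichlet portion of $\rho_{\mathrm{b},\mathrm{D}}^0$—coercive on the Hilbert space~$\Phi_\mathrm{I}^0$, so Lax--Milgram gives a unique weak solution, necessarily equal to $(\hat{p}^\ast_+,\hat{p}^\ast_-,\mathfrak{A}_\rmf\hat{p}^\ast_\rmf)$ in view of \Cref{thm:alm1}. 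Since every subsequence of $\{\hat{p}^\epsilon\}_{\epsilon\in(0,\hat{\epsilon}]}$ admits, by \Cref{prop:apriori_l2_3,prop:convergence}, a further subsequence whose weak limit solves~\eqref{eq:weaklimit_alm1}, the Urysohn subsequence principle promotes the convergences~\eqref{eq:conv_A}--\eqref{eq:conv_C} to the whole $\epsilon\rightarrow 0$.

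Strong convergence in the fracture is almost free from the a-priori bound~\eqref{eq:apriori_l2_3a}: since $\alpha<-1$ makes both $\alpha+1$ and $\alpha-1$ negative,
\begin{align*}
\normsize{\big}{\nablaGamma\hat{p}^\epsilon_\rmf}_{\vct{L}^2(\Gamma_a)}^2\lesssim\epsilon^{-(\alpha+1)}\rightarrow 0, \qquad \normsize{\big}{\nablaN\hat{p}^\epsilon_\rmf}_{\vct{L}^2(\Gamma_a)}^2\lesssim\epsilon^{1-\alpha}\rightarrow 0.
\end{align*}
By \Cref{prop:pconst}~\ref{item:pconst1} the limit $\hat{p}^\ast_\rmf$ is constant, so both gradient components converge strongly to the correct limit~$\vct{0}$. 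Combined with strong $L^2(\Gamma_a)$-convergence of $\hat{p}^\epsilon_\rmf$ (Rellich--Kondrachov applied to the $H^1$-bounded sequence, plus the whole-sequence uniqueness just established), this yields strong $H^1(\Gamma_a)$-convergence.

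For the bulk I would use the energy method. Testing~\eqref{eq:rescaledweakdarcyeps} with $\hat{p}^\epsilon\in\Phi$ itself gives the energy identity
\begin{align*}
\sum_{i=\pm}\mathcal{A}^\epsilon_i(\hat{p}^\epsilon_i,\hat{p}^\epsilon_i)+\mathcal{A}^\epsilon_\rmf(\hat{p}^\epsilon_\rmf,\hat{p}^\epsilon_\rmf)=\bigl[1+\mathcal{O}(\epsilon)\bigr]\!\biggl[\sum_{i=\pm}(q_i^0,\hat{p}^\epsilon_i)_{L^2(\Omega_i^0)}+\epsilon^{\beta+1}(\hat{q}_\rmf,\hat{p}^\epsilon_\rmf)_{L^2(\Gamma_a)}\biggr] ,
\end{align*}
whose right-hand side tends, by the $L^2$-convergences above, to $\sum_\pm(q_\pm^0,\hat{p}^\ast_\pm)_{L^2(\Omega_\pm^0)}+A\,\overline{q_\Gamma}\,\mathfrak{A}_\rmf\hat{p}^\ast_\rmf$; testing~\eqref{eq:weaklimit_alm1} with the admissible triple $(\hat{p}^\ast_+,\hat{p}^\ast_-,\mathfrak{A}_\rmf\hat{p}^\ast_\rmf)\in\Phi^0_\mathrm{I}$ identifies this limit with $\sum_\pm(\matr{K}^0_\pm\nabla\hat{p}^\ast_\pm,\nabla\hat{p}^\ast_\pm)_{\vct{L}^2(\Omega_\pm^0)}$. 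Since $\mathcal{A}^\epsilon_\rmf(\hat{p}^\epsilon_\rmf,\hat{p}^\epsilon_\rmf)\ge 0$ and, by \Cref{lem:trafo}~\ref{item:trafo4}, $\mathcal{A}^\epsilon_\pm(\hat{p}^\epsilon_\pm,\hat{p}^\epsilon_\pm)$ differs from $(\matr{K}^0_\pm\nabla\hat{p}^\epsilon_\pm,\nabla\hat{p}^\epsilon_\pm)_{\vct{L}^2(\Omega_\pm^0)}$ only by a multiplicative $1+\mathcal{O}(\epsilon)$, sandwiching between weak $L^2$-lower semicontinuity (from below) and the energy identity (from above) forces the $\matr{K}^0_\pm$-weighted gradient norm of $\hat{p}^\epsilon_\pm$ to converge to that of $\hat{p}^\ast_\pm$; combined with weak convergence this upgrades to strong $H^1(\Omega_\pm^0)$-convergence.

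The main delicate point I expect is precisely this bulk passage, because $\mathcal{A}^\epsilon_\pm$ is itself $\epsilon$-dependent through $\matr{M}^\epsilon_\pm$. It is absorbed by the uniform estimate $\matr{M}^\epsilon_\pm=\matr{I}_n+\mathcal{O}(\epsilon)$ of \Cref{lem:trafo}~\ref{item:trafo4}, which allows $\mathcal{A}^\epsilon_\pm$ to be swapped for the $\epsilon$-independent quadratic form $(\matr{K}^0_\pm\nabla\cdot,\nabla\cdot)_{\vct{L}^2(\Omega_\pm^0)}$ when invoking weak lower semicontinuity; both the convergence of the $\epsilon^{\beta+1}$-term in the source (whose handling splits into $\beta=-1$ and $\beta>-1$) and the convergence in \Cref{lem:limApm} fit the same template. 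Once these strong convergences are in place, both assertions of the theorem follow.
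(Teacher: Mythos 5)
Your proposal is correct and reaches the conclusion by essentially the same route as the paper: Lax--Milgram for uniqueness, the Urysohn subsequence principle to promote weak convergence to the whole parameter family, and then the standard energy identity together with $\mathcal{A}_\rmf^\epsilon \ge 0$, $\matr{M}^\epsilon_\pm = \matr{I}_n + \mathcal{O}(\epsilon)$, and weak lower semicontinuity to upgrade to strong convergence. The only cosmetic difference is that you peel off the fracture part first, reading $\normsize{\big}{\nablaGamma\hat{p}^\epsilon_\rmf}^2_{\vct{L}^2(\Gamma_a)} = \mathcal{O}(\epsilon^{-\alpha-1})$ and $\normsize{\big}{\nablaN\hat{p}^\epsilon_\rmf}^2_{\vct{L}^2(\Gamma_a)} = \mathcal{O}(\epsilon^{1-\alpha})$ directly from \Cref{prop:apriori_l2_3}, whereas the paper folds the same exponent computation into the single norm $\normiii{\cdot}$ of \cref{eq:normiii} and handles all three components in one weak-lower-semicontinuity step; both variants use the same bounds and are equivalent.
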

\begin{proof}
The solution of \cref{eq:weaklimit_alm1} is unique as a consequence of the Lax-Milgram theorem. 
Thus, the weak convergence~\eqref{eq:conv_A} and~\eqref{eq:conv_B} in \Cref{prop:convergence} hold for the whole sequence~$\smash{ \{ \hat{p}_i^\epsilon \}_{\epsilon\in (0,\hat{\epsilon}]}}$, $\smash{i\in\{ + , - , \mathrm{f}\}}$.
This follows from \Cref{prop:apriori_l2_3} and the fact that every weakly convergent subsequence has the same limit.

Now, in order to show the strong convergence~\eqref{eq:strongconv_alm1}, we define the norm~$\smash{\normiii{\cdot }}$ on~$\Phi \subset \smash{ H^1 (\Omega_+^0 ) \times H^1 (\Omega_-^0 ) \times H^1 (\Gamma_a )}$ by
\begin{align}
&\normsizeiii{\big}{(\phi_+ , \phi_- , \phi_\mathrm{f} )}^2 :=  \sum_{i=\pm } \bigl( \matr{K}_i^0 \nabla \phi_i , \nabla \phi_i \bigr)_{\vct{L}^2 (\Omega_i^0  )} + \bigl( \hat{\matr{K}}_\mathrm{f} \nabla_{\!\Gamma_a} \phi_\mathrm{f} , \nabla_{\! \Gamma_a } \phi_\mathrm{f} \bigr)_{\vct{L}^2 (\Gamma_a ) } .
\label{eq:normiii}
\end{align}
Then, with \Cref{lem:apriori_l2_2}, it is easy to see that the norm~$\smash{\normiii{\cdot }}$ on the space~$\Phi$ is equivalent to the natural product norm of $\smash{ H^1 (\Omega_+^0 ) \times H^1 (\Omega_-^0 ) \times H^1 (\Gamma_a )}$.
Moreover, with analogous arguments as in \Cref{lem:limApm}, we find 
\begin{align}
\bigl( \matr{K}^0_\pm \nabla \hat{p}_\pm^\epsilon , \nabla \hat{p}^\epsilon_\pm \bigr)_{\vct{L}^2 (\Omega^0_{\pm , \mathrm{in}})} &= \bigl( \matr{K}^0_\pm \matr{M}^\epsilon_\pm \nabla \hat{p}_\pm^\epsilon , \matr{M}^\epsilon_\pm \nabla \hat{p}_\pm^\epsilon \bigr)_{\vct{L}^2 (\Omega_{\pm , \mathrm{in}}^0)} + \mathcal{O} (\epsilon ) . \label{eq:limApm2}
\end{align}
The uniform ellipticity of~$\smash{\hat{\matr{K}}_\rmf}$ and \Cref{prop:apriori_l2_3} yield
\begin{align*}
\bigl( \hat{\matr{K}}_\rmf \nabla_{\!\Gamma_a } \hat{p}_\rmf^\epsilon , \nabla_{\! \Gamma_a } \hat{p}_\rmf^\epsilon \bigr)_{\vct{L}^2 (\Gamma_a )} \lesssim \norm{\nablaGamma \hat{p}^\epsilon_\rmf }^2_{\vct{L}^2 (\Gamma_a ) } + \norm{\nablaN \hat{p}_\rmf^\epsilon }^2_{\vct{L}^2 (\Gamma_a ) } = \mathcal{O}(\epsilon^{-\alpha - 1} ) .
\end{align*}
Thus, with $\smash{\mathcal{A}_\mathrm{f}^\epsilon (\hat{p}_\mathrm{f}^\epsilon , \hat{p}_\mathrm{f}^\epsilon ) \ge 0}$ and \cref{eq:rescaledweakdarcyeps}, we have 
\begin{align}
\label{eq:estimateiii}
\begin{split}
& \normsizeiii{\big}{( \hat{p}^\epsilon_+ , \hat{p}_-^\epsilon , \hat{p}^\epsilon_\mathrm{f}) }^2 \le \sum_{i=\pm} \mathcal{A}_i^\epsilon (\hat{p}^\epsilon_i , \hat{p}_i^\epsilon ) + \mathcal{A}_\mathrm{f}^\epsilon (\hat{p}_\mathrm{f}^\epsilon , \hat{p}_\mathrm{f}^\epsilon ) + \smallo (\epsilon )  \\
&\hspace{1.5cm } = \bigl[1 + \mathcal{O}(\epsilon ) \bigr] \biggl[ \sum_{i=\pm } \bigl( q_i^0 , \hat{p}_i^\epsilon \bigr)_{L^2 (\Omega_i^0 ) } + \epsilon^{\beta + 1 } \bigl( \hat{q}_\mathrm{f} , \hat{p}_\mathrm{f}^\epsilon \bigr)_{L^2 (\Gamma_a )}  \biggr] + \smallo (\epsilon ) .
\end{split}%
\end{align}%
With \Cref{prop:convergence} and \Cref{thm:alm1}, we find
\begin{align*}
\limsup_{\epsilon\rightarrow 0 } \normsizeiii{\big}{( \hat{p}^\epsilon_+ , \hat{p}_-^\epsilon , \hat{p}^\epsilon_\mathrm{f}) }^2 &\le  \sum_{i = \pm} \bigl( q_i^0 , \hat{p}_i^\ast \bigr)_{L^2 (\Omega_i^0 )} + A\, \overline{q_\Gamma} \, \mathfrak{A}_\mathrm{f} \hat{p}_\mathrm{f}^\ast \\
&= \sum_{i = \pm} \bigl( \matr{K}_i^0 \nabla \hat{p}_i^\ast , \nabla \hat{p}_i^\ast )_{\vct{L}^2 (\Omega_i^0  ) }  \le \normsizeiii{\big}{( \hat{p}^\ast_+ , \hat{p}_-^\ast , \hat{p}^\ast_\mathrm{f}) }^2. 
\end{align*}
Consequently, with the weak lower semicontinuity of the norm, we obtain
\begin{align*}
\lim_{\epsilon \rightarrow 0 } \normsizeiii{\big}{( \hat{p}^\epsilon_+ , \hat{p}_-^\epsilon , \hat{p}^\epsilon_\mathrm{f}) } = \normsizeiii{\big}{( \hat{p}^\ast_+ , \hat{p}_-^\ast , \hat{p}^\ast_\mathrm{f}) } . \tag*{\qedhere}
\end{align*} 
\end{proof}

\subsection{Case II: \texorpdfstring{$\alpha = -1$}{alpha = -1}}
\label{sec:sec42}
For $\alpha = -1$ and $\beta \ge -1$, the fracture pressure head in the limit models fulfills a Darcy-like PDE on the interface~$\Gamma$ with an effective hydraulic conductivity matrix~$\smash{\matr{K}_\Gamma}$. 
The inflow from the bulk domains into the fracture is modeled by an additional source term on the right-hand side of the interfacial PDE.
The bulk and interface solution are coupled by the continuity of the pressure heads across the interface~$\Gamma$, which corresponds to the case of a conductive fracture in accordance with the choice of the parameter~$\alpha = -1$. 
We remark that the effective conductivity matrix~$\smash{\matr{K}_\Gamma}$ for the limit fracture in \cref{eq:KGamma} below explicitly depends on the off-diagonal entries of the full-dimensional conductivity matrix~$\smash{\hat{\matr{K}}_\rmf}$, which is not accounted for in previous works with equivalent scaling of bulk and fracture conductivities~\cite{huy74,list20,morales10}. 

The resulting limit model for $\alpha = -1$ resembles discrete fracture models for Darcy flow that are derived by averaging methods~\cite{burbulla23,martin05}.
The averaging approach leads to a Darcy-like PDE on the fracture interface~$\Gamma$ as in \cref{eq:stronglimit_aeqm1_b} below. 
However, the choice of coupling conditions between bulk and interface solution does not occur naturally in this case, especially if the averaged model aspires to describe both conductive and blocking fractures. 
Therefore, coupling conditions in averaged models are typically obtained by making formal assumptions on the flow profile inside the fracture.

The strong formulation of the limit problem for $\alpha = -1$ and $\beta \ge -1$ now reads as follows.

Find $\smash{p_\pm \colon \Omega_\pm^0 \rightarrow \mathbb{R}}$ and $\smash{p_\Gamma \colon \Gamma \rightarrow \mathbb{R}}$ such that 
\begin{subequations}
\begin{alignat}{3}
-\nabla \cdot \bigl( \matr{K}_\pm^0 \nabla p_\pm \bigr) &= q_\pm^0 \qquad &&\text{in } \Omega_\pm^0 , \\
\label{eq:stronglimit_aeqm1_b} -\nablaGamma \cdot \bigl( a \matr{K}_\Gamma \nablaGamma p_\Gamma  \bigr) &= aq_\Gamma + \jumpsize{\big}{\matr{K}^0 \nabla p }_\Gamma \qquad &&\text{in } \Gamma , \\
{p}_+ = {p}_- &= p_\Gamma \qquad &&\text{on } \Gamma , \\
{p}_+ &= {p}_- \qquad &&\text{on } \Gamma_0^0 , \\
\matr{K}_+^0 \nabla p_+ \cdot \vct{N} &= \matr{K}_-^0 \nabla p_- \cdot \vct{N} \qquad &&\text{on } \Gamma_0^0 , \\
{p}_\pm &= 0 \qquad &&\text{on } \rho_{\pm ,\mathrm{D}}^0 ,  \\
\matr{K}_\pm^0 \nabla {p}_\pm \cdot \vct{n} &= 0 \qquad &&\text{on } \rho_{\pm ,\mathrm{N}}^0  ,  \\
p_\Gamma &= 0 \qquad &&\text{on } \partial\Gamma_\mathrm{D}  , \label{eq:stronglimit_aeqm1_h} \\
\matr{K}_\Gamma \nablaGamma  p_\Gamma \cdot \vct{n} &= 0 \qquad &&\text{on } \partial\Gamma_\mathrm{N}  . \label{eq:stronglimit_aeqm1_i}
\end{alignat}%
\label{eq:stronglimit_aeqm1}%
\end{subequations}%
In \cref{eq:stronglimit_aeqm1_b}, $\smash{q_\Gamma \in L^2_a (\Gamma )}$ and $\smash{\matr{K}_\Gamma \in L^\infty (\Gamma ; \mathbb{R}^{n\times n})}$ are given by 
\begin{align}
\label{eq:qgamma} q_\Gamma (\vct{p}  ) &:= \begin{cases} ( \mathfrak{A}_\Gamma \hat{q}_\rmf ) (\vct{p} )   &\text{if } \beta = -1 , \\
0 &\text{if } \beta > - 1 ,  \end{cases} \\
\label{eq:KGamma} \matr{K}_\Gamma (\vct{p} ) &:= \bigl( \mathfrak{A}_\Gamma \bigl[ \hat{\matr{K}}_\rmf - \bigl[ \hat{\matr{K}}_\rmf \vct{N} \cdot \vct{N} \bigr]^{-1}  \hat{\matr{K}}_\rmf \vct{N} \otimes \hat{\matr{K}}_\rmf \vct{N}  \bigr] \bigr) (\vct{p}) .
\end{align} 
Here, the application of the operator~$\smash{\mathfrak{A}_\Gamma}$ is to be understood componentwise.
The boundary parts~$\smash{\partial\Gamma_\mathrm{D}}$, $\smash{\partial\Gamma_\mathrm{N}}$ in the~\cref{eq:stronglimit_aeqm1_i,eq:stronglimit_aeqm1_h} are given by%
\begin{subequations}
\begin{align}
\partial \Gamma_{\mathrm{D}} &:= \bigl\{ \vct{p} \in \partial\Gamma \ \big\vert\ \exists Z_{\vct{p}} \subset \mathbb{R}, \abssize{}{Z_{\vct{p}}} > 0 ,\ \forall \theta_n \in Z_{\vct{p}} \colon \vct{p} + \hat{\epsilon} \theta_n \vct{N} (\vct{p} )  \in \rho_{\mathrm{f},\mathrm{D}}^{\hat{\epsilon}} \bigr\} , \\
\partial \Gamma_\mathrm{N} &:= \partial \Gamma \setminus \partial \Gamma_\mathrm{D} .
\end{align}
\end{subequations}
Generally, in particular, we have $\smash{\partial \Gamma_\mathrm{N} \setminus \partial \Omega \not= \emptyset}$, i.e., we also have a homogeneous Neumann condition at closing points of the fracture inside the domain.

A weak formulation of the system in \cref{eq:stronglimit_aeqm1} is given by the following problem.

Find $( {p}_+ , p_- , p_\Gamma) \in \Phi^0_\mathrm{II}$ such that, for all $ ( \phi_+ , \phi_- , \phi_\Gamma ) \in \Phi^0_\mathrm{II}$,
\begin{align}
\begin{split}
&\sum_{i = \pm} \bigl( \matr{K}_i^0 \nabla p_i, \nabla \phi_i \bigr)_{\vct{L}^2 (\Omega_i^0  ) } +  \bigl( a \matr{K}_\Gamma \nablaGamma  p_\Gamma   ,  \nablaGamma \phi_\Gamma \bigr)_{\vct{L}^2 ( \Gamma )} \\
&\hspace{5cm}  = \sum_{i = \pm } \bigl( q_i^0 , \phi_i \bigr)_{L^2 (\Omega_i^0 )} + \bigl( aq_\Gamma , \phi_\Gamma \bigr)_{L^2(\Gamma )} .
\end{split}%
\label{eq:weaklimit_aeqm1}%
\end{align}%
Here, the space~$\Phi^0_\mathrm{II}$ is defined by 
\begin{align}
\begin{split}
\Phi^0_\mathrm{II} &:= \Bigl\{ ( \phi_+ , \phi_- , \phi_\Gamma) \in \Bigl[\bigtimes\nolimits_{i=\pm} H_{0,\rho_{i,\mathrm{D}}^0}^1 (\Omega_i^0 )\Bigr] \times H^1_a (\Gamma )  \ \Big\vert\ \\
&\hspace{2cm}  \phi_+\bigr\vert_{\Gamma_0^0} = \phi_-\bigr\vert_{\Gamma_0^0} , \enspace \phi_\pm\bigr\vert_{\Gamma } = \phi_\Gamma , \enspace  \mathfrak{T}^a_\parallel \phi_\Gamma \bigr\vert_{\partial \Gamma_\mathrm{D}} \equiv 0 \Bigr\} \\
&\hphantom{:}\cong \Bigl\{ \phi \in H^1_{0,\rho^0_{\mathrm{b},\mathrm{D}}} (\Omega^0 ) \ \Big\vert\ \phi_\Gamma := \phi\bigr\vert_{\Gamma} \in H^1_a (\Gamma ), \ \mathfrak{T}^a_\parallel \phi_\Gamma \bigr\vert_{\partial \Gamma_\mathrm{D}} \equiv 0   \Bigr\}.
\end{split} 
\end{align}
We now have the following weak convergence result.
\begin{theorem} \label{thm:aeqm1}
Let $\alpha = -1$ and $\beta \ge - 1$.
Then, $\smash{( \hat{p}^\ast_+ , \hat{p}^\ast_- , \mathfrak{A}_\Gamma \hat{p}_\mathrm{f}^\ast ) \in \Phi_\mathrm{II}^0}$ is a weak solution of problem~\eqref{eq:weaklimit_aeqm1}, where $\smash{\hat{p}^\ast_\pm\in H^1(\Omega^0_\pm )}$, $\smash{\hat{p}^\ast_\mathrm{f} \in H^1 (\Gamma_a )}$ denote the limit functions from \Cref{prop:convergence}.
Further, for a.a.\ $\smash{(\vct{p} , \theta_n ) \in \Gamma_a }$, we have $\smash{\hat{p}_\mathrm{f}^\ast (\vct{p} , \theta_n) = (\mathfrak{A}_\Gamma \hat{p}^\ast_\mathrm{f} ) (\vct{p} )}$.
\end{theorem}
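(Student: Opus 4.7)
The plan is to pass to the limit along the subsequence $\{\epsilon_k\}$ from \Cref{prop:convergence} in~\eqref{eq:rescaledweakdarcyeps}, using as test triples the constant-in-$\theta_n$ lifts of elements of $\Phi^0_\mathrm{II}$, and to identify the weakly converging scaled normal derivative via two auxiliary test-function choices.

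\emph{Step 1 (Limit structure and scaled normal derivative).} Since $\alpha = -1 < 1$, \Cref{prop:pconst}~\ref{item:pconst2} gives $\nablaN \hat{p}^\ast_\rmf = \vct{0}$ a.e., so $\hat{p}^\ast_\rmf(\vct{p}, \theta_n) = (\mathfrak{A}_\Gamma \hat{p}^\ast_\rmf)(\vct{p}) =: p_\Gamma(\vct{p})$; \eqref{eq:conv_B} together with a $\theta_n$-integration then yields $p_\Gamma \in H^1_a(\Gamma)$, and the Dirichlet condition on $\rho_{a,\mathrm{D}}$ descends to $\mathfrak{T}^a_\smallpar p_\Gamma \vert_{\partial\Gamma_\mathrm{D}} \equiv 0$. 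Together with $\hat{p}^\ast_\pm \vert_\Gamma = p_\Gamma$ from \Cref{prop:pressurecontinuity}, this gives $(\hat{p}^\ast_+, \hat{p}^\ast_-, p_\Gamma) \in \Phi^0_\mathrm{II}$. From~\eqref{eq:apriori_l2_3a} with $\alpha = -1$ one also has $\smash{\norm{\epsilon_k^{-1} \nablaN \hat{p}^{\epsilon_k}_\rmf}_{\vct{L}^2(\Gamma_a)} \lesssim 1}$, so, up to a further subsequence, there exists $\zeta \in L^2(\Gamma_a)$ with $\epsilon_k^{-1} \nablaN \hat{p}^{\epsilon_k}_\rmf \rightharpoonup \zeta \vct{N}$ weakly in $\vct{L}^2(\Gamma_a)$.

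\emph{Step 2 (Passage to the limit on lifted test functions).} Given $(\phi_+, \phi_-, \phi_\Gamma) \in \Phi^0_\mathrm{II}$, extend $\phi_\Gamma$ constantly in $\theta_n$ to obtain $\phi_\rmf \in H^1(\Gamma_a)$ with $(\phi_+, \phi_-, \phi_\rmf) \in \Phi$, then insert this triple into~\eqref{eq:rescaledweakdarcyeps}. The bulk terms converge by \Cref{lem:limApm}; Fubini and~\eqref{eq:qgamma} give $\epsilon_k^{\beta+1}(\hat{q}_\rmf, \phi_\rmf)_{L^2(\Gamma_a)} \to (a q_\Gamma, \phi_\Gamma)_{L^2(\Gamma)}$. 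Since $\nablaN \phi_\rmf = \vct{0}$, only the first two terms of $\mathcal{A}_\rmf^{\epsilon_k}$ survive, and \Cref{lem:Roperators} together with~\eqref{eq:conv_B} and Step~1 yields
\begin{equation*}
\mathcal{A}_\rmf^{\epsilon_k}(\hat{p}^{\epsilon_k}_\rmf, \phi_\rmf) \longrightarrow \int_{\Gamma_a} \bigl[ \hat{\matr{K}}_\rmf \nablaGamma p_\Gamma + \zeta\, \hat{\matr{K}}_\rmf \vct{N} \bigr] \cdot \nablaGamma \phi_\Gamma \,\D\lambda_{\Gamma_a} .
\end{equation*}

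\emph{Step 3 (Identification of $\zeta$ via a Schur complement --- the main obstacle).} Reconciling this limit with the target~\eqref{eq:KGamma} is the crux of the proof. First, test with $(0, 0, \epsilon_k \Psi)$ for $\Psi \in H^1(\Gamma_a)$ vanishing on $\partial\Gamma_a^\smallperp \cup \rho_{a,\mathrm{D}}$: the prefactor $\epsilon_k$ kills the diagonal terms, and the two cross terms of $\mathcal{A}_\rmf^{\epsilon_k}$ pass to
\begin{equation*}
\bigl( \hat{\matr{K}}_\rmf \nablaGamma p_\Gamma \cdot \vct{N} + (\hat{\matr{K}}_\rmf \vct{N} \cdot \vct{N}) \zeta,\, \partial_{\theta_n} \Psi \bigr)_{L^2(\Gamma_a)} = 0 ,
\end{equation*}
forcing the bracketed function to be $\theta_n$-independent, say $c(\vct{p})$. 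Second, for smooth $\psi$ compactly supported in $\Gamma$, pick trace lifts $\tilde{\phi}_\pm \in H^1_{0, \rho_{\pm,\mathrm{D}}^0}(\Omega_\pm^0)$ with $\tilde{\phi}_\pm \vert_\Gamma = \pm a_\pm \psi$ and test with the linear-in-$\theta_n$ triple $(\epsilon_k \tilde{\phi}_+, \epsilon_k \tilde{\phi}_-, \epsilon_k \theta_n \psi) \in \Phi$: bulk and source contributions are $O(\epsilon_k)$, whereas the last two terms of $\mathcal{A}_\rmf^{\epsilon_k}$ deliver $\int_\Gamma a c \psi \,\D\lambda_\Gamma = 0$ in the limit, and arbitrariness of $\psi$ with $a > 0$ on $\Gamma$ forces $c \equiv 0$. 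Solving $\zeta = -(\hat{\matr{K}}_\rmf \vct{N} \cdot \vct{N})^{-1} \hat{\matr{K}}_\rmf \nablaGamma p_\Gamma \cdot \vct{N}$ and substituting into Step~2, the integrand collapses to $\bigl[\hat{\matr{K}}_\rmf - (\hat{\matr{K}}_\rmf \vct{N} \cdot \vct{N})^{-1} \hat{\matr{K}}_\rmf \vct{N} \otimes \hat{\matr{K}}_\rmf \vct{N}\bigr] \nablaGamma p_\Gamma \cdot \nablaGamma \phi_\Gamma$, and the $\theta_n$-integration --- with $\nablaGamma p_\Gamma$ and $\nablaGamma \phi_\Gamma$ being $\theta_n$-independent --- produces $(a \matr{K}_\Gamma \nablaGamma p_\Gamma, \nablaGamma \phi_\Gamma)_{L^2(\Gamma)}$. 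Assembled with the bulk and source limits this gives~\eqref{eq:weaklimit_aeqm1}. The decisive difficulty is precisely this two-stage identification of $\zeta$: without the second auxiliary test, a spurious $c(\vct{p})$-contribution would survive and the Schur-complement correction --- and in particular the dependence of $\matr{K}_\Gamma$ on the off-diagonal entries of $\hat{\matr{K}}_\rmf$ --- would not appear.
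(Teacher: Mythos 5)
Your proof is correct and follows the same overall architecture as the paper's proof: extract the weak limit $\zeta$ of $\epsilon_k^{-1}\nablaN \hat{p}^{\epsilon_k}_\rmf$ (your Step 1, the paper's Eq.\ (4.26)), pass to the limit against constant-in-$\theta_n$ test triples (your Step 2, the paper's final block), and then identify $\zeta$ via the $\epsilon_k$-scaled equation (your Step 3, the paper's Eqs.\ (4.27)--(4.28)). The difference lies in how $\zeta$ is pinned down. The paper derives the single orthogonality relation
$\bigl(\hat{\matr{K}}_\rmf\nablaGamma\hat{p}_\rmf^\ast + \zeta^\ast \hat{\matr{K}}_\rmf\vct{N},\nablaN\phi_\rmf\bigr)_{\vct{L}^2(\Gamma_a)}=0$ for all triples in $\Phi$, exhibits the Schur-complement formula as one solution, and then argues uniqueness by testing with $\phi_\rmf(\vct{p},\theta_n)=\int_{-a_-(\vct{p})}^{\theta_n}(\zeta^\ast-\bar\zeta^\ast)$. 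You instead split the identification into two explicit tests: zero-trace $\Psi$'s to force the combination to be $\theta_n$-independent, then linear-in-$\theta_n$ lifts $\epsilon_k\theta_n\psi$ to kill that constant. Your route is somewhat more careful in one respect: the paper's uniqueness step silently requires that the chosen $\phi_\rmf$ can be completed to a triple in $\Phi^\ast$ (its lateral trace is a priori only $L^2_a(\Gamma)$, not $H^{1/2}$), a point that your smooth compactly supported $\psi$'s and explicit $H^1$-lifts sidestep. Conversely, the paper's formulation is more compact, and both arguments extract exactly the same content from the same family of test functions.

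One minor slip in Step~3: you say the $\epsilon_k$-prefactor ``kills the diagonal terms'' and that ``the two cross terms pass to'' the stated identity. Tracking the four blocks of $\mathcal{A}_\rmf^{\epsilon_k}$ for $\alpha=-1$ after testing against $\epsilon_k\Psi$, the terms that vanish are the tangential--tangential one (extra factor $\epsilon_k$) and the normal--tangential cross term (because $\nablaN\hat{p}_\rmf^{\epsilon_k}\to 0$ strongly), while the surviving contributions come from the \emph{other} cross term $(\hat{\matr{K}}_\rmf\invR_\rmf^{\epsilon_k}\nablaGamma\hat{p}_\rmf^{\epsilon_k},\nablaN\Psi)$ and the normal--normal diagonal term $\epsilon_k^{-1}(\hat{\matr{K}}_\rmf\nablaN\hat{p}_\rmf^{\epsilon_k},\nablaN\Psi)$, which converge precisely to $(\hat{\matr{K}}_\rmf\nablaGamma p_\Gamma\cdot\vct{N},\partial_{\theta_n}\Psi)$ and $((\hat{\matr{K}}_\rmf\vct{N}\cdot\vct{N})\zeta,\partial_{\theta_n}\Psi)$ respectively. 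So your bookkeeping of which blocks contribute is off, but the resulting displayed identity is correct, and the same mislabeling in the second auxiliary test (``last two terms'') is harmless since it names the right pair there. A final cosmetic remark: your first auxiliary test uses $\Psi\in H^1(\Gamma_a)$ vanishing on the lateral boundary, so the admissible $\partial_{\theta_n}\Psi$'s are only a dense subset of the fiberwise zero-mean functions; the conclusion that the bracket is $\theta_n$-independent then needs a one-line density argument, which you should make explicit.
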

\begin{proof}
According to \Cref{prop:apriori_l2_3}, we have $\smash{\norm{\epsilon^{-1} \nablaN \hat{p}_\mathrm{f}^\epsilon}_{\vct{L}^2 (\Gamma_a )} \lesssim 1}$.
Thus, there exists~$\smash{\zeta^\ast \in L^2 (\Gamma_a)}$ such that
\begin{align}
\epsilon_k^{-1} \nablaN \hat{p}_\mathrm{f}^{\epsilon_k} \rightharpoonup \zeta^\ast \vct{N} \quad\enspace\text{in } \vct{L}^2 (\Gamma_a ) \label{eq:zetalimit2}
\end{align}
as $k\rightarrow \infty$.
By multiplying the transformed weak formulation~\eqref{eq:rescaledweakdarcyeps} by~$\smash{\epsilon_k}$ and taking the limit~$k\rightarrow\infty$, we find
\begin{align}
\bigl( \hat{\matr{K}}_\rmf \nablaGamma \hat{p}_\rmf^\ast , \nablaN \phi_\rmf \bigr)_{\vct{L}_2 (\Gamma_a )} + \bigl( \zeta^\ast \hat{\matr{K}}_\rmf \vct{N } , \nablaN \phi_\rmf \bigr)_{\vct{L}^2 (\Gamma_a ) } =  0 \label{eq:zetalimit4}
\end{align}
for any test function triple~$\smash{ (\phi_+ , \phi_- , \phi_\rmf ) \in \Phi}$, where we have used \Cref{lem:Roperators}.
A solution for $\smash{\zeta^\ast \in L^2 (\Gamma_a ) }$ is now clearly given by 
\begin{align}
\zeta^\ast = - \bigl[ \hat{\matr{K}}_\rmf \vct{N}  \cdot \vct{N} \bigr]^{-1}\, \hat{\matr{K}}_\rmf \nablaGamma \hat{p}_\rmf^\ast \cdot \vct{N} . \label{eq:zetalimit3}
\end{align}
Moreover, suppose that $\smash{\bar{\zeta}^\ast \in L^2 (\Gamma_a )}$ is another solution of \cref{eq:zetalimit4}. 
Then, with \cref{eq:zetalimit4}, we find
\begin{align*}
\bigl( \bigl[ \hat{\matr{K}}_\rmf \vct{N} \cdot \vct{N } \bigr] \bigl( \zeta^\ast - \bar{\zeta}^\ast \bigr) , \partial_{\theta_n} \phi_\rmf \bigr)_{L^2 (\Gamma_a ) } &= 0 \quad\enspace \text{for all } (\phi_+ , \phi_- , \phi_\rmf ) \in \Phi^\ast .
\end{align*}
Thus, by choosing $\smash{\phi_\rmf \in H^1_\vct{N}(\Gamma_a )}$ as
\begin{align*}
\phi_\rmf ( \vct{p} ,\theta_n ) := \int_{-a_-(\vct{p})}^{\theta_n} \bigl( \zeta^\ast - \bar{\zeta}^\ast \bigr) (\vct{p} , \bar{\theta}_n ) \,\D \bar{\theta}_n ,
\end{align*}
we obtain $\smash{\zeta^\ast = \bar{\zeta}^\ast}$ a.e.\ in~$\Gamma_a$, i.e., $\zeta^\ast$ is uniquely determined by \cref{eq:zetalimit3}. 

Next, we define the space 
\begin{align*}
\Phi_\rmf := \bigl\{ \phi_\mathrm{f} \in H^1(\Gamma_a )  \ \big\vert \ \nablaN \phi_\mathrm{f} \equiv \vct{0} \bigr\} \cong H^1_a (\Gamma ) 
\end{align*}
and take a test function triple $\smash{ (\phi_+ , \phi_- , \phi_\mathrm{f}) \in \Phi}$ with $\phi_\mathrm{f} \in \Phi_\mathrm{f}$.
Then, there is a function $\smash{\phi_\Gamma \in H^1_a (\Gamma )}$ with  $\smash{\phi_\mathrm{f} (\vct{p} , \theta_n )} = \smash{\phi_\Gamma ( \vct{p} )}$ a.e.\ in~$\smash{\Gamma_a }$. 
With \Cref{prop:convergence}, \Cref{lem:limApm}, and \cref{eq:zetalimit2}, we obtain 
\begin{align*}
\mathcal{A}_\pm^{\epsilon_k} ( \hat{p}^{\epsilon_k}_\pm , \phi_\pm ) \enspace &\rightarrow\enspace \bigl( \matr{K}_\pm^0 \nabla \hat{p}_\pm^\ast, \nabla \phi_\pm^0 \bigr)_{\vct{L}^2 (\Omega_i^0 ) }  , \\
\mathcal{A}^{\epsilon_k}_\rmf ( \hat{p}_\rmf^{\epsilon_k}  , \phi_\rmf ) \enspace &\rightarrow \enspace \bigl( \hat{\matr{K}}_\rmf \nablaGamma \hat{p}_\rmf^\ast , \nablaGamma \phi_\rmf \bigr)_{\vct{L}^2 (\Gamma_a ) } + \bigl( \zeta^\ast \hat{\matr{K}}_\rmf \vct{N} , \nablaGamma \phi_\rmf \bigr)_{\vct{L}^2 (\Gamma_a ) } 
\end{align*}
as $k\rightarrow\infty$. 
Here, we have used that
\begin{align*}
&\bigl( \hat{\matr{K}}_\rmf \invR^{\epsilon_k}_\rmf \nablaGamma \hat{p}_\rmf^{\epsilon_k} ,  \invR^{\epsilon_k}_\rmf \nablaGamma \phi_\rmf \bigr)_{\vct{L}^2 (\Gamma_a ) } - \bigl( \hat{\matr{K}}_\rmf \nablaGamma \hat{p}_\rmf^\ast , \nablaGamma \phi_\rmf \bigr)_{\vct{L}^2 (\Gamma_a ) } \\ 
&\hspace{0.25cm} = \bigl( \hat{\matr{K}}_\rmf \bigl[ \invR^{\epsilon_k}_\rmf - \operatorname{id}_{\rmT\Gamma}  \bigr] \nablaGamma \hat{p}^{\epsilon_k}_\rmf , \invR^{\epsilon_k}_\rmf \nablaGamma \phi_\rmf \bigr)_{\vct{L}^2 (\Gamma_a ) } \\
&\hspace{1cm} +
\bigl( \hat{\matr{K}}_\rmf \nablaGamma \hat{p}_\rmf^{\epsilon_k} , \bigl[ \invR^{\epsilon_k}_\rmf - \operatorname{id}_{\rmT\Gamma}  \bigr] \nablaGamma \phi_\rmf \bigr)_{\vct{L}^2 (\Gamma_a ) }  + 
\bigl( \hat{\matr{K}}_\rmf \nablaGamma \bigl[ \hat{p}_\rmf^{\epsilon_k} - \hat{p}_\rmf^\ast  \bigr] , \nablaGamma \phi_\rmf \bigr)_{\vct{L}^2 (\Gamma_a ) }
\end{align*}
for all $\smash{\phi_\rmf \in H^1 (\Gamma_a )}$, where the first two terms on the right-hand side vanish according to \Cref{lem:Roperators} as $k\rightarrow \infty$ and the third terms tends to zero with \Cref{prop:convergence}.
Moreover, with \cref{eq:zetalimit3} and \Cref{prop:pconst}~\ref{item:pconst2}, we have 
\begin{equation*}
 \bigl( \hat{\matr{K}}_\rmf \nablaGamma \hat{p}_\rmf^\ast , \nablaGamma \phi_\rmf \bigr)_{\vct{L}^2 (\Gamma_a ) } + \bigl( \zeta^\ast \hat{\matr{K}}_\rmf \vct{N} , \nablaGamma \phi_\rmf \bigr)_{\vct{L}^2 (\Gamma_a ) }  
=  \bigl(  a \matr{K}_\Gamma \nablaGamma ( \mathfrak{A}_\Gamma \hat{p}_\mathrm{f}^\ast ) , \nablaGamma \phi_\Gamma \bigr)_{\vct{L}^2 (\Gamma ) } ,
\end{equation*}
where $\smash{\matr{K}_\Gamma}$ is defined by \cref{eq:KGamma}.
Thus, by inserting $\smash{(\phi_+ , \phi_- , \phi_\rmf )}$ into the transformed weak formulation~\eqref{eq:rescaledweakdarcyeps} and letting $k\rightarrow \infty$, it follows that the limit solution~$\smash{( \hat{p}^\ast_+ , \hat{p}^\ast_- , \mathfrak{A}_\Gamma \hat{p}^\ast_\mathrm{f} ) }$ satisfies \cref{eq:weaklimit_aeqm1}. 
Besides, with \Cref{lem:traceH1a} and \Cref{prop:pressurecontinuity}, we have $\smash{( \hat{p}^\ast_+ , \hat{p}^\ast_- , \mathfrak{A}_\Gamma \hat{p}^\ast_\mathrm{f} )  \in \Phi_\mathrm{II}^0}$. 
\end{proof}

The effective hydraulic conductivity matrix~$\smash{\matr{K}_\Gamma}$ has the following properties.
\begin{lemma} 
\begin{enumerate}[label=(\roman*),wide,topsep=0pt]
\item The effective hydraulic conductivity matrix~$\smash{\matr{K}_\Gamma}$ from \cref{eq:KGamma} is symmetric and positive semidefinite.
In addition, for all $\vct{p} \in \Gamma $ and $\vct{\xi} \in \rmT_\vct{p}\Gamma$, we have 
$\smash{\vct{\xi} \cdot \matr{K}_\Gamma (\vct{p} )  \,\vct{\xi} > 0}$.
\item If $\smash{\hat{\matr{K}}_\rmf \in \mathcal{C}^0 ( \overline{\Gamma}_a^\smallpar ; \mathbb{R}^{n\times n }) }$, then $\smash{\matr{K}_\Gamma }$ is uniformly elliptic on~$\rmT \Gamma$, i.e., for all~$\vct{p }\in \Gamma$ and $\vct{\xi}\in \rmT_\vct{p}\Gamma$, we have 
$\smash{\vct{\xi} \cdot \matr{K}_\Gamma (\vct{p} )  \,\vct{\xi} \gtrsim \abs{\vct{\xi}}^2 }$.
\end{enumerate}
\end{lemma}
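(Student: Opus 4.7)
My plan is to reduce everything to a pointwise algebraic identity for the integrand
\[
\matr{M}(\vct{p},\theta_n) \;:=\; \hat{\matr{K}}_\rmf(\vct{p},\theta_n) - \bigl[\hat{\matr{K}}_\rmf \vct{N} \cdot \vct{N}\bigr]^{-1} \bigl(\hat{\matr{K}}_\rmf \vct{N}\bigr)\otimes\bigl(\hat{\matr{K}}_\rmf \vct{N}\bigr),
\]
so that $\matr{K}_\Gamma = \mathfrak{A}_\Gamma \matr{M}$ componentwise. Symmetry of $\matr{M}$ is immediate from symmetry of $\hat{\matr{K}}_\rmf$ together with the symmetry of the rank-one term $(\hat{\matr{K}}_\rmf\vct{N})\otimes(\hat{\matr{K}}_\rmf\vct{N})$, and $\mathfrak{A}_\Gamma$ preserves symmetry, which settles that part of (i). The key observation for the positivity statements is that, because $\hat{\matr{K}}_\rmf$ is uniformly elliptic, $\langle\vct{u},\vct{w}\rangle_{\hat{\matr{K}}_\rmf} := \hat{\matr{K}}_\rmf \vct{u}\cdot\vct{w}$ is a genuine inner product with $c_0|\vct{u}|^2 \le \|\vct{u}\|_{\hat{\matr{K}}_\rmf}^2 \le C_0|\vct{u}|^2$, uniformly in $(\vct{p},\theta_n)$.

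For the strict inequality in (i), I complete the square with respect to this inner product. For arbitrary $\vct{\xi}\in\mathbb{R}^n$, set $\lambda := \langle\vct{\xi},\vct{N}\rangle_{\hat{\matr{K}}_\rmf}/\langle\vct{N},\vct{N}\rangle_{\hat{\matr{K}}_\rmf}$ (the denominator is positive by uniform ellipticity) and $\vct{w} := \vct{\xi} - \lambda\vct{N}$. A direct expansion of $\hat{\matr{K}}_\rmf\vct{w}\cdot\vct{w}$ yields the identity
\[
\vct{\xi}\cdot\matr{M}(\vct{p},\theta_n)\vct{\xi} \;=\; \hat{\matr{K}}_\rmf(\vct{p},\theta_n)\,\vct{w}\cdot\vct{w} \;\ge\; 0,
\]
proving pointwise positive semidefiniteness of $\matr{M}$, which is inherited by $\matr{K}_\Gamma$ after averaging. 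When $\vct{\xi}\in\rmT_\vct{p}\Gamma\setminus\{\vct{0}\}$, the orthogonality $\vct{\xi}\cdot\vct{N}(\vct{p}) = 0$ together with $|\vct{N}(\vct{p})|=1$ yields $|\vct{w}|^2 = |\vct{\xi}|^2 + \lambda^2 \ge |\vct{\xi}|^2 > 0$, hence $\vct{\xi}\cdot\matr{M}(\vct{p},\theta_n)\vct{\xi} \ge c_0|\vct{\xi}|^2 > 0$ for a.e.\ $\theta_n\in(-a_-(\vct{p}),a_+(\vct{p}))$; integrating over this non-degenerate interval and dividing by $a(\vct{p})>0$ gives $\vct{\xi}\cdot\matr{K}_\Gamma(\vct{p})\vct{\xi} > 0$.

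Part (ii) then follows without any new ideas: the lower bound $\vct{\xi}\cdot\matr{M}(\vct{p},\theta_n)\vct{\xi} \ge c_0|\vct{\xi}|^2$ established above is uniform in $(\vct{p},\theta_n)$ and $\vct{\xi}\in\rmT_\vct{p}\Gamma$ and survives the averaging intact. Under the continuity assumption $\hat{\matr{K}}_\rmf\in\mathcal{C}^0(\overline{\Gamma}_a^\smallpar;\mathbb{R}^{n\times n})$ one could alternatively invoke the Weierstrass extreme-value theorem on the compact set $\{(\vct{p},\theta_n,\vct{\xi}) : \vct{p}\in\overline{\Gamma},\ -a_-(\vct{p})\le\theta_n\le a_+(\vct{p}),\ \vct{\xi}\in\rmT_\vct{p}\Gamma,\ |\vct{\xi}|=1\}$ together with the strict pointwise inequality from (i), but the completing-the-square route is shorter and requires only uniform ellipticity. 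The main obstacle I anticipate is simply recognizing $\matr{M}$ as a Schur complement with respect to the $\hat{\matr{K}}_\rmf$-inner product; without that algebraic reformulation a direct spectral analysis of $\matr{M}$ or a contradiction argument is feasible but notationally heavier.
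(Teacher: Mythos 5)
Your proof is correct, and it takes a genuinely different and, in part (ii), stronger route than the paper. For the positivity in (i), the paper expands $\vct{\xi}\cdot\matr{K}_\Gamma\vct{\xi}$ and invokes the Cauchy--Schwarz inequality in the $\hat{\matr{K}}_\rmf$-inner product, noting that equality would force $\vct{N}$ and $\vct{\xi}$ to be linearly dependent, which is excluded when $\vct{0}\neq\vct{\xi}\perp\vct{N}$. That argument gives strict positivity but no quantitative lower bound. You instead complete the square: with $\lambda = (\hat{\matr{K}}_\rmf\vct{\xi}\cdot\vct{N})/(\hat{\matr{K}}_\rmf\vct{N}\cdot\vct{N})$ and $\vct{w} = \vct{\xi}-\lambda\vct{N}$, you get the exact identity $\vct{\xi}\cdot\matr{M}\vct{\xi} = \hat{\matr{K}}_\rmf\vct{w}\cdot\vct{w}$, and since $\vct{\xi}\perp\vct{N}$ in the \emph{Euclidean} sense you have $\abs{\vct{w}}^2 = \abs{\vct{\xi}}^2+\lambda^2\ge\abs{\vct{\xi}}^2$, whence $\vct{\xi}\cdot\matr{M}(\vct{p},\theta_n)\vct{\xi}\ge c_0\abs{\vct{\xi}}^2$ with $c_0$ the ellipticity constant of $\hat{\matr{K}}_\rmf$, uniformly in $(\vct{p},\theta_n)$. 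Averaging preserves this bound. The payoff is visible precisely in (ii): the paper deduces uniform ellipticity from the strict pointwise inequality via a Bolzano--Weierstrass contradiction argument, which is where the continuity hypothesis $\hat{\matr{K}}_\rmf\in\mathcal{C}^0(\overline{\Gamma}_a^\smallpar)$ enters; your quantitative lower bound makes (ii) an immediate corollary of (i) and shows that the continuity hypothesis is in fact unnecessary given the standing assumption that $\hat{\matr{K}}_\rmf$ is uniformly elliptic. The Schur-complement reading you mention is accurate (with respect to the decomposition $\mathbb{R}^n = \rmT_\vct{p}\Gamma\oplus\mathrm{span}\,\vct{N}$, $\matr{M}$ restricted to $\rmT_\vct{p}\Gamma$ is $A-bd^{-1}b^\trp$) and explains structurally why this works. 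In short: same conclusion, but your route is shorter, quantitative, and yields a slightly sharper theorem.
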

\begin{proof}
\begin{enumerate}[label=(\roman*),wide,topsep=0pt]
\item $\smash{\matr{K}_\Gamma}$ is symmetric by definition.
Moreover, for $\vct{\xi} \in \mathbb{R}^n$, we have 
\begin{align*}
\vct{\xi} \cdot \matr{K}_\Gamma \vct{\xi} = \mathcal{A}_\Gamma \bigl( \vct{\xi} \hat{\matr{K}}_\rmf \cdot \vct{\xi} - \bigl[ \hat{\matr{K}}_\rmf \vct{N} \cdot \vct{N} \bigr]^{-1} \bigl[ \hat{\matr{K}}_\rmf \vct{N} \cdot \vct{\xi} \bigr]^2 \bigr) . 
\end{align*}
With the Cauchy-Schwarz inequality, we obtain 
\begin{align*}
\bigl[ \hat{\matr{K}}_\rmf \vct{N} \cdot \vct{\xi} \bigr]^2 = \bigl[ \hat{\matr{K}}_\rmf^{\frac{1}{2}} \vct{N} \cdot \hat{\matr{K}}_\rmf^{\frac{1}{2}} \vct{\xi} \bigr]^2 \le \bigl(  \hat{\matr{K}}_\rmf \vct{N} \cdot \vct{N}\bigr) \bigl(  \hat{\matr{K}}_\rmf \vct{\xi} \cdot \vct{\xi}\bigr) 
\end{align*}
with strict inequality if $\vct{N} \perp \vct{\xi}$. 
\item Suppose that, for all $k\in \mathbb{N}$, there exist $\vct{p}_k \in \Gamma$ and $\smash{ \vct{\xi}_k \in \rmT_{\vct{p}_k}\Gamma }$ such that 
\begin{align*}
\vct{\xi}_k \cdot \matr{K}_\Gamma (\vct{p}_k )  \,\vct{\xi}_k \le \frac{1}{k} \abs{\vct{\xi}_k}^2 .
\end{align*}
W.l.o.g., we assume $\smash{\abs{\vct{\xi}_k} = 1}$ for all $k \in \mathbb{N}$.
Then, with the Bolzano-Weierstraß theorem, there exists a subsequence such that 
\begin{align*}
\vct{p}_{k_l} \rightarrow \vct{p} \in \overline{\Gamma} , \qquad \vct{\xi}_{k_l} \rightarrow \vct{\xi} \in \rmT_\vct{p}\partial G  
\end{align*}
as $l \rightarrow \infty$. In particular, we have 
\begin{align*}
\vct{\xi}_{k_l} \cdot \matr{K}_\Gamma (\vct{p}_{k_l} )  \,\vct{\xi}_{k_l} \rightarrow  \vct{\xi} \cdot \matr{K}_\Gamma (\vct{p} )  \,\vct{\xi} = 0
\end{align*}
as $l \rightarrow \infty$, which is a contradiction to~(i). \qedhere
\end{enumerate}
\end{proof}

Further, the following strong convergence result holds true.
\begin{theorem} 
Let $\alpha = -1 $ and $\beta \ge -1$. Then, we have strong convergence 
\begin{subequations}
\begin{alignat}{2}
\hat{p}_\pm^{\epsilon_k} &\rightarrow \hat{p}^\ast_\pm \quad\enspace &&\text{in } H^1 (\Omega_\pm^0 ) , \\
\hat{p}_\mathrm{f}^{\epsilon_k} &\rightarrow \hat{p}^\ast_\mathrm{f} \quad\enspace &&\text{in } H^1 (\Gamma_a ) , \\
\epsilon_k^{-1} \nablaN \hat{p}_\rmf^{\epsilon_k} &\rightarrow \zeta^\ast \vct{N} \quad\enspace &&\text{in } \vct{L}^2 (\Gamma_a ) 
\end{alignat}%
\label{eq:strongconv_aeqm1}%
\end{subequations}%
as $k\rightarrow \infty$, where $\smash{\zeta^\ast \in L^2 (\Gamma_a ) }$ is given by \cref{eq:zetalimit3}.
Moreover, if $\smash{\matr{K}_\Gamma}$ is uniformly elliptic on~$\rmT\Gamma$,  $\smash{( \hat{p}^\ast_+ , \hat{p}^\ast_- , \mathfrak{A}_\Gamma \hat{p}_\mathrm{f}^\ast ) \in \Phi_\mathrm{II}^0}$ is the unique weak solution of the problem in \cref{eq:weaklimit_aeqm1} and the strong convergence in \cref{eq:strongconv_aeqm1} holds for the whole sequence~$\smash{ \{ \hat{p}_i^\epsilon \}_{\epsilon\in (0,\hat{\epsilon}]}}$, $\smash{i\in\{ + , - , \mathrm{f}\}}$.
\end{theorem}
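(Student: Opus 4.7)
The overall strategy parallels the strong convergence argument in Case~I: test the transformed weak formulation~\eqref{eq:rescaledweakdarcyeps} with the solution itself to obtain an energy identity, pass to the limit, and match the result against the energy of the limit problem. Two new complications arise compared to Case~I: the limit fracture pressure is no longer constant but varies tangentially along $\Gamma$, and the rescaled normal derivative $\epsilon^{-1}\nablaN\hat{p}_\rmf^\epsilon$ carries the nontrivial weak limit $\zeta^\ast\vct{N}$ introduced in~\eqref{eq:zetalimit3}, which must be tracked separately.

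I would first handle uniqueness. Under the uniform ellipticity of $\matr{K}_\Gamma$ on $\rmT\Gamma$, the bilinear form in~\eqref{eq:weaklimit_aeqm1} is coercive on $\Phi_\mathrm{II}^0$: Poincar\'e's inequality on the bulk domains (exploiting the Dirichlet portion of $\partial\Omega$ and the coupling across $\Gamma_0^0$) controls the bulk seminorm, while the assumed ellipticity of $\matr{K}_\Gamma$ controls the interfacial $H^1_a(\Gamma)$-seminorm of $\phi_\Gamma$ via the trace $\mathfrak{T}_\smallpar^a$. Lax--Milgram then gives uniqueness in $\Phi_\mathrm{II}^0$, so all subsequential weak limits agree and the weak convergences of \Cref{prop:convergence} together with \eqref{eq:zetalimit2} extend to the full family $\epsilon\to 0$.

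For the strong convergence, testing~\eqref{eq:rescaledweakdarcyeps} with its own solution and using the strong $L^2$-convergences from \Cref{prop:convergence}, the right-hand side tends to $\sum_\pm(q_\pm^0,\hat{p}_\pm^\ast)_{L^2(\Omega_\pm^0)} + (aq_\Gamma,\mathfrak{A}_\Gamma\hat{p}_\rmf^\ast)_{L^2(\Gamma)}$; testing~\eqref{eq:weaklimit_aeqm1} with its own solution identifies this value as $\sum_\pm(\matr{K}_\pm^0\nabla\hat{p}_\pm^\ast,\nabla\hat{p}_\pm^\ast)_{\vct{L}^2(\Omega_\pm^0)} + (a\matr{K}_\Gamma\nablaGamma\mathfrak{A}_\Gamma\hat{p}_\rmf^\ast,\nablaGamma\mathfrak{A}_\Gamma\hat{p}_\rmf^\ast)_{\vct{L}^2(\Gamma)}$. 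The decisive algebraic ingredient is the cell-problem identity
\[ (a\matr{K}_\Gamma\nablaGamma\mathfrak{A}_\Gamma\hat{p}_\rmf^\ast,\nablaGamma\mathfrak{A}_\Gamma\hat{p}_\rmf^\ast)_{\vct{L}^2(\Gamma)} = (\hat{\matr{K}}_\rmf\vct{F}^\ast,\vct{F}^\ast)_{\vct{L}^2(\Gamma_a)}, \quad \vct{F}^\ast := \nablaGamma\hat{p}_\rmf^\ast + \zeta^\ast\vct{N}, \]
which follows from~\eqref{eq:KGamma} and~\eqref{eq:zetalimit3} by a Schur-complement calculation: $\zeta^\ast$ is exactly the value that pointwise minimizes $\zeta\mapsto \hat{\matr{K}}_\rmf(\nablaGamma\hat{p}_\rmf^\ast+\zeta\vct{N})\cdot(\nablaGamma\hat{p}_\rmf^\ast+\zeta\vct{N})$, and the $\theta_n$-integration produces the Schur complement $\matr{K}_\Gamma$.

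With this identity in hand, weak lower semicontinuity applied to each summand finishes the proof as in Case~I. Combining \Cref{lem:Roperators}, \eqref{eq:zetalimit2}, and \Cref{prop:convergence} shows that the fracture flux $\vct{F}^{\epsilon_k}:=\invR_\rmf^{\epsilon_k}\nablaGamma\hat{p}_\rmf^{\epsilon_k}+\epsilon_k^{-1}\nablaN\hat{p}_\rmf^{\epsilon_k}$ converges weakly in $\vct{L}^2(\Gamma_a)$ to $\vct{F}^\ast$, giving $\liminf(\hat{\matr{K}}_\rmf\vct{F}^{\epsilon_k},\vct{F}^{\epsilon_k})_{\vct{L}^2(\Gamma_a)}\ge(\hat{\matr{K}}_\rmf\vct{F}^\ast,\vct{F}^\ast)_{\vct{L}^2(\Gamma_a)}$; \Cref{lem:limApm} gives the analogous bound on the bulk summands. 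Equality of the sum of liminfs with the limit of the sum forces each term to converge, and norm-plus-weak convergence then delivers strong $\vct{L}^2$-convergence of $\nabla\hat{p}_\pm^{\epsilon_k}$ and of $\vct{F}^{\epsilon_k}$. Combined with $\nablaN\hat{p}_\rmf^{\epsilon_k}\to\vct{0}$ (which follows from \Cref{prop:apriori_l2_3} since $\alpha=-1$) and $\invR_\rmf^{\epsilon_k}\to\operatorname{id}$ uniformly, this decouples the tangential and normal contributions of $\vct{F}^{\epsilon_k}$ and yields the three strong convergences claimed in~\eqref{eq:strongconv_aeqm1}. The main obstacle is the cell-problem identity above: it is precisely this Schur-complement calculation that forces the off-diagonal entries of $\hat{\matr{K}}_\rmf$ to enter the effective conductivity $\matr{K}_\Gamma$, distinguishing this case from the scalar-conductivity settings treated in prior work.
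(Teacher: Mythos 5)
Your proposal is correct and takes essentially the same approach as the paper's proof: testing with the solution itself, exploiting the Schur-complement identity $(a\matr{K}_\Gamma\nablaGamma\mathfrak{A}_\Gamma\hat{p}_\rmf^\ast,\nablaGamma\mathfrak{A}_\Gamma\hat{p}_\rmf^\ast)_{\vct{L}^2(\Gamma)} = (\hat{\matr{K}}_\rmf[\nablaGamma\hat{p}_\rmf^\ast+\zeta^\ast\vct{N}],[\nablaGamma\hat{p}_\rmf^\ast+\zeta^\ast\vct{N}])_{\vct{L}^2(\Gamma_a)}$, and converting energy convergence plus weak convergence into strong convergence. The paper packages the bulk and fracture quadratic terms into a single equivalent inner-product norm on $\Phi\times L^2(\Gamma_a)$ and argues via norm convergence, whereas you apply weak lower semicontinuity termwise and use the liminf/sum-equality trick; these are algebraically the same argument, and your orthogonal-decomposition remark (tangential $\invR_\rmf^{\epsilon_k}\nablaGamma\hat{p}_\rmf^{\epsilon_k}$ versus normal $\epsilon_k^{-1}\nablaN\hat{p}_\rmf^{\epsilon_k}$) for splitting off the individual strong limits is exactly what is implicit in the paper's use of its quadratic form.
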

\begin{proof}
First, we define the norm 
\begin{multline*}
\normsizeiii{\big}{( \phi_+ , \phi_- , \phi_\rmf ,\, \zeta ) }^2 := \sum_{i=\pm} \bigl( \matr{K}_i^0 \nabla \phi_i , \nabla \phi_i \bigr)_{\vct{L}^2 (\Omega_i^0 ) } \\
+ \bigl( \hat{\matr{K}}_\rmf \bigl[ \nablaGamma \phi_\rmf + \zeta \vct{N} \bigr] , \bigl[ \nablaGamma \phi_\rmf  + \zeta \vct{N} \bigr] \bigr)_{\vct{L}^2 (\Gamma_a ) } + \bigl( \hat{\matr{K}}_\rmf \nablaN \phi_\rmf , \nablaN \phi_\rmf \bigr)_{L^2 (\Gamma_a ) } 
\end{multline*}
on $\smash{\Phi \times L^2 (\Gamma_a )}$. 
Then, with \Cref{lem:apriori_l2_2}, it is easy to see that the norm~$\smash{\normiii{\cdot}}$ is equivalent to the product norm on $\smash{\Phi \times L^2 (\Gamma_a ) \subset H^1 (\Omega_+^0 ) \times H^1 (\Omega_-^0 ) \times H^1 (\Gamma_a ) \times L^2 (\Gamma_a ) }$.
With \Cref{lem:Roperators}, \Cref{prop:apriori_l2_3}, and the \cref{eq:rescaledweakdarcyeps,eq:limApm2}, we find 
\begin{multline*}
\normsizeiii{\big}{\bigl( \hat{p}_+^{\epsilon_k}  , \hat{p}_-^{\epsilon_k}  , \hat{p}_\rmf^{\epsilon_k}  , \epsilon_k^{-1} \partial_{\theta_n} \hat{p}_\rmf^{\epsilon_k }   \bigr) }^2 = \sum_{i=\pm} \mathcal{A}_i^{\epsilon_k} (\hat{p}_i^{\epsilon_k} , \hat{p}_i^{\epsilon_k} ) + \mathcal{A}_\rmf ( \hat{p}_\rmf^{\epsilon_k} , \hat{p}_\rmf^{\epsilon_k } ) + \smallo (\epsilon_k ) \\
= \bigl[1 + \mathcal{O}(\epsilon_k ) \bigr] \biggl[ \sum_{i=\pm } \bigl( q_i^0 , \hat{p}_i^{\epsilon_k} \bigr)_{L^2 (\Omega_i^0 ) } + \epsilon_k^{\beta + 1 } \bigl( \hat{q}_\mathrm{f} , \hat{p}_\mathrm{f}^{\epsilon_k} \bigr)_{L^2 (\Gamma_a )}  \biggr] + \smallo (\epsilon_k ) .
\end{multline*}
Thus, with the \Cref{prop:convergence} and \Cref{thm:aeqm1}, we obtain 
\begin{multline*}
\lim_{k \rightarrow \infty } \normsizeiii{\big}{\bigl( \hat{p}_+^{\epsilon_k}  , \hat{p}_-^{\epsilon_k}  , \hat{p}_\rmf^{\epsilon_k}  , \epsilon_k^{-1} \partial_{\theta_n} \hat{p}_\rmf^{\epsilon_k }   \bigr) }^2 = \sum_{i = \pm} \bigl( q_i^0 , \hat{p}_i^\ast \bigr)_{L^2 (\Omega_i^0 )}  + \bigl( aq_\Gamma , \mathfrak{A}_\Gamma \hat{p}_\mathrm{f}^\ast \bigr)_{L^2(\Gamma )} \\
= \sum_{i = \pm} \bigl( \matr{K}_i^0 \nabla \hat{p}_i^\ast, \nabla \hat{p}_i^\ast \bigr)_{\vct{L}^2 (\Omega_i^0 ) } +  \bigl( a \matr{K}_\Gamma \nablaGamma \bigl[ \mathfrak{A}_\Gamma p_\mathrm{f}^\ast \bigr]   ,  \nablaGamma \bigl[ \mathfrak{A}_\Gamma p_\mathrm{f}^\ast \bigr]   \bigr)_{\vct{L}^2 ( \Gamma  )} .
\end{multline*}
Additionally, with the \cref{eq:KGamma,eq:zetalimit3} and \Cref{prop:pconst}, it is
\begin{align*}
\bigl( a \matr{K}_\Gamma \nablaGamma \bigl[ \mathfrak{A}_\Gamma p_\mathrm{f}^\ast \bigr]   ,  \nablaGamma \bigl[ \mathfrak{A}_\Gamma p_\mathrm{f}^\ast \bigr]   \bigr)_{\vct{L}^2 ( \Gamma  )}  &= \bigl( \hat{\matr{K}}_\rmf \bigl[ \nablaGamma \hat{p}_\rmf^\ast  + \zeta^\ast \vct{N }\bigr] , \bigl[ \nablaGamma \hat{p}_\rmf^\ast + \zeta^\ast \vct{N} \bigr] \bigr)_{\vct{L}^2 (\Gamma_a ) } .
\end{align*}
Thus, with \Cref{prop:pconst}, we have 
\begin{align*}
\lim_{k \rightarrow \infty } \normsizeiii{\big}{( \hat{p}^{\epsilon_k}_+ , \hat{p}_-^{\epsilon_k} , \hat{p}^{\epsilon_k}_\mathrm{f} , \epsilon_k^{-1} \partial_{\theta_n} \hat{p}_\rmf^{\epsilon_k } )  } &= \normsizeiii{\big}{( \hat{p}^\ast_+ , \hat{p}_-^\ast , \hat{p}^\ast_\mathrm{f}, \zeta^\ast ) }.
\end{align*}

Now, let $\smash{\matr{K}_\Gamma}$ be uniformly elliptic on $\rmT \Gamma$ and $\smash{ (\phi_+ , \phi_- , \phi_\Gamma ) \in \Phi_\mathrm{II}^0}$.
Then, we have
\begin{multline*}
\sum_{i = \pm} \bigl( \matr{K}_i^0 \nabla \phi_i, \nabla \phi_i \bigr)_{\vct{L}^2 (\Omega_i^0  ) } +  \bigl( a \matr{K}_\Gamma \nablaGamma  \phi_\Gamma   ,  \nablaGamma \phi_\Gamma \bigr)_{\vct{L}^2 ( \Gamma  )} \\
\gtrsim \sum_{i=\pm} \norm{\nabla \phi_i}^2_{\vct{L}^2 (\Omega_i^0 )} + \norm{\nablaGamma \phi_\Gamma}^2_{\vct{L}^2 (\Gamma_a )} .
\end{multline*}
Hence, we obtain coercivity on~$\smash{\Phi_\mathrm{II}^0}$ by applying \Cref{lem:apriori_l2_2}. 
Thus, as a consequence of the Lax-Milgram theorem, $\smash{ ( \hat{p}^\ast_+ , \hat{p}^\ast_- , \mathfrak{A}_\Gamma \hat{p}_\mathrm{f}^\ast ) \in \Phi_\mathrm{II}^0}$ is the unique weak solution of the problem in \cref{eq:weaklimit_aeqm1}. 
Further, this implies the convergence of the whole sequence~$\smash{ \{ \hat{p}_i^\epsilon \}_{\epsilon\in (0,\hat{\epsilon}]}}$, $\smash{i\in\{ + , - , \mathrm{f}\}}$, as $\epsilon\rightarrow 0$ since every convergent subsequence has the same limit.
\end{proof}

\subsection{Case III: \texorpdfstring{$\alpha \in (-1, 1)$}{alpha in (-1,1)}}
\label{sec:sec43}
For $\alpha \in (-1, 1)$ and $\beta \ge -1$, the hydraulic conductivities in bulk and fracture are of similar magnitude such that the fracture disappears in the limit $\smash{\epsilon \rightarrow 0}$.
No effect of the fracture conductivity is visible in the limit model and pressure and normal velocity are continuous across the interface~$\Gamma$ (except for source terms if $\beta = -1$).
The strong formulation of the limit problem reads as follows. 

Find $\smash{p_\pm \colon \Omega^0_\pm \rightarrow \mathbb{R}}$ such that
\begin{subequations}
\begin{alignat}{3}
-\nabla \cdot \bigl( \matr{K}_\pm^0 \nabla p_\pm \bigr) &= q_\pm^0 \qquad &&\text{in } \Omega_\pm^0 ,  \\
p_+ &= p_-  \qquad &&\text{on } \gamma , \\
 \jumpsize{\big}{\matr{K}^0\nabla p}_\Gamma  + aq_\Gamma &= 0 \qquad &&\text{on } \Gamma ,\\
\matr{K}_+^0 \nabla p_+ \cdot \vct{N} &= \matr{K}_-^0 \nabla p_- \cdot \vct{N} \qquad &&\text{on } \Gamma_0^0   , \\
p_\pm &= 0 \qquad &&\text{on } \rho_{\pm,\mathrm{D}}^0 ,  \\
\matr{K}_\pm^0 \nabla p_\pm \cdot \vct{n} &= 0 \qquad &&\text{on } \rho_{\pm,\mathrm{N}}^0 , 
\end{alignat}%
\label{eq:stronglimit_ainm1p1}%
\end{subequations}%
where $\smash{ q_\Gamma \in L^2_a (\Gamma ) }$ is defined as in \cref{eq:qgamma}. 

A weak formulation of the system in \cref{eq:stronglimit_ainm1p1} is given by the following problem.

Find $\smash{(p_+ , p_- ) \in \Phi^0_\mathrm{III}}$ such that, for all $\smash{ ( \phi_- , \phi_+ ) \in \Phi_\mathrm{III}^0}$ with $\smash{\phi_\Gamma := \phi_\pm\bigr\vert_{\Gamma}}$,
\begin{align}
&\sum_{i = \pm} \bigl( \matr{K}_i^0 \nabla p_i , \nabla \phi_i )_{\vct{L}^2 (\Omega_i^0 ) } = \sum_{i = \pm } \bigl( q_i^0 , \phi_i \bigr)_{L^2 (\Omega_i^0 )} +  \bigl( aq_\Gamma , \phi_\Gamma \bigr)_{L^2(\Gamma ) }. \label{eq:weaklimit_ainm1p1}
\end{align}
Here, the space~$\Phi^0_\mathrm{III}$ is given by 
\begin{align}
\begin{split}
\Phi_\mathrm{III}^0 &:= \Bigl\{ ( \phi_+ , \phi_-  ) \in \bigtimes\nolimits_{i=\pm} H_{0,\rho_{i,\mathrm{D}}^0}^1 (\Omega_i^0 )   \ \Big\vert\  \phi_+\bigr\vert_{\gamma} = \phi_-\bigr\vert_{\gamma} \Bigr\} \cong H^1_{0,\rho_{\mathrm{b},\mathrm{D}}^0} (\Omega ) .
\end{split}
\end{align}
We now obtain the following convergence results.
\begin{theorem} \label{thm:ainm1p1}
Let $\alpha \in (-1, 1)$ and $\beta \ge -1$. 
Besides, let either $\alpha \le 0$ or assume that \eqref{asm:rhoD} holds.
Then, given the limit functions $\smash{\hat{p}^\ast_\pm\in H^1(\Omega^0_\pm )}$ and $\smash{\hat{p}^\ast_\mathrm{f} \in H^1_{\vct{N}} (\Gamma_a )}$ from \Cref{prop:convergence}, we find that $\smash{ ( \hat{p}^\ast_+ , \hat{p}^\ast_-  ) \in \Phi_\mathrm{III}^0}$ is a weak solution of \cref{eq:weaklimit_ainm1p1}. 
Moreover, we have $\smash{\hat{p}^\ast_\pm = \mathfrak{A}_\Gamma \hat{p}_\mathrm{f}^\ast}$ on~$\Gamma$
and $\smash{\hat{p}_\mathrm{f}^\ast (\vct{p} , \theta_n) = (\mathfrak{A}_\Gamma \hat{p}^\ast_\mathrm{f} ) (\vct{p} )}$ for a.a.~$\smash{(\vct{p} , \theta_n ) \in \Gamma_a }$.
\end{theorem}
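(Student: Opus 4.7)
The plan is to pass to the limit $k\to\infty$ in the transformed weak formulation~\eqref{eq:rescaledweakdarcyeps} along the subsequence~$\{\epsilon_k\}$ using test functions whose normal gradient on the fracture vanishes identically, thereby killing the $\epsilon^{\alpha-1}$ blow-up in the $\nablaN$-part of $\mathcal{A}_\rmf^\epsilon$.

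First I would establish the structural statements about the limit. Since $\alpha < 1$, \Cref{prop:pconst}\ref{item:pconst2} gives $\nablaN \hat{p}_\rmf^\ast = \vct{0}$ a.e.\ in~$\Gamma_a$, so $\hat{p}_\rmf^\ast(\vct{p},\theta_n) = (\mathfrak{A}_\Gamma \hat{p}_\rmf^\ast)(\vct{p})$. \Cref{prop:pressurecontinuity} then yields $\hat{p}_\pm^\ast\vert_\Gamma = \mathfrak{A}_\Gamma \hat{p}_\rmf^\ast$ a.e., so the two bulk traces coincide on~$\Gamma$; combined with the condition $\hat{p}_+^\ast\vert_{\Gamma_0^0} = \hat{p}_-^\ast\vert_{\Gamma_0^0}$, which is preserved under passing to the closure~$\Phi^\ast$ by continuity of the bulk trace on~$\Gamma_0^0$, this gives continuity on all of~$\gamma$ and thus $(\hat{p}_+^\ast,\hat{p}_-^\ast)\in\Phi_\mathrm{III}^0$.

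For the convergence I would take a smooth test pair $(\phi_+,\phi_-)$ coming from a function $\phi\in\mathcal{C}^\infty(\overline{\Omega})$ that vanishes near~$\rho_{\mathrm{b},\mathrm{D}}^0$ and near the closing of the fracture so that the compatibility with~$\rho_{a,\mathrm{D}}$ is satisfied, and set $\phi_\rmf(\vct{p},\theta_n) := \phi_\Gamma(\vct{p}) := \phi\vert_\Gamma(\vct{p})$. Since $\nablaN\phi_\rmf \equiv \vct{0}$ and $\mathfrak{T}_\pm\phi_\rmf = \phi_\Gamma = \phi_\pm\vert_\Gamma$, the triple $(\phi_+,\phi_-,\phi_\rmf)\in\Phi$ and can be inserted into~\eqref{eq:rescaledweakdarcyeps}. \Cref{lem:limApm} delivers $\mathcal{A}_\pm^{\epsilon_k}(\hat{p}_\pm^{\epsilon_k},\phi_\pm) \to (\matr{K}_\pm^0\nabla\hat{p}_\pm^\ast,\nabla\phi_\pm)_{\vct{L}^2(\Omega_\pm^0)}$. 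In $\mathcal{A}_\rmf^{\epsilon_k}(\hat{p}_\rmf^{\epsilon_k},\phi_\rmf)$ the two terms containing $\nablaN\phi_\rmf$ drop, and the surviving ones are controlled by \Cref{lem:Roperators}, \Cref{prop:apriori_l2_3}, and Cauchy--Schwarz:
\begin{align*}
\epsilon_k^{\alpha+1}\bigl\lvert\bigl(\hat{\matr{K}}_\rmf\invR_\rmf^{\epsilon_k}\nablaGamma\hat{p}_\rmf^{\epsilon_k},\invR_\rmf^{\epsilon_k}\nablaGamma\phi_\rmf\bigr)_{\vct{L}^2}\bigr\rvert + \epsilon_k^{\alpha}\bigl\lvert\bigl(\hat{\matr{K}}_\rmf\nablaN\hat{p}_\rmf^{\epsilon_k},\invR_\rmf^{\epsilon_k}\nablaGamma\phi_\rmf\bigr)_{\vct{L}^2}\bigr\rvert \lesssim \epsilon_k^{(\alpha+1)/2}\xrightarrow{k\to\infty}0,
\end{align*}
since $\alpha>-1$. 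On the right-hand side, \eqref{eq:conv_Astrong} handles the bulk source terms, while $\epsilon_k^{\beta+1}(\hat{q}_\rmf,\phi_\rmf)_{L^2(\Gamma_a)}$ vanishes when $\beta>-1$ and reduces via Fubini to $(aq_\Gamma,\phi_\Gamma)_{L^2(\Gamma)}$ when $\beta=-1$, consistent with~\eqref{eq:qgamma}.

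Finally, I would extend the identity~\eqref{eq:weaklimit_ainm1p1} to all of~$\Phi_\mathrm{III}^0$ by density: smooth functions of the form above are dense in $\Phi_\mathrm{III}^0 \cong H^1_{0,\rho_{\mathrm{b},\mathrm{D}}^0}(\Omega)$ with respect to its natural $H^1$-norm, and both sides of~\eqref{eq:weaklimit_ainm1p1} depend continuously on $(\phi_+,\phi_-)$ in this norm (for the interface term one exploits $aq_\Gamma\in L^2(\Gamma)$, which follows from $a\in L^\infty(\Gamma)$ and $q_\Gamma\in L^2_a(\Gamma)$, together with the continuous bulk trace). The main obstacle I anticipate is the test-function construction: one must verify that constant normal extensions compatible with the mixed Dirichlet data on~$\rho_{a,\mathrm{D}}$ form a dense admissible class in~$\Phi_\mathrm{III}^0$, which requires some care because $\rho_{a,\mathrm{D}}$ is defined in terms of which segments of $\vct{p}+\hat{\epsilon}\theta_n\vct{N}(\vct{p})$ land on~$\rho_{\rmf,\mathrm{D}}^{\hat{\epsilon}}$, but under the standing geometric hypotheses this is standard once one localizes and approximates by smooth functions vanishing near the Dirichlet portion.
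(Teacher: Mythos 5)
Your proof follows essentially the same route as the paper's: you choose test triples $(\phi_+,\phi_-,\phi_\rmf)\in\Phi$ with $\phi_\rmf$ constant in $\theta_n$ (so $\nablaN\phi_\rmf\equiv\vct 0$), invoke \Cref{lem:limApm} for the bulk terms, bound the two surviving fracture terms by $\epsilon_k^{(\alpha+1)/2}\to 0$ via \Cref{lem:Roperators} and \Cref{prop:apriori_l2_3}, and obtain $(\hat p_+^\ast,\hat p_-^\ast)\in\Phi_\mathrm{III}^0$ together with the structural identities from \Cref{prop:pconst} and \Cref{prop:pressurecontinuity}. The density step you add at the end (extending from constant-in-$\theta_n$ test functions vanishing near $\partial\Gamma$ to all of $\Phi_\mathrm{III}^0$) is a reasonable refinement of a point the paper leaves implicit, but it does not change the underlying strategy.
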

\begin{proof}
Take a test function triple $\smash{(\phi_+ , \phi_- , \phi_\mathrm{f}) \in \Phi}$  such that $\smash{\phi_\mathrm{f} (\vct{p} , \theta_n ) = \phi_\Gamma ( \vct{p} )}$ a.e.\ in~$\smash{\Gamma_a }$. 
Then, by inserting $\smash{(\phi_+ , \phi_- , \phi_\mathrm{f})}$ into the transformed weak formulation~\eqref{eq:rescaledweakdarcyeps}, we obtain
\begin{align}
\begin{split}
&\sum_{i=\pm} \mathcal{A}_i^{\epsilon_k} ( \hat{p}_i^{\epsilon_k} , \phi_i ) + \epsilon_k^{\alpha + 1 } \bigl( \hat{\matr{K}}_\rmf \invR_\rmf^{\epsilon_k} \nablaGamma \hat{p}_\rmf^{\epsilon_k } , \invR_\rmf^{\epsilon_k } \nablaGamma \phi_\rmf \bigr)_{\vct{L}^2 (\Gamma_a ) } \\
&\hspace{0.5cm}+ \epsilon_k^\alpha \bigl( \hat{\matr{K}}_\rmf \nablaN \hat{p}_\rmf^{\epsilon_k } , \invR_\rmf^{\epsilon_k} \nablaGamma \phi_\rmf  \bigr)_{\vct{L}^2 (\Gamma_a ) } \\
&\hspace{2.25cm}= \bigl[ 1 + \mathcal{O}(\epsilon ) \bigr] \biggl[ \sum_{i = \pm} \bigl( q_i^0 , \phi_i \bigr)_{L^2 (\Omega_i^0 )} + \epsilon_k^{\beta + 1 } \bigl( a q_\Gamma , \phi_\Gamma \bigr)_{L^2 (\Gamma )} \biggr] . 
\end{split}%
\label{eq:proof_ainm1p1_1}%
\end{align}%
Further, with \Cref{lem:Roperators} and \Cref{prop:apriori_l2_3}, we have 
\begin{align*}
\epsilon^{\alpha + 1 } \abssize{\Big}{ \bigl( \hat{\matr{K}}_\rmf \invR_\rmf^{\epsilon} \nablaGamma \hat{p}_\rmf^{\epsilon } , \invR_\rmf^{\epsilon } \nablaGamma \phi_\rmf \bigr)_{\vct{L}^2 (\Gamma_a ) } } 
&\lesssim \epsilon^{\frac{\alpha + 1}{2}} \normsize{\big}{\nablaGamma \phi_\rmf }_{\vct{L}^2(\Gamma_a )}, \\
\epsilon^\alpha \abssize{\Big}{ \bigl( \hat{\matr{K}}_\rmf \nablaN \hat{p}_\rmf^{\epsilon } , \invR_\rmf^{\epsilon } \nablaGamma \phi_\rmf  \bigr)_{\vct{L}^2 (\Gamma_a ) }} 
&\lesssim \epsilon^\frac{\alpha + 1}{2} \normsize{\big}{\nablaGamma \phi_\mathrm{f} }_{\vct{L}^2(\Gamma_a )}
\end{align*}
if $\epsilon$ is sufficiently small.
Thus, by using  \Cref{prop:convergence} and \Cref{lem:limApm} and letting $k\rightarrow \infty$ in \cref{eq:proof_ainm1p1_1}, it follows that the limit solution pair~$ ( \hat{p}^\ast_+ , \hat{p}^\ast_-  )$ satisfies the weak formulation~\eqref{eq:weaklimit_ainm1p1}. 
Besides, with \Cref{prop:pressurecontinuity}, we have $\smash{(\hat{p}_+^\ast , \hat{p}_-^\ast ) \in \Phi^0_\mathrm{III}}$.  
\end{proof}

\begin{theorem} \label{thm:strong_ainl1p1}
Let $\alpha \in (-1,1 )$ and $\beta \ge -1$. 
Then, given the assumption~\eqref{asm:rhoD}, we have strong convergence 
\begin{subequations}
\begin{alignat}{2}
\hat{p}_\pm^{\epsilon } &\rightarrow \hat{p}_\pm^\ast \quad\enspace &&\text{in } H^1 (\Omega_\pm^0 ) , \\
\hat{p}_\mathrm{f}^\epsilon &\rightarrow \hat{p}_\mathrm{f}^\ast \quad\enspace &&\text{in } H^1_{\vct{N}} (\Gamma_a ) 
\end{alignat}%
\label{eq:strongconv_ainm1p1}%
\end{subequations}%
as $\epsilon \rightarrow 0$ for the whole sequence~$\smash{\{ \hat{p}_i^\epsilon \}_{\epsilon\in (0,\hat{\epsilon}]}}$, $i \in \{ +,-,\mathrm{f}\}$ .
Besides, $\smash{( \hat{p}_+^\ast , \hat{p}_-^\ast ) \in \Phi_\mathrm{III}^0}$ is the unique weak solution of \cref{eq:weaklimit_ainm1p1}.
\end{theorem}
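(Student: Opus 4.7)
The plan is to mirror the strategy developed for Cases~I and~II in \Cref{sec:sec41,sec:sec42}. First I would dispose of uniqueness for the limit problem: assumption~\eqref{asm:rhoD} ensures that both $\rho_{+,\mathrm{D}}^0$ and $\rho_{-,\mathrm{D}}^0$ carry positive surface measure, so Poincaré's inequality combined with the uniform ellipticity of~$\matr{K}_\pm^0$ makes the bilinear form on the left-hand side of~\eqref{eq:weaklimit_ainm1p1} coercive on~$\Phi_\mathrm{III}^0$, and Lax--Milgram yields a unique weak solution~$(\hat p_+^\ast, \hat p_-^\ast) \in \Phi_\mathrm{III}^0$; the limit fracture pressure is then uniquely fixed through $\hat p_\mathrm{f}^\ast = \mathfrak{A}_\Gamma \hat p_\mathrm{f}^\ast = \hat p_\pm^\ast|_\Gamma$ by \Cref{thm:ainm1p1}. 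A standard subsequence-of-subsequence argument (using \Cref{prop:apriori_l2_3}, Rellich--Kondrachov, and uniqueness) then upgrades the weak convergences from \Cref{prop:convergence} to hold along the whole family~$\{\hat p_i^\epsilon\}_{\epsilon \in (0,\hat\epsilon]}$.

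For strong $H^1$-convergence in the bulk, I would equip $\Phi_\mathrm{III}^0$ with the norm
\[
\normsizeiii{\big}{(\phi_+, \phi_-)}^2 := \sum_{i=\pm} \bigl(\matr{K}_i^0 \nabla \phi_i, \nabla \phi_i\bigr)_{\vct{L}^2(\Omega_i^0)},
\]
which is equivalent to the natural product $H^1$-norm on~$\Phi_\mathrm{III}^0$. Inserting the solution itself as test function in~\cref{eq:rescaledweakdarcyeps}, dropping the non-negative fracture term~$\mathcal{A}_\mathrm{f}^\epsilon(\hat p_\mathrm{f}^\epsilon, \hat p_\mathrm{f}^\epsilon)$, and using the expansion~\eqref{eq:limApm2} yields
\[
\normsizeiii{\big}{(\hat p_+^\epsilon, \hat p_-^\epsilon)}^2 \le \bigl[1 + \mathcal{O}(\epsilon)\bigr]\biggl[\sum_{i=\pm}\bigl(q_i^0, \hat p_i^\epsilon\bigr)_{L^2(\Omega_i^0)} + \epsilon^{\beta+1}\bigl(\hat q_\mathrm{f}, \hat p_\mathrm{f}^\epsilon\bigr)_{L^2(\Gamma_a)}\biggr].
\]
Passing $\epsilon \to 0$ and recognising the right-hand side as that of~\eqref{eq:weaklimit_ainm1p1} produces $\limsup \normsizeiii{\big}{(\hat p_+^\epsilon, \hat p_-^\epsilon)}^2 \le \normsizeiii{\big}{(\hat p_+^\ast, \hat p_-^\ast)}^2$; weak lower semicontinuity finishes the norm-convergence, and the Hilbert-space identity (weak~$+$~norm~$\Rightarrow$~strong) yields strong convergence in $H^1(\Omega_\pm^0)$.

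The strong $H^1_\vct{N}$-convergence of $\hat p_\mathrm{f}^\epsilon$ then follows without a new variational step. The a-priori bound~\eqref{eq:apriori_l2_3a} gives $\norm{\nablaN \hat p_\mathrm{f}^\epsilon}_{\vct{L}^2(\Gamma_a)}^2 \lesssim \epsilon^{1-\alpha} \to 0$ since $\alpha < 1$, while $\nablaN \hat p_\mathrm{f}^\ast = \vct{0}$ by \Cref{prop:pconst}~\ref{item:pconst2}, so the normal-gradient component converges strongly automatically. For the $L^2(\Gamma_a)$-part I would decompose
\[
\hat p_\mathrm{f}^\epsilon - \hat p_\mathrm{f}^\ast = \bigl(\hat p_\mathrm{f}^\epsilon - \mathfrak{A}_\Gamma \hat p_\mathrm{f}^\epsilon\bigr) + \bigl(\mathfrak{A}_\Gamma \hat p_\mathrm{f}^\epsilon - \mathfrak{T}_\pm \hat p_\mathrm{f}^\epsilon\bigr) + \bigl(\hat p_\pm^\epsilon|_\Gamma - \hat p_\pm^\ast|_\Gamma\bigr),
\]
using $\mathfrak{T}_\pm \hat p_\mathrm{f}^\epsilon = \hat p_\pm^\epsilon|_\Gamma$ and $\hat p_\mathrm{f}^\ast = \hat p_\pm^\ast|_\Gamma$. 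The first summand is controlled fibrewise by the one-dimensional zero-mean Poincaré inequality, the second by \Cref{lem:apriori_l2_0}, and the third by the Sobolev trace inequality combined with the strong $H^1$-convergence just established.

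The main obstacle I foresee is the $\beta = -1$ case of the $\limsup$ identification: the factor~$\epsilon^{\beta+1}$ is then no longer small, and one must verify $(\hat q_\mathrm{f}, \hat p_\mathrm{f}^\epsilon)_{L^2(\Gamma_a)} \to \int_\Gamma a q_\Gamma\,(\hat p_\pm^\ast|_\Gamma)\,\D\lambda_\Gamma$ by combining the weak $L^2$-convergence $\hat p_\mathrm{f}^\epsilon \rightharpoonup \hat p_\mathrm{f}^\ast$ with the $\theta_n$-independence of the limit and the defining relation $q_\Gamma = \mathfrak{A}_\Gamma \hat q_\mathrm{f}$ from~\cref{eq:qgamma}. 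For $\beta > -1$ this contribution vanishes trivially, and once this accounting is in place the remaining steps are a rote adaptation of the arguments already carried out in \Cref{sec:sec41,sec:sec42}.
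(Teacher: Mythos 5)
Your proof is correct, but the route is genuinely different from the paper's for the fracture variable. The paper equips $\Phi^\ast \subset H^1(\Omega_+^0)\times H^1(\Omega_-^0)\times H^1_\vct{N}(\Gamma_a)$ with the single norm
\[
\normsizeiii{\big}{(\phi_+,\phi_-,\phi_\rmf)}^2 = \sum_{i=\pm}\bigl(\matr{K}_i^0\nabla\phi_i,\nabla\phi_i\bigr)_{\vct{L}^2(\Omega_i^0)} + \bigl(\hat{\matr{K}}_\rmf\nablaN\phi_\rmf,\nablaN\phi_\rmf\bigr)_{\vct{L}^2(\Gamma_a)},
\]
observes that $\bigl(\hat{\matr{K}}_\rmf\nablaN\hat p_\rmf^\epsilon,\nablaN\hat p_\rmf^\epsilon\bigr) = \mathcal{O}(\epsilon^{1-\alpha})$, and then runs a single ``weak convergence $+$ norm convergence $\Rightarrow$ strong convergence'' argument for all three variables simultaneously. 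You instead split the work: you drop $\mathcal{A}_\rmf^\epsilon\ge 0$, prove strong $H^1$-convergence for the bulk pair alone, and then derive $\hat p_\rmf^\epsilon\to\hat p_\rmf^\ast$ in $H^1_\vct{N}(\Gamma_a)$ afterwards from the a-priori decay $\normsize{}{\nablaN\hat p_\rmf^\epsilon}_{\vct{L}^2(\Gamma_a)}^2\lesssim\epsilon^{1-\alpha}\to 0$ plus a three-term triangle decomposition of $\hat p_\rmf^\epsilon-\hat p_\rmf^\ast$ handled by a fibrewise Poincar\'e--Wirtinger inequality, \Cref{lem:apriori_l2_0}, and the Sobolev trace inequality. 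Both work; the paper's version is shorter because \Cref{lem:apriori_l2_2}~(ii) already certifies equivalence of $\normiii{\cdot}$ with the product norm on $\Phi^\ast$, so no secondary decomposition is needed, whereas your version makes it transparent that the fracture strong convergence is essentially a corollary of the bulk strong convergence together with $\alpha<1$. One small point worth tightening: the seminorm $\sum_{i=\pm}\bigl(\matr{K}_i^0\nabla\phi_i,\nabla\phi_i\bigr)$ should be introduced on the Hilbert space $\bigtimes_{i=\pm}H^1_{0,\rho_{i,\mathrm{D}}^0}(\Omega_i^0)$ rather than on $\Phi_\mathrm{III}^0$, since the approximants $(\hat p_+^\epsilon,\hat p_-^\epsilon)$ lie in the former but generally not in the latter (they only satisfy $\hat p_\pm^\epsilon\vert_\Gamma = \mathfrak{T}_\pm\hat p_\rmf^\epsilon$, not $\hat p_+^\epsilon\vert_\Gamma = \hat p_-^\epsilon\vert_\Gamma$); equivalence with the $H^1$-product norm on that larger space still follows from assumption~\eqref{asm:rhoD} and Poincar\'e, which is exactly where the hypothesis is needed.
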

\begin{proof}
As consequence of the Lax-Milgram theorem, the problem in \cref{eq:weaklimit_ainm1p1} has a unique weak solution. 
Thus, the weak convergence~\eqref{eq:conv_A} holds for the whole sequence~$\smash{\{ \hat{p}^\epsilon_\pm \}_{\epsilon \in (0, \hat{\epsilon}]}}$.
This follows from \Cref{prop:apriori_l2_3} and the fact that every weakly convergent subsequent has the same limit. 
Besides, with the \Cref{prop:apriori_l2_3,prop:pconst,prop:pressurecontinuity},  the weak convergence~\eqref{eq:conv_C} is satisfied for the whole sequence~$\smash{\{ \hat{p}_\mathrm{f}^\ast \}_{\epsilon \in (0, \hat{\epsilon}]} }$. 

Next, we equip the space~$\smash{\Phi^\ast}$ with the norm~$\smash{\normiii{\cdot}}$ defined by 
\begin{align}
\normsizeiii{\big}{( \phi_+ , \phi_- , \phi_\mathrm{f} ) }^2 &:= \sum_{i=\pm } \bigl( \matr{K}_i^0 \nabla \phi_i , \nabla \phi_i \bigr)_{\vct{L}^2 (\Omega_i^0 )} + \bigl( \hat{\matr{K}}_\rmf \nablaN \phi_\rmf , \nablaN \phi_\rmf \bigr)_{\vct{L}^2 (\Gamma_a ) } , \label{eq:normast}
\end{align}
which, as a consequence of \Cref{lem:apriori_l2_2}, is equivalent to the usual product norm on~$\smash{ \Phi^\ast \subset H^1(\Omega_+^0 ) \times H^1 (\Omega_-^0 ) \times H^1_{\vct{N}} (\Gamma_a )}$.
Besides, with \Cref{prop:apriori_l2_3}, we have 
\begin{align*}
\bigl( \hat{\matr{K}}_\rmf \nablaN \hat{p}_\rmf^\epsilon , \nablaN \hat{p}_\rmf^\epsilon  \bigr)_{\vct{L}^2 (\Gamma_a ) } \lesssim \normsize{\big }{\nablaN \hat{p}_\rmf^\epsilon }^2_{\vct{L}^2 (\Gamma_a ) } = \mathcal{O} (\epsilon^{1-\alpha} ) .
\end{align*}
Thus, using the \cref{eq:rescaledweakdarcyeps,eq:limApm2} and $\smash{\mathcal{A}_\rmf^\epsilon ( \hat{p}^\epsilon_\rmf , \hat{p}_\rmf^\epsilon ) \ge 0}$, we find 
\begin{align*}
\normsizeiii{\big}{( \hat{p}_+^\epsilon , \hat{p}_-^\epsilon , \hat{p}_\mathrm{f}^\epsilon )}^2 &\le \sum_{i=\pm} \mathcal{A}_i^\epsilon (\hat{p}_i^\epsilon , \hat{p}_i^\epsilon ) + \mathcal{A}_\mathrm{f}^\epsilon (\hat{p}_\mathrm{f}^\epsilon , \hat{p}_\mathrm{f}^\epsilon ) + \smallo (\epsilon ) \\
&= \bigl[ 1 + \mathcal{O}(\epsilon ) \bigr] \biggl[ \sum_{i=\pm} \bigl( q_i^0 , \hat{p}_i^\epsilon \bigr)_{L^2 (\Omega_i^0 ) } + \epsilon^{\beta + 1} \bigl( \hat{q}_\mathrm{f} , \hat{p}_\mathrm{f}^\epsilon \bigr)_{L^2 (\Gamma_a ) } \biggr] +\smallo (\epsilon )  .
\end{align*}
Further, \Cref{thm:ainm1p1} yields
\begin{align*}
\limsup_{\epsilon \rightarrow 0 } \normsizeiii{\big}{( \hat{p}_+^\epsilon , \hat{p}_-^\epsilon , \hat{p}_\mathrm{f}^\epsilon )}^2 &\le \sum_{i = \pm } \bigl( q_i^0 , \hat{p}_i^\ast \bigr)_{L^2 (\Omega_i^0 )} +  \bigl( aq_\Gamma , \mathfrak{A}_\Gamma p_\mathrm{f}^\ast \bigr)_{L^2(\Gamma ) } \\
&= \sum_{i = \pm} \bigl( \matr{K}_i^0 \nabla \hat{p}_i^\ast , \nabla \hat{p}^\ast_i )_{\vct{L}^2 (\Omega_i^0 ) } = \normsizeiii{\big}{( \hat{p}_+^\ast , \hat{p}_-^\ast , \hat{p}_\mathrm{f}^\ast )}^2 .
\end{align*}
With the weak lower semicontinuity of the norm, we now have
\begin{align*}
\lim_{\epsilon \rightarrow 0 }\normsizeiii{\big}{( \hat{p}_+^\epsilon , \hat{p}_-^\epsilon , \hat{p}_\mathrm{f}^\epsilon )} = \normsizeiii{\big}{( \hat{p}_+^\ast , \hat{p}_-^\ast , \hat{p}_\mathrm{f}^\ast )} . \tag*{\qedhere}
\end{align*}
\end{proof}

\subsection{Case IV: \texorpdfstring{$\alpha = 1$}{alpha = 1}}
\label{sec:sec44}
For $\alpha = 1$ and $\beta \ge -1$, the fracture becomes a permeable barrier in limit~$\epsilon \rightarrow 0$ with a jump of pressure heads across the interface~$\Gamma$ but continuous normal velocity (except for source terms).

In the following, we will derive two different limit models for $\alpha = 1$ and $\beta \ge -1$.
First, in \Cref{sec:aeqp1_coupled}, we obtain a coupled limit problem, where the pressure head~$\smash{\hat{p}_\rmf^\ast}$ in the fracture satisfies a parameter-dependent Darcy-type ODE inside the full-dimensional fracture domain~$\Gamma_a$. 
The ODE is formulated with respect to the normal coordinate~$\theta_n$, while the tangential coordinate~$\vct{p}$ acts as a parameter. 
This resembles the limit problem in~\cite{kumar20} for Richards equation with the respective scaling of hydraulic conductivities. 
However, in \Cref{sec:aeqp1_decoupled}, it then turns out that the bulk problem can be solved independently from the fracture problem. 
In the decoupled bulk limit problem, the jump of pressure heads across the interface~$\Gamma$ scales with an effective hydraulic conductivity, that is defined as a non-trivial mean value of the fracture conductivity in normal direction and reminds of a result from homogenization theory. 
In particular, if one is still interested in the fracture solution, it is possible to first solve the decoupled bulk limit problem in \Cref{sec:aeqp1_decoupled}, which will then provide the boundary conditions to solve the ODE for the fracture pressure head in \Cref{sec:aeqp1_coupled}.

\subsubsection{Coupled Limit Problem} \label{sec:aeqp1_coupled}
The strong formulation of the coupled limit problem for $\alpha = 1$ and $\beta \ge -1$ reads as follows. 

Find $\smash{p_\pm \colon \Omega^0_\pm \rightarrow \mathbb{R}}$ and $\smash{p_\rmf \colon \Gamma_a \rightarrow \mathbb{R}}$ such that 
\begin{subequations}
\begin{alignat}{2}
-\nabla \cdot \bigl( \matr{K}_\pm^0 \nabla p_\pm \bigr) &= q_\pm^0 \qquad &&\text{in } \Omega_i^0 , \ \\
- \partial_{\theta_n }\bigl( \hat{K}_\rmf^\perp  \partial_{\theta_n } p_\rmf \bigr) &= q_\rmf^\ast   \qquad &&\text{in } \Gamma_a  , \\ 
p_\pm &= \mathfrak{T}_\pm p_\rmf \qquad &&\text{on } \Gamma , \\
 \matr{K}_\pm^0 \nabla p_\pm^0 \cdot \vct{N} &= \mathfrak{T}_\pm \bigl( \hat{K}_\rmf^\perp \partial_{\theta_n} p_\rmf \bigr) \qquad &&\text{on } \Gamma , \\
p_+ &= p_- \qquad &&\text{on } \Gamma_0^0 , \\ 
\matr{K}_+^0 \nabla p_+ \cdot \vct{N} &= \matr{K}_-^0 \nabla p_- \cdot \vct{N} \qquad &&\text{on } \Gamma_0^0 , \\
p_\pm &= 0 \qquad &&\text{on } \rho_{\pm,\mathrm{D}}^0 ,  \\
\matr{K}_\pm^0 \nabla p_\pm \cdot \vct{n} &= 0 \qquad &&\text{on } \rho_{\pm,\mathrm{N}}^0 ,
\end{alignat}%
\label{eq:stronglimit_aeqp1_coupled}%
\end{subequations}%
where $\smash{\hat{q}^\ast_\rmf \in L^2 (\Gamma_a ) }$ and $\smash{\hat{K}_\rmf^\perp \in L^\infty (\Gamma_a ) }$ are defined by
\begin{align}
\hat{q}_\rmf^\ast (\vct{p} , \theta_n ) := \begin{cases} 
\hat{q}_\rmf (\vct{p} , \theta_n ) &\text{if } \beta = -1 ,\\
0 &\text{if } \beta > -1 ,
\end{cases} \\
\hat{K}_\rmf^\perp (\vct{p} , \theta_n ) := \matr{\hat{K}}_\rmf (\vct{p} , \theta_n ) \vct{N} ( \vct{p} ) \cdot \vct{N} (\vct{p} ) .
\end{align}
A weak formulation of the system in \cref{eq:stronglimit_aeqp1_coupled} is given by the following problem.

Find $\smash{ ( p_+ , p_- , p_\rmf ) \in \Phi^\ast}$ such that, for all $\smash{ ( \phi_+ , \phi_- , \phi_\mathrm{f} ) \in \Phi^\ast}$, 
\begin{align}
\begin{split}
&\sum_{i = \pm } \bigl( \matr{K}_i^0 \nabla p_i , \nabla \phi_i )_{\vct{L}^2 (\Omega_i^0  ) } + \bigl( \hat{\matr{K}}_\rmf \nablaN p_\rmf , \nablaN \phi_\rmf \bigr)_{\vct{L}^2 (\Gamma_a )} \\
&\hspace{5cm} =  \sum_{i = \pm }\bigl( q_i^0 , \phi_i \bigr)_{L^2 (\Omega_i^0 )} + \bigl( \hat{q}^\ast_\rmf , \phi_\rmf \bigr)_{L^2 (\Gamma_a ) } .
\end{split}%
\label{eq:weaklimit_aeqp1_coupled}%
\end{align}%
We obtain the following convergence results.
\begin{theorem} \label{thm:aeqp1_coupled}
Let $\alpha = 1$ and $\beta \ge -1$. 
Then, given the assumption~\eqref{asm:rhoD}, the triple $\smash{( \hat{p}_+^\ast , \hat{p}_-^\ast , \hat{p}_\rmf^\ast ) \in \Phi^\ast }$ is a weak solution of problem~\eqref{eq:weaklimit_aeqp1_coupled}, where $\smash{\hat{p}_\pm^\ast \in H^1 (\Omega_\pm^0 ) }$ and $\smash{\hat{p}_\rmf^\ast \in H^1_\vct{N} (\Gamma_a ) }$ denote the limit functions from \Cref{prop:convergence}.  
\end{theorem}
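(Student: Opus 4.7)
The plan is to insert an arbitrary test triple $(\phi_+, \phi_-, \phi_\rmf) \in \Phi$ into the transformed weak formulation~\eqref{eq:rescaledweakdarcyeps} with $\alpha = 1$, pass to the limit along the subsequence $\{\epsilon_k\}$ supplied by \Cref{prop:convergence}, and then extend the resulting identity from $\Phi$ to $\Phi^\ast$ by a density argument (which is legitimate since every surviving term of~\eqref{eq:weaklimit_aeqp1_coupled} is continuous with respect to the $H^1(\Omega_+^0) \times H^1(\Omega_-^0) \times H^1_\vct{N}(\Gamma_a)$ topology in the test variable). The membership $(\hat{p}_+^\ast, \hat{p}_-^\ast, \hat{p}_\rmf^\ast) \in \Phi^\ast$ is already supplied by \Cref{prop:convergence} for $\alpha = 1$, so no further effort is needed to place the candidate solution in the test space.

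For $\alpha = 1$ the fracture contribution splits as $\mathcal{A}_\rmf^\epsilon(\hat{p}_\rmf^\epsilon, \phi_\rmf) = \epsilon^2 A_1^\epsilon + \epsilon (A_2^\epsilon + A_3^\epsilon) + A_4^\epsilon$ in the notation of \Cref{sec:sec25}, where $A_4^\epsilon = ( \hat{\matr{K}}_\rmf \nablaN \hat{p}_\rmf^\epsilon , \nablaN \phi_\rmf )_{\vct{L}^2(\Gamma_a)}$ is precisely the targeted main term. The bulk terms converge to $\sum_{i=\pm} ( \matr{K}_i^0 \nabla \hat{p}_i^\ast, \nabla \phi_i )_{\vct{L}^2(\Omega_i^0)}$ by \Cref{lem:limApm}. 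The a-priori estimates in \Cref{prop:apriori_l2_3} give $\norm{\nablaGamma \hat{p}_\rmf^\epsilon}_{\vct{L}^2(\Gamma_a)} = \mathcal{O}(\epsilon^{-1})$ and $\norm{\nablaN \hat{p}_\rmf^\epsilon}_{\vct{L}^2(\Gamma_a)} = \mathcal{O}(1)$; combined with the uniform boundedness of $\invR_\rmf^\epsilon$ from \Cref{lem:Roperators}, the Cauchy-Schwarz inequality readily shows $\epsilon_k^2 A_1^{\epsilon_k}, \epsilon_k A_2^{\epsilon_k} = \mathcal{O}(\epsilon_k) \to 0$, whereas $A_4^{\epsilon_k} \to ( \hat{\matr{K}}_\rmf \nablaN \hat{p}_\rmf^\ast, \nablaN \phi_\rmf )_{\vct{L}^2(\Gamma_a)}$ follows from the weak convergence~\eqref{eq:conv_C} together with the uniform convergence $\invR_\rmf^\epsilon \to \operatorname{id}$.

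The principal obstacle is the cross term $\epsilon_k A_3^{\epsilon_k} = \bigl( \hat{\matr{K}}_\rmf \invR_\rmf^{\epsilon_k} [\epsilon_k \nablaGamma \hat{p}_\rmf^{\epsilon_k}], \nablaN \phi_\rmf \bigr)_{\vct{L}^2(\Gamma_a)}$, which under the naive bounds is only $\mathcal{O}(1)$ and therefore cannot be discarded by brute force. The key step I intend to establish is the weak convergence $\epsilon_k \nablaGamma \hat{p}_\rmf^{\epsilon_k} \rightharpoonup \vct{0}$ in $\vct{L}^2(\Gamma_a)$. The sequence is $\vct{L}^2$-bounded by \Cref{prop:apriori_l2_3}; moreover the same proposition with $\nu = 0$ yields boundedness of $\hat{p}_\rmf^{\epsilon_k}$ in $L^2(\Gamma_a)$, so that $\epsilon_k \hat{p}_\rmf^{\epsilon_k} \to 0$ strongly in $L^2(\Gamma_a)$; taking tangential derivatives in the distributional sense then gives $\epsilon_k \nablaGamma \hat{p}_\rmf^{\epsilon_k} \to \vct{0}$ distributionally, and uniqueness of the weak $\vct{L}^2$-limit identifies any weak-limit cluster point with $\vct{0}$. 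Combined with $\invR_\rmf^{\epsilon_k} \to \operatorname{id}$ uniformly (\Cref{lem:Roperators}), this gives $\epsilon_k A_3^{\epsilon_k} \to 0$.

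Finally, the right-hand side contribution $\epsilon_k^{\beta+1}[1+\mathcal{O}(\epsilon_k)] ( \hat{q}_\rmf, \phi_\rmf )_{L^2(\Gamma_a)}$ tends to $( \hat{q}_\rmf^\ast, \phi_\rmf )_{L^2(\Gamma_a)}$ consistently with the case distinction $\beta = -1$ versus $\beta > -1$ built into the definition of $\hat{q}_\rmf^\ast$. Assembling all the limits produces~\eqref{eq:weaklimit_aeqp1_coupled} for every $\phi \in \Phi$, and the density of $\Phi$ in $\Phi^\ast$ then closes the argument.
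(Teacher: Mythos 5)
Your proposal is correct and follows essentially the same route as the paper: decompose $\mathcal{A}_\rmf^\epsilon$ into four pieces, identify the cross term $\epsilon A_3^\epsilon$ as the one that cannot be dismissed by naive bounds, establish $\epsilon_k \nablaGamma \hat{p}_\rmf^{\epsilon_k} \rightharpoonup \vct{0}$ in $\vct{L}^2(\Gamma_a)$ via the $L^2$-bound on $\hat{p}_\rmf^{\epsilon_k}$ and distributional convergence, and then pass to the limit using \Cref{lem:limApm,lem:Roperators} and \Cref{prop:convergence}. The only cosmetic slip is in your argument for $A_4^{\epsilon_k}$: that term contains no $\invR_\rmf^\epsilon$ factor, so the convergence there requires only~\eqref{eq:conv_C}, not the operator convergence.
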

\begin{proof} 
According to \Cref{prop:apriori_l2_3}, we have 
\begin{align*}
\norm{\hat{p}^\epsilon_\mathrm{f}}_{H^1_\vct{N}(\Gamma_a )} + \epsilon \norm{\nablaGamma \hat{p}^\epsilon_\mathrm{f}}_{\vct{L}^2(\Gamma_a )} \lesssim 1
\end{align*}
and hence 
\begin{align}
\epsilon_k \hat{p}_\rmf^{\epsilon_k }  \rightarrow 0 \enspace\ \text{in } H^1_\vct{N} (\Gamma_a ) , \qquad \epsilon_k \nablaGamma \hat{p}_\rmf^{\epsilon_k } \rightharpoonup \vct{0} \enspace\ \text{in } \vct{L}^2 (\Gamma_a ) \label{eq:epspf_conv}
\end{align}
as $k\rightarrow \infty$.  
As a result, we have 
\begin{multline*}
\epsilon_k \bigl( \hat{\matr{K}}_\rmf \invR^{\epsilon_k}_\rmf \nablaGamma \hat{p}_\rmf^{\epsilon_k} , \nablaN \phi_\rmf \bigr)_{L^2 (\Gamma_a ) } \\
= \epsilon_k \bigl( \hat{\matr{K}}_\rmf \bigl[ \invR^{\epsilon_k}_\rmf - \operatorname{id}_{\rmT \Gamma } \bigr] \nablaGamma \hat{p}_\rmf^{\epsilon_k} , \nablaN \phi_\rmf \bigr)_{L^2 (\Gamma_a ) } + \epsilon_k \bigl( \hat{\matr{K}}_\rmf \nablaGamma \hat{p}_\rmf^{\epsilon_k} , \nablaN \phi_\rmf \bigr)_{L^2 (\Gamma_a ) } ,
\end{multline*}
where, as $k\rightarrow \infty$, the first term vanishes with \Cref{lem:Roperators} and the second term with \cref{eq:epspf_conv}.
Thus, with the \Cref{prop:apriori_l2_3,prop:convergence} and the \Cref{lem:Roperators,lem:limApm}, we conclude that $\smash{(\hat{p}^\ast_+ , \hat{p}_-^\ast , \hat{p}_\mathrm{f}^\ast) \in \Phi^\ast}$ solves \cref{eq:weaklimit_aeqp1_coupled} by taking the limit $\smash{k \rightarrow \infty}$ in the transformed weak formulation~\eqref{eq:rescaledweakdarcyeps}.
\end{proof}

\begin{theorem}
Let $\alpha =-1$ and $\beta \ge -1$. 
Then, given the assumption~\eqref{asm:rhoD}, we have strong convergence 
\begin{subequations}
\begin{alignat}{2}
\hat{p}_\pm^{\epsilon } &\rightarrow \hat{p}_\pm^\ast \quad\enspace &&\text{in } H^1 (\Omega_\pm^0 ) , \\
\hat{p}_\mathrm{f}^\epsilon &\rightarrow \hat{p}_\mathrm{f}^\ast \quad\enspace &&\text{in } H^1_{\vct{N}} (\Gamma_a ) ,\\
\epsilon \nablaGamma \hat{p}_\rmf^\epsilon &\rightarrow \vct{0} \quad\enspace &&\text{in } \vct{L}^2 (\Gamma_a ) 
\end{alignat}%
\label{eq:strong_aeqp1}%
\end{subequations}%
as $\epsilon \rightarrow 0$ for the whole sequence~$\smash{\{ \hat{p}_i^\epsilon \}_{\epsilon\in (0,\hat{\epsilon}]}}$, $i \in \{ +,-,\mathrm{f}\}$.
Besides, we find that $\smash{( \hat{p}_+^\ast , \hat{p}_-^\ast , \hat{p}_\rmf^\ast ) \in \Phi^\ast}$ is the unique weak solution of the problem in \cref{eq:weaklimit_aeqp1_coupled}.
\end{theorem}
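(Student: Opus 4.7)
The plan is to follow the same four-stage scheme used for the earlier cases: establish uniqueness via Lax-Milgram to promote subsequential weak convergence to convergence of the full family, then upgrade weak convergence to strong convergence through an energy identity against a suitably chosen equivalent norm $\normiii{\cdot}$.

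First I would check well-posedness of \eqref{eq:weaklimit_aeqp1_coupled}. The bilinear form on the left-hand side is symmetric, continuous, and by \Cref{lem:apriori_l2_2}(ii) together with the uniform ellipticity of $\matr{K}_\pm^0$ and $\hat{\matr{K}}_\rmf$, coercive on $\Phi^\ast$ under assumption~\eqref{asm:rhoD}; hence Lax-Milgram yields a unique weak solution. Because the limit is unique, every weakly convergent subsequence extracted from the a-priori estimates of \Cref{prop:apriori_l2_3} must share the same limit, so the weak convergences~\eqref{eq:conv_A}, \eqref{eq:conv_Astrong}, and~\eqref{eq:conv_C} hold for the whole family $\{ \hat{p}_i^\epsilon \}_{\epsilon \in (0, \hat{\epsilon}]}$, and \Cref{thm:aeqp1_coupled} identifies the limit as $(\hat p_+^\ast , \hat p_-^\ast , \hat p_\rmf^\ast )$.

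Next I would introduce on $\Phi^\ast \times \vct{L}^2 (\Gamma_a )$ the augmented norm
\begin{align*}
\normsizeiii{\big}{(\phi_+ , \phi_- , \phi_\rmf , \vct{\eta})}^2 &:= \sum_{i = \pm} \bigl( \matr{K}_i^0 \nabla \phi_i , \nabla \phi_i \bigr)_{\vct{L}^2 (\Omega_i^0 )} + \bigl( \hat{\matr{K}}_\rmf \bigl[ \vct{\eta} + \nablaN \phi_\rmf \bigr] , \vct{\eta} + \nablaN \phi_\rmf \bigr)_{\vct{L}^2 (\Gamma_a ) } ,
\end{align*}
which, using \Cref{lem:apriori_l2_2}(ii), is equivalent to the natural product norm on the relevant subspace. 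Evaluating it at $(\hat p_+^\epsilon, \hat p_-^\epsilon, \hat p_\rmf^\epsilon, \epsilon \invR_\rmf^\epsilon \nablaGamma \hat p_\rmf^\epsilon)$ and exploiting the completed-square structure
$\mathcal{A}_\rmf^\epsilon(\hat p_\rmf^\epsilon , \hat p_\rmf^\epsilon ) = \bigl\| \epsilon \invR_\rmf^\epsilon \nablaGamma \hat p_\rmf^\epsilon + \nablaN \hat p_\rmf^\epsilon \bigr\|_{\hat{\matr{K}}_\rmf, \vct{L}^2(\Gamma_a)}^2$ (valid because $\alpha = 1$), together with \eqref{eq:limApm2}, gives
\begin{align*}
\normsizeiii{\big}{(\hat p_+^\epsilon , \hat p_-^\epsilon , \hat p_\rmf^\epsilon , \epsilon \invR_\rmf^\epsilon \nablaGamma \hat p_\rmf^\epsilon )}^2 &= \sum_{i = \pm } \mathcal{A}_i^\epsilon ( \hat p_i^\epsilon , \hat p_i^\epsilon ) + \mathcal{A}_\rmf^\epsilon ( \hat p_\rmf^\epsilon , \hat p_\rmf^\epsilon ) + \smallo (\epsilon ) .
\end{align*}
Testing \eqref{eq:rescaledweakdarcyeps} against the solution itself and using \Cref{prop:convergence} to pass to the limit on the source side, together with \Cref{thm:aeqp1_coupled} to rewrite the resulting limit as $\normiii{(\hat p_+^\ast , \hat p_-^\ast , \hat p_\rmf^\ast , \vct{0})}^2$, then yields
$\limsup_{\epsilon\rightarrow 0} \normiii{(\hat p_+^\epsilon , \hat p_-^\epsilon , \hat p_\rmf^\epsilon , \epsilon \invR_\rmf^\epsilon \nablaGamma \hat p_\rmf^\epsilon )} \le \normiii{(\hat p_+^\ast , \hat p_-^\ast , \hat p_\rmf^\ast , \vct{0})}$.

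Combining this $\limsup$ bound with the weak lower semicontinuity of the norm (the weak convergence $\epsilon \invR_\rmf^\epsilon \nablaGamma \hat p_\rmf^\epsilon \rightharpoonup \vct{0}$ in $\vct{L}^2(\Gamma_a)$ follows from \Cref{prop:apriori_l2_3} since $\alpha+1 = 2 > 0$) upgrades all weak convergences to strong ones in the $\normiii{\cdot}$-norm, giving $\hat p_\pm^\epsilon \to \hat p_\pm^\ast$ in $H^1(\Omega_\pm^0)$, $\nablaN \hat p_\rmf^\epsilon \to \nablaN \hat p_\rmf^\ast$ in $\vct{L}^2(\Gamma_a)$, and $\epsilon \invR_\rmf^\epsilon \nablaGamma \hat p_\rmf^\epsilon \to \vct{0}$ in $\vct{L}^2(\Gamma_a)$. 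The statement $\epsilon \nablaGamma \hat p_\rmf^\epsilon \to \vct{0}$ follows from the last item by combining with \Cref{lem:Roperators}(iv-a) and the uniform $\mathcal{O}(\epsilon^{-\frac{\alpha+1}{2}}) = \mathcal{O}(\epsilon^{-1})$ bound on $\|\nablaGamma \hat p_\rmf^\epsilon\|_{\vct{L}^2(\Gamma_a)}$. The main obstacle, compared to the cases $\alpha \ne 1$, is precisely handling the cross term $2\epsilon (\hat{\matr{K}}_\rmf \nablaN \hat p_\rmf^\epsilon , \invR_\rmf^\epsilon \nablaGamma \hat p_\rmf^\epsilon)$ in $\mathcal{A}_\rmf^\epsilon$; this forces the augmented-norm choice above and the introduction of the auxiliary variable $\vct{\eta}$ so that the cross term is absorbed into a square and does not obstruct either the lower bound or the passage to the limit.
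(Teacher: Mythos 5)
Your proof takes essentially the same route as the paper's: uniqueness via Lax–Milgram using the equivalent norm~\eqref{eq:normast}, full-sequence weak convergence by the usual subsequence argument, and then an energy identity in an augmented norm on a product space to upgrade to strong convergence. The only cosmetic difference is your choice of auxiliary variable $\epsilon\invR_\rmf^\epsilon\nablaGamma\hat p_\rmf^\epsilon$ (which makes the square in $\mathcal{A}_\rmf^\epsilon$ exactly complete and defers the $\invR_\rmf^\epsilon$ correction to the last step via \Cref{lem:Roperators}(iv-a)), whereas the paper uses $\epsilon\nablaGamma\hat p_\rmf^\epsilon$ directly and absorbs the same correction into the $\smallo(\epsilon)$ error in the energy identity; the two are equivalent. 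One minor imprecision worth fixing: the paper defines the augmented norm on $\Phi^\ast\times\vct{L}^2_\Gamma(\Gamma_a)$ rather than on all of $\Phi^\ast\times\vct{L}^2(\Gamma_a)$, because the restriction to tangential vector fields is needed for the equivalence with the product norm (otherwise $\vct{\eta}+\nablaN\phi_\rmf$ could vanish without both pieces vanishing), and your assertion that $\epsilon\invR_\rmf^\epsilon\nablaGamma\hat p_\rmf^\epsilon\rightharpoonup\vct 0$ "follows from \Cref{prop:apriori_l2_3} since $\alpha+1>0$" really only gives boundedness; identifying the weak limit as zero needs an extra line (e.g.\ the strong convergence $\epsilon\hat p_\rmf^\epsilon\to 0$ in $H^1_\vct{N}(\Gamma_a)$ as in~\eqref{eq:epspf_conv}, followed by integration by parts against tangential test fields, or simply by the uniqueness of weak subsequential limits that is part of your own argument).
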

\begin{proof} 
Clearly, the bilinear form of the weak formulation~\eqref{eq:weaklimit_aeqp1_coupled} is continuous and coercive with respect to the norm  defined by~\cref{eq:normast}. 
Thus, with the Lax-Milgram theorem, we obtain that $\smash{( \hat{p}^\ast_+ , \hat{p}^\ast_- , \hat{p}^\ast_\mathrm{f} ) \in \Phi^\ast}$ is the unique solution of \cref{eq:weaklimit_aeqp1_coupled}. 
As a result, every weakly convergent subsequence has the same limit and hence, with \Cref{prop:apriori_l2_3}, the weak convergence statements~\eqref{eq:conv_A} and~\eqref{eq:conv_C} in \Cref{prop:convergence} hold for the whole sequence~$\smash{\{ \hat{p}_i^\epsilon\}_{\epsilon\in (0, \hat{\epsilon}]}}$, $i\in\{ +,-,\mathrm{f}\}$.

Further, we define the space 
\begin{align*}
\vct{L}^2_\Gamma ( \Gamma_a ) := \bigl\{ \vct{\xi}\in \vct{L}^2 (\Gamma_a ) \ \big\vert \  \vct{\xi} (\vct{p} , \theta_n ) \cdot \vct{N}(\vct{p} ) = 0 \ \text{ for a.a. } (\vct{p}, \theta_n ) \in \Gamma_a  \bigr\} 
\end{align*}
and equip the product space $\smash{\Phi^\ast \times \vct{L}^2_\Gamma ( \Gamma_a ) }$ with the norm 
\begin{multline*}
\normsizeiii{\big}{ \bigl( \phi_+ , \phi_- , \phi_\rmf ,\, \vct{\xi} \bigr) }^2 \\ := \sum_{i=\pm} \bigl( \matr{K}_i^0 \nabla \phi_i ,  \nabla \phi_i \bigr)_{\vct{L}^2(\Omega_i^0 ) } + \bigl( \hat{\matr{K}}_\rmf \bigl[ \nablaN \phi_\rmf + \vct{\xi} \bigr] , \bigl[ \nablaN \phi_\rmf + \vct{\xi} \bigr] \bigr)_{\vct{L}^2 (\Gamma_a ) } .
\end{multline*}
Then, with \Cref{lem:apriori_l2_2}, it is easy to see that the norm~$\smash{\normiii{ \cdot } }$ is equivalent to the standard product norm 
on $\smash{\Phi^\ast \times \vct{L}^2_\Gamma ( \Gamma_a )  }$.
Moreover, with \Cref{lem:Roperators} and the \cref{eq:rescaledweakdarcyeps,eq:limApm2}, we have 
\begin{multline*}
\normsizeiii{\big}{ \bigl( \hat{p}_+^\epsilon  , \hat{p}_-^\epsilon  , \hat{p}_\rmf^\epsilon ,\, \epsilon\nabla_\Gamma \hat{p}_\rmf^\epsilon  \bigr) }^2 = \sum_{i=\pm } \mathcal{A}_i^\epsilon ( \hat{p}_i^\epsilon , \hat{p}_i^\epsilon \bigr) + \mathcal{A}_\rmf ( \hat{p}_\rmf^\epsilon , \hat{p}_\rmf^\epsilon ) + \smallo (\epsilon ) \\
= \bigl[1 + \mathcal{O}(\epsilon ) \bigr] \biggl[ \sum_{i=\pm } \bigl( q_i^0 , \hat{p}_i^{\epsilon} \bigr)_{L^2 (\Omega_i^0 ) } + \epsilon^{\beta + 1 } \bigl( \hat{q}_\mathrm{f} , \hat{p}_\mathrm{f}^{\epsilon} \bigr)_{L^2 (\Gamma_a )}  \biggr] + \smallo (\epsilon ) .
\end{multline*}
Thus, with \Cref{prop:convergence} and \Cref{thm:aeqp1_coupled}, we find 
\begin{align*}
\lim_{\epsilon \rightarrow 0 } \normsizeiii{\big}{ \bigl( \hat{p}_+^\epsilon  , \hat{p}_-^\epsilon  , \hat{p}_\rmf^\epsilon ,\, \epsilon\nabla_\Gamma \hat{p}_\rmf^\epsilon  \bigr) }^2 &= \sum_{i = \pm }\bigl( q_i^0 , \hat{p}^\ast_i \bigr)_{L^2 (\Omega_i^0 )} + \bigl( \hat{q}^\ast_\rmf , \hat{p}^\ast_\rmf \bigr)_{L^2 (\Gamma_a ) } \\
= \normsizeiii{\big}{ \bigl( \hat{p}_+^\ast  , \hat{p}_-^\ast  , \hat{p}_\rmf^\ast , \vct{0}  \bigr) }^2 . \tag*{\qedhere}
\end{align*}
\end{proof}

\subsubsection{Decoupled Limit Problem} \label{sec:aeqp1_decoupled}
Starting from the coupled limit problem~\eqref{eq:weaklimit_aeqp1_coupled}, we will subsequently derive a decoupled limit problem for the bulk solution only.
The strong formulation of the decoupled bulk limit problem reads as follows.

Find $\smash{p_\pm \colon \Omega^0_\pm \rightarrow \mathbb{R}}$ such that 
\begin{subequations}
\begin{alignat}{2}
-\nabla \cdot \bigl( \matr{K}_\pm^0 \nabla p_\pm \bigr) &= q_\pm^0 \qquad &&\text{in } \Omega_i^0 , \ \\
p_+ &= p_- \qquad &&\text{on } \Gamma_0^0 , \\ 
\matr{K}_+^0 \nabla p_+ \cdot \vct{N} &= \matr{K}_-^0 \nabla p_- \cdot \vct{N} \qquad &&\text{on } \Gamma_0^0 , \\
\jumpsize{\big}{\matr{K}^0 \nabla p}_\Gamma + aq_\Gamma &= 0 \qquad &&\text{on } \Gamma , \\
\matr{K}_+^0 \nabla p_+ \cdot \vct{N} &= K_\Gamma^\perp \bigl(  \jump{p}_\Gamma - a Q_\Gamma \bigr) \qquad &&\text{on } \Gamma , \\
p_\pm &= 0 \qquad &&\text{on } \rho_{\pm,\mathrm{D}}^0 ,  \\
\matr{K}_\pm^0 \nabla p_\pm \cdot \vct{n} &= 0 \qquad &&\text{on } \rho_{\pm,\mathrm{N}}^0 ,
\end{alignat}%
\label{eq:stronglimit_aeqp1}%
\end{subequations}%
where $\smash{q_\Gamma \in L^2_a (\Gamma )}$ is given by \cref{eq:qgamma}.
$\smash{Q_\Gamma \in L^2 (\Gamma )}$ 
and the effective hydraulic conductivity $\smash{K_\Gamma^\perp \colon \Gamma \rightarrow \mathbb{R}}$ with $\smash{a K_\Gamma^\perp  \in L^\infty(\Gamma )}$  are defined by
\begin{subequations}
\begin{align}
Q_\Gamma (\vct{p}  ) &:= \begin{cases} \bigl( \mathfrak{A}_\Gamma \hat{Q}_\mathrm{f} \bigr)  (\vct{p} )   &\text{if } \beta = -1 , \\
0 &\text{if } \beta > - 1 ,  \end{cases} \\
\hat{Q}_\rmf (\vct{p} , \theta_n ) &:=  \hat{q}_\rmf ( \vct{p} , \theta_n ) \int_{-a_-(\vct{p} )}^{ \theta_n }  \bigl[ \hat{K}_\rmf^\perp \bigr]^{-1} (\vct{p}, \bar{\theta}_n) \,\D \bar{\theta}_n ,
\end{align}
\end{subequations}
\vspace{-0.25cm}
\begin{align}
\label{eq:Kperp} K_\Gamma^\perp (\vct{p} ) &:= \bigl[ a(\vct{p} ) \mathfrak{A}_\Gamma \bigl( [\hat{K}_\rmf^\perp ]^{-1} \bigr) (\vct{p}) \bigr]^{-1} .
\end{align}
A weak formulation of the system in \cref{eq:stronglimit_aeqp1} is given by the following problem.
\vspace{8pt}

Find $ (p_+ , p_-  ) \in \Phi^0_\mathrm{IV}$ such that, for all $\smash{(\phi_+ , \phi_- ) \in \Phi_\mathrm{IV}^0}$,
\begin{align}
\begin{split}
&\sum_{i = \pm } \bigl( \matr{K}_i^0 \nabla p_i , \nabla \phi_i )_{\vct{L}^2 (\Omega_i^0  ) } + \bigl( K_\Gamma^\perp \jump{p}_\Gamma , \jump{\phi}_\Gamma \bigr)_{L^2 (\Gamma )}  \\
&\hspace{1.6cm} =  \sum_{i = \pm }\bigl( q_i^0 , \phi_i \bigr)_{L^2 (\Omega_i^0 )} + \bigl( aq_\Gamma , \phi_- \bigr)_{L^2(\Gamma ) } + \bigl( aK_\Gamma^\perp Q_\Gamma , \jump{\phi}_\Gamma \bigr)_{L^2 (\Gamma ) }.
\end{split}%
\label{eq:weaklimit_aeqp1}%
\end{align}%
Here, the space~$\Phi^0_\mathrm{IV}$ is given by 
\begin{align}
\Phi_\mathrm{IV}^0 &:= \Bigl\{ (\phi_+ , \phi_- ) \in \bigtimes\nolimits_{i=\pm } H^1_{0,\rho_{i,\mathrm{D}}^0} (\Omega_i^0 ) \ \Big\vert \ \jump{\phi}_\Gamma \in L^2_{a^{-1}} (\Gamma ) , \ \phi_+\bigr\vert_{\Gamma_0^0} = \phi_-\bigr\vert_{\Gamma_0^0}  \Bigr\} .
\end{align} 
We require the following auxiliary result.
\begin{lemma} \label{lem:PhiIV}
The map 
\begin{align}
( \phi_+ , \phi_- , \phi_\rmf )  \mapsto ( \phi_+ , \phi_- )
\end{align}
defines a continuous embedding~$\smash{\Phi^\ast \hookrightarrow \Phi_\mathrm{IV}^0}$.
\end{lemma}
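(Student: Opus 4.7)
The plan is to reduce everything to a single pointwise Cauchy--Schwarz estimate for the jump of a smooth function across the fracture in normal direction, then extend by density and closure.

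First, I would prove the key estimate
\begin{align*}
\normsize{\big}{\mathfrak{T}_+ \phi_\rmf - \mathfrak{T}_- \phi_\rmf}_{L^2_{a^{-1}}(\Gamma)} \le \norm{\nablaN \phi_\rmf}_{\vct{L}^2 (\Gamma_a)}
\end{align*}
on the dense subspace $\mathcal{C}^{0,1}(\overline{\Gamma}_a^\smallperp)$ of $H^1_\vct{N}(\Gamma_a)$. For such $\phi_\rmf$, the fundamental theorem of calculus gives
\begin{align*}
\phi_\rmf \bigl(\vct{p} , a_+(\vct{p})\bigr) - \phi_\rmf\bigl(\vct{p} , -a_-(\vct{p})\bigr) = \int_{-a_-(\vct{p})}^{a_+(\vct{p})} \partial_{\theta_n} \phi_\rmf (\vct{p} , \theta_n) \,\D \theta_n ,
\end{align*}
so Cauchy--Schwarz yields $a^{-1}(\vct{p})\abssize{\big}{\jump{\phi}_\Gamma(\vct{p})}^2 \le \int_{-a_-(\vct{p})}^{a_+(\vct{p})} \abssize{\big}{\partial_{\theta_n}\phi_\rmf(\vct{p},\theta_n)}^2 \D\theta_n$, and integrating over~$\Gamma$ gives the claim. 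By density and the boundedness of $\mathfrak{T}_\pm$ from \Cref{lem:Tpm}, this extends to every $\phi_\rmf \in H^1_\vct{N}(\Gamma_a)$: a smooth approximating sequence is Cauchy in $L^2_{a^{-1}}(\Gamma)$, and its limit coincides with $\mathfrak{T}_+\phi_\rmf - \mathfrak{T}_-\phi_\rmf$ by uniqueness of limits in the weaker space~$L^2_a(\Gamma)$ (using that $L^2_{a^{-1}}(\Gamma) \hookrightarrow L^2(\Gamma) \hookrightarrow L^2_a(\Gamma)$ since $a$ is bounded above).

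Next, for $(\phi_+ , \phi_- , \phi_\rmf) \in \Phi$, \Cref{lem:traceineq} gives $\jump{\phi}_\Gamma = \mathfrak{T}_+\phi_\rmf - \mathfrak{T}_-\phi_\rmf$, so the above estimate immediately yields $\jump{\phi}_\Gamma \in L^2_{a^{-1}}(\Gamma)$ together with
\begin{align*}
\normsize{\big}{\jump{\phi}_\Gamma}_{L^2_{a^{-1}}(\Gamma)} \le \norm{\nablaN \phi_\rmf}_{\vct{L}^2 (\Gamma_a)}.
\end{align*}

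The remaining step is to pass to the closure~$\Phi^\ast$. Given $(\phi_+, \phi_-, \phi_\rmf) \in \Phi^\ast$, pick a sequence $(\phi^k_+,\phi^k_-,\phi^k_\rmf) \in \Phi$ converging to it in $H^1(\Omega_+^0) \times H^1(\Omega_-^0) \times H^1_\vct{N}(\Gamma_a)$. Applied to the differences, the estimate shows that $\smash{\{\jump{\phi^k}_\Gamma\}_k}$ is Cauchy in $L^2_{a^{-1}}(\Gamma)$, hence convergent. The limit must agree with $\smash{\jump{\phi}_\Gamma}$, which is the $L^2(\Gamma)$-limit by continuity of the trace operators $\smash{H^1(\Omega_\pm^0) \to L^2(\Gamma)}$; hence $\jump{\phi}_\Gamma \in L^2_{a^{-1}}(\Gamma)$ with the same bound. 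The Dirichlet conditions on $\rho^0_{\pm,\mathrm{D}}$ and the matching condition on $\Gamma_0^0$ pass to the limit because they are closed under $H^1$-convergence. This shows $(\phi_+,\phi_-) \in \Phi_\mathrm{IV}^0$, and the estimate yields the continuous embedding
\begin{align*}
\norm{\phi_+}_{H^1(\Omega_+^0)}^2 + \norm{\phi_-}_{H^1(\Omega_-^0)}^2 + \normsize{\big}{\jump{\phi}_\Gamma}^2_{L^2_{a^{-1}}(\Gamma)} \lesssim \normsize{\big}{(\phi_+,\phi_-,\phi_\rmf)}^2_{\Phi^\ast}.
\end{align*}

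No step looks genuinely difficult; the only subtle point is the identification of the $L^2_{a^{-1}}$-limit with $\jump{\phi}_\Gamma$ in the closure argument, which requires being careful that weak or strong convergence in different weighted $L^2$-spaces still pins down the same pointwise a.e.\ representative---handled cleanly by comparing with the (weaker) $L^2(\Gamma)$-trace convergence.
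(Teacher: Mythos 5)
Your proposal is correct and follows essentially the same approach as the paper: the central step is precisely the fundamental-theorem-of-calculus representation of $\jump{\phi}_\Gamma$ followed by Cauchy--Schwarz, yielding $\normsize{\big}{\jump{\phi}_\Gamma}_{L^2_{a^{-1}}(\Gamma)} \le \norm{\nablaN \phi_\rmf}_{\vct{L}^2(\Gamma_a)}$. You simply spell out more explicitly the density steps (from $\mathcal{C}^{0,1}(\overline{\Gamma}_a^\smallperp)$ to $H^1_\vct{N}(\Gamma_a)$, and from $\Phi$ to $\Phi^\ast$) that the paper leaves implicit.
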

\begin{proof}
With \Cref{lem:traceineq}, we have 
\begin{align*}
\jump{\phi}_\Gamma^2 (\vct{p} )&= \biggl[ \int_{a_-(\vct{p})}^{a_+(\vct{p} ) } \partial_{\theta_n} \phi_\mathrm{f} (\vct{p}, \theta_n )  \,\D \theta_n \biggr]^2  \le a(\vct{p}) \int_{a_-(\vct{p} )}^{a_+(\vct{p} ) } \bigl[ \partial_{\theta_n }  \phi_\mathrm{f}(\vct{p}, \theta_n )   \bigr]^2  \,\D \theta_n 
\end{align*}
for a.a.\ $\smash{(\vct{p}, \theta_n ) \in \Gamma_a }$.
Thus, an additional integration on~$\Gamma$ yields
\begin{align*}
\normsize{\big}{\jump{\phi}_\Gamma}_{L^2_{a^{-1}}(\Gamma ) } \le \norm{\phi_\mathrm{f}}_{H^1_{\vct{N}} (\Gamma_a ) } . \tag*{\qedhere}
\end{align*}
\end{proof}

We now obtain the following convergence result.
\begin{theorem} \label{thm:aeqp1}
Let $\alpha = 1$ and $\beta \ge -1$.
Then, given that the assumption~\eqref{asm:rhoD} holds true, $\smash{ (\hat{p}_+^\ast , \hat{p}_-^\ast ) \in \Phi^0_\mathrm{IV}}$ is the unique solution of problem~\eqref{eq:weaklimit_aeqp1}, where $\smash{\hat{p}_\pm^\ast \in H^1(\Omega^0_\pm )}$ denote the limit functions from \Cref{prop:convergence}. 
\end{theorem}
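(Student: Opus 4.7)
My strategy is to derive the decoupled weak formulation~\eqref{eq:weaklimit_aeqp1} directly from the coupled one~\eqref{eq:weaklimit_aeqp1_coupled} by solving the normal-direction ODE for $\hat{p}_\rmf^\ast$ explicitly in terms of the bulk traces $\hat{p}_\pm^\ast\vert_\Gamma$, and then to close the argument with Lax-Milgram. The starting point is \Cref{thm:aeqp1_coupled}, which yields $(\hat{p}_+^\ast, \hat{p}_-^\ast, \hat{p}_\rmf^\ast) \in \Phi^\ast$ as the unique solution of~\eqref{eq:weaklimit_aeqp1_coupled}; together with \Cref{lem:PhiIV}, this gives $(\hat{p}_+^\ast, \hat{p}_-^\ast) \in \Phi_\mathrm{IV}^0$ for free.

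First, testing~\eqref{eq:weaklimit_aeqp1_coupled} against triples of the form $(0, 0, \phi_\rmf) \in \Phi^\ast$ with $\mathfrak{T}_\pm \phi_\rmf = 0$ shows that for $\lambda_\Gamma$-a.e.~$\vct{p} \in \Gamma$ the fiber $\theta_n \mapsto \hat{p}_\rmf^\ast(\vct{p}, \theta_n)$ solves the one-dimensional Dirichlet problem $-\partial_{\theta_n}(\hat{K}_\rmf^\perp \partial_{\theta_n} \hat{p}_\rmf^\ast) = \hat{q}_\rmf^\ast$ on $(-a_-(\vct{p}), a_+(\vct{p}))$ with boundary data $\hat{p}_\pm^\ast\vert_\Gamma(\vct{p})$ supplied by \Cref{lem:Tpm}. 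A first integration yields $\hat{K}_\rmf^\perp \partial_{\theta_n}\hat{p}_\rmf^\ast(\vct{p}, \theta_n) = C(\vct{p}) - \int_{-a_-}^{\theta_n}\hat{q}_\rmf^\ast\,\D\bar{\theta}_n$; dividing by $\hat{K}_\rmf^\perp$, integrating a second time, and enforcing $\hat{p}_\rmf^\ast(\vct{p}, a_+) - \hat{p}_\rmf^\ast(\vct{p}, -a_-) = \jump{\hat{p}^\ast}_\Gamma(\vct{p})$ determines $C$. A Fubini swap in the resulting double integral recognizes the (non-obvious) quantity $a Q_\Gamma$ from \cref{eq:qgamma} and produces
\begin{align*}
\hat{K}_\rmf^\perp \partial_{\theta_n} \hat{p}_\rmf^\ast(\vct{p}, \theta_n) = K_\Gamma^\perp \bigl( \jump{\hat{p}^\ast}_\Gamma - a Q_\Gamma \bigr)(\vct{p}) - \int_{-a_-(\vct{p})}^{\theta_n} \hat{q}_\rmf^\ast(\vct{p}, \bar{\theta}_n)\,\D\bar{\theta}_n.
\end{align*}

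Next, for an arbitrary test pair $(\phi_+, \phi_-) \in \Phi_\mathrm{IV}^0$ I construct the linear-in-$\theta_n$ lifting $\phi_\rmf(\vct{p}, \theta_n) := \phi_-\vert_\Gamma(\vct{p}) + (\theta_n + a_-(\vct{p}))/a(\vct{p})\, \jump{\phi}_\Gamma(\vct{p})$ on $\{a > 0\}$; the membership $\jump{\phi}_\Gamma \in L^2_{a^{-1}}(\Gamma)$ built into $\Phi_\mathrm{IV}^0$ ensures $\phi_\rmf \in H^1_\vct{N}(\Gamma_a)$ with $\mathfrak{T}_\pm\phi_\rmf = \phi_\pm\vert_\Gamma$, so $(\phi_+, \phi_-, \phi_\rmf) \in \Phi^\ast$. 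Substituting this triple into~\eqref{eq:weaklimit_aeqp1_coupled} and performing fiberwise integration by parts in $\theta_n$, the $(\hat{q}_\rmf^\ast, \phi_\rmf)$-terms cancel and the boundary contribution, evaluated at $\theta_n = \pm a_\pm$ via the displayed formula, reduces to exactly the interface terms appearing in~\eqref{eq:weaklimit_aeqp1}. Uniqueness then follows from Lax-Milgram on $\Phi_\mathrm{IV}^0$ equipped with the norm $\sum_{i=\pm}\|\phi_i\|_{H^1(\Omega_i^0)}^2 + \|\jump{\phi}_\Gamma\|_{L^2_{a^{-1}}(\Gamma)}^2$: Poincaré under~\eqref{asm:rhoD} and uniform ellipticity of $\matr{K}_\pm^0$ deliver bulk coercivity, while uniform ellipticity of $\hat{\matr{K}}_\rmf$ supplies an $L^\infty$-bound on $[\hat{K}_\rmf^\perp]^{-1}$, hence $K_\Gamma^\perp \gtrsim 1/a$, giving coercivity of the interface term; continuity is immediate from $aK_\Gamma^\perp, aQ_\Gamma \in L^\infty$ and Cauchy-Schwarz.

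The main obstacle I anticipate is the second step: the Fubini rearrangement that forces the non-obvious quantity $aQ_\Gamma$ to surface inside the integration constant $C(\vct{p})$, together with the careful bookkeeping that splits the double line integral of $\hat{q}_\rmf^\ast$ in exactly the right way so that $aq_\Gamma$ ends up paired with $\phi_-\vert_\Gamma$ (and not with $\jump{\phi}_\Gamma$) in the final right-hand side, matching the asymmetric form of~\eqref{eq:weaklimit_aeqp1}.
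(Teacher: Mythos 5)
Your overall strategy---solve the normal-fiber ODE for $\hat{p}_\rmf^\ast$ explicitly, substitute an affine-in-$\theta_n$ lifting of the test pair, and integrate by parts in $\theta_n$---is a genuinely different route from the paper's. The paper sidesteps both the ODE solve and the integration by parts by choosing the lifting $\phi_\rmf(\vct{p},\theta_n) := \phi_-\vert_\Gamma(\vct{p}) + \jump{\phi}_\Gamma(\vct{p})\, K_\Gamma^\perp(\vct{p})\int_{-a_-(\vct{p})}^{\theta_n}[\hat{K}_\rmf^\perp]^{-1}\,\D\bar{\theta}_n$, weighted by $[\hat{K}_\rmf^\perp]^{-1}$ rather than linear in $\theta_n$; this makes $\hat{K}_\rmf^\perp\partial_{\theta_n}\phi_\rmf = K_\Gamma^\perp\jump{\phi}_\Gamma$ constant in $\theta_n$, so the fracture bilinear term collapses directly to $(K_\Gamma^\perp\jump{\hat{p}^\ast}_\Gamma,\jump{\phi}_\Gamma)_{L^2(\Gamma)}$, and the source term $(\hat{q}_\rmf^\ast,\phi_\rmf)_{L^2(\Gamma_a)}$ splits into the two interface source terms without any integration by parts or Green's-function bookkeeping. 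Your route works in principle, but does more computation and exposes more opportunities for error.

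And indeed the obstacle you flagged is exactly where the error sits. Your displayed formula for $\hat{K}_\rmf^\perp\partial_{\theta_n}\hat{p}_\rmf^\ast$ has the wrong integration constant: the correct one is
\begin{align*}
\hat{K}_\rmf^\perp\partial_{\theta_n}\hat{p}_\rmf^\ast(\vct{p},\theta_n) = K_\Gamma^\perp\bigl(\jump{\hat{p}^\ast}_\Gamma - aQ_\Gamma\bigr)(\vct{p}) + \int_{\theta_n}^{a_+(\vct{p})}\hat{q}_\rmf^\ast(\vct{p},\bar{\theta}_n)\,\D\bar{\theta}_n,
\end{align*}
which differs from yours by the additive constant $aq_\Gamma(\vct{p}) = \int_{-a_-}^{a_+}\hat{q}_\rmf^\ast\,\D\theta_n$. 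You can see the discrepancy immediately by evaluating at $\theta_n = a_+$: your formula gives $\mathfrak{T}_+(\hat{K}_\rmf^\perp\partial_{\theta_n}\hat{p}_\rmf^\ast) = K_\Gamma^\perp(\jump{\hat{p}^\ast}_\Gamma - aQ_\Gamma) - aq_\Gamma$, which is incompatible with the coupling condition $\matr{K}_+^0\nabla\hat{p}_+^\ast\cdot\vct{N} = K_\Gamma^\perp(\jump{\hat{p}^\ast}_\Gamma - aQ_\Gamma)$ in \cref{eq:stronglimit_aeqp1}. (The error traces back to how the Fubini swap in $\int_{-a_-}^{a_+}[\hat{K}_\rmf^\perp]^{-1}(\theta_n)\int_{-a_-}^{\theta_n}\hat{q}_\rmf^\ast\,\D\bar\theta_n\,\D\theta_n$ gives $[K_\Gamma^\perp]^{-1}aq_\Gamma - aQ_\Gamma$, not $aQ_\Gamma$, because the $\hat{Q}_\rmf$ in \cref{eq:qgamma} carries the cumulative integral of $[\hat{K}_\rmf^\perp]^{-1}$, not of $\hat{q}_\rmf^\ast$.) With your formula the boundary term from the fiberwise integration by parts evaluates to $(K_\Gamma^\perp\jump{\hat{p}^\ast}_\Gamma,\jump{\phi}_\Gamma) - (aK_\Gamma^\perp Q_\Gamma,\jump{\phi}_\Gamma) - (aq_\Gamma,\phi_+)$, i.e., the fracture source lands on $\phi_+$ instead of $\phi_-$, so you do not recover \cref{eq:weaklimit_aeqp1}. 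With the corrected constant, the boundary contribution is $\mathfrak{T}_+(\cdots)\phi_+\vert_\Gamma - \mathfrak{T}_-(\cdots)\phi_-\vert_\Gamma$ with $\mathfrak{T}_- = \mathfrak{T}_+ + aq_\Gamma$, and the source correctly pairs with $\phi_-$. Once you fix this, your argument and the Lax--Milgram closure go through.
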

\begin{proof}
Let $\smash{(\phi_+ , \phi_- ) \in \Phi^0_\mathrm{IV}}$.
We define~$\phi_\mathrm{f} \in H_{\vct{N}}^1 (\Gamma_a )$ by 
\begin{align*}
\phi_\mathrm{f} ( \vct{p} , \theta_n ) &:= \phi_-\bigr\vert_{\Gamma } (\vct{p} ) + \jump{\phi}_\Gamma (\vct{p} ) K_\Gamma^\perp (\vct{p} )  \int_{-a_-(\vct{p})}^{\theta_n} [\hat{K}_\rmf^\perp]^{-1} (\vct{p}, \bar{\theta}_n ) \,\D \bar{\theta}_n ,
\end{align*}
where $\smash{K_\Gamma^\perp \in L^\infty_a (\Gamma )}$ is given by \cref{eq:Kperp}.
It is easy to check that $\smash{(\phi_+ , \phi_- , \phi_\rmf ) \in \Phi^\ast}$.
In particular, we have 
\begin{align*}
\partial_{\theta_n } \phi_\mathrm{f} (\vct{p} , \theta_n ) &= \jump{\phi}_\Gamma (\vct{p} ) K_\Gamma^\perp (\vct{p} )  [\hat{K}_\rmf^\perp ]^{-1} (\vct{p}, \theta_n ) .
\end{align*}
Thus, by inserting the test function triple $\smash{(\phi_+ , \phi_- , \phi_\rmf ) }$ into the weak formulation~\eqref{eq:weaklimit_aeqp1_coupled} and by using that 
\begin{align*}
\bigl( \hat{\matr{K}}_\rmf \nablaN \hat{p}_\rmf^\ast , \nablaN \phi_\rmf \bigr)_{\vct{L}^2 (\Gamma_a )} &=  \bigl( K_\Gamma^\perp \partial_{\theta_n} \hat{p}_\rmf^\ast , \jump{\phi}_\Gamma \bigr)_{L^2 (\Gamma_a ) }  = \bigl(K_\Gamma^\perp \jump{\hat{p}^\ast}_\Gamma , \jump{\phi}_\Gamma \bigr)_{L^2 (\Gamma )},
\end{align*}
we find that $\smash{ (\hat{p}_+^\ast , \hat{p}_-^\ast ) }$ satisfies \cref{eq:weaklimit_aeqp1}.
With \Cref{lem:PhiIV}, we have $\smash{(\hat{p}_+^\ast , \hat{p}_-^\ast ) \in \Phi_\mathrm{IV}^0}$.
The uniqueness of the solution follows from the Lax-Milgram theorem.
\end{proof}

\subsection{Case V: \texorpdfstring{$\alpha > 1$}{alpha > 1} }
\label{sec:sec45}
For $\alpha > 1$ and $2 \beta \ge \alpha - 3$, the fracture becomes a solid wall as $\epsilon \rightarrow 0$, i.e., the interface~$\Gamma$ is an impermeable barrier with zero flux across~$\Gamma$. 
The strong formulation of the limit problem reads as follows.

Find $\smash{p_\pm \colon \Omega_\pm^0 \rightarrow \mathbb{R}}$ such that
\begin{subequations}
\begin{alignat}{3}
-\nabla \cdot \bigl( \matr{K}_\pm^0 \nabla p_\pm \bigr) &= q_\pm^0 \qquad &&\text{in } \Omega_\pm^0 ,  \\
\matr{K}_\pm^0 \nabla p_\pm \cdot \vct{N} &= 0 \qquad &&\text{on } \Gamma , \label{eq:stronglimit_ag1_b}  \\
p_+ &= p_- \qquad &&\text{on } \Gamma_0^0 , \\
\matr{K}_+^0 \nabla p_+ \cdot \vct{N} &= \matr{K}_-^0 \nabla p_- \cdot \vct{N} \qquad &&\text{on } \Gamma_0^0 , \\
p_\pm &= 0 \qquad &&\text{on } \rho_{\pm ,\mathrm{D}}^0 , \\
\matr{K}_\pm^0 \nabla p_\pm \cdot \vct{n} &= 0 \qquad &&\text{on } \rho_{\pm ,\mathrm{N}}^0 .
\end{alignat}%
\label{eq:stronglimit_agp1}%
\end{subequations}%
A weak formulation of the system in \cref{eq:stronglimit_agp1} is given by the following problem.

Find $ (p_+ , p_- ) \in \Phi^0_\mathrm{V}$ such that, for all $ ( \phi_+ , \phi_-  ) \in \Phi_\mathrm{V}^0$,
\begin{align}
\label{eq:weaklimit_agp1} &\sum_{i = \pm } \bigl( \matr{K}_i^0 \nabla p_i , \nabla \phi_i )_{\vct{L}^2 (\Omega_i^0  ) } =  \sum_{i = \pm }\bigl( q_i^0 , \phi_i \bigr)_{L^2 (\Omega_i^0 )} .
\end{align}
Here, the space~$\Phi^0_\mathrm{V}$ is given by 
\begin{align}
\Phi_\mathrm{V}^0 &:= \Bigl\{ (\phi_+ , \phi_- ) \in \bigtimes\nolimits_{i=\pm } H^1_{0,\rho_{i,\mathrm{D}}^0} (\Omega_i^0 ) \ \Big\vert \ \phi_+\bigr\vert_{\Gamma_0^0} = \phi_-\bigr\vert_{\Gamma_0^0}  \Bigr\} \cong H^1_{0, \rho^0_{\mathrm{b},\mathrm{D}} } (\Omega^0 \setminus \Gamma ) .
\end{align}
We now have the following convergence results.
\begin{theorem}
Let $\alpha > 1$ and $2 \beta \ge \alpha - 3$. 
Then, given the assumption~\eqref{asm:rhoD}, $\smash{ ( \hat{p}^\ast_+ , \hat{p}^\ast_- ) \in \Phi_\mathrm{V}^0}$ is a weak solution of problem~\eqref{eq:weaklimit_agp1}, where $\smash{\hat{p}^\ast_\pm\in H^1(\Omega^0_\pm )}$ denote the limit functions from \Cref{prop:convergence}.
\end{theorem}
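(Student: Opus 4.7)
The plan is to pass to the limit along the subsequence $\{\epsilon_k\}$ from \Cref{prop:convergence} in the transformed weak formulation~\eqref{eq:rescaledweakdarcyeps}, using test functions tailored to $\Phi_\mathrm{V}^0$. The preparatory step is the following: given an arbitrary pair $(\phi_+, \phi_-) \in \Phi_\mathrm{V}^0$, whose traces $\phi_\pm\vert_\Gamma$ are independent $H^{1/2}(\Gamma)$-functions, construct an extension $\phi_\rmf \in H^1_{0, \rho_{a,\mathrm{D}}}(\Gamma_a)$ with $\mathfrak{T}_\pm \phi_\rmf = \phi_\pm\vert_\Gamma$. Such an extension exists by standard trace and extension theory for the Lipschitz manifold~$\Gamma_a$, and by \Cref{lem:traceineq} the resulting triple $(\phi_+, \phi_-, \phi_\rmf)$ lies in~$\Phi$.

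With this test triple inserted into~\eqref{eq:rescaledweakdarcyeps}, the next step is to show that $\mathcal{A}_\rmf^{\epsilon_k}(\hat{p}_\rmf^{\epsilon_k}, \phi_\rmf)$ vanishes as $\epsilon_k \to 0$. From \Cref{prop:apriori_l2_3} one has $\norm{\nablaGamma \hat{p}_\rmf^{\epsilon_k}}_{\vct{L}^2(\Gamma_a)} = \mathcal{O}(\epsilon_k^{-(\alpha+1)/2})$ and $\norm{\nablaN \hat{p}_\rmf^{\epsilon_k}}_{\vct{L}^2(\Gamma_a)} = \mathcal{O}(\epsilon_k^{-(\alpha-1)/2})$; combining these with \Cref{lem:Roperators} and Cauchy--Schwarz, each of the four summands in~$\mathcal{A}_\rmf^{\epsilon_k}$ is bounded by a constant times $\epsilon_k^{(\alpha+1)/2}\norm{\nablaGamma \phi_\rmf}_{\vct{L}^2(\Gamma_a)}$ or $\epsilon_k^{(\alpha-1)/2}\norm{\nablaN \phi_\rmf}_{\vct{L}^2(\Gamma_a)}$, all of which tend to zero since $\alpha > 1$. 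The fracture source contribution $\epsilon_k^{\beta+1}(\hat{q}_\rmf, \phi_\rmf)_{L^2(\Gamma_a)}$ also vanishes because the hypothesis $2\beta \ge \alpha - 3 > -2$ forces $\beta > -1$.

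The bulk bilinear forms converge by \Cref{lem:limApm} to $\sum_{i=\pm}(\matr{K}_i^0 \nabla \hat{p}_i^\ast, \nabla \phi_i)_{\vct{L}^2(\Omega_i^0)}$, and the bulk source terms on the right-hand side converge trivially. Collecting all contributions yields~\eqref{eq:weaklimit_agp1} for the limit pair. Finally, $(\hat{p}_+^\ast, \hat{p}_-^\ast) \in \Phi_\mathrm{V}^0$ follows from \Cref{prop:convergence}: the weak $H^1$-convergence preserves both the homogeneous Dirichlet conditions on $\rho_{\pm,\mathrm{D}}^0$ and the continuity requirement on $\Gamma_0^0$.

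The main technical hurdle lies entirely in the first step, namely producing a single $H^1$-extension into the thin Lipschitz manifold~$\Gamma_a$ that simultaneously realizes two arbitrary and unrelated lateral traces on $\Gamma_\pm$ while respecting the Dirichlet constraint on~$\rho_{a,\mathrm{D}}$; care is needed near $\partial\Gamma$ where $\rho_{a,\mathrm{D}}$ meets the corners of $\rho_{\pm,\mathrm{D}}^0$, which requires a compatibility argument built on the geometric setup. Once this extension is in place, the remaining passage to the limit is routine and relies only on the already-established a-priori estimates.
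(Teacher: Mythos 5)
Your overall strategy coincides with the paper's: insert a test triple from~$\Phi$ into the transformed weak formulation~\eqref{eq:rescaledweakdarcyeps}, use the a-priori bounds of \Cref{prop:apriori_l2_3} together with \Cref{lem:Roperators} to show that all four summands of $\mathcal{A}_\rmf^{\epsilon_k}$ vanish, note that $2\beta\ge\alpha-3>-2$ forces $\beta>-1$ so the fracture source term goes to zero, and use \Cref{lem:limApm} for the bulk contributions. Your order-of-magnitude accounting for the four pieces of $\mathcal{A}_\rmf^{\epsilon_k}$ is correct, and the paper's (very terse) proof does exactly this.

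Where your write-up goes wrong is the preparatory step. It is \emph{not} true that an arbitrary pair $(\phi_+,\phi_-)\in\Phi_\mathrm{V}^0$ admits an extension $\phi_\rmf\in H^1_{0,\rho_{a,\mathrm{D}}}(\Gamma_a)$ with $\mathfrak{T}_\pm\phi_\rmf=\phi_\pm|_\Gamma$. The Cauchy--Schwarz computation in the proof of \Cref{lem:PhiIV} shows that for \emph{any} $\phi_\rmf\in H^1_{\vct{N}}(\Gamma_a)$ the jump of its lateral traces necessarily lies in $L^2_{a^{-1}}(\Gamma)$, namely
\begin{align*}
\bigl\lVert \mathfrak{T}_+\phi_\rmf - \mathfrak{T}_-\phi_\rmf \bigr\rVert_{L^2_{a^{-1}}(\Gamma)}
\le \lVert \nablaN \phi_\rmf \rVert_{\vct{L}^2(\Gamma_a)} ,
\end{align*}
whereas $\Phi_\mathrm{V}^0$ imposes no such constraint on $\jump{\phi}_\Gamma$. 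When $a$ degenerates — for example near $\Gamma_0^0$, which the geometric set-up permits — one has $L^2_{a^{-1}}(\Gamma)\subsetneq L^2(\Gamma)$, and pairs in $\Phi_\mathrm{V}^0$ whose jump is not in $L^2_{a^{-1}}(\Gamma)$ admit no extension at all; this is precisely why the paper restricts the case-IV test space to $\Phi_\mathrm{IV}^0$. The repair (which the paper itself leaves implicit in this case) is a density argument: first verify~\eqref{eq:weaklimit_agp1} for the subclass of pairs with $\jump{\phi}_\Gamma\in L^2_{a^{-1}}(\Gamma)$, e.g.\ those whose jump has compact support in~$\Gamma$, where a linear-in-$\theta_n$ interpolation produces an admissible $\phi_\rmf$; then observe that such pairs are dense in $\Phi_\mathrm{V}^0$ and that both sides of~\eqref{eq:weaklimit_agp1} are continuous in the $H^1\times H^1$ topology, so the identity extends to all of~$\Phi_\mathrm{V}^0$.
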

\begin{proof}
With \Cref{prop:apriori_l2_3}, we have 
\begin{align*}
\epsilon^\alpha \normsize{\big }{\nablaN \hat{p}_\mathrm{f}^\epsilon}_{\vct{L}^2 (\Gamma_a )} \lesssim \epsilon^{\alpha - 1} \normsize{\big }{\nablaN \hat{p}_\mathrm{f}^\epsilon}_{\vct{L}^2 (\Gamma_a )} &\lesssim \epsilon^\frac{\alpha - 1}{2} , \\
\epsilon^{\alpha + 1} \normsize{\big }{\nablaGamma \hat{p}_\mathrm{f}^\epsilon }_{\vct{L}^2 (\Gamma_a )} \lesssim \epsilon^{\alpha} \normsize{\big }{\nablaGamma \hat{p}_\mathrm{f}^\epsilon }_{\vct{L}^2 (\Gamma_a )} &\lesssim \epsilon^\frac{\alpha - 1}{2} .
\end{align*}
Thus, with the \Cref{lem:Roperators,lem:limApm}, the result follows by letting~$k\rightarrow \infty$ in the transformed weak formulation~\eqref{eq:rescaledweakdarcyeps}.
\end{proof}

\begin{theorem} \label{thm:agp1}
Let $\alpha > -1 $ and $2 \beta \ge \alpha - 3 $. 
Then, given the assumption~\eqref{asm:rhoD}, we have strong convergence 
\begin{alignat}{2}
\hat{p}_\pm^{\epsilon } &\rightarrow \hat{p}_\pm^\ast \quad\enspace &&\text{in } H^1 (\Omega_\pm^0 ) \label{eq:strongconv_agp1}
\end{alignat}
as $\epsilon \rightarrow 0$ for the whole sequence~$\smash{\{ \hat{p}_\pm^\epsilon \}_{\epsilon\in (0,\hat{\epsilon}]}}$.
Moreover, $\smash{ ( \hat{p}_+^\ast , \hat{p}_-^\ast ) \in \Phi_\mathrm{V}^0}$ is the unique weak solution of the problem in \cref{eq:weaklimit_agp1}.
\end{theorem}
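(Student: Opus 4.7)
The strategy follows the template of the earlier strong convergence results (Cases~I and~III). First I would invoke the Lax-Milgram theorem on~\eqref{eq:weaklimit_agp1}: under assumption~\eqref{asm:rhoD}, Poincaré's inequality applies on both~$\Omega_\pm^0$, and combined with the uniform ellipticity of~$\matr{K}_\pm^0$ this yields coercivity of the bilinear form on~$\Phi_\mathrm{V}^0$; continuity is immediate. Uniqueness of the limit solution then forces every weakly convergent subsequence extracted in \Cref{prop:convergence} to have the same limit, upgrading the weak convergence~\eqref{eq:conv_A} to hold along the whole family~$\{\hat{p}_\pm^\epsilon\}_{\epsilon \in (0,\hat{\epsilon}]}$.

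For the strong convergence, I would introduce the energy norm
\begin{align*}
\normsizeiii{\big}{(\phi_+, \phi_-)}^2 := \sum_{i=\pm} \bigl(\matr{K}_i^0 \nabla \phi_i, \nabla \phi_i\bigr)_{\vct{L}^2(\Omega_i^0)}
\end{align*}
on~$\Phi_\mathrm{V}^0$, which is equivalent to the natural product $H^1$-norm by \Cref{lem:apriori_l2_2} together with the uniform ellipticity of~$\matr{K}_\pm^0$. Testing the transformed weak formulation~\eqref{eq:rescaledweakdarcyeps} with the solution triple~$(\hat{p}_+^\epsilon, \hat{p}_-^\epsilon, \hat{p}_\rmf^\epsilon)$ itself, using that $\mathcal{A}_\rmf^\epsilon(\hat{p}_\rmf^\epsilon, \hat{p}_\rmf^\epsilon) \ge 0$ by uniform ellipticity of~$\hat{\matr{K}}_\rmf$, and invoking the identity~\eqref{eq:limApm2} to swap the $\matr{M}_\pm^\epsilon$-deformed gradient for the undeformed one up to an $\mathcal{O}(\epsilon)$-factor yields
\begin{align*}
\normsizeiii{\big}{(\hat{p}_+^\epsilon, \hat{p}_-^\epsilon)}^2 \le \bigl[1+\mathcal{O}(\epsilon)\bigr]\!\biggl[\sum_{i=\pm}\bigl(q_i^0, \hat{p}_i^\epsilon\bigr)_{L^2(\Omega_i^0)} + \epsilon^{\beta+1}\bigl(\hat{q}_\rmf, \hat{p}_\rmf^\epsilon\bigr)_{L^2(\Gamma_a)}\biggr] + \smallo(\epsilon).
\end{align*}

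As~$\epsilon \to 0$, the bulk sum on the right converges to $\sum_{i=\pm}(q_i^0, \hat{p}_i^\ast)_{L^2(\Omega_i^0)}$ by the strong $L^2$-convergence~\eqref{eq:conv_Astrong}. The fracture contribution is controlled via the weighted estimate $\epsilon^\nu \norm{\hat{p}_\rmf^\epsilon}_{L^2(\Gamma_a)} \lesssim 1$ from \Cref{prop:apriori_l2_3} with any $2\nu \ge \max\{0, \alpha-1\}$, giving a bound of order~$\epsilon^{\beta+1-\nu}$; the hypothesis $2\beta \ge \alpha - 3$ is precisely what guarantees $\beta + 1 \ge \nu$, so this term vanishes in the limit. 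Finally, testing~\eqref{eq:weaklimit_agp1} with~$(\hat{p}_+^\ast, \hat{p}_-^\ast)$ identifies the surviving right-hand side with $\normsizeiii{\big}{(\hat{p}_+^\ast, \hat{p}_-^\ast)}^2$, and combining with the weak lower semicontinuity of~$\normiii{\cdot}$ yields convergence of norms; together with the already established weak convergence this produces the strong convergence~\eqref{eq:strongconv_agp1}.

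The main subtlety I anticipate is the bookkeeping of $\epsilon$-powers in the fracture source term: the $L^2$-control of~$\hat{p}_\rmf^\epsilon$ degrades like $\epsilon^{-(\alpha-1)/2}$ for large~$\alpha$, and the assumption $2\beta \ge \alpha - 3$ is exactly the threshold that keeps $\epsilon^{\beta+1}(\hat{q}_\rmf, \hat{p}_\rmf^\epsilon)_{L^2(\Gamma_a)}$ of order~$\smallo(1)$. Everything else is a routine replay of the argument template that has already been used in Cases~I and~III.
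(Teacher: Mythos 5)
Your plan reproduces the template the paper explicitly refers to (``analogous arguments as in the cases above''), and in that sense it is the intended argument: Lax--Milgram wellposedness of~\eqref{eq:weaklimit_agp1} under~\eqref{asm:rhoD}, uniqueness of the limit to upgrade~\eqref{eq:conv_A} to the full family, the energy norm built from $\matr{K}_\pm^0$, testing~\eqref{eq:rescaledweakdarcyeps} with the solution itself and discarding $\mathcal{A}_\rmf^\epsilon \ge 0$, identity~\eqref{eq:limApm2}, and weak lower semicontinuity. The structure is exactly that of the strong-convergence proofs in Cases~I and~III.

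There is, however, one step where your bookkeeping is too coarse. You write that $2\beta \ge \alpha - 3$ ``is precisely what guarantees $\beta + 1 \ge \nu$, so this term vanishes in the limit.'' From $\epsilon^\nu \norm{\hat{p}_\rmf^\epsilon}_{L^2(\Gamma_a)} \lesssim 1$ with $\nu = (\alpha-1)/2$, one deduces
\begin{align*}
\epsilon^{\beta+1}\abssize{\big}{\bigl(\hat{q}_\rmf, \hat{p}_\rmf^\epsilon\bigr)_{L^2(\Gamma_a)}} \lesssim \epsilon^{\beta+1-\nu}\normsize{\big}{\hat{q}_\rmf}_{L^2(\Gamma_a)} ,
\end{align*}
which tends to zero only when $\beta + 1 > \nu$, i.e.\ $2\beta > \alpha - 3$ strictly. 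At the boundary value $2\beta = \alpha - 3$ (which the hypothesis admits) this bound is merely $\mathcal{O}(1)$, and the a-priori estimates of \Cref{prop:apriori_l2_3} alone do not force the term to vanish: $\epsilon^\nu\hat{p}_\rmf^\epsilon$ is bounded in $H^1_\vct{N}(\Gamma_a)$ and satisfies $\mathfrak{T}_\pm(\epsilon^\nu\hat{p}_\rmf^\epsilon) = \epsilon^\nu \hat{p}_\pm^\epsilon\bigr\vert_\Gamma \to 0$ in $L^2_a(\Gamma)$, but vanishing lateral traces do not by themselves pin the $L^2(\Gamma_a)$-weak limit of $\epsilon^\nu\hat{p}_\rmf^\epsilon$ to zero, nor do they force $\epsilon^{(\alpha-1)/2}\norm{\nablaN\hat{p}_\rmf^\epsilon}_{\vct{L}^2(\Gamma_a)}$ to decay. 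So the step ``so this term vanishes'' is an implication that would need an additional argument at the borderline. (The paper's own proof is a one-line reference to the earlier cases and does not address this either, so the gap is latent in both texts, but you should flag it rather than assert it.) If you restrict to $2\beta > \alpha - 3$ the argument goes through exactly as you describe; for $2\beta = \alpha - 3$ you would need something extra, e.g.\ identifying the weak limit of $\epsilon^\nu\hat{p}_\rmf^\epsilon$ and showing it vanishes, or working with a tighter a-priori estimate on $\nablaN\hat{p}_\rmf^\epsilon$.
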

\begin{proof}
The result follows with analogous arguments as in the cases above.
\end{proof}

\appendixtitleon
\begin{appendices}
\section{Geometric Background} \label{sec:secA}
In the following, we summarize useful definitions and results related to the geometry of Euclidean submanifolds. 

\subsection{Orthogonal Projection and Signed Distance Function} \label{sec:secA1}
We introduce the orthogonal projection and (signed) distance function of a set and state selected properties and regularity results. 
For details, we refer to \cite{leobacher21}.
\begin{definition} \label{def:unpp}
Let $\emptyset \not= M \subset \mathbb{R}^n$.
\begin{enumerate}[wide,topsep=0pt,label=(\roman*)]
\item We write $\smash{d^M \colon \mathbb{R}^n \rightarrow [0, \infty )}$, $\smash{d^M ( \vct{x} ) := \inf_{\vct{p}\in M} \abs{\vct{x} - \vct{p}}}$ for the distance function of~$M$.
If $M = \partial A \not= \emptyset$ for a set $A \subset \mathbb{R}^n$, we can define the signed distance function of~$M$ by  
\begin{align}
d_\leftrightarrow^M \colon \mathbb{R}^n \rightarrow \mathbb{R} , \quad d^M_\leftrightarrow ( \vct{x} ) := \begin{cases}
d^M ( \vct{x} ) &\text{if } \vct{x} \in A , \\
-d^M ( \vct{x} ) &\text{if } \vct{x} \in \mathbb{R}^n \setminus A .
\end{cases}
\end{align}
\item A set~$A \subset \mathbb{R}^n$ is said to have the unique nearest point property with respect to~$M$ if, for all $\vct{x}\in A$, there exists a unique~$\vct{p} \in M$ such that $\smash{d}^M (\vct{x} ) = \abs{\vct{x} - \vct{p}}$. 
We write $\mathrm{unpp}(M)$ for the maximal set with this property.
\item We define the orthogonal projection onto~$M$ by
\begin{align}
\mathcal{P}^M \colon \mathrm{unpp}(M) \rightarrow M , \quad \vct{x} \mapsto \argmin_{\vct{p}\in M} \abs{\vct{x} - \vct{p}} .
\end{align}
\item Let $\delta > 0$. 
Then, we define the $\delta$-neighborhood of~$M$ by
\begin{align}
U_\delta (M) &:= \bigl\{ \vct{x} \in \mathbb{R}^n \ \big\vert\ d^M(\vct{x} ) < \delta  \bigr\} . 
\end{align}
For $\vct{x} \in \mathbb{R}^n$, we also write $U_\delta ( \vct{x} ) := U_\delta ( \{ \vct{x} \} )$.
\item We define the reach of $M$ by 
\begin{align}
\mathrm{reach}(M) &:= \sup \bigl\{ \delta > 0 \ \big\vert \ U_\delta (M) \subset \mathrm{unpp} (M) \bigr\} .
\end{align}
\end{enumerate}
\end{definition}
Let $M \subset \mathbb{R}^n$ be a $\mathcal{C}^k$-submanifold, $k\in \mathbb{N}$.
Then, the orthogonal projection~$\mathcal{P}^M$ is $\mathcal{C}^{k-1}$-differentiable on $\mathrm{unpp}(M)^\circ$~\cite[Thm.\ 2]{leobacher21}. 
If $k\ge 2$, we have $\mathcal{P}^M (\vct{p} + \vct{n} ) = \vct{p}$ for $\vct{p}\in M$ and $\vct{n} \perp \rmT_\vct{p}M$ with $\vct{p} +  \vct{n} \in \mathrm{unpp}(M)^\circ$~\cite[Prop.\ 2]{leobacher21}.
Besides, if $M$ is compact and $k\ge 2$, we have $\mathrm{reach} (M) > 0$~\cite[Prop.\ 6]{leobacher21}.
Moreover, if $M = \partial A $ for a set $A \subset \mathbb{R}^n$ of class $\mathcal{C}^k$, $k \ge 2$, the signed distance function $d_\leftrightarrow^{\partial A}$ is $\mathcal{C}^k$-differentiable on~$\mathrm{unpp}(\partial A)^\circ$ (cf. \cite[Thm.\ 7.8.2]{delfour11} and \cite[Thm.\ 2]{leobacher21}).

\subsection{Shape Operator} \label{sec:secA2}
Let $2 \le k \in \mathbb{N}$ and $M \subset \mathbb{R}^n$ be an $(n-1)$-dimensional $\mathcal{C}^k$-sub\-ma\-ni\-fold with a global unit normal vector field~$\vct{N}\in \mathcal{C}^{k-1} (M ; \mathbb{R}^n )$. 
We define the shape operator~$\mathcal{S}_\vct{p}$ of~$M$ at~$\vct{p}\in M$ for each~$\vct{v} \in \rmT_\vct{p}M$ as the negative directional derivative $\smash{\mathcal{S}_\vct{p} (\vct{v} ) := -\nabla_{\!\vct{v}} \vct{N}(\vct{p})}$.
Then, for each $\vct{p} \in M$, the shape operator~$\mathcal{S}_\vct{p}$ is a self-adjoint linear operator $\smash{\mathcal{S}_\vct{p} \colon \rmT_\vct{p} M \rightarrow \rmT_\vct{p} M}$.
The eigenvalues~$\kappa_1 (\vct{p} ), \dots , \kappa_{n-1} (\vct{p} ) $ of the shape operator~$\mathcal{S}_\vct{p}$ are called the principal curvatures of~$M$ at~$\vct{p} \in M$. 
In particular, we have $\kappa_1 , \dots , \kappa_{n-1} \in \mathcal{C}^{k-2} (M)$. 

\subsection{Function Spaces on Manifolds} \label{sec:secA3}
Let $M \subset \mathbb{R}^n$ be an $m$-dimensional $\smash{\mathcal{C}^{0,1} }$-submanifold with boundary~$\partial M$. 
We denote charts for~$M$ as triples~$\smash{( U , \vct{\psi}, V) }$, i.e., $U \subset M$ and $\smash{V \subset \mathbb{R}^m}$ (or $V \subset \mathbb{R}^{m-1} \times [0, \infty )$ for charts with boundary) are open and~$\vct{\psi }\colon U \rightarrow V$ is bi-Lipschitz.
For the inverse chart~$\smash{\vct{\psi}^{-1}}$, we also use the symbol~$\invpsi$.
Besides, we write $\smash{\matr{g}\vert^{\vct{\psi}}}$ for the metric tensor in coordinates of the chart~$\smash{\vct{\psi} }$, i.e., $\smash{\matr{g}\vert^{\vct{\psi} } (\vct{\theta} ) = [ \matr{D} \invpsi (\vct{\theta }) ]^\trp \matr{D} \invpsi (\vct{\theta }) \in \mathbb{R}^{m\times m }}$.
For $p \in [1,\infty ]$, we write $L^p (M)$ for the Lebesgue space on~$M$ with respect to the Riemannian measure~$\lambda_M$. 
Moreover, we define $\smash{\vct{L}^p (M) := L^p (M)^m}$. 
Following \cite{hebey00}, we define the first-order Sobolev space~$H^1 (M)$ as the completion of 
\begin{align}
 \bigl\{ f \in \mathcal{C}^{0,1} (M)  \ \big\vert\ \norm{f}_{H^1(M)} < \infty   \bigr\} 
\end{align}
with respect to the norm $\smash{\norm{f}_{H^1(M)}^2 :=  \norm{f}_{L^2 (M ) }^2 +  \norm{\nabla_{\! M} f}^2_{\vct{L}^2 (M) }}$, where $\smash{\nabla_{\! M} f}$ denotes the gradient of~$f$. 
In local coordinates, we have
\begin{align}
\nabla_{\! M} f \bigl( \invpsi  (\vct{\theta } ) \bigr)  &=  \matr{D} \invpsi (\vct{\theta} )  \, \matr{g}^{-1} \big\vert^{\vct{\psi}} (\vct{\theta} ) \nabla (f \circ \invpsi ) (\vct{\theta }) .
\end{align}%
Besides, $\smash{H^1 (M)}$ is a reflexive Hilbert space. 
For the more general case of Sobolev spaces~$\smash{W^{k,p}(M)}$ of arbitrary order~$k\in\mathbb{N}$ and $1 \le p < \infty$ on Riemannian manifolds, we refer to \cite{hebey00}. 
Further, if $M$ is compact, we can alternatively define the Sobolev space~$\smash{H^1 (M)}$ by using local coordinates~\cite{wloka87}.
Given a finite atlas~$\smash{\{( U_i , \vct{\psi}_i , V_i ) \}_{i\in I}}$ of~$M$ and a subordinate partition of unity~$\smash{\{ \chi_i \}_{i\in I }} \subset \mathcal{C}^{0,1} (M)$, we define the space
\begin{align}
H^1 (M) &:= \bigl\{ f \in L^2 (M) \ \big\vert \ ( \chi_i f ) \circ \invpsi_i \in H^1 (V_i ) \bigr\} 
\end{align}
with the norm $\smash{\norm{f}_{H^1(M) }^2 :=  \sum_{i \in I } \norm{( \chi_i f ) \circ \invpsi_i }_{H^1 (V_i) }^2 }$.
It is easy to check that the two definitions for $\smash{H^1 (M)}$ are equivalent.
Consequently, it is  $\smash{ H^1 (M) = H^1 (\operatorname{Int} (M) )}$, where $\operatorname{Int}(M)$ denotes the interior of~$M$.
Moreover, with analogous arguments as in \cite[\S 11]{booss93}, one can prove the following trace theorem.
\begin{lemma}  \label{lem:manifoldtrace}
Let $\partial M $ be compact.
Then, there exists a unique bounded linear operator $\smash{\mathfrak{T}_M \colon H^1(M) \rightarrow L^2 (\partial M ) }$ such that $\smash{\mathfrak{T}_M f = f\vert_{\partial M } }$ for all $\smash{f \in  H^1 (M) \cap \mathcal{C}^0 (M)}$.
\end{lemma}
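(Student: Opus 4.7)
The plan is to reduce the statement to the classical trace theorem for Sobolev functions on Lipschitz domains in Euclidean half-spaces by passing to local coordinates, exactly as outlined in \cite{booss93}.

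First I would exploit the compactness of $\partial M$ to choose a \emph{finite} atlas of boundary charts $\{(U_i,\vct{\psi}_i,V_i)\}_{i\in I_\partial}$ whose images $V_i \subset \mathbb{R}^{m-1}\times[0,\infty)$ are bounded Lipschitz domains satisfying $\vct{\psi}_i(U_i\cap \partial M) = V_i\cap(\mathbb{R}^{m-1}\times\{0\})$, together with a Lipschitz cut-off function $\chi_0\in\mathcal{C}^{0,1}(M)$ that is identically one in a tubular neighborhood of $\partial M$ and vanishes outside $\bigcup_{i\in I_\partial} U_i$. Complement $\chi_0$ by a Lipschitz partition of unity $\{\chi_i\}_{i\in I_\partial}$ on the support of $\chi_0$ subordinate to $\{U_i\}_{i\in I_\partial}$. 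This reduces all trace considerations to the finite collection of boundary charts and sidesteps any issue with non-compactness of $M$ itself.

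Next I would define $\mathfrak{T}_M$ on the dense subspace $\mathcal{C}^{0,1}(M)\cap H^1(M)$ by setting $\mathfrak{T}_M f := f\vert_{\partial M}$ and establish the continuity estimate. For each $i\in I_\partial$, the pullback $g_i := (\chi_0\chi_i f)\circ\invpsi_i$ lies in $H^1(V_i)$, and the classical trace theorem on the Lipschitz domain $V_i$ yields
\begin{align*}
\bigl\Vert g_i\bigr\vert_{V_i\cap(\mathbb{R}^{m-1}\times\{0\})}\bigr\Vert_{L^2} \lesssim \Vert g_i\Vert_{H^1(V_i)}.
\end{align*}
Because $\vct{\psi}_i$ is bi-Lipschitz, the metric tensor $\matr{g}\vert^{\vct{\psi}_i}$ has determinant bounded above and below, so this inequality transfers to the intrinsic norms on $U_i\cap\partial M$ and $U_i$. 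Summing over the finite index set $I_\partial$ and using $f\vert_{\partial M} = \sum_{i\in I_\partial}(\chi_0\chi_i f)\vert_{\partial M}$ together with the triangle inequality yields $\Vert\mathfrak{T}_M f\Vert_{L^2(\partial M)} \lesssim \Vert f\Vert_{H^1(M)}$ on the dense subspace.

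Finally I would extend $\mathfrak{T}_M$ by continuity to all of $H^1(M)$ using the completion definition of the Sobolev space, with uniqueness immediate from the boundedness and density. To verify the identification $\mathfrak{T}_M f = f\vert_{\partial M}$ for a general $f\in H^1(M)\cap\mathcal{C}^0(M)$, approximate $f$ in $H^1(M)$ by Lipschitz functions $f_k$; the traces $\mathfrak{T}_M f_k = f_k\vert_{\partial M}$ converge in $L^2(\partial M)$ to $\mathfrak{T}_M f$, while standard mollification arguments chart-by-chart allow one to choose the $f_k$ so that $f_k\vert_{\partial M}\to f\vert_{\partial M}$ in $L^2(\partial M)$. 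The main obstacle is this last identification under only Lipschitz regularity of the manifold and mere continuity of $f$; it is handled by the chartwise mollification argument just indicated, noting that inside each boundary chart the problem reduces to the analogous classical statement in $H^1$ of a Lipschitz subdomain of the half-space, for which standard mollifier arguments apply.
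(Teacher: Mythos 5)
Your proposal is correct and follows essentially the same approach the paper intends: the paper offers no proof of its own, only the pointer ``with analogous arguments as in \cite[\S 11]{booss93},'' which is exactly the localization-to-boundary-charts, classical-trace-theorem-plus-partition-of-unity, density-extension argument you spell out. The one delicate point you correctly flag---identifying the continuous extension $\mathfrak{T}_M f$ with the pointwise restriction $f\vert_{\partial M}$ for $f\in H^1(M)\cap\mathcal{C}^0(M)$ under only Lipschitz chart regularity---is indeed the step that requires the chartwise mollification care you describe, and your outline handles it appropriately.
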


\end{appendices}

\section*{Acknowledgments}
This work was funded by the Deutsche Forschungsgemeinschaft (DFG, German Research Foundation) -- project number~327154368 -- SFB 1313, and under Germany's Excellence Strategy -- EXC 2075 -- 390740016. The authors also were supported by the Stuttgart Center for Simulation Science (SimTech).


\printbibliography[heading=none]


\end{document}